\newenvironment{mabstract}
{\begin{quote}\small {\bfseries Abstract.}}{\end{quote}\par}
\newenvironment{mkeywords}
{\begin{quote}\small {\bfseries Keywords.}}{\end{quote}\par}
\newenvironment{msubjclass}
{\begin{quote}\small {\bfseries AMS Subject Classification.}}{\end{quote}\par}
\newtheorem{theorem}{Theorem}[section]
\newtheorem{proposition}{Proposition}[section]
\newcounter{appendixcounter}
\newtheorem{lemma}{Lemma}[section]
\newtheorem{lemmaappendix}{Lemma}[appendixcounter]
\newtheorem{assumption}{Assumption}[section]
\newtheorem{assumptionappendix}{Assumption}[appendixcounter]
\newtheorem{corollary}{Corollary}[section]
\newtheorem*{remark}{Remark}%[section]
\newtheorem*{rproof}{\bfseries Proof}
\titleformat{\section}[hang]{\bf\Large}{\thesection.}{0.4em}{}
\titleformat{\subsection}[hang]{\bf\large}{\thesubsection.}{0.4em}{}
\begin{document}

\title{Stability of Optimal Filter Higher-Order Derivatives}

\author{Vladislav Z. B. Tadi\'{c}
\thanks{School of Mathematics, University of Bristol,
Bristol, United Kingdom
(email: v.b.tadic@bristol.ac.uk). }
\and
Arnaud Doucet\thanks{Department of Statistics,
University of Oxford, Oxford, United Kingdom
(doucet@stats.ox.ac.uk). } }

\date{}

\maketitle

\begin{mabstract}
In many scenarios, a state-space model depends on a parameter which needs to be inferred
from data.
Using stochastic gradient search and the optimal filter first-order derivatives,
the parameter can be estimated online.
To analyze the asymptotic behavior of such methods,
it is necessary to establish results on the existence and stability
of the optimal filter higher-order derivatives.
These properties are studied here.
Under regularity conditions, we show that the optimal filter higher-order derivatives exist and forget initial conditions exponentially fast.
We also show that the same derivatives are geometrically ergodic.
\end{mabstract}

\begin{mkeywords}
State-Space Models, Optimal Filter, Optimal Filter Higher-Order Derivatives,
Forgetting of Initial Conditions, Geometric Ergodicity, Log-Likelihood.
\end{mkeywords}

\begin{msubjclass}
Primary 60G35;
Secondary 62M20, 93E11.
\end{msubjclass}

\section{Introduction}
State-space models, also known as continuous-state hidden Markov models, are a powerful and versatile tool for statistical modeling of
complex time-series data and stochastic dynamic systems.
These models can be viewed as a discrete-time Markov process
which are observed only through noisy measurements of their states.
In this context, one of the most important problems is
the optimal estimation of the current state given the noisy measurements
of the current and previous states.
This problem is known as optimal filtering.
Optimal filtering has been studied in a number of papers and books; see, e.g., \cite{cappe&moulines&ryden}, \cite{crisan&rozovskii}, \cite{douc&moulines&stoffer} and
references therein.

In many applications, a state-space model depends on a parameter whose value
needs to be inferred from data.
When the number of data points is large,
it is desirable, for the sake of computational efficiency,
to infer the parameter recursively (i.e., online).
In the maximum likelihood approach,
recursive parameter estimation can be performed using stochastic gradient search,
where the underlying gradient estimation is based on the optimal filter
and its first-order derivatives; see, e.g.,  \cite{legland&mevel2}, \cite{poyiadjis&doucet&singh}, \cite{tadic2}.
In \cite{tadic2}, it has been shown that the asymptotic behavior of recursive maximum likelihood estimation
in finite-state hidden Markov models is closely related to the analytical properties, higher-order differentiability and analyticity, of the underlying log-likelihood rate.
In view of the recent results on stochastic gradient search \cite{tadic&doucet3},
a similar relationship is likely to hold for state-space models.
However, to apply the results of \cite{tadic&doucet3} to
recursive maximum likelihood estimation in state-space models,
it is necessary to establish results on the higher-order differentiability
of the log-likelihood rate for these models.
Since the log-likelihood rate for state-space models is a functional
of the optimal filter, the analytical properties of this rate
are tightly connected to the existence and stability of the optimal filter higher-order
derivatives. Hence, one of the first steps to carry out asymptotic analysis of recursive maximum likelihood
estimation in state-space models is to establish results on the existence and stability of these derivatives.
To the best of our knowledge, this problem has never been addressed before and the results presented here fill this gap in the literature on optimal filtering.

%In order to get a faster convergence rate of recursive maximum likelihood estimation
%in state-space models,
%it is desirable to maximize the underlying log-likelihood rate using the (stochastic) Newton method
%(instead of stochastic gradient search).
%As the Newton method relies on the information matrix
%(i.e., on the Hessian of the log-likelihood rate),
%the second-order derivative of the optimal filter is needed to estimate this matrix
%(for details see \cite{legland&mevel2}, \cite{poyiadjis&doucet&singh}).
%Hence, to gain any theoretical insight into the asymptotic behavior
%of the approach based on the Newton method,
%it is necessary to establish results on the existence and stability
%of the optimal filter second-order derivative.
%These results are are included as a particular case in the analysis carried out here.

In this paper, the optimal filter higher-order derivatives
and their existence and stability properties are studied.
Under standard stability and regularity conditions,
we show that these derivatives exist and forget initial conditions exponentially fast.
We also show that the optimal filter higher-order derivatives are geometrically ergodic.
The obtained results cover state-space models met in practice and are one of the first stepping stones
to analyze the asymptotic behavior of recursive maximum likelihood estimation in
non-linear state-space modes \cite{tadic&doucet2}.

The paper is organized as follows.
In Section \ref{section1},
the existence and stability of the optimal filter higher-order derivatives
are studied and the main results are presented.
In Section \ref{section2}, the main results are used to study
the analytical properties of log-likelihood for state-space models.
An example illustrating the main results is provided in Section \ref{section3}.
In Sections \ref{section1.1*} -- \ref{section3*}, the main results and their corollaries are proved.

\section{Main Results} \label{section1}

\subsection{State-Space Models and Optimal Filter}\label{subsection1.1}

To specify state-space models and to formulate the problem of optimal filtering,
we use the following notation.
For a set ${\cal Z}$ in a metric space,
${\cal B}({\cal Z})$ denotes the collection of Borel subsets of ${\cal Z}$.
$d_{x}\geq 1$ and $d_{y}\geq 1$ are integers, while
${\cal X}\in{\cal B}(\mathbb{R}^{d_{x} })$ and
${\cal Y}\in{\cal B}(\mathbb{R}^{d_{y} })$.
$P(x,dx')$ is a transition kernel on ${\cal X}$, while
$Q(x,dy)$ is a conditional probability measure on ${\cal Y}$ given $x\in{\cal X}$.
$(\Omega,{\cal F}, P)$ is a probability space.
Then, a state-space model can be defined as
an ${\cal X}\times{\cal Y}$-valued stochastic process
$\{ (X_{n}, Y_{n} ) \}_{n\geq 0}$ on $(\Omega,{\cal F}, P)$ which satisfies
\begin{align*}
	P\left( (X_{n+1}, Y_{n+1} )\in B
	|X_{0:n}, Y_{0:n} \right)
	=
	\int I_{B}(x,y) Q(x,dy) P(X_{n}, dx )
\end{align*}
almost surely for any $B\in{\cal B}({\cal X}\times{\cal Y})$ and $n\geq 0$.
$\{X_{n} \}_{n\geq 0}$ are the unobservable states,
while $\{Y_{n} \}_{n\geq 0}$ are the observations.
One of the most important problems related to state-space models
is the estimation of the current state $X_{n}$
given the state-observations $Y_{1:n}$.
This problem is known as filtering.
In the Bayesian approach, the optimal estimation of $X_{n}$ given $Y_{1:n}$
is based on the (optimal) filtering distribution $P(X_{n}\in dx_{n}|Y_{1:n} )$.
As $P(x,dx')$ and $Q(x,dy)$ are rarely available in practice,
the filtering distribution is usually computed using
some approximate models.

In this paper, we assume that the model $\{ (X_{n}, Y_{n} ) \}_{n\geq 0}$ can be accurately approximated
by a parametric family of state-space models.
To define such a family, we rely on the following notation.
Let $d\geq 1$ be an integer, while
$\Theta\subset\mathbb{R}^{d}$ is a bounded open set.
${\cal P}({\cal X} )$ is the set of probability measures on ${\cal X}$, while
$\mu(dx)$ and $\nu(dy)$ are measures on ${\cal X}$ and ${\cal Y}$ (respectively).
$p_{\theta}(x'|x)$ and $q_{\theta}(y|x)$
are functions which map
$\theta\in\Theta$, $x,x'\in{\cal X}$, $y\in{\cal Y}$
to $[0,\infty )$ and satisfy
\begin{align*}
	\int p_{\theta}(x'|x) \mu(dx')
	=
	\int q_{\theta}(y|x) \nu(dy)
	=
	1
\end{align*}
for all $\theta\in\Theta$, $x\in{\cal X}$.
With this notation, a parametric family of state-space models can be defined as
an ${\cal X}\times{\cal Y}$-valued stochastic process
$\left\{ (X_{n}^{\theta,\lambda}, Y_{n}^{\theta,\lambda} ) \right\}_{n\geq 0}$
on $(\Omega, {\cal F}, P)$
which is parameterized by $\theta\in\Theta$, $\lambda\in{\cal P}({\cal X} )$
and satisfies
\begin{align*}
	&
	P\left( (X_{0}^{\theta,\lambda}, Y_{0}^{\theta,\lambda} ) \in B \right)
	=
	\int\int I_{B}(x,y) q_{\theta}(y|x) \lambda(dx)\nu(dy),
	\\
	&
	P\left(\left. (X_{n+1}^{\theta,\lambda}, Y_{n+1}^{\theta,\lambda} ) \in B\right|
	X_{0:n}^{\theta,\lambda}, Y_{0:n}^{\theta,\lambda} \right)
	=
	\int\int I_{B}(x,y)
	q_{\theta}(y|x) p_{\theta}(x|X_{n}^{\theta,\lambda} ) \mu(dx) \nu(dy),
\end{align*}
almost surely for any $B\in{\cal B}({\cal X}\times{\cal Y})$ and $n\geq 0$.\footnote
{To evaluate the values of $\theta$ for which
$\big\{ (X_{n}^{\theta,\lambda}, Y_{n}^{\theta,\lambda} ) \big\}_{n\geq 0}$
provides the best approximation to
$\left\{ (X_{n}, Y_{n} ) \right\}_{n\geq 0}$,
we usually rely on the maximum likelihood principle.
For further details on maximum likelihood estimation in state-space and hidden Markov models,
see e.g., \cite{cappe&moulines&ryden}, \cite{douc&moulines&stoffer} and references cited therein.
}

To show how the filtering distribution is computed using approximate model
$\left\{ (X_{n}^{\theta,\lambda}, Y_{n}^{\theta,\lambda} ) \right\}_{n\geq 0}$,
we use the following notation.
$r_{\theta}(y,x'|x)$ is the function defined by
\begin{align}\label{1.901}
	r_{\theta}(y,x'|x) = q_{\theta}(y|x') p_{\theta}(x'|x)
\end{align}
for $\theta\in\Theta$, $x,x'\in{\cal X}$, $y\in{\cal Y}$, 
while $r_{\theta,\boldsymbol y}^{m:n}(x'|x)$ is the function recursively defined by
\begin{align}\label{1.501}
	r_{\theta,\boldsymbol y}^{m:m+1}(x'|x) = r_{\theta}(y_{m+1}, x'|x),
	\;\;\;\;\;
	r_{\theta,\boldsymbol y}^{m:n+1}(x'|x)
	=
	\int r_{\theta}(y_{n+1}, x'|x'') r_{\theta,\boldsymbol y}^{m:n}(x''|x) \mu(dx'')
\end{align}
for $n>m\geq 0$ and any sequence $\boldsymbol y = \{y_{n} \}_{n\geq 1}$ in ${\cal Y}$.
$p_{\theta,\boldsymbol y}^{m:n}(x|\lambda )$ and $P_{\theta,\boldsymbol y}^{m:n}(dx|\lambda )$
are the function and the probability measure defined by
\begin{align}\label{1.903}
	p_{\theta,\boldsymbol y}^{m:n}(x|\lambda )
	=
	\frac{\int r_{\theta,\boldsymbol y}^{m:n}(x|x') \lambda(dx') }
	{\int\int r_{\theta,\boldsymbol y}^{m:n}(x''|x') \mu(dx'') \lambda(dx') },
	\;\;\;\;\;
	P_{\theta,\boldsymbol y}^{m:n}(B|\lambda )
	=
	\int_{B} p_{\theta,\boldsymbol y}^{m:n}(x'|\lambda ) \mu(dx')
\end{align}
for $B\in{\cal B}({\cal X} )$,
$\lambda\in{\cal P}({\cal X} )$,
while $P_{\theta,\boldsymbol y}^{m:n}(\lambda )$ is a
`short-hand' notation for $P_{\theta,\boldsymbol y}^{m:n}(dx|\lambda )$.
Then, it can easily be shown that $P_{\theta,\boldsymbol y}^{m:n}(\lambda )$ is the filtering distribution, i.e.,
\begin{align*}
	P_{\theta,\boldsymbol y}^{0:n}(B|\lambda )
	=
	P\left(\left. X_{n}^{\theta,\lambda}\in B
	\right| Y_{1:n}^{\theta,\lambda} = y_{1:n}
	\right)
\end{align*}
for each $\theta\in\Theta$, $B\in{\cal B}({\cal X} )$,
$\lambda\in{\cal P}({\cal X} )$, $n\geq 1$ and any sequence $\boldsymbol y = \{y_{n} \}_{n\geq 1}$
in ${\cal Y}$.
In this context, $\lambda$ can be interpreted as the initial condition of the filtering distribution
$P_{\theta,\boldsymbol y}^{m:n}(\lambda )$.

\subsection{Optimal Filter Higher-Order Derivatives}

Let $p\geq 1$ be an integer. Throughout the paper, we assume that $p_{\theta}(x'|x)$ and $q_{\theta}(y|x)$
are $p$-times differentiable in $\theta$ for each
$\theta\in\Theta$, $x,x'\in{\cal X}$, $y\in{\cal Y}$.

To define the higher-order derivatives of the optimal filter, we use the following notation.
$\mathbb{N}_{0}$ is the set of non-negative integers.
$\boldsymbol 0$ is the element of $\mathbb{N}_{0}^{d}$
whose all components are zero.
For $\boldsymbol\alpha = (\alpha_{1},\dots,\alpha_{d} ) \in\mathbb{N}_{0}^{d}$,
$\boldsymbol\beta = (\beta_{1},\dots,\beta_{d} ) \in\mathbb{N}_{0}^{d}$,
relation $\boldsymbol\beta\leq\boldsymbol\alpha$ is taken component-wise, i.e.,
$\boldsymbol\beta\leq\boldsymbol\alpha$
if and only if $\alpha_{i}\leq\beta_{i}$ for each $1\leq i\leq d$.
For the same $\boldsymbol\alpha$, $\boldsymbol\beta$
satisfying $\boldsymbol\beta\leq\boldsymbol\alpha$,
$\left(\boldsymbol\alpha \atop \boldsymbol\beta \right)$ denotes
the multinomial coefficient
\begin{align*}
	\left( \boldsymbol\alpha \atop \boldsymbol\beta \right)
	=
	\left(\alpha_{1} \atop \beta_{1} \right)
	\cdots
	\left(\alpha_{d} \atop \beta_{d} \right).
\end{align*}
For $\boldsymbol\alpha=(\alpha_{1},\dots,\alpha_{d} )\in\mathbb{N}_{0}^{d}$,
$\theta=(\theta_{1},\dots,\theta_{d} )\in\Theta$,
notation $|\boldsymbol \alpha|$ and $\partial_{\theta}^{\boldsymbol \alpha}$ stand for
\begin{align*}
	|\boldsymbol\alpha| = \alpha_{1} + \cdots + \alpha_{d},
	\;\;\;\;\;
	\partial_{\theta}^{\boldsymbol\alpha}
	=
	\frac{\partial^{|\boldsymbol\alpha|} }{\partial\theta_{1}^{\alpha_{1} } \cdots \partial\theta_{d}^{\alpha_{d} } }.
\end{align*}
$d(p)$ is the number elements in set
$\{\boldsymbol\alpha: \boldsymbol\alpha\in\mathbb{N}_{0}^{d},|\boldsymbol\alpha|\leq p\}$, i.e.,
\begin{align*}
	d(p)
	=
	\sum_{k=0}^{p} \left( d+k-1 \atop k \right).
\end{align*}
${\cal M}_{s}({\cal X} )$ is the set of finite signed measures on ${\cal X}$.
${\cal L}({\cal X})$ is the set of $d(p)$-dimensional finite signed vector measures on ${\cal X}$.
The components of an element of ${\cal L}({\cal X})$ are indexed by multi-indices in $\mathbb{N}_{0}^{d}$
and ordered lexicographically.
More specifically, an element $\Lambda$ of ${\cal L}({\cal X})$ can be denoted by
\begin{align}\label{1.301}
	\Lambda = \left\{ \lambda_{\boldsymbol \alpha}:
	\boldsymbol \alpha \in \mathbb{N}_{0}^{d}, |\boldsymbol \alpha |\leq p \right\},
\end{align}
where $\lambda_{\boldsymbol \alpha}\in{\cal M}_{s}({\cal X} )$
is referred to as the component $\boldsymbol \alpha$ of $\Lambda$.
The components of $\Lambda$ follow lexicographical order, i.e.,
$\lambda_{\boldsymbol \alpha}$ precedes $\lambda_{\boldsymbol \beta}$
if and only if $\alpha_{i}<\beta_{i}$, $\alpha_{j}=\beta_{j}$ for some $i$ and each $j$
satisfying $1\leq i\leq d$, $1\leq j< i$,
where $\boldsymbol \alpha = (\alpha_{1},\dots,\alpha_{d} )$,
$\boldsymbol \beta = (\beta_{1},\dots,\beta_{d} )$.
For $\lambda\in{\cal M}_{s}({\cal X} )$, $\|\lambda\|$ denotes the total variation norm of $\lambda$.
For $\Lambda\in{\cal L}({\cal X} )$, $\|\Lambda\|$ denotes the total variation norm of $\Lambda$
induced by the $l_{\infty}$ vector norm, i.e.,
\begin{align*}
	\|\Lambda\|
	=
	\max\left\{ \|\lambda_{\boldsymbol \alpha} \|:
	\boldsymbol \alpha \in\mathbb{N}_{0}^{d}, |\boldsymbol \alpha|\leq p \right\}
\end{align*}
for $\Lambda$ specified in (\ref{1.301}).
${\cal L}_{0}({\cal X} )$ is the set of $d(p)$-dimensional finite vector measures
whose component $\boldsymbol 0$ is a probability measure
(i.e., $\Lambda$ specified in (\ref{1.301}) belongs to ${\cal L}_{0}({\cal X} )$
if and only if $\lambda_{\boldsymbol 0}\in{\cal P}({\cal X} )$).

We need a few additional notation:
$r_{\theta,y}^{\boldsymbol\alpha}(x|\lambda )$ and $s_{\theta,y}^{\boldsymbol\alpha}(x|\Lambda )$
are the functions defined by
\begin{align}\label{1.703}
	r_{\theta,y}^{\boldsymbol\alpha}(x|\lambda )
	=&
	\int
	\partial_{\theta}^{\boldsymbol\alpha}
	r_{\theta}(y,x|x') \lambda(dx'),
	\;\;\;\;\;
	s_{\theta,y}^{\boldsymbol\alpha}(x|\Lambda )
	=
	\sum_{\stackrel{\scriptstyle\boldsymbol\beta\in\mathbb{N}_{0}^{d} }
	{\boldsymbol\beta\leq\boldsymbol\alpha} }
	\left( \boldsymbol\alpha \atop \boldsymbol \beta \right)
	\frac{r_{\theta,y}^{\boldsymbol\alpha-\boldsymbol\beta}(x|\lambda_{\boldsymbol\beta} ) }
	{ \int r_{\theta,y}^{\boldsymbol 0}(x'|\lambda_{\boldsymbol 0} ) \mu(dx') }
\end{align}
for $\theta\in\Theta$, $x\in{\cal X}$, $y\in{\cal Y}$, $\lambda\in{\cal M}_{s}({\cal X} )$,
$\Lambda=\left\{\lambda_{\boldsymbol\beta}:
\boldsymbol\beta\in\mathbb{N}_{0}^{d}, |\boldsymbol\beta|\leq p \right\} \in{\cal L}_{0}({\cal X} )$,
$\boldsymbol\alpha\in\mathbb{N}_{0}^{d}$,
$|\boldsymbol\alpha|\leq p$.
$f_{\theta,y}^{\boldsymbol\alpha}(x|\Lambda )$ is the function recursively defined by
\begin{align}\label{1.1}
	f_{\theta,y}^{\boldsymbol 0}(x|\Lambda ) = s_{\theta,y}^{\boldsymbol 0}(x|\Lambda ),
	\;\;\;\;\;
	f_{\theta,y}^{\boldsymbol\alpha}(x|\Lambda )
	=
	s_{\theta,y}^{\boldsymbol\alpha}(x|\Lambda )
	-
	\sum_{\stackrel{\scriptstyle\boldsymbol\beta\in\mathbb{N}_{0}^{d}\setminus\{\boldsymbol\alpha \} }
	{\boldsymbol\beta\leq\boldsymbol\alpha} }
	\left( \boldsymbol\alpha \atop \boldsymbol \beta \right)
	f_{\theta,y}^{\boldsymbol\beta}(x|\Lambda)
	\int s_{\theta,y}^{\boldsymbol\alpha-\boldsymbol\beta}(x'|\Lambda) \mu(dx'),
\end{align}
where the recursion is in $|\boldsymbol\alpha|$.\footnote
{In (\ref{1.1}), $f_{\theta,y}^{\boldsymbol 0}(x|\Lambda )$ is the initial condition.
At iteration $k$ of (\ref{1.1}) ($1\leq k\leq p$),
function $f_{\theta,y}^{\boldsymbol\alpha}(x|\Lambda )$
is computed for multi-indices
$\boldsymbol\alpha\in\mathbb{N}_{0}^{d}$, $|\boldsymbol\alpha|=k$
using the results obtained at the previous iterations. }
$R_{\theta,y}^{\boldsymbol\alpha}(dx|\lambda )$,
$S_{\theta,y}^{\boldsymbol\alpha}(dx|\Lambda )$ and
$F_{\theta,y}^{\boldsymbol\alpha}(dx|\Lambda )$ are the elements of ${\cal M}_{s}({\cal X} )$ defined by
\begin{align}\label{1.905}
	R_{\theta,y}^{\boldsymbol\alpha}(B|\lambda )
	=
	\int_{B} r_{\theta,y}^{\boldsymbol\alpha}(x|\lambda ) \mu(dx),
	\;\;\;\:\:
	S_{\theta,y}^{\boldsymbol\alpha}(B|\Lambda )
	=
	\int_{B} s_{\theta,y}^{\boldsymbol\alpha}(x|\Lambda ) \mu(dx),
	\;\;\;\:\:
	F_{\theta,y}^{\boldsymbol\alpha}(B|\Lambda )
	=
	\int_{B} f_{\theta,y}^{\boldsymbol\alpha}(x|\Lambda ) \mu(dx)
\end{align}
for $B\in{\cal B}({\cal X} )$,
while $R_{\theta,y}^{\boldsymbol\alpha}(\lambda )$,
$S_{\theta,y}^{\boldsymbol\alpha}(\Lambda )$,
$F_{\theta,y}^{\boldsymbol\alpha}(\Lambda )$ are a `short-hand' notation for
$R_{\theta,y}^{\boldsymbol\alpha}(dx|\lambda )$,
$S_{\theta,y}^{\boldsymbol\alpha}(dx|\Lambda )$,
$F_{\theta,y}^{\boldsymbol\alpha}(dx|\Lambda )$ (respectively).
$\big\langle R_{\theta,y}^{\boldsymbol\alpha}(\lambda ) \big\rangle$,
$\big\langle S_{\theta,y}^{\boldsymbol\alpha}(\Lambda ) \big\rangle $ and
$\big\langle F_{\theta,y}^{\boldsymbol\alpha}(\Lambda ) \big\rangle$ are the quantities defined by
\begin{align}\label{1.907}
	\left\langle R_{\theta,y}^{\boldsymbol\alpha}(\lambda ) \right\rangle
	=
	R_{\theta,y}^{\boldsymbol\alpha}({\cal X}|\lambda ),
	\;\;\;\;\;
	\left\langle S_{\theta,y}^{\boldsymbol\alpha}(\Lambda ) \right\rangle
	=
	S_{\theta,y}^{\boldsymbol\alpha}({\cal X}|\Lambda ),
	\;\;\;\;\;
	\left\langle F_{\theta,y}^{\boldsymbol\alpha}(\Lambda ) \right\rangle
	=
	F_{\theta,y}^{\boldsymbol\alpha}({\cal X}|\Lambda ).
\end{align}
$F_{\theta,y}(\Lambda)$ is the element of ${\cal L}_{0}({\cal X} )$ defined by
\begin{align}\label{1.909}
	F_{\theta,y}(\Lambda)
	=
	\left\{F_{\theta,y}^{\boldsymbol\alpha}(\Lambda):
	\boldsymbol\alpha\in\mathbb{N}_{0}^{d}, |\boldsymbol\alpha|\leq p \right\},
\end{align}
where $F_{\theta,y}^{\boldsymbol\alpha}(\Lambda)$ is
the component $\boldsymbol\alpha$ of $F_{\theta,y}(\Lambda)$.
$F_{\theta,\boldsymbol y}^{m:n}(\Lambda)$
is the element of ${\cal L}_{0}({\cal X} )$ recursively defined by
\begin{align}\label{1.705}
	F_{\theta,\boldsymbol y}^{m:m}(\Lambda)
	=
	\Lambda,
	\;\;\;\;\;
	F_{\theta,\boldsymbol y}^{m:n+1}(\Lambda)
	=
	F_{\theta,y_{n+1} }\left(F_{\theta,\boldsymbol y}^{m:n}(\Lambda) \right)
\end{align}
for $n\geq m\geq 0$
and a sequence $\boldsymbol y = \{y_{n} \}_{n\geq 1}$ in ${\cal Y}$.
$f_{\theta,\boldsymbol y}^{\boldsymbol\alpha,m:n}(x|\Lambda)$
is the function defined for $n>m\geq 0$ by
\begin{align}\label{1.921}
	f_{\theta,\boldsymbol y}^{\boldsymbol\alpha,m:n}(x|\Lambda )
	=
	f_{\theta,y_{n} }^{\boldsymbol\alpha}(x|F_{\theta,\boldsymbol y}^{m:n-1}(\Lambda) ).
\end{align}
${\cal E}_{\lambda} = \left\{ {\cal E}_{\lambda}^{\boldsymbol\alpha}: \boldsymbol\alpha\in\mathbb{N}_{0}^{d},
|\boldsymbol\alpha|\leq p \right\}$ is the element of ${\cal L}_{0}({\cal X} )$ defined by
\begin{align}\label{1.911}
	{\cal E}_{\lambda}^{\boldsymbol 0}(B) = \lambda(B),
	\;\;\;\;\;
	{\cal E}_{\lambda}^{\boldsymbol\alpha}(B) = 0
\end{align}
for $B\in{\cal B}({\cal X} )$, $\lambda\in{\cal P}({\cal X} )$,
$\boldsymbol\alpha\in\mathbb{N}_{0}^{d}$, $1\leq|\boldsymbol\alpha|\leq p$,
where ${\cal E}_{\lambda}^{\boldsymbol 0}$ and
${\cal E}_{\lambda}^{\boldsymbol\alpha}$ are (respectively) the component $\boldsymbol 0$ and
the component $\boldsymbol\alpha$ of ${\cal E}_{\lambda}$.

We will show in Theorem \ref{theorem1.1} that
$F_{\theta,\boldsymbol y}^{m:n}(\Lambda)$ is the vector of the optimal filter derivatives
of the order up to $p$.
More specifically, we will demonstrate 
\begin{align*}
	F_{\theta,\boldsymbol y}^{\boldsymbol \alpha, m:n}(B|{\cal E}_{\lambda} )
	=
	\partial_{\theta}^{\boldsymbol \alpha} \:
	P\left(\left. X_{n}^{\theta,\lambda}\in B
	\right| Y_{1:n}^{\theta,\lambda} = y_{1:n}
	\right)
\end{align*}
for each $\theta\in\Theta$, $B\in{\cal B}({\cal X} )$,
$\lambda\in{\cal P}({\cal X} )$, $\boldsymbol\alpha\in\mathbb{N}_{0}^{d}$,
$|\boldsymbol\alpha|\leq p$,
$n\geq 1$ and any sequence $\boldsymbol y = \{y_{n} \}_{n\geq 1}$
in ${\cal Y}$.

\subsection{Existence and Stability Results}

We analyze here the existence and stability of the optimal filter higher-order derivatives.
The analysis is carried out under the following assumptions.

\begin{assumption}\label{a1.1}
There exists a real number $\varepsilon\in(0,1)$
and
for each $\theta\in\Theta$, $y\in{\cal Y}$, there exists a measure
$\mu_{\theta}(dx|y)$ on ${\cal X}$ such that
$0<\mu_{\theta}({\cal X}|y)<\infty$
and
\begin{align*}
	\varepsilon\mu_{\theta}(B|y)
	\leq
	\int_{B} r_{\theta}(y,x'|x) \mu(dx')
	\leq
	\frac{\mu_{\theta}(B|y)}{\varepsilon}
\end{align*}
for all $x\in{\cal X}$, $B\in{\cal B}({\cal X} )$.
\end{assumption}

\begin{assumption}\label{a1.2}
There exists a function $\psi: {\cal Y}\rightarrow [1,\infty )$
such that
\begin{align}\label{a1.2.1*}
	\left|\partial_{\theta}^{\boldsymbol\alpha} r_{\theta}(y,x'|x) \right|
	\leq
	\big(\psi(y) \big)^{|\boldsymbol\alpha|}
	r_{\theta}(y,x'|x)
\end{align}
for all $\theta\in\Theta$, $x,x'\in{\cal X}$, $y\in{\cal Y}$
and any multi-index $\boldsymbol\alpha\in\mathbb{N}_{0}^{d}\setminus\{\boldsymbol 0\}$,
$|\boldsymbol\alpha|\leq p$.
\end{assumption}

\begin{assumption}\label{a1.2'}
There exists a function $\phi:{\cal Y}\times{\cal X}\rightarrow[1,\infty)$
such that
\begin{align*}
	r_{\theta}(y,x'|x) \leq\phi(y,x'),
	\;\;\;\;\;
	\int \phi(y,x'')\mu(dx'')<\infty
\end{align*}
for all $\theta\in\Theta$, $x,x'\in{\cal X}$, $y\in{\cal Y}$.
\end{assumption}

Assumption \ref{a1.1} is a standard strong mixing condition
for state-space models.
It ensures that the optimal filter
$P_{\theta,\boldsymbol y}^{m:n}(\lambda)$
forgets its initial condition $\lambda$ exponentially fast
(see Proposition \ref{proposition1.2}).
In this or a similar form, Assumption \ref{a1.1} is a crucial ingredient
in many results on optimal filtering and statistical inference in
state-space and hidden Markov models
(see e.g., \cite{atar&zeitouni}, \cite{bickel&ritov&ryden},
\cite{delmoral&guionnet}, \cite{delmoral&doucet&singh}, \cite{douc&moulines&ryden},
\cite{legland&mevel2} -- \cite{legland&oudjane},
\cite{ryden}, \cite{tadic2};
see also \cite{cappe&moulines&ryden}, \cite{crisan&rozovskii}, \cite{douc&moulines&stoffer}
and references cited therein).
Assumption \ref{a1.2} can be considered as an extension of \cite[Assumption B]{legland&mevel}
and \cite[Assumption 3.2]{tadic&doucet1}
to the optimal filter higher-order derivatives.
It ensures that the higher-order score functions
\begin{align*}
	\frac{\partial_{\theta}^{\boldsymbol\alpha} r_{\theta}(y,x'|x) }{r_{\theta}(y,x'|x) }
\end{align*}
are well-defined and uniformly bounded in $\theta$, $x,x'$.
Together with Assumptions \ref{a1.1} and \ref{a1.2'},
Assumption \ref{a1.2} guarantees that the higher-order derivatives of
the optimal filter $P_{\theta,\boldsymbol y}^{m:n}(\lambda)$
exist and forget their initial conditions exponentially fast
(see Theorems \ref{theorem1.1} and \ref{theorem1.2}).
Assumptions \ref{a1.1} -- \ref{a1.2'}
hold if ${\cal X}$ is a compact set
and $q_{\theta}(y|x)$ is a mixture of Gaussian densities
(see the example studied in Section \ref{section3}).

Our results on the existence and stability of the optimal filter higher-order derivatives
are presented in the next two theorems.

\begin{theorem}[Higher-Order Differentiability]\label{theorem1.1}
Let Assumptions \ref{a1.1} -- \ref{a1.2'} hold.
Then, $p_{\theta, \boldsymbol y}^{m:n}(x|\lambda )$ and $P_{\theta, \boldsymbol y}^{m:n}(B|\lambda )$
are $p$-times differentiable in $\theta$
for each $\theta\in\Theta$, $x\in{\cal X}$, $B\in{\cal B}({\cal X} )$,
$\lambda\in{\cal P}({\cal X} )$, $n>m\geq 0$
and any sequence $\boldsymbol y = \{y_{n} \}_{n\geq 1}$ in ${\cal Y}$.
Moreover, we have
\begin{align}\label{t1.1.1*}
	\partial_{\theta}^{\boldsymbol\alpha} p_{\theta, \boldsymbol y}^{m:n}(x|\lambda )
	=
	f_{\theta, \boldsymbol y}^{\boldsymbol\alpha, m:n}(x|{\cal E}_{\lambda} ),
	\;\;\;\;\;
	\partial_{\theta}^{\boldsymbol\alpha} P_{\theta,\boldsymbol y}^{m:n}(B|\lambda)
	=
	F_{\theta,\boldsymbol y}^{\boldsymbol\alpha, m:n}(B|{\cal E}_{\lambda} )
\end{align}
for any multi-index $\boldsymbol\alpha\in\mathbb{N}_{0}^{d}$, $|\boldsymbol\alpha|\leq p$.
\end{theorem}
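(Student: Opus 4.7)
The plan is to establish (\ref{t1.1.1*}) by a double induction, outer on $n-m$ and inner on $|\boldsymbol\alpha|$. The claim for $P_{\theta,\boldsymbol y}^{m:n}(B|\lambda)$ will follow at once from the claim for $p_{\theta,\boldsymbol y}^{m:n}(x|\lambda)$ by integrating (\ref{1.903}) over $B$ and exchanging $\partial_{\theta}^{\boldsymbol\alpha}$ with $\int_{B}\cdot\,\mu(dx)$. A short unrolling of (\ref{1.501})--(\ref{1.903}) gives the one-step filter update
\begin{align*}
p_{\theta,\boldsymbol y}^{m:n+1}(x|\lambda) = \frac{N(\theta,x)}{D(\theta)}, \qquad N(\theta,x) = \int r_{\theta}(y_{n+1},x|x')\,p_{\theta,\boldsymbol y}^{m:n}(x'|\lambda)\,\mu(dx'), \qquad D(\theta) = \int N(\theta,z)\,\mu(dz),
\end{align*}
which reduces the induction step to a single Leibniz expansion of a quotient.

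Before differentiating, the exchange of $\partial_{\theta}^{\boldsymbol\alpha}$ and $\int\cdot\,\mu(dx')$ must be justified at every stage. Assumption \ref{a1.2} gives $\bigl|\partial_{\theta}^{\boldsymbol\alpha}r_{\theta}(y,x|x')\bigr| \leq (\psi(y))^{|\boldsymbol\alpha|}\,r_{\theta}(y,x|x')$, and Assumption \ref{a1.2'} then majorizes this by the $\mu$-integrable (in $x$) function $(\psi(y))^{|\boldsymbol\alpha|}\phi(y,x)$, so the dominated convergence theorem applies. Assumption \ref{a1.1} (with $B={\cal X}$) gives $D(\theta) \geq \varepsilon\mu_{\theta}({\cal X}|y_{n+1}) > 0$, so $1/D(\theta)$ remains smooth in $\theta$.

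The base case $n=m+1$ is a direct application of the generalized Leibniz rule to the quotient $N/D$ with $p_{\theta,\boldsymbol y}^{m:n}$ replaced by $\lambda$: one checks that (\ref{1.703})--(\ref{1.1}) evaluated at $\Lambda = {\cal E}_{\lambda}$ (so $\lambda_{\boldsymbol 0} = \lambda$ and $\lambda_{\boldsymbol\beta} = 0$ for $|\boldsymbol\beta|\geq 1$) reproduces the answer. For the inductive step, assume (\ref{t1.1.1*}) holds at step $n$ for all multi-indices of order up to $p$ and set $\Lambda = F_{\theta,\boldsymbol y}^{m:n}({\cal E}_{\lambda}) \in {\cal L}_{0}({\cal X})$, whose $\boldsymbol\beta$-component has $\mu$-density $\partial_{\theta}^{\boldsymbol\beta}p_{\theta,\boldsymbol y}^{m:n}(\cdot|\lambda)$ by hypothesis. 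The multi-index Leibniz rule applied to $N$ then yields $\partial_{\theta}^{\boldsymbol\alpha}N(\theta,x)/D(\theta) = s_{\theta,y_{n+1}}^{\boldsymbol\alpha}(x|\Lambda)$ and $\partial_{\theta}^{\boldsymbol\alpha-\boldsymbol\beta}D(\theta)/D(\theta) = \int s_{\theta,y_{n+1}}^{\boldsymbol\alpha-\boldsymbol\beta}(x'|\Lambda)\,\mu(dx')$. Differentiating the identity $N = D\cdot p_{\theta,\boldsymbol y}^{m:n+1}(\cdot|\lambda)$ with $\partial_{\theta}^{\boldsymbol\alpha}$, isolating the $\boldsymbol\beta=\boldsymbol\alpha$ term, dividing by $D$, and invoking the inner induction on $|\boldsymbol\alpha|$ to substitute $\partial_{\theta}^{\boldsymbol\beta}p_{\theta,\boldsymbol y}^{m:n+1}(\cdot|\lambda) = f_{\theta,y_{n+1}}^{\boldsymbol\beta}(\cdot|\Lambda)$ for $\boldsymbol\beta < \boldsymbol\alpha$ reproduces exactly the recursion (\ref{1.1}) defining $f_{\theta,y_{n+1}}^{\boldsymbol\alpha}(x|\Lambda) = f_{\theta,\boldsymbol y}^{\boldsymbol\alpha,m:n+1}(x|{\cal E}_{\lambda})$, closing both inductions.

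The main difficulty is neither analytical (integrability and nonvanishing of $D$ are handled cleanly by Assumptions \ref{a1.1}--\ref{a1.2'}) nor structural (the two inductions nest unambiguously), but rather combinatorial: one must align the multi-index Leibniz expansions and the binomial coefficients $\left(\boldsymbol\alpha\atop\boldsymbol\beta\right)$ with the author's recursive definitions of $s^{\boldsymbol\alpha}$ and $f^{\boldsymbol\alpha}$ term-by-term, and verify that $F_{\theta,\boldsymbol y}^{m:n}({\cal E}_{\lambda})$ remains in ${\cal L}_{0}({\cal X})$ at each iteration so that the $s$- and $f$-recursions stay well-defined at the next step.
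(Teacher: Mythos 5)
Your overall plan — one-step Bayes update written as a quotient $N/D$, Leibniz expansion, and term-by-term identification with the recursion (\ref{1.1}) — parallels the identification step the paper performs (its Proposition \ref{proposition5.1} also differentiates the identity relating $p_{\theta,\boldsymbol y}^{0:n}$ to the normalizer and matches the result against (\ref{1.1})). However, there is a genuine gap at the analytic heart of your induction: the justification of exchanging $\partial_{\theta}^{\boldsymbol\alpha}$ with $\int\cdot\,\mu(dx')$ in $N(\theta,x)=\int r_{\theta}(y_{n+1},x|x')\,p_{\theta,\boldsymbol y}^{m:n}(x'|\lambda)\,\mu(dx')$. The dominator you invoke, $(\psi(y))^{|\boldsymbol\alpha|}\phi(y,x)$, controls only the kernel factor $\partial_{\theta}^{\boldsymbol\gamma}r_{\theta}(y_{n+1},x|x')$ (and is constant in the actual integration variable $x'$; note also that Assumption \ref{a1.2'} bounds $r_{\theta}(y,x'|x)$ by $\phi(y,x')$ in the \emph{new}-state argument, which here is the fixed $x$, not $x'$). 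The integrand is the product $r_{\theta}\cdot p_{\theta,\boldsymbol y}^{m:n}$, so its Leibniz expansion contains $\partial_{\theta}^{\boldsymbol\delta}p_{\theta,\boldsymbol y}^{m:n}(x'|\lambda)$, and for dominated convergence you need an integrable-in-$x'$ dominator for these derivatives that is uniform over a $\theta$-neighborhood. The assumptions give no continuity or uniformity in $\theta$ of $\mu_{\theta}(\cdot|y)$, and the natural pointwise bounds on $f_{\theta,\boldsymbol y}^{\boldsymbol\delta,m:n}$ obtained from (\ref{1.703})--(\ref{1.1}) carry factors $1/\big(\varepsilon\mu_{\theta}({\cal X}|y_k)\big)$, so local $\theta$-uniformity is not automatic; it can be recovered (e.g.\ by showing $\theta\mapsto\int r_{\theta}(y,x'|x_0)\mu(dx')$ is continuous via Assumption \ref{a1.2'} and hence locally bounded below), but this extra layer of estimates must be carried through your induction and is absent from the proposal as written.

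This is exactly the difficulty the paper's proof is structured to avoid: instead of recursing through the normalized one-step update, it writes $p_{\theta,\boldsymbol y}^{0:n}(x|\lambda)=v_{\theta,\boldsymbol y}^{n}(x|\lambda)/w_{\theta,\boldsymbol y}^{n}(\lambda)$ with $v,w$ integrals over ${\cal X}^{n+1}$ of the product $\prod_{k}r_{\theta}(y_k,x_k|x_{k-1})$, for which Assumptions \ref{a1.2} and \ref{a1.2'} furnish a $\theta$-free integrable dominator $2^{|\boldsymbol\alpha|}\big(\prod_k\psi(y_k)\big)^{|\boldsymbol\alpha|}\prod_k\phi(y_k,x_k)$; the division by the (positive, already differentiable) normalizer is then done after integration via the appendix Lemma \ref{lemmaa3}, where no domination is needed. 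Only afterwards does the paper derive the recursion (p5.1.55) and match it with (\ref{1.1}) — the step your proposal reproduces. So either adopt the unnormalized path-space representation for the existence/differentiation-under-the-integral part, or strengthen your induction hypothesis to include locally-$\theta$-uniform pointwise bounds on $\partial_{\theta}^{\boldsymbol\delta}p_{\theta,\boldsymbol y}^{m:n}(\cdot|\lambda)$ of the form $C\,\phi(y_n,\cdot)$ with $C$ shown locally bounded in $\theta$; as it stands, the key exchange of derivative and integral is not justified.
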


\begin{theorem}[Forgetting]\label{theorem1.2}
Let Assumptions \ref{a1.1} and \ref{a1.2} hold.
Then, there exist real numbers $\tau\in(0,1)$, $K\in[1,\infty)$
(depending only on $p$, $\varepsilon$) such that
\begin{align}
	&\label{t1.2.1*}
	\|F_{\theta,\boldsymbol y}^{m:n}(\Lambda)\|
	\leq
	K \|\Lambda \|^{p}
	\left(\sum_{k=m+1}^{n} \psi(y_{k} ) \right)^{p},
	\\
	&\label{t1.2.3*}
	\|F_{\theta,\boldsymbol y}^{m:n}(\Lambda )
	-
	F_{\theta,\boldsymbol y}^{m:n}(\Lambda' )\|
	\leq
	K \tau^{n-m}
	\|\Lambda-\Lambda' \| (\|\Lambda \| + \|\Lambda' \| )^{p}
	\left(\sum_{k=m+1}^{n} \psi(y_{k} ) \right)^{p}
\end{align}
for all $\theta\in\Theta$,
$\Lambda, \Lambda'\in{\cal L}_{0}({\cal X} )$, $n\geq m\geq 0$
and any sequence $\boldsymbol y = \{y_{n} \}_{n\geq 1}$ in ${\cal Y}$.
\end{theorem}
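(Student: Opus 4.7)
The plan is to establish (\ref{t1.2.1*}) by iterating a one-step growth estimate for $F_{\theta,y}$, and to derive (\ref{t1.2.3*}) by combining the exponential contraction of the $\boldsymbol 0$-component (the optimal filter itself, which is provided by Proposition \ref{proposition1.2} under Assumption \ref{a1.1}) with an induction on $k=|\boldsymbol\alpha|$.

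The one-step estimates come from the building blocks in (\ref{1.703})--(\ref{1.1}). Assumption \ref{a1.2} yields $|r_{\theta,y}^{\boldsymbol\alpha}(x|\lambda)|\leq (\psi(y))^{|\boldsymbol\alpha|}\int r_{\theta}(y,x|x')\,|\lambda|(dx')$ for any signed measure $\lambda$, while Assumption \ref{a1.1} yields $\varepsilon\mu_{\theta}({\cal X}|y)\leq\int r_{\theta,y}^{\boldsymbol 0}(x|\lambda_{\boldsymbol 0})\mu(dx)\leq\mu_{\theta}({\cal X}|y)/\varepsilon$ whenever $\lambda_{\boldsymbol 0}\in{\cal P}({\cal X})$. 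Combining these gives $\|S_{\theta,y}^{\boldsymbol\alpha}(\Lambda)\|\leq c_{1}(\psi(y))^{|\boldsymbol\alpha|}\|\Lambda\|$ with $c_{1}=c_{1}(\varepsilon)$. Inducting on $k$ in the recursion (\ref{1.1}) then produces the one-step bound $\|F_{\theta,y}(\Lambda)\|\leq c_{2}\|\Lambda\|^{p}(1+\psi(y))^{p}$; iterating (\ref{1.705}) and tracking the accumulated constants across $n-m$ steps yields (\ref{t1.2.1*}).

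For (\ref{t1.2.3*}) I would induct on $k$. The base case $k=0$ is Proposition \ref{proposition1.2}, since the $\boldsymbol 0$-component of $F_{\theta,\boldsymbol y}^{m:n}(\Lambda)$ is precisely the optimal filter $P_{\theta,\boldsymbol y}^{m:n}(\lambda_{\boldsymbol 0})$. For $k\geq 1$ I would split $[m,n]$ at $\ell=\lfloor(m+n)/2\rfloor$: over $[m,\ell]$ both trajectories are controlled by the growth bound proved in the previous step, while over $[\ell,n]$ I expand the difference componentwise. Subtracting the two instances of the recursion (\ref{1.1}) represents $f_{\theta,y_{n}}^{\boldsymbol\alpha}(x|F^{\ell:n-1}(\Lambda))-f_{\theta,y_{n}}^{\boldsymbol\alpha}(x|F^{\ell:n-1}(\Lambda'))$ as a finite linear combination in which each summand contains exactly one difference factor of the form $\xi_{\boldsymbol\beta}-\xi'_{\boldsymbol\beta}$ with $\boldsymbol\beta<\boldsymbol\alpha$ (handled by the inductive hypothesis) or a filter-normalization difference (handled by the base case), multiplied by $\Lambda$- and $\Lambda'$-dependent factors already bounded polynomially in the first step.

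The main obstacle is the nonlinear coupling introduced by the normalizations in (\ref{1.703}) and (\ref{1.1}): subtracting two ratios $A/D$ and $A'/D'$ produces the cross term $A(D'-D)/(DD')$ that mixes higher-order variations with the $\boldsymbol 0$-component variation. A careful telescoping is needed so that every resulting term carries exactly one difference factor, yielding the linear dependence on $\|\Lambda-\Lambda'\|$ in (\ref{t1.2.3*}), while the remaining factors combine into $(\|\Lambda\|+\|\Lambda'\|)^{p}\big(\sum_{k=m+1}^{n}\psi(y_{k})\big)^{p}$. Each term then inherits a rate of at least $\tau^{\lfloor(n-m)/2\rfloor}$ from the base case, and replacing $\tau$ by $\sqrt{\tau}$ while enlarging $K$ absorbs the square-rooted rate to give the claimed bound.
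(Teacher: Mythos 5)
There is a genuine gap, and it appears already in your treatment of the growth bound (\ref{t1.2.1*}). Iterating a one-step estimate of the form $\|F_{\theta,y}(\Lambda)\|\leq c_{2}\|\Lambda\|^{p}(1+\psi(y))^{p}$ cannot produce the theorem: after $n-m$ steps the exponent of $\|\Lambda\|$ compounds to $p^{\,n-m}$, the $\psi$-factors acquire geometrically growing powers, and the constant becomes $c_{2}^{1+p+\cdots+p^{\,n-m-1}}$, whereas (\ref{t1.2.1*}) asserts a bound with the fixed exponent $p$ and a constant uniform in $n-m$. In the paper even the growth bound requires the stability mechanism: Lemma \ref{lemma1.12} writes $F_{\theta,\boldsymbol y}^{\boldsymbol\alpha,m:n}(\Lambda)$ as a Duhamel-type sum $V_{\theta,\boldsymbol y}^{\boldsymbol\alpha,m:n}(\Lambda)+\sum_{k=m+1}^{n}G_{\theta,\boldsymbol y}^{k:n}\bigl(F^{\boldsymbol 0,m:k},W^{\boldsymbol\alpha,m:k}\bigr)$, where $G_{\theta,y}(\lambda,\tilde\lambda)$ is the linearization of the normalized Bayes map in its measure argument, and the exponential decay $\|G_{\theta,\boldsymbol y}^{k:n}(\lambda,\tilde\lambda)\|\leq C_{3}\tau^{2(n-k)}\|\tilde\lambda\|$ (Proposition \ref{proposition1.1}) turns the sum over $k$ into a convergent geometric series; the bookkeeping quantities $M_{\boldsymbol\alpha,\boldsymbol y}^{m:n}(\Lambda)=(M_{\boldsymbol\alpha}(\Lambda)\Psi_{\boldsymbol y}^{m:n})^{|\boldsymbol\alpha|}$ keep the powers tied to $|\boldsymbol\alpha|$ rather than compounding. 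Your sketch contains no device that prevents this compounding, so the first half of the argument fails as stated.

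The same missing ingredient undermines the inductive step for (\ref{t1.2.3*}). Your only source of exponential decay is Proposition \ref{proposition1.2}, i.e.\ forgetting of the $\boldsymbol 0$-component; but the $\boldsymbol\alpha$-components are signed measures, and when you subtract the two recursions the dominant term is the \emph{linear propagation of the $\boldsymbol\alpha$-component difference itself}, which is controlled neither by the base case nor by the induction hypothesis on lower orders. What supplies its decay in the paper is again Proposition \ref{proposition1.1} (exponential stability of the tangent map $G$ on signed measures, imported from the authors' earlier work), combined with Lemma \ref{lemma1.12}: each Duhamel term carries $\tau^{2(n-k)}$ from the propagator and $\tau^{2(k-m)}$ from the induction hypothesis applied to the forcing term $W^{\boldsymbol\alpha,m:k}$, so the full factor $\tau^{2(n-m)}$ appears without any midpoint splitting, and the polynomial factors $(n-m)^{p}$ are absorbed by relaxing $\tau^{2}$ to $\tau$ at the very end. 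Your ``one difference factor per term'' telescoping is the right spirit for handling the normalization ratios (this is essentially Lemmas \ref{lemma1.2} and \ref{lemma1.4} and the $T$, $S$, $H$ estimates inside Proposition \ref{proposition1.3}), but without identifying the linearization $G$ and its contraction property, and without proving the growth and forgetting bounds simultaneously by induction on $|\boldsymbol\alpha|$ (they feed into each other through the products $F^{\boldsymbol\beta}\langle S^{\boldsymbol\alpha-\boldsymbol\beta}\rangle$), the proposed argument does not close.
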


Theorems \ref{theorem1.1} and \ref{theorem1.2} are proved in Sections \ref{section2*}
and \ref{section1.1*}, respectively.
According to Theorem \ref{theorem1.1},
the filtering density
$p_{\theta,\boldsymbol y}^{m:n}(x|\lambda )$ and the filtering distribution
$P_{\theta,\boldsymbol y}^{m:n}(dx|\lambda)$
are $p$-times differentiable in $\theta$.
The same theorem also shows how their higher-order derivatives
can be computed recursively using
mappings $f_{\theta,y}^{\boldsymbol\alpha}(x|\Lambda)$,
$F_{\theta,y}^{\boldsymbol\alpha}(\Lambda )$.
According to Theorem \ref{theorem1.2},
the filtering distribution
and its higher-order derivatives $F_{\theta,\boldsymbol y}^{m:n}(\Lambda)$
forget their initial conditions exponentially fast.

In the rest of the section, we study the ergodicity properties of the optimal filter
higher-order derivatives.
To do so, we use the following notation.
${\cal Z}$ is the set defined by
${\cal Z} = {\cal X}\times{\cal Y}\times{\cal L}_{0}({\cal X} )$.
$\Phi_{\theta}(x,y,\Lambda)$ is a function which maps
$\theta\in\Theta$, $x\in{\cal X}$, $y\in{\cal Y}$, $\Lambda\in{\cal L}_{0}({\cal X} )$
to $\mathbb{R}$.
$\Phi_{\theta}(z)$ is another notation for $\Phi_{\theta}(x,y,\Lambda)$,
i.e., $\Phi_{\theta}(z)=\Phi_{\theta}(x,y,\Lambda)$ for
$z=(x,y,\Lambda)$.
$\big\{ Z_{n}^{\theta,\Lambda} \big\}_{n\geq 0}$ and
$\big\{ \tilde{Z}_{n}^{\theta,\Lambda} \big\}_{n\geq 0}$ are stochastic processes defined by
\begin{align*}
	Z_{n}^{\theta,\Lambda }
	=
	\left(X_{n}, Y_{n}, F_{\theta, \boldsymbol Y}^{0:n}(\Lambda) \right),
	\;\;\;\;\;
	\tilde{Z}_{n}^{\theta,\Lambda }
	=
	\left(X_{n+1}, Y_{n+1}, F_{\theta, \boldsymbol Y}^{0:n}(\Lambda) \right).
\end{align*}
for $n\geq 0$,
where $\boldsymbol Y= \{Y_{n} \}_{n\geq 1}$.
$\Pi_{\theta}(z,dz')$ and $\tilde{\Pi}_{\theta}(z,dz')$ are the kernels on ${\cal Z}$
defined by
\begin{align*}
	\Pi_{\theta}(z,B)
	=&
	\int\int I_{B}(x',y',F_{\theta,y'}(\Lambda) )
	Q(x',dy') P(x,dx'),
	\\
	\tilde{\Pi}_{\theta}(z,B)
	=&
	\int\int I_{B}(x',y',F_{\theta,y}(\Lambda) )
	Q(x',dy') P(x,dx')
\end{align*}
for $B\in{\cal B}({\cal Z} )$ and $z=(x,y,\Lambda)$.
Then, it is easy to show that
$\{ Z_{n}^{\theta,\Lambda } \}_{n\geq 0}$ and
$\{ \tilde{Z}_{n}^{\theta,\Lambda } \}_{n\geq 0}$
are homogeneous Markov processes
whose transition kernels are
$\Pi_{\theta}(z,dz')$ and $\tilde{\Pi}_{\theta}(z,dz')$, 
respectively.

To analyze the ergodicity properties of
$\{ Z_{n}^{\theta,\Lambda } \}_{n\geq 0}$ and
$\{ \tilde{Z}_{n}^{\theta,\Lambda } \}_{n\geq 0}$,
we rely on following assumptions.

\begin{assumption} \label{a1.3}
There exist a probability measure $\pi(dx)$ on ${\cal X}$
and real numbers $\delta\in(0,1)$, $K_{0}\in[1,\infty )$
such that
\begin{align*}
	|P^{n}(x,B) - \pi(B) |
	\leq
	K_{0}\delta^{n}
\end{align*}
for all $x\in{\cal X}$, $B\in{\cal B}({\cal X} )$, $n\geq 0$.
\end{assumption}

\begin{assumption}\label{a1.4}
There exit a function $\varphi:{\cal X} \times {\cal Y} \rightarrow [1,\infty )$
and a real number $q\in[0,\infty )$ such that
\begin{align*}
	&
	|\Phi_{\theta}(x,y,\Lambda ) |
	\leq
	\varphi(x,y) \|\Lambda \|^{q},
	\\
	&
	|\Phi_{\theta}(x,y,\Lambda) - \Phi_{\theta}(x,y,\Lambda') |
	\leq
	\varphi(x,y) \|\Lambda - \Lambda' \| (\|\Lambda \| + \|\Lambda' \| )^{q}
\end{align*}
for all $\theta\in\Theta$,
$x\in{\cal X}$, $y\in{\cal Y}$,
$\Lambda,\Lambda'\in{\cal L}_{0}({\cal X} )$.
\end{assumption}

\begin{assumption}\label{a1.5}
There exists a real number $L_{0}\in[1,\infty )$ such that
\begin{align}\label{a1.4.1*}
	\int \varphi(x,y) \psi^{r}(y) Q(x,dy) \leq L_{0}
\end{align}
for all $x\in{\cal X}$, where $r=p(p+q+1)$.
\end{assumption}

Assumption \ref{a1.3} ensures that the Markov process $\{ (X_{n}, Y_{n} ) \}_{n\geq 0}$
is geometrically ergodic (for further details, see e.g., \cite{meyn&tweedie}).
Assumption \ref{a1.4} is related to function $\Phi_{\theta}(x,y,\Lambda)$
and its analytical properties.
It requires $\Phi_{\theta}(x,y,\Lambda)$ to be locally Lipschitz continuous in $\Lambda$
and to grow at most polynomially in the same argument.
Assumption \ref{a1.5} corresponds to the conditional mean of
$\varphi(X_{n}, Y_{n} ) \psi^{r}(Y_{n} )$ given $X_{n}=x$.\footnote
{Assumption \ref{a1.5} holds under the following conditions:
(i) ${\cal X}$ is compact, (ii) $\varphi(x,y)$ is continuous in $(x,y)$
and polynomial in $y$,
(iii) $\psi(y)$ is polynomial and
(iv) $q_{\theta}(y|x)$ is Gaussian in $y$ and
continuous in $(\theta,x,y)$.
}
In this or a similar form,
Assumptions \ref{a1.3} -- \ref{a1.5}
are involved in many results on the stability of the optimal filter
and the asymptotic properties of maximum likelihood estimation
in state-space and hidden Markov models
(see e.g. \cite{atar&zeitouni}, \cite{delmoral&guionnet},
\cite{legland&mevel}, \cite{legland&oudjane}, \cite{tadic2}, \cite{tadic&doucet2};
see also \cite{cappe&moulines&ryden}, \cite{crisan&rozovskii}
and references cited therein).

Our results on the ergodicity of $\{ Z_{n}^{\theta,\Lambda } \}_{n\geq 0}$ and
$\{ \tilde{Z}_{n}^{\theta,\Lambda } \}_{n\geq 0}$
are presented in the next theorem.

\begin{theorem}[Ergodicity]\label{theorem1.3}
Let Assumptions \ref{a1.1} -- \ref{a1.5} hold.
Moreover, let $s=p(q+1)$.
Then, there exist functions $\phi_{\theta}$, $\tilde{\phi}_{\theta}$
mapping $\theta\in\Theta$ to $\mathbb{R}$
such that
\begin{align*}
	\phi_{\theta}
	=
	\lim_{n\rightarrow\infty }
	(\Pi^{n}\Phi)_{\theta}(z),
	\;\;\;\;\;
	\tilde{\phi}_{\theta}
	=
	\lim_{n\rightarrow\infty }
	(\tilde{\Pi}^{n}\Phi)_{\theta}(z)
\end{align*}
for all $\theta\in\Theta$, $z\in{\cal Z}$.
There also exist real numbers $\rho\in(0,1)$, $L\in[1,\infty )$
(depending only on $\varepsilon$, $\delta$, $p$, $q$, $K_{0}$, $L_{0}$)
such that
\begin{align*}
	&
	|(\Pi^{n}\Phi)_{\theta}(z) - \phi_{\theta} |
	\leq
	L\rho^{n} \|\Lambda \|^{s},
	\;\;\;\;\;
	|(\tilde{\Pi}^{n}\Phi)_{\theta}(z) - \tilde{\phi}_{\theta} |
	\leq
	L\rho^{n} \psi^{r}(y) \|\Lambda \|^{s}
\end{align*}
for all $\theta\in\Theta$, $x\in{\cal X}$, $y\in{\cal Y}$,
$\Lambda\in{\cal L}_{0}({\cal X})$, $n\geq 1$
and $z=(x,y,\Lambda)$.
Here $(\Pi^{n}\Phi)_{\theta}(z)$ and $(\tilde{\Pi}^{n}\Phi)_{\theta}(z)$ are the functions defined by
\begin{align*}
	(\Pi^{n}\Phi)_{\theta}(z) = \int \Phi_{\theta}(z') \Pi_{\theta}^{n}(z,dz'),
	\;\;\;\;\;
	(\tilde{\Pi}^{n}\Phi)_{\theta}(z) = \int \Phi_{\theta}(z') \tilde{\Pi}_{\theta}^{n}(z,dz').
\end{align*}
\end{theorem}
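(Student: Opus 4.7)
The plan is to combine the geometric forgetting of the filter derivatives (Theorem~\ref{theorem1.2}) with the geometric ergodicity of $\{(X_{n},Y_{n})\}$ supplied by Assumption~\ref{a1.3}, while Assumptions~\ref{a1.4} and~\ref{a1.5} yield the moment control needed to pass from pointwise to integrated bounds. The primary goal is to show that $\{(\Pi^{n}\Phi)_{\theta}(z)\}_{n}$ is Cauchy for every fixed $z$ and that its limit is independent of $z$; the $\tilde{\Pi}$ case then follows with one additional boundary step that produces the extra $\psi^{r}(y)$ factor.

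I would first derive two pointwise inequalities valid for all starting states:
\[
\mathbb{E}\bigl[|\Phi_{\theta}(Z_{n}^{\theta,\Lambda})|\mid Z_{0}^{\theta,\Lambda}=z\bigr]\le C\,\|\Lambda\|^{s},
\qquad
|(\Pi^{n}\Phi)_{\theta}(x,y,\Lambda)-(\Pi^{n}\Phi)_{\theta}(x,y,\Lambda')|\le C\rho^{n}\|\Lambda-\Lambda'\|(\|\Lambda\|+\|\Lambda'\|)^{s}.
\]
Both are obtained by plugging Theorem~\ref{theorem1.2} (growth and Lipschitz parts) into the corresponding parts of Assumption~\ref{a1.4}; the resulting expressions involve $\mathbb{E}[\varphi(X_{n},Y_{n})(\sum_{k=1}^{n}\psi(Y_{k}))^{a}]$ for various $a\le r$, which by the Jensen inequality $(\sum_{k}a_{k})^{m}\le n^{m-1}\sum_{k}a_{k}^{m}$ together with Assumption~\ref{a1.5} are bounded by polynomials in $n$ that are absorbed into a slightly larger $\rho\in(0,1)$. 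A useful simplification throughout is that $\Pi_{\theta}(z,\cdot)$ ignores the $y$-coordinate of $z$, so for $n\ge 1$ the quantity $(\Pi^{n}\Phi)_{\theta}(x,y,\Lambda)$ depends only on $(x,\Lambda)$.

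Next I would control dependence on $x$ by a Dobrushin-type coupling: by Assumption~\ref{a1.3}, the chains $\{(X_{k},Y_{k})\}$ under initial states $x$ and $x'$ can be coupled to agree from time $\lfloor n/2\rfloor$ onward with failure probability at most $C\delta^{n/2}$. On the coupled event the filters in the two copies share a common post-coupling tail, so Theorem~\ref{theorem1.2} applied to that common half contributes a factor $\tau^{n/2}$; on the uncoupled event the growth bound from the previous paragraph plus Cauchy--Schwarz (paid for by Assumption~\ref{a1.5}) gives $\delta^{n/4}\|\Lambda\|^{s}$. Combining produces $|(\Pi^{n}\Phi)_{\theta}(x,\Lambda)-(\Pi^{n}\Phi)_{\theta}(x',\Lambda)|\le C\rho^{n}\|\Lambda\|^{s}$. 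The Markov identity $(\Pi^{n+m}\Phi)_{\theta}(z)-(\Pi^{n}\Phi)_{\theta}(z)=\int[(\Pi^{n}\Phi)_{\theta}(z')-(\Pi^{n}\Phi)_{\theta}(z)]\,\Pi_{\theta}^{m}(z,dz')$ then lets these two contractions dominate the integrand; integrating via Theorem~\ref{theorem1.2} and Assumption~\ref{a1.5} yields $|(\Pi^{n+m}\Phi)_{\theta}(z)-(\Pi^{n}\Phi)_{\theta}(z)|\le C\rho^{n}\|\Lambda\|^{s}$ uniformly in $m$, proving $\{(\Pi^{n}\Phi)_{\theta}(z)\}_{n}$ is Cauchy. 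The limit $\phi_{\theta}(z)$ therefore exists, and the two contractions applied at the limit show $\phi_{\theta}(z)=\phi_{\theta}(z')$, so $\phi_{\theta}$ depends only on $\theta$.

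The tilded version reduces to the previous one once I observe that one application of $\tilde{\Pi}_{\theta}$ at $z=(x,y,\Lambda)$ replaces $\Lambda$ by $F_{\theta,y}(\Lambda)$, whose norm is at most $K\|\Lambda\|^{p}\psi(y)^{p}$ by Theorem~\ref{theorem1.2}; thereafter the dynamics coincide with $\Pi_{\theta}$. Substituting this inflated initial condition into the $\Pi$-bound and using the elementary inequality $(\psi(y)+\sum_{k}\psi(Y_{k}))^{m}\le 2^{m}(\psi(y)^{m}+(\sum_{k}\psi(Y_{k}))^{m})$ produces the announced factor $\psi^{r}(y)\|\Lambda\|^{s}$. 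The main obstacle is the exponent bookkeeping: every application of Assumption~\ref{a1.5} must encounter a power of $\psi$ no larger than $r=p(p+q+1)$, so each $(\sum\psi)^{a}$ and each $\psi(y)^{b}$ encountered in the argument must be individually checked; this pins down the specific choice $r=p(p+q+1)$ (accommodating $p$ factors from one filter-forgetting step, $pq$ from the polynomial growth in Assumption~\ref{a1.4}, and $p^{2}$ from the interaction of the two) and the matching exponent $s=p(q+1)$ on $\|\Lambda\|$.
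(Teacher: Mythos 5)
Your $\Pi$-part skeleton (filter forgetting from Theorem~\ref{theorem1.2} plus geometric ergodicity from Assumption~\ref{a1.3}, with moments from Assumptions~\ref{a1.4}--\ref{a1.5}) is the right kind of argument, and it genuinely differs from the paper, which never couples: the paper telescopes over the filter-initialization time, so each increment $E[G^{k:n}_{\theta,\boldsymbol X,\boldsymbol Y}(\mathcal{E}_{\lambda})-G^{k+1:n}_{\theta,\boldsymbol X,\boldsymbol Y}(\mathcal{E}_{\lambda})\mid X_{k}]$ is a \emph{uniformly bounded} function of $X_{k}$, which is then integrated against $(P^{k}-\pi)(x,\cdot)$; no expectation ever has to be restricted to a rare event. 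That structural choice is exactly what your coupling route has to replace, and the replacement as you state it fails: on the uncoupled event you invoke Cauchy--Schwarz ``paid for by Assumption~\ref{a1.5}'', but Cauchy--Schwarz needs second moments of quantities like $\varphi(X_{n},Y_{n})\bigl(\sum_{k}\psi(Y_{k})\bigr)^{a}$, i.e.\ conditional moments of $\varphi^{2}\psi^{2r}$-type, whereas Assumption~\ref{a1.5} only gives first moments of $\varphi\psi^{r}$, with no exponent to spare in the bookkeeping ($a$ already reaches $r$). The step is repairable -- the uncoupled event is measurable with respect to the state variables and the coupling randomness, and given the state path the observations are conditionally independent with $\psi$-moments up to order $r$ uniformly bounded, so one can condition on the $X$-paths instead of using Cauchy--Schwarz -- but that is a different argument from the one you wrote. (Also, your first displayed growth bound cannot be uniform in $n$ as stated; it carries a polynomial factor in $n$, harmless only because it is always multiplied by a geometric term.)

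The $\tilde{\Pi}$ reduction has a more serious defect. After one application of $\tilde{\Pi}_{\theta}$ the dynamics do \emph{not} coincide with $\Pi_{\theta}$: under $\tilde{\Pi}^{n}$ the function $\Phi_{\theta}$ is evaluated at $(X_{n+1},Y_{n+1})$ while the filter component only uses $Y_{1:n}$, so the correct identity is $(\tilde{\Pi}^{n}\Phi)_{\theta}\approx(\Pi^{n-1}\tilde{\Phi})_{\theta}$ with $\tilde{\Phi}_{\theta}(x,y,\Lambda)=\int\!\!\int\Phi_{\theta}(x',y',\Lambda)Q(x',dy')P(x,dx')$ (the paper's (\ref{5.101})), and one must check that $\tilde{\Phi}_{\theta}/L_{0}$ again satisfies Assumption~\ref{a1.4}; feeding the inflated initial condition into the bound for $\Phi$ itself would wrongly identify $\tilde{\phi}_{\theta}$ with $\phi_{\theta}$. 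Moreover, even granting the reduction, substituting $\|F_{\theta,y}(\Lambda)\|\le K\|\Lambda\|^{p}\psi^{p}(y)$ into the $\Pi$-bound $L\rho^{n}\|\cdot\|^{s}$ yields $\|\Lambda\|^{ps}\psi^{ps}(y)$ with $ps=p^{2}(q+1)$, which exceeds the asserted exponent $s=p(q+1)$ on $\|\Lambda\|$ as soon as $p\ge2$ (and generally exceeds $r=p(p+q+1)$ on $\psi(y)$ as well), so this route cannot deliver the stated estimate. The paper avoids the inflation altogether: it compares the $\Lambda$-initialized and $\mathcal{E}_{\lambda}$-initialized processes via the forgetting bound (the terms $C_{\theta}^{n}$ and $D_{\theta}^{n}(\Lambda)-D_{\theta}^{n}(\mathcal{E}_{\lambda})$ of Lemma~\ref{lemma2.2}(ii)), which is what keeps the exponents at $\|\Lambda\|^{s}$ and $\psi^{r}(y)$; your proof needs an analogous comparison step rather than an inflated restart.
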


Theorem \ref{theorem1.3} is proved in Section \ref{section1.2*}.
According to this theorem,
Markov processes $\big\{ Z_{n}^{\theta,\Lambda} \big\}_{n\geq 0}$ and
$\big\{ \tilde{Z}_{n}^{\theta,\Lambda} \big\}_{n\geq 0}$
are geometrically ergodic.
As $F_{\theta,Y}^{0:n}(\Lambda)$ is a component of $Z_{n}^{\theta,\Lambda}$ and
$\tilde{Z}_{n}^{\theta,\Lambda}$,
the optimal filter and its higher-order derivatives
are geometrically ergodic, too.

The optimal filter and its properties have extensively been studied
in the literature.
However, to the best of our knowledge,
the existing results do not provide any information about
the existence and stability of
the optimal filter higher-order derivatives.
Theorems \ref{theorem1.1} -- \ref{theorem1.3} fill this gap
in the literature on optimal filtering.
More specifically, these theorems extend the existing results on
the optimal filter first-order derivatives
(in particular those of \cite{douc&matias}, \cite{legland&mevel} and \cite{tadic&doucet1})
to the higher-order derivatives.
In Section \ref{section2},
we use Theorems \ref{theorem1.1} -- \ref{theorem1.3} to study
the analytical properties of the log-likelihood rate for state-space models.
Moreover, in \cite{tadic&doucet2}, we use the same theorems
to analyze the asymptotic behavior of recursive maximum likelihood estimation in
state-space models.

\section{Analytical Properties of Log-Likelihood Rate} \label{section2}

In this section, the results presented in Section \ref{section1} are used to study
the higher-order differentiability of the log-likelihood rate for state-space models.
In addition to the notation specified in Section \ref{section1},
the following notation is used here, too.
Let $q_{\theta}^{n}(y_{1:n}|\lambda)$ be the function defined by
\begin{align}\label{1.915}
	q_{\theta}^{n}(y_{1:n}|\lambda )
	=
	\int\cdots\int\int
	\left(\prod_{k=1}^{n} r_{\theta}(y_{k},x_{k}|x_{k-1} ) \right)
	\mu(dx_{n} ) \cdots \mu(dx_{1} ) \lambda(dx_{0} )
\end{align}
for $\theta\in\Theta$, $y_{1},\dots,y_{n}\in{\cal Y}$, $\lambda\in{\cal P}({\cal X} )$, $n\geq 1$.
Then, the average log-likelihood for state-space model
$\{ (X_{n}, Y_{n} ) \}_{n\geq 0}$ is defined as
\begin{align*}
	l_{n}(\theta,\lambda )
	=
	E\left(\frac{1}{n} \log q_{\theta}^{n}(Y_{1:n}|\lambda ) \right),
\end{align*}
while the corresponding likelihood rate is the limit
$\lim_{n\rightarrow\infty}l_{n}(\theta,\lambda)$.
To analyze the analytical and asymptotic properties of $l_{n}(\theta,\lambda )$,
we rely on the following assumptions.

\begin{assumption} \label{a2.1}
There exists a function $\varphi:{\cal Y}\rightarrow [1,\infty )$ such that
\begin{align*}
	\left|
	\log\mu_{\theta}({\cal X}|y)
	\right|
	\leq
	\varphi(y)
\end{align*}
for all $\theta\in\Theta$, $y\in{\cal Y}$, 
where $\mu_{\theta}(dx|y)$ is specified in Assumption \ref{a1.1}.
\end{assumption}

\begin{assumption}\label{a2.2}
There exists a real number $M_{0}\in[1,\infty )$ such that
\begin{align*}
	\int \varphi(y) \psi^{u}(y) Q(x,dy)
	\leq M_{0},
	\;\;\;\;\;
	\int \psi^{v}(y) Q(x,dy)
	\leq
	M_{0}
\end{align*}
for all $x\in{\cal X}$, where $u=p(p+1)$, $v=2p(p+1)$ and $\psi(y)$ is specified in Assumption \ref{a1.2}.
\end{assumption}

Assumptions \ref{a2.1} and \ref{a2.2} are related to the conditional measure $\mu_{\theta}(dx|y)$
and its properties.
In this or similar form, these assumptions are involved in
a number of result on the asymptotic properties of maximum likelihood estimation
in state-space and hidden Markov models
(see \cite{bickel&ritov&ryden}, \cite{douc&matias}, \cite{douc&moulines&ryden},
\cite{ryden}, \cite{tadic2};
see also \cite{cappe&moulines&ryden} and references cited therein).

Our results on the higher-order differentiability of log-likelihood rate for
state-space models are provided in the next theorem.

\begin{theorem}\label{theorem2.1}
Let Assumptions \ref{a1.1} -- \ref{a1.3}, \ref{a2.1} and \ref{a2.2} hold.
Then, there exists a function $l:\Theta\rightarrow\mathbb{R}$
which is $p$-times differentiable on $\Theta$ and satisfies
$l(\theta )	= \lim_{n\rightarrow\infty }l_{n}(\theta, \lambda )$
for all $\theta\in\Theta$, $\lambda\in{\cal P}({\cal X} )$.
\end{theorem}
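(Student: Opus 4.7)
The plan is to reduce Theorem~\ref{theorem2.1} to an application of Theorem~\ref{theorem1.3} via the classical telescoping decomposition of the log-likelihood. First, I would write
\[
\log q_{\theta}^{n}(Y_{1:n}|\lambda) = \sum_{k=1}^{n} \log u_{\theta}^{k}, \qquad u_{\theta}^{k} := \frac{q_{\theta}^{k}(Y_{1:k}|\lambda)}{q_{\theta}^{k-1}(Y_{1:k-1}|\lambda)} = \int\int r_{\theta}(Y_{k}, x|x') p_{\theta,\boldsymbol Y}^{0:k-1}(x'|\lambda)\mu(dx)\mu(dx'),
\]
with the conventions $q_{\theta}^{0} \equiv 1$ and $p_{\theta,\boldsymbol Y}^{0:0}(x'|\lambda)\mu(dx') = \lambda(dx')$. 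Setting
\[
\Phi_{\theta}(x,y,\Lambda) := \log \int\int r_{\theta}(y, x''|x''') \lambda_{\boldsymbol 0}(dx''') \mu(dx''),
\]
and using that $({\cal E}_{\lambda})_{\boldsymbol 0} = \lambda$ together with the identification of the $\boldsymbol 0$-component of $F_{\theta,\boldsymbol Y}^{0:k-1}({\cal E}_{\lambda})$ with $P_{\theta,\boldsymbol Y}^{0:k-1}(\lambda)$ (Theorem~\ref{theorem1.1} with $\boldsymbol\alpha = \boldsymbol 0$), one obtains $\log u_{\theta}^{k} = \Phi_{\theta}(\tilde{Z}_{k-1}^{\theta,{\cal E}_{\lambda}})$ for every $k \geq 1$; thus $(1/n)\log q_{\theta}^{n}$ is a Cesaro average of a test function along the Markov chain $\{\tilde{Z}_{k}^{\theta,{\cal E}_{\lambda}}\}_{k\geq 0}$.

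Next, for each multi-index $\boldsymbol\alpha$ with $|\boldsymbol\alpha|\leq p$, I would apply Fa\`a di Bruno's formula for $\log$ together with Leibniz's rule to express $\partial_{\theta}^{\boldsymbol\alpha}\log u_{\theta}^{k}$ as a rational quantity whose numerator is a sum of products of terms of the form
\[
B_{\theta}^{\boldsymbol\beta}(y,\Lambda) := \sum_{\boldsymbol\gamma \leq \boldsymbol\beta} \left( \boldsymbol\beta \atop \boldsymbol\gamma \right) \int\int \partial_{\theta}^{\boldsymbol\gamma} r_{\theta}(y, x|x') \lambda_{\boldsymbol\beta - \boldsymbol\gamma}(dx') \mu(dx),
\]
and whose denominator is a positive power of $B_{\theta}^{\boldsymbol 0}(y,\Lambda)$. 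Since by Theorem~\ref{theorem1.1} the component $\lambda_{\boldsymbol\delta}$ of $\Lambda = F_{\theta,\boldsymbol Y}^{0:k-1}({\cal E}_{\lambda})$ equals $\partial_{\theta}^{\boldsymbol\delta} P_{\theta,\boldsymbol Y}^{0:k-1}(\lambda)$, this defines a function $\Phi_{\theta}^{\boldsymbol\alpha}(x,y,\Lambda)$ (trivial in $x$) satisfying $\partial_{\theta}^{\boldsymbol\alpha}\log u_{\theta}^{k} = \Phi_{\theta}^{\boldsymbol\alpha}(\tilde{Z}_{k-1}^{\theta,{\cal E}_{\lambda}})$.

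The technical crux is verifying Assumptions~\ref{a1.4} and \ref{a1.5} for each $\Phi_{\theta}^{\boldsymbol\alpha}$. Assumption~\ref{a1.1} bounds $B_{\theta}^{\boldsymbol 0}$ above and below by constant multiples of $\mu_{\theta}({\cal X}|y)$ (as $\lambda_{\boldsymbol 0}$ is a probability measure), and Assumption~\ref{a1.2} bounds $|B_{\theta}^{\boldsymbol\beta}|$ by a constant times $\psi(y)^{|\boldsymbol\beta|}\mu_{\theta}({\cal X}|y)\|\Lambda\|$. Because the rational expression for $\Phi_{\theta}^{\boldsymbol\alpha}$ is homogeneous of degree zero in $\mu_{\theta}({\cal X}|y)$, this potentially unbounded factor cancels, yielding the polynomial estimate $|\Phi_{\theta}^{\boldsymbol\alpha}(x,y,\Lambda)| \leq C\psi(y)^{|\boldsymbol\alpha|}\|\Lambda\|^{|\boldsymbol\alpha|}$ for $|\boldsymbol\alpha|\geq 1$, together with the matching local Lipschitz estimate in $\Lambda$ obtained by the same rational-function calculus. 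For $\boldsymbol\alpha = \boldsymbol 0$, Assumption~\ref{a2.1} instead gives $|\Phi_{\theta}(x,y,\Lambda)| \leq \varphi(y) + |\log\varepsilon|$, independent of $\Lambda$. Thus Assumption~\ref{a1.4} holds with $q = |\boldsymbol\alpha|$, and Assumption~\ref{a1.5} reduces to bounds of the form $\int \varphi(y)\psi(y)^{p(p+1)} Q(x,dy) \leq M_{0}$ (for $\boldsymbol\alpha = \boldsymbol 0$, where $p(p+1)=u$) or $\int \psi(y)^{2p(p+1)} Q(x,dy) \leq M_{0}$ (for $|\boldsymbol\alpha| \geq 1$, where $2p(p+1)=v$), both of which are provided by Assumption~\ref{a2.2}.

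Finally, the uniform polynomial bound on $|\partial_{\theta}^{\boldsymbol\alpha}\log q_{\theta}^{n}(Y_{1:n}|\lambda)|$ combined with Theorem~\ref{theorem1.2} and Assumption~\ref{a2.2} produces an integrable majorant, justifying the interchange of $E$ and $\partial_{\theta}^{\boldsymbol\alpha}$ and yielding $\partial_{\theta}^{\boldsymbol\alpha} l_{n}(\theta,\lambda) = (1/n)\sum_{k=1}^{n} E[\Phi_{\theta}^{\boldsymbol\alpha}(\tilde{Z}_{k-1}^{\theta,{\cal E}_{\lambda}})]$. Applying Theorem~\ref{theorem1.3} to each $\Phi_{\theta}^{\boldsymbol\alpha}$ produces constants $\tilde{\phi}_{\theta}^{\boldsymbol\alpha}$ with $(\tilde{\Pi}^{n}\Phi^{\boldsymbol\alpha})_{\theta}(z) \to \tilde{\phi}_{\theta}^{\boldsymbol\alpha}$ at a geometric rate whose error bound $L\rho^{n}\psi(y)^{r_{\alpha}}\|\Lambda\|^{s_{\alpha}}$ is integrable when $\Lambda = {\cal E}_{\lambda}$ (since $\|{\cal E}_{\lambda}\| = 1$ and $E[\psi(Y_{1})^{r_{\alpha}}] \leq M_{0}$ via Assumption~\ref{a2.2}). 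Cesaro averaging then gives $\partial_{\theta}^{\boldsymbol\alpha} l_{n}(\theta,\lambda) \to \tilde{\phi}_{\theta}^{\boldsymbol\alpha}$ uniformly in $\theta \in \Theta$ (and in $\lambda$), and the standard theorem on term-by-term differentiation implies that $l(\theta) := \tilde{\phi}_{\theta}^{\boldsymbol 0}$ is $p$-times differentiable on $\Theta$ with $\partial_{\theta}^{\boldsymbol\alpha} l(\theta) = \tilde{\phi}_{\theta}^{\boldsymbol\alpha}$. The main obstacle is the bookkeeping in the third paragraph: writing out the Fa\`a di Bruno expansion transparently enough to see that the (potentially unbounded) $\mu_{\theta}({\cal X}|y)$-factors cancel, so that Assumption~\ref{a1.4} can be verified with the sharp exponent $q = |\boldsymbol\alpha|$ required to remain within the integrability window provided by Assumption~\ref{a2.2}.
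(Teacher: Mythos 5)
Your proposal is correct and follows essentially the same route as the paper's proof: the paper likewise telescopes $\log q_{\theta}^{n}(Y_{1:n}|\lambda)$ into one-step predictive terms evaluated along the chain $\tilde{Z}_{k}^{\theta,{\cal E}_{\lambda}}$, identifies their $\theta$-derivatives as functions $\Psi_{\theta}^{\boldsymbol\alpha}$ of the extended state (Proposition \ref{proposition5.1}, obtained by a recursion rather than your Fa\`a di Bruno bookkeeping), verifies Assumptions \ref{a1.4} and \ref{a1.5} for these functions via Lemmas \ref{lemma3.1} and \ref{lemma3.2} (with the uniform exponent $q=p$ where you track $q=|\boldsymbol\alpha|$), applies Theorem \ref{theorem1.3} to each of them, and concludes exactly as you do by term-by-term differentiation of the uniformly convergent limits.
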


Theorem \ref{theorem2.1} is proved in Section \ref{section2*}.
The theorem claims that the log-likelihood rate
$\lim_{n\rightarrow\infty }l_{n}(\theta, \lambda )$ is well-defined for
each $\theta\in\Theta$, $\lambda\in{\cal P}({\cal X} )$.
It also claims that this rate is independent of $\lambda$
and $p$-times differentiable in $\theta$.

In the context of statistical inference,
the properties of
log-likelihood rate for state-space and hidden Markov models
have been studied in a number of papers
(see \cite{bickel&ritov&ryden}, \cite{douc&matias}, \cite{douc&moulines&ryden},
\cite{ryden}, \cite{tadic2};
see also \cite{cappe&moulines&ryden} and references cited therein).
However, the existing results do not address
the higher-order differentiability of this rate.
Theorem \ref{theorem2.1} fills this gap in the literature.
Theorem \ref{theorem2.1} is also relevant for
asymptotic properties of maximum likelihood estimation in state-space models \cite{tadic&doucet2}.
The same theorem can also be used to study the higher-order statistical asymptotics for
the maximum likelihood estimation in time-series models
(for further details on such asymptotics,
see e.g. \cite{mccullagh}, \cite{taniguchi&kakizawa}).

\section{Example}\label{section3}

To illustrate the main results,
we use them to study optimal filtering in non-linear state-space models.
Let $\Theta$ and $d$ have the same meaning as in Section \ref{section1},
while $\tilde{\Theta}\subseteq\mathbb{R}^{d}$ is an open set satisfying
$\text{cl}\Theta\subset\tilde{\Theta}$.
We consider the following state-space model:
\begin{align}\label{3.1}
	X_{n+1}^{\theta,\lambda}
	=
	A_{\theta}(X_{n}^{\theta,\lambda} ) + B_{\theta}(X_{n}^{\theta,\lambda} ) U_{n},
	\;\;\;\;\;
	Y_{n}^{\theta,\lambda}
	=
	C_{\theta}(X_{n}^{\theta,\lambda} ) + D_{\theta}(X_{n}^{\theta,\lambda} ) V_{n},
	\;\;\;\;\;
	n\geq 0.
\end{align}
Here, $\theta\in\tilde{\Theta}$, $\lambda\in{\cal P}({\cal X} )$
are the parameters indexing the model (\ref{3.1}).
$A_{\theta}(x)$ and $B_{\theta}(x)$ are functions mapping
$\theta\in\tilde{\Theta}$, $x\in\mathbb{R}^{d_{x} }$ (respectively) to
$\mathbb{R}^{d_{x} }$ and $\mathbb{R}^{d_{x}\times d_{x} }$
($d_{x}$ has the same meaning as in Section \ref{section1}).
$C_{\theta}(x)$ and $D_{\theta}(x)$ are functions mapping
$\theta\in\tilde{\Theta}$, $x\in\mathbb{R}^{d_{x} }$ (respectively) to
$\mathbb{R}^{d_{y} }$ and $\mathbb{R}^{d_{y}\times d_{y} }$
($d_{y}$ has the same meaning as in Section \ref{section1}).
$X_{0}^{\theta,\lambda}$ is an $\mathbb{R}^{d_{x}}$-valued random variable
defined on a probability space $(\Omega,{\cal F}, P)$ and distributed according to $\lambda$.
$\{ U_{n} \}_{n\geq 0}$ are $\mathbb{R}^{d_{x}}$-valued i.i.d. random variables which are defined
on $(\Omega, {\cal F}, P)$ and have marginal density $r(u)$ with respect to Lebesgue measure.
$\{ V_{n} \}_{n\geq 0}$ are $\mathbb{R}^{d_{y}}$-valued i.i.d. random variables which are defined
on $(\Omega, {\cal F}, P)$ and have marginal density $s(v)$ with respect to Lebesgue measure.
We also assume that $X_{0}^{\theta,\lambda}$,
$\{U_{n}\}_{n\geq 0}$ and $\{V_{n}\}_{n\geq 0}$ are (jointly) independent.

In addition to the previously introduced notation, the following notation is used here, too.
$\tilde{p}_{\theta}(x'|x)$ and $\tilde{q}_{\theta}(y|x)$ are the functions defined by
\begin{align*}%\label{3.3}
	\tilde{p}_{\theta}(x'|x)
	=
	\frac{r\left(B_{\theta}^{-1}(x) (x'-A_{\theta}(x) ) \right)}
	{|\text{det} B_{\theta}(x) |},
	\;\;\;\;\;
	\tilde{q}_{\theta}(y|x)
	=
	\frac{s\left(D_{\theta}^{-1}(x) (y-C_{\theta}(x) ) \right)}
	{|\text{det} D_{\theta}(x) |}
\end{align*}
for $\theta\in\tilde{\Theta}$, $x,x'\in\mathbb{R}^{d_{x} }$, $y\in\mathbb{R}^{d_{y} }$
(provided $B_{\theta}(x)$ and $D_{\theta}(x)$ are invertible).
$p_{\theta}(x'|x)$ and $q_{\theta}(y|x)$ are the functions defined by
\begin{align}\label{3.5}
	p_{\theta}(x'|x)
	=
	\frac{r\left(B_{\theta}^{-1}(x) (x'-A_{\theta}(x) ) \right) 1_{\cal X}(x') }
	{\int_{\cal X} r\left(B_{\theta}^{-1}(x) (x''-A_{\theta}(x) ) \right) dx'' },
	\;\;\;\;\;
	q_{\theta}(y|x)
	=
	\frac{s\left(D_{\theta}^{-1}(x) (y-C_{\theta}(x) ) \right) 1_{\cal Y}(y) }
	{\int_{\cal Y} s\left(D_{\theta}^{-1}(x) (y'-C_{\theta}(x) ) \right) dy' }
\end{align}
(${\cal X}$, ${\cal Y}$ have the same meaning as in Section \ref{section1}).
It is easy to conclude that
$\tilde{p}_{\theta}(x'|x)$ and $\tilde{q}_{\theta}(y|x)$ are
the conditional densities of $X_{n+1}^{\theta,\lambda}$
and $Y_{n}^{\theta,\lambda}$ (respectively) given $X_{n}^{\theta,\lambda}=x$.
It is also easy to deduce that $p_{\theta}(x'|x)$ and $q_{\theta}(y|x)$
accurately approximate $\tilde{p}_{\theta}(x'|x)$ and $\tilde{q}_{\theta}(y|x)$
when ${\cal X}$ and ${\cal Y}$ are sufficiently large
(i.e., when balls of a sufficiently large radius can be inscribed in
${\cal X}$, ${\cal Y}$).
$p_{\theta}(x'|x)$ and $q_{\theta}(y|x)$ can be interpreted as
truncations of $\tilde{p}_{\theta}(x'|x)$ and $\tilde{q}_{\theta}(y|x)$
to sets ${\cal X}$ and ${\cal Y}$
(i.e., model specified in (\ref{3.5}) can be considered as
a truncation of model (\ref{3.1}) to ${\cal X}$, ${\cal Y}$).
This or similar truncation is involved (implicitly or explicitly)
in the implementation of any numerical approximation to the optimal filter for the model (\ref{3.1}).

The optimal filter based on the truncated model (\ref{3.5})
is studied under the following assumptions.

\begin{assumption}\label{a3.1}
$r(x)>0$ and $s(y)>0$ for all $x\in\mathbb{R}^{d_{x} }$,
$y\in\mathbb{R}^{d_{y} }$.
Moreover, $B_{\theta}(x)$ and $D_{\theta}(x)$ are invertible
for each $\theta\in\tilde{\Theta}$, $x\in\mathbb{R}^{d_{x} }$.
\end{assumption}

\begin{assumption}\label{a3.2}
$r(x)$ and $s(y)$ are $p$-times differentiable for all $x\in\mathbb{R}^{d_{x} }$,
$y\in\mathbb{R}^{d_{y} }$, where $p\geq 1$.
Moreover, $A_{\theta}(x)$, $B_{\theta}(x)$, $C_{\theta}(x)$ and $D_{\theta}(x)$
are $p$-times differentiable in $\theta$
for each $\theta\in\tilde{\Theta}$, $x\in\mathbb{R}^{d_{x} }$.
\end{assumption}

\begin{assumption}\label{a3.3}
$\partial^{\boldsymbol\alpha} r(x)$ and $\partial^{\boldsymbol\alpha} s(y)$
are continuous for each $x\in\mathbb{R}^{d_{x} }$, $y\in\mathbb{R}^{d_{y} }$ and
any multi-index $\boldsymbol\alpha\in\mathbb{N}_{0}^{d}$, $|\boldsymbol\alpha|\leq p$.
Moreover,
$\partial_{\theta}^{\boldsymbol\alpha} A_{\theta}(x)$,
$\partial_{\theta}^{\boldsymbol\alpha} B_{\theta}(x)$,
$\partial_{\theta}^{\boldsymbol\alpha} C_{\theta}(x)$ and
$\partial_{\theta}^{\boldsymbol\alpha} D_{\theta}(x)$
are continuous in $(\theta,x)$ for each $\theta\in\tilde{\Theta}$,
$x\in\mathbb{R}^{d_{x} }$, $y\in\mathbb{R}^{d_{y} }$ and
any multi-index $\boldsymbol\alpha\in\mathbb{N}_{0}^{d}$, $|\boldsymbol\alpha|\leq p$.
\end{assumption}

\begin{assumption}\label{a3.4}
${\cal X}$ and ${\cal Y}$ are compact sets with non-empty interiors.
\end{assumption}

\begin{assumption}\label{a3.5}
${\cal X}$ is a compact set with a non-empty interior,
while ${\cal Y}=\mathbb{R}^{d_{y} }$.
Moreover, there exists a real number $K_{0}\in[1,\infty)$ such that
\begin{align*}
	s(y)\leq K_{0},
	\;\;\;\;\;
	\left|\partial^{\boldsymbol\alpha}s(y) \right|
	\leq
	K_{0}s(y)(1+\|y\| )^{|\boldsymbol\alpha|}
\end{align*}
for all $y\in\mathbb{R}^{d_{y} }$ and any multi-index
$\boldsymbol\alpha\in\mathbb{N}_{0}^{d}$, $|\boldsymbol\alpha|\leq p$.
\end{assumption}

\begin{assumption}\label{a3.6}
There exists a real number $L_{0}\in[1,\infty)$ such that
\begin{align*}
	\left|\log s(y) \right|
	\leq
	L_{0}(1 + \|y\|)^{2}
\end{align*}
for all $y\in{\cal Y}$.
\end{assumption}

Assumptions \ref{a3.1} -- \ref{a3.6} cover several classes of
non-linear state-space models met in practice --- 
e.g. they hold for a class of stochastic volatility and dynamic probit models.
Moreover, these assumptions include non-linear state-space models in which the observation
noise $\{V_{n} \}_{n\geq 0}$ is a mixture of Gaussian distributions.
Other models satisfying assumptions \ref{a3.1} -- \ref{a3.6} can be found
in \cite{cappe&moulines&ryden},
\cite{douc&moulines&stoffer} (see also references cited therein).

Our results on the optimal filter for model (\ref{3.5}) and its higher-order derivatives
read as follows.

\begin{corollary}\label{corollary3.1}
(i) Let Assumptions \ref{a3.1} -- \ref{a3.4} hold.
Then, all conclusions of Theorems \ref{theorem1.1} and \ref{theorem1.2} are true.

(ii) Let Assumptions \ref{a1.3}, \ref{a1.4} and \ref{a3.1} -- \ref{a3.4} hold.
Moreover, assume
\begin{align}\label{c3.1.1*}
	\sup_{x\in{\cal X}} \int \varphi(x,y)Q(x,dy)<\infty,
\end{align}
where $\varphi(x,y)$ is specified in Assumption \ref{a1.4}.
Then, all conclusions of Theorem \ref{theorem1.3} are true.

(iii) Let Assumptions \ref{a1.3} and \ref{a3.1} -- \ref{a3.4} hold.
Then, all conclusions of Theorem \ref{theorem2.1} are true.
\end{corollary}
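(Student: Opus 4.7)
The plan is to verify that Assumptions \ref{a1.1}--\ref{a1.5}, \ref{a2.1} and \ref{a2.2} (which are the hypotheses respectively used by Theorems \ref{theorem1.1}--\ref{theorem1.3} and \ref{theorem2.1}) follow from the concrete regularity Assumptions \ref{a3.1}--\ref{a3.4} together with (\ref{c3.1.1*}) and Assumptions \ref{a1.3}, \ref{a1.4} where indicated. The relevant theorem of Section \ref{section1} or \ref{section2} then applies directly.

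The key preliminary observation is that under Assumptions \ref{a3.1}--\ref{a3.3} combined with $\mathrm{cl}\,\Theta \subset \tilde\Theta$ and the compactness hypothesis in Assumption \ref{a3.4}, the functions $r,s$ and $A_\theta, B_\theta, C_\theta, D_\theta$ along with their $\theta$-derivatives up to order $p$ are continuous on compact products, and $B_\theta(x), D_\theta(x)$ are invertible with continuously varying inverses. Consequently the truncated densities $p_\theta(x'|x), q_\theta(y|x)$ in (\ref{3.5}) and their $\theta$-derivatives up to order $p$ are continuous and positive on $\mathrm{cl}\,\Theta \times \mathcal{X} \times \mathcal{X} \times \mathcal{Y}$. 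Therefore there exist constants $0 < c_1 \leq c_2 < \infty$ and $C_p < \infty$ such that $c_1 \leq r_\theta(y,x'|x) \leq c_2$ and $|\partial_\theta^{\boldsymbol\alpha} r_\theta(y,x'|x)| \leq C_p$ uniformly over $\theta \in \Theta$, $x, x' \in \mathcal{X}$, $y \in \mathcal{Y}$ and $|\boldsymbol\alpha| \leq p$.

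With these uniform bounds, the verifications are largely mechanical. For (i), take $\mu_\theta(\,\cdot\,|y) := \mu|_\mathcal{X}$ and $\varepsilon := \min(c_1, 1/c_2, 1/2)$ for Assumption \ref{a1.1}, take $\psi$ to be the constant $\max(1, C_p/c_1)$ for Assumption \ref{a1.2} (so that $\psi^{|\boldsymbol\alpha|} \geq C_p/c_1$ whenever $|\boldsymbol\alpha| \geq 1$), and take $\phi(y,x') := c_2$ for Assumption \ref{a1.2'} (with $\int \phi\,d\mu = c_2\mu(\mathcal{X}) < \infty$ by compactness of $\mathcal{X}$); Theorems \ref{theorem1.1} and \ref{theorem1.2} then apply. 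For (ii), Assumptions \ref{a1.3} and \ref{a1.4} are in force, and Assumption \ref{a1.5} reduces, because $\psi$ is constant, to $\psi^r \sup_x \int \varphi(x,y) Q(x,dy) < \infty$, which is exactly (\ref{c3.1.1*}); Theorem \ref{theorem1.3} then applies. For (iii), Assumption \ref{a2.1} holds because $\mu_\theta(\mathcal{X}|y) = \mu(\mathcal{X})$ is a positive constant (so $|\log \mu_\theta(\mathcal{X}|y)|$ is bounded; take $\varphi$ constant), and Assumption \ref{a2.2} is immediate since $\psi$ and $\varphi$ are constants while $Q(x,\cdot)$ is a probability measure; Theorem \ref{theorem2.1} then applies.

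The main point of the argument is not analytic but organizational: one must ensure that the constants $c_1, c_2, C_p$ are genuinely uniform in $(\theta, x, x', y)$. The technical device of introducing $\tilde\Theta$ with $\mathrm{cl}\,\Theta \subset \tilde\Theta$ in the problem setup exists precisely so that functions continuous on the open set $\tilde\Theta \times \mathcal{X}$ attain extrema on the compact set $\mathrm{cl}\,\Theta \times \mathcal{X}$, yielding the required uniformity over $\theta \in \Theta$. Beyond this bookkeeping no further difficulty arises, since the truncation in (\ref{3.5}) is designed exactly to force the strong mixing and boundedness properties that Assumptions \ref{a1.1}--\ref{a1.2'} demand.
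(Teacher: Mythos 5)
Your proposal is correct and follows essentially the same route as the paper: the paper's Lemma 4.1 establishes exactly your uniform bounds (positivity and bounded $\theta$-derivatives of $p_\theta$, $q_\theta$ on $\mathrm{cl}\,\Theta\times{\cal X}\times{\cal X}\times{\cal Y}$ via continuity and compactness, using $\mathrm{cl}\,\Theta\subset\tilde\Theta$), and the corollary is then proved by taking $\mu_\theta(dx|y)=\mu(dx)$ and constant $\psi$, $\phi$, $\varphi$ to verify Assumptions 2.1--2.3, 2.6, 3.1 and 3.2, just as you do.
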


\begin{corollary}\label{corollary3.2}
(i) Let Assumptions \ref{a3.1} -- \ref{a3.3} and \ref{a3.5} hold.
Then, all conclusions of Theorems \ref{theorem1.1} and \ref{theorem1.2} are true.

(ii) Let Assumptions \ref{a1.3}, \ref{a1.4}, \ref{a3.1} -- \ref{a3.3}
and \ref{a3.5} hold.
Moreover, assume
\begin{align}\label{c3.2.1*}
	\sup_{x\in{\cal X}} \int \varphi(x,y)(1+\|y\|)^{2r}Q(x,dy)<\infty,
\end{align}
where $r$ and $\varphi(x,y)$ are specified in Assumptions \ref{a1.4} and \ref{a1.5}.
Then, all conclusions of Theorem \ref{theorem1.3} are true.

(iii) Let Assumptions \ref{a1.3}, \ref{a3.1} -- \ref{a3.3}, \ref{a3.5} and \ref{a3.6} hold.
Moreover, assume
\begin{align}\label{c3.2.3*}
	\sup_{x\in{\cal X}} \int (1+\|y\|)^{2v}Q(x,dy)<\infty,
\end{align}
where $v$ is specified in Assumption \ref{a2.2}.
Then, all conclusions of Theorem \ref{theorem2.1} are true.
\end{corollary}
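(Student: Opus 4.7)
\begin{sproof}
The plan is to verify, for the truncated non-linear model (\ref{3.5}) under the hypotheses of each part, all the abstract assumptions invoked by Theorems \ref{theorem1.1}--\ref{theorem1.3} and \ref{theorem2.1}, and then apply those theorems directly. The argument parallels the compact-${\cal Y}$ case of Corollary \ref{corollary3.1}, but the unboundedness of ${\cal Y}=\mathbb{R}^{d_{y}}$ in Assumption \ref{a3.5} forces $\psi(y)$ and the $\varphi$ of Assumption \ref{a2.1} to be polynomial in $\|y\|$ rather than constant.

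For part (i) I would first verify Assumptions \ref{a1.1} and \ref{a1.2'}. Assumptions \ref{a3.1}--\ref{a3.3}, combined with the compactness of ${\cal X}$ and of $\text{cl}\,\Theta$, imply that $p_{\theta}(x'|x)$ is continuous and strictly positive on $\text{cl}\,\Theta\times{\cal X}\times{\cal X}$, so there exist $0<\underline{p}\leq\bar{p}<\infty$ with $\underline{p}\leq p_{\theta}(x'|x)\leq\bar{p}$. Setting $\mu_{\theta}(B|y):=\sqrt{\underline{p}\bar{p}}\int_{B}q_{\theta}(y|x)\mu(dx)$, the factorisation $r_{\theta}(y,x'|x)=q_{\theta}(y|x')p_{\theta}(x'|x)$ yields Assumption \ref{a1.1} with $\varepsilon=\sqrt{\underline{p}/\bar{p}}$. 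Assumption \ref{a1.2'} follows because $s\leq K_{0}$ and $|\det D_{\theta}(x)|$ is bounded away from zero on $\text{cl}\,\Theta\times{\cal X}$, giving a uniform constant upper bound on $r_{\theta}(y,x'|x)$ that is trivially integrable against $\mu$ on the compact set ${\cal X}$.

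The essential step is verifying Assumption \ref{a1.2} with $\psi(y)=K(1+\|y\|)^{2}$. Writing $q_{\theta}(y|x)=s(g_{\theta}(x,y))/|\det D_{\theta}(x)|$ with $g_{\theta}(x,y):=D_{\theta}^{-1}(x)(y-C_{\theta}(x))$, the compactness and continuity provided by Assumption \ref{a3.3} give $\|\partial_{\theta}^{\boldsymbol\beta}g_{\theta}(x,y)\|\leq K_{1}(1+\|y\|)$ and $\|g_{\theta}(x,y)\|\leq K_{1}(1+\|y\|)$ for all $|\boldsymbol\beta|\leq p$. Applying Fa\`a di Bruno to $\partial_{\theta}^{\boldsymbol\alpha}s(g_{\theta}(x,y))$ and invoking the polynomial score bound in Assumption \ref{a3.5} produces $|\partial_{\theta}^{\boldsymbol\alpha}q_{\theta}(y|x)|\leq K_{2}q_{\theta}(y|x)(1+\|y\|)^{2|\boldsymbol\alpha|}$; a simpler expansion (no $y$-dependence) gives $|\partial_{\theta}^{\boldsymbol\alpha}p_{\theta}(x'|x)|\leq K_{3}p_{\theta}(x'|x)$; and a Leibniz expansion of $\partial_{\theta}^{\boldsymbol\alpha}r_{\theta}(y,x'|x)$ yields $|\partial_{\theta}^{\boldsymbol\alpha}r_{\theta}(y,x'|x)|\leq K_{4}(1+\|y\|)^{2|\boldsymbol\alpha|}r_{\theta}(y,x'|x)$, so Assumption \ref{a1.2} holds with the claimed $\psi$. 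Theorems \ref{theorem1.1} and \ref{theorem1.2} now deliver part (i). For part (ii), $\psi^{r}(y)\leq K'(1+\|y\|)^{2r}$ turns Assumption \ref{a1.5} into exactly the hypothesis (\ref{c3.2.1*}), and Theorem \ref{theorem1.3} applies.

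For part (iii) I still need Assumptions \ref{a2.1} and \ref{a2.2}. The upper bound $\log\mu_{\theta}({\cal X}|y)\leq\text{const.}$ is immediate from the uniform upper bound on $q_{\theta}$; for the lower bound, Assumption \ref{a3.6} gives $s(u)\geq\exp(-L_{0}(1+\|u\|)^{2})$, and combined with $\|g_{\theta}(x,y)\|\leq K_{1}(1+\|y\|)$ this yields $q_{\theta}(y|x)\geq K_{5}\exp(-K_{6}(1+\|y\|)^{2})$ uniformly in $(\theta,x)$, whence $\log\mu_{\theta}({\cal X}|y)\geq-K_{7}(1+\|y\|)^{2}$, so Assumption \ref{a2.1} holds with $\varphi(y)=K_{8}(1+\|y\|)^{2}$. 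Assumption \ref{a2.2} then reduces to the integrability of $(1+\|y\|)^{2+2u}$ and $(1+\|y\|)^{2v}$ against $Q(x,dy)$; since $v=2u$ and $u=p(p+1)\geq 2$, both are dominated by $(1+\|y\|)^{2v}$, which is exactly (\ref{c3.2.3*}), so Theorem \ref{theorem2.1} gives part (iii). The principal obstacle in the whole argument is the Fa\`a di Bruno expansion verifying Assumption \ref{a1.2}: one has to track how derivatives of $s$ at $g_{\theta}(x,y)$ combine with derivatives of $g_{\theta}$ and of $\log|\det D_{\theta}|$ so that the factor $q_{\theta}(y|x)$ extracts cleanly and the growth exponent in $\|y\|$ stays proportional to $|\boldsymbol\alpha|$; once that is done, everything else is polynomial bookkeeping that aligns the exponents $r$, $u$, $v$ with the integrability hypotheses.
\end{sproof}
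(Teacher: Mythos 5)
Your proposal is correct and follows essentially the same route as the paper: you verify Assumptions \ref{a1.1}, \ref{a1.2}, \ref{a1.2'} for the truncated model with $\psi(y)\propto(1+\|y\|)^{2}$ and $\mu_{\theta}(dx|y)$ proportional to $\int q_{\theta}(y|x)\mu(dx)$ (which is exactly the content of the paper's Lemma \ref{lemma4.2}), and then check Assumptions \ref{a1.5}, \ref{a2.1}, \ref{a2.2} against the polynomial moment hypotheses (\ref{c3.2.1*}), (\ref{c3.2.3*}). The only cosmetic deviation is in part (iii), where you bound $q_{\theta}(y|x)$ from below pointwise and integrate, whereas the paper bounds $|\log q_{\theta}(y|x)|$ and applies Jensen's inequality; both give $\varphi(y)\propto(1+\|y\|)^{2}$ in Assumption \ref{a2.1}.
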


Corollaries \ref{corollary3.1} and \ref{corollary3.2} are proved in Section \ref{section3*}.

\section{Proof of Theorem \ref{theorem1.2} } \label{section1.1*}

In this section, we use the following notation.
$\tau$ is the real number defined as
$\tau = (1-\varepsilon^{2} )^{1/2}$.
%\begin{align*}
%	\tau=
%	\left(\frac{1-\varepsilon^{2} }{1+\varepsilon^{2} } \right)^{1/2}.
%\end{align*}
$G_{\theta,y}(\lambda,\tilde{\lambda} )$ is the element of ${\cal M}_{s}({\cal X} )$ defined by
\begin{align}\label{4.101}
	G_{\theta,y}(\lambda,\tilde{\lambda} )
	=
	\frac{R_{\theta,y}^{\boldsymbol 0}(\tilde{\lambda} ) }
	{\big\langle R_{\theta,y}^{\boldsymbol 0}(\lambda) \big\rangle}
	-
	\frac{R_{\theta,y}^{\boldsymbol 0}(\lambda)
	\big\langle R_{\theta,y}^{\boldsymbol 0}(\tilde{\lambda} ) \big\rangle}
	{\big\langle R_{\theta,y}^{\boldsymbol 0}(\lambda) \big\rangle^{2} }
\end{align}
for $\theta\in\Theta$, $y\in{\cal Y}$, $\lambda\in{\cal P}({\cal X} )$, $\tilde{\lambda}\in{\cal M}_{s}({\cal X} )$.
$T_{\theta,y}^{\boldsymbol\alpha,\boldsymbol\beta}(\Lambda )$ is the element of ${\cal M}_{s}({\cal X} )$ defined by
\begin{align}\label{4.103}
	&
	T_{\theta,y}^{\boldsymbol\alpha,\boldsymbol\beta}(\Lambda )
	=
	\frac{R_{\theta,y}^{\boldsymbol\alpha-\boldsymbol\beta}(\lambda_{\boldsymbol\beta} ) }
	{\big\langle R_{\theta,y}^{\boldsymbol 0}(\lambda_{\boldsymbol 0} ) \big\rangle }
	-
	F_{\theta,y}^{\boldsymbol 0}(\Lambda)
	\frac{\big\langle R_{\theta,y}^{\boldsymbol\alpha-\boldsymbol\beta}(\lambda_{\boldsymbol\beta} ) \big\rangle }
	{\big\langle R_{\theta,y}^{\boldsymbol 0}(\lambda_{\boldsymbol 0} ) \big\rangle }
\end{align}
for $\Lambda=\left\{\lambda_{\boldsymbol\gamma}:
\boldsymbol\gamma\in\mathbb{N}_{0}^{d}, |\boldsymbol\gamma|\leq p \right\} \in {\cal L}_{0}({\cal X} )$,
$\boldsymbol\alpha, \boldsymbol\beta \in \mathbb{N}_{0}^{d}$,
$\boldsymbol\beta\leq\boldsymbol\alpha$, $|\boldsymbol\alpha|\leq p$.
$G_{\theta,y}^{\boldsymbol\alpha}(\Lambda)$ and
$H_{\theta,y}^{\boldsymbol\alpha}(\Lambda)$ are the elements of ${\cal M}_{s}({\cal X} )$ defined by
\begin{align}\label{4.105}
	G_{\theta,y}^{\boldsymbol\alpha}(\Lambda)
	=
	G_{\theta,y}(\lambda_{\boldsymbol 0}, \lambda_{\boldsymbol\alpha} ),
	\;\;\;
	H_{\theta,y}^{\boldsymbol\alpha}(\Lambda)
	=
	\sum_{\stackrel{\scriptstyle
	\boldsymbol\beta\in\mathbb{N}_{0}^{d}\setminus\{\boldsymbol\alpha\} }
	{\boldsymbol\beta\leq\boldsymbol\alpha} }
	\left(\boldsymbol\alpha \atop \boldsymbol\beta \right)
	T_{\theta,y}^{\boldsymbol\alpha,\boldsymbol\beta}(\Lambda )
	-
	\sum_{\stackrel{\scriptstyle \boldsymbol\beta\in\mathbb{N}_{0}^{d}
	\setminus\{\boldsymbol 0,\boldsymbol\alpha\} }
	{\boldsymbol\beta\leq\boldsymbol\alpha} }
	\left(\boldsymbol\alpha \atop \boldsymbol\beta \right)
	F_{\theta,y}^{\boldsymbol\beta}(\Lambda )
	\big\langle S_{\theta,y}^{\boldsymbol\alpha-\boldsymbol\beta}(\Lambda ) \big\rangle.
\end{align}
Here and throughout the paper, we rely on the convention that
$\sum_{\boldsymbol\beta\in{\cal B} }$ is zero whenever ${\cal B}=\emptyset$.
Then, using (\ref{1.703}) -- (\ref{1.907}), it is straightforward to verify
\begin{align}\label{4.151}
	&
	S_{\theta,y}^{\boldsymbol\alpha}(\Lambda)
	=
	\sum_{\stackrel{\scriptstyle\boldsymbol\beta\in\mathbb{N}_{0}^{d} }
	{\boldsymbol\beta\leq\boldsymbol\alpha} }
	\left( \boldsymbol\alpha \atop \boldsymbol \beta \right)
	\frac{R_{\theta,y}^{\boldsymbol\alpha-\boldsymbol\beta}(\lambda_{\boldsymbol\beta} ) }
	{ \big\langle R_{\theta,y}^{\boldsymbol 0}(\lambda_{\boldsymbol 0} ) \big\rangle },
	\;\;\;\;\;
	F_{\theta,y}^{\boldsymbol\alpha}(\Lambda)
	=
	S_{\theta,y}^{\boldsymbol\alpha}(\Lambda)
	-
	\sum_{\stackrel{\scriptstyle\boldsymbol\beta\in\mathbb{N}_{0}^{d}\setminus\{\boldsymbol\alpha\} }
	{\boldsymbol\beta\leq\boldsymbol\alpha} }
	F_{\theta,y}^{\boldsymbol\beta}(\Lambda)
	\big\langle S_{\theta,y}^{\boldsymbol\alpha-\boldsymbol\beta}(\Lambda) \big\rangle.
\end{align}
Hence, we get
$
	F_{\theta,y}^{\boldsymbol 0}(\Lambda)
	=
	S_{\theta,y}^{\boldsymbol 0}(\Lambda)
	=
	R_{\theta,y}^{\boldsymbol 0}(\lambda_{\boldsymbol 0} ) /\big\langle R_{\theta,y}^{\boldsymbol 0}(\lambda_{\boldsymbol 0} ) \big\rangle
$ and
\begin{align*}
	T_{\theta,y}^{\boldsymbol\alpha,\boldsymbol\alpha}(\Lambda)
	=
	\frac{R_{\theta,y}^{\boldsymbol 0}(\lambda_{\boldsymbol\alpha} ) }
	{ \big\langle R_{\theta,y}^{\boldsymbol 0}(\lambda_{\boldsymbol 0} ) \big\rangle }
	-
	F_{\theta,y}^{\boldsymbol 0}(\Lambda)
	\frac{ \big\langle R_{\theta,y}^{\boldsymbol 0}(\lambda_{\boldsymbol\alpha} ) \big\rangle }
	{ \big\langle R_{\theta,y}^{\boldsymbol 0}(\lambda_{\boldsymbol 0} ) \big\rangle }
	=
	\frac{R_{\theta,y}^{\boldsymbol 0}(\lambda_{\boldsymbol\alpha} ) }
	{ \big\langle R_{\theta,y}^{\boldsymbol 0}(\lambda_{\boldsymbol 0} ) \big\rangle }
	-
	\frac{R_{\theta,y}^{\boldsymbol 0}(\lambda_{\boldsymbol 0} )
	\big\langle R_{\theta,y}^{\boldsymbol 0}(\lambda_{\boldsymbol\alpha} ) \big\rangle }
	{ \big\langle R_{\theta,y}^{\boldsymbol 0}(\lambda_{\boldsymbol 0} ) \big\rangle^{2} }
	=
	G_{\theta,y}^{\boldsymbol\alpha}(\Lambda).
\end{align*}
Consequently, (\ref{4.103}) -- (\ref{4.105}) imply
\begin{align}\label{4.153}
	S_{\theta,y}^{\boldsymbol\alpha}(\Lambda)
	-
	F_{\theta,y}^{\boldsymbol 0}(\Lambda)
	\big\langle S_{\theta,y}^{\boldsymbol\alpha}(\Lambda) \big\rangle
	=&
	\sum_{\stackrel{\scriptstyle\boldsymbol\beta\in\mathbb{N}_{0}^{d} }
	{\boldsymbol\beta\leq\boldsymbol\alpha} }
	\left( \boldsymbol\alpha \atop \boldsymbol \beta \right)
	\left(
	\frac{R_{\theta,y}^{\boldsymbol\alpha-\boldsymbol\beta}(\lambda_{\boldsymbol\beta} ) }
	{ \big\langle R_{\theta,y}^{\boldsymbol 0}(\lambda_{\boldsymbol 0} ) \big\rangle }
	-
	F_{\theta,y}^{\boldsymbol 0}(\Lambda)
	\frac{ \big\langle R_{\theta,y}^{\boldsymbol\alpha-\boldsymbol\beta}(\lambda_{\boldsymbol\beta} ) \big\rangle }
	{ \big\langle R_{\theta,y}^{\boldsymbol 0}(\lambda_{\boldsymbol 0} ) \big\rangle }
	\right)
	\nonumber\\
	=&
	\sum_{\stackrel{\scriptstyle\boldsymbol\beta\in\mathbb{N}_{0}^{d} }
	{\boldsymbol\beta\leq\boldsymbol\alpha} }
	\left( \boldsymbol\alpha \atop \boldsymbol \beta \right)
	T_{\theta,y}^{\boldsymbol\alpha,\boldsymbol\beta}(\Lambda)
	%\nonumber\\
	=%&
	G_{\theta,y}^{\boldsymbol\alpha}(\Lambda)
	+
	\sum_{\stackrel{\scriptstyle\boldsymbol\beta\in\mathbb{N}_{0}^{d}\setminus\{\boldsymbol\alpha\} }
	{\boldsymbol\beta\leq\boldsymbol\alpha} }
	\left( \boldsymbol\alpha \atop \boldsymbol \beta \right)
	T_{\theta,y}^{\boldsymbol\alpha,\boldsymbol\beta}(\Lambda).
\end{align}
Then, (\ref{4.105}), (\ref{4.151}) %(\ref{4.153})
yield
\begin{align}\label{4.1}
	F_{\theta,y}^{\boldsymbol\alpha}(\Lambda )
	=
	S_{\theta,y}^{\boldsymbol\alpha}(\Lambda)
	-
	F_{\theta,y}^{\boldsymbol 0}(\Lambda)
	\big\langle S_{\theta,y}^{\boldsymbol\alpha}(\Lambda) \big\rangle
	-
	\sum_{\stackrel{\scriptstyle\boldsymbol\beta\in\mathbb{N}_{0}^{d}\setminus\{\boldsymbol 0,\boldsymbol\alpha\} }
	{\boldsymbol\beta\leq\boldsymbol\alpha} }
	F_{\theta,y}^{\boldsymbol\beta}(\Lambda)
	\big\langle S_{\theta,y}^{\boldsymbol\alpha-\boldsymbol\beta}(\Lambda) \big\rangle
	=
	G_{\theta,y}^{\boldsymbol\alpha}(\Lambda )
	+
	H_{\theta,y}^{\boldsymbol\alpha}(\Lambda ).
\end{align}

In addition to the previously introduced notation,
the following notation is used here, too.
$G_{\theta,\boldsymbol y}^{m:n}(\lambda,\!\tilde{\lambda} )$
is the element of ${\cal M}_{s}({\cal X} )$ recursively defined by
\begin{align}\label{4.107}
	G_{\theta,\boldsymbol y}^{m:m}(\lambda,\tilde{\lambda} )=\tilde{\lambda},
	\;\;\;\;\;
	G_{\theta,\boldsymbol y}^{m:n}(\lambda, \tilde{\lambda} )
	=
	G_{\theta,y_{n} }
	\left(P_{\theta,\boldsymbol y}^{m:n-1}(\lambda),
	G_{\theta,\boldsymbol y}^{m:n-1}(\lambda, \tilde{\lambda} )
	\right)
\end{align}
for $\theta\in\Theta$, $\lambda\in{\cal P}({\cal X} )$, $\tilde{\lambda}\in{\cal M}_{s}({\cal X} )$,
$n>m\geq 0$ and a sequence $\boldsymbol y = \{y_{n} \}_{n\geq 1}$ in ${\cal Y}$.
$V_{\theta,\boldsymbol y}^{\boldsymbol\alpha, m:n}(\Lambda )$
and
$W_{\theta,\boldsymbol y}^{\boldsymbol\alpha, m:n}(\Lambda )$
are the elements of ${\cal M}_{s}({\cal X} )$ defined by
\begin{align}\label{4.121}
	V_{\theta,\boldsymbol y}^{\boldsymbol\alpha, m:n}(\Lambda )
	=
	G_{\theta,\boldsymbol y}^{m:n}(\lambda_{\boldsymbol 0}, \lambda_{\boldsymbol\alpha } ),
	\;\;\;\;\;
	W_{\theta,\boldsymbol y}^{\boldsymbol\alpha, m:n}(\Lambda )
	=
	H_{\theta,y_{n} }^{\boldsymbol\alpha}(F_{\theta,\boldsymbol y}^{m:n-1}(\Lambda ) )
\end{align}
for $\Lambda=\left\{\lambda_{\boldsymbol\beta}:
\boldsymbol\beta\in\mathbb{N}_{0}^{d}, |\boldsymbol\beta|\leq p \right\} \in {\cal L}_{0}({\cal X} )$,
$\boldsymbol\alpha\in \mathbb{N}_{0}^{d}$,
$|\boldsymbol\alpha|\leq p$.
$\Phi_{\boldsymbol y}^{m:n}$ and
$\Psi_{\boldsymbol y}^{m:n}$ are the quantities
defined by
\begin{align*}
	\Phi_{\boldsymbol y}^{m:m} = 1,
	\;\;\;\;\;
	\Psi_{\boldsymbol y}^{m:m} = 1,
	\;\;\;\;\;
	\Phi_{\boldsymbol y}^{m:n} = (n-m) \sum_{k=m+1}^{n} \psi(y_{k} ),
	\;\;\;\;\;
	\Psi_{\boldsymbol y}^{m:n} = \sum_{k=m+1}^{n} \psi(y_{k} ).
\end{align*}
$M_{\boldsymbol\alpha}(\Lambda)$ is the function defined by
\begin{align*}
	M_{\boldsymbol\alpha}(\Lambda)
	=
	\max\left\{\|\lambda_{\boldsymbol\beta}\|: \boldsymbol\beta\in\mathbb{N}_{0}^{d},
	\boldsymbol\beta\leq\boldsymbol\alpha \right\}.
\end{align*}
$K_{\boldsymbol\alpha}(\Lambda,\Lambda')$ and $L_{\boldsymbol\alpha}(\Lambda,\Lambda')$
are the functions defined by
\begin{align}\label{4.109}
	K_{\boldsymbol\alpha}(\Lambda,\Lambda')
	=
	\min\{1, M_{\boldsymbol\alpha}(\Lambda-\Lambda') \},
	\;\;\;\;\;
	L_{\boldsymbol\alpha}(\Lambda,\Lambda')
	=
	M_{\boldsymbol\alpha}(\Lambda) + M_{\boldsymbol\alpha}(\Lambda')
\end{align}
for $\Lambda,\Lambda'\in{\cal L}({\cal X} )$.
$L_{\boldsymbol\alpha, \boldsymbol y}^{m:n}(\Lambda,\Lambda')$ and $M_{\boldsymbol\alpha, \boldsymbol y}^{m:n}(\Lambda)$
are the functions defined by
\begin{align}\label{4.5}
	L_{\boldsymbol\alpha, \boldsymbol y}^{m:n}(\Lambda,\Lambda')
	=
	\left( L_{\boldsymbol\alpha}(\Lambda,\Lambda') \Phi_{\boldsymbol y}^{m:n} \right)^{|\boldsymbol\alpha|},
	\;\;\;\;\;
	M_{\boldsymbol\alpha, \boldsymbol y}^{m:n}(\Lambda)
	=
	\left( M_{\boldsymbol\alpha}(\Lambda) \Psi_{\boldsymbol y}^{m:n} \right)^{|\boldsymbol\alpha|}
\end{align}
for $n\geq m\geq 0$.

\begin{remark}
Throughout this and subsequent sections, the following convention is applied.
Diacritic $\tilde{}$ is used to denote a locally defined quantity,
i.e., a quantity whose definition holds only within the proof where
the quantity appears.
\end{remark}

\begin{lemma}\label{lemma1.2}
Let Assumptions \ref{a1.1} and \ref{a1.2} hold.
Then, there exists a real number $C_{1}\in[1,\infty )$
(depending only on $\varepsilon$)
such that
\begin{align}
	&\label{l1.2.5*}
	\left\|
	\frac{R_{\theta, y}^{\boldsymbol\alpha}(\tilde{\lambda} )}
	{\big\langle R_{\theta, y}^{\boldsymbol 0}(\lambda ) \big\rangle }
	\right\|
	\leq
	C_{1} \left(\psi(y) \right)^{|\boldsymbol\alpha|}
	\|\tilde{\lambda} \|,
	\\
	&\label{l1.2.7*}
	\left\|
	\frac{R_{\theta, y}^{\boldsymbol\alpha}(\tilde{\lambda} )}
	{\big\langle R_{\theta, y}^{\boldsymbol 0}(\lambda) \big\rangle }
	-
	\frac{R_{\theta, y}^{\boldsymbol\alpha}(\tilde{\lambda}' )}
	{\big\langle R_{\theta, y}^{\boldsymbol 0}(\lambda' ) \big\rangle }
	\right\|
	\leq
	C_{1} \left(\psi(y) \right)^{|\boldsymbol\alpha|}
	\left(\|\tilde{\lambda} - \tilde{\lambda}' \|
	+ \|\lambda - \lambda' \| \|\tilde{\lambda}' \| \right)
\end{align}
for all $\theta\in\Theta$, $y\in{\cal Y}$, $\lambda, \lambda'\in{\cal P}({\cal X} )$,
$\tilde{\lambda},\tilde{\lambda}'\in{\cal M}_{s}({\cal X} )$
and any multi-index $\boldsymbol\alpha\in\mathbb{N}_{0}^{d}$, $|\boldsymbol\alpha|\leq p$.
\end{lemma}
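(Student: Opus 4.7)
The plan is to exploit the two estimates provided by Assumptions \ref{a1.1} and \ref{a1.2}. First, by Assumption \ref{a1.2}, for any multi-index $\boldsymbol\alpha$ with $|\boldsymbol\alpha|\leq p$ (including $\boldsymbol\alpha=\boldsymbol 0$, where the bound is trivial with $(\psi(y))^{0}=1$), the signed density $\partial_{\theta}^{\boldsymbol\alpha} r_{\theta}(y,x|x')$ is dominated in absolute value by $(\psi(y))^{|\boldsymbol\alpha|} r_{\theta}(y,x|x')$. Combined with the Jordan decomposition of $\tilde{\lambda}$, this yields
\begin{align*}
	\|R_{\theta,y}^{\boldsymbol\alpha}(\tilde{\lambda})\|
	\leq
	(\psi(y))^{|\boldsymbol\alpha|}
	\int\!\!\int r_{\theta}(y,x|x')\,\mu(dx)\,|\tilde{\lambda}|(dx')
	\leq
	\frac{(\psi(y))^{|\boldsymbol\alpha|}\mu_{\theta}({\cal X}|y)}{\varepsilon}\|\tilde{\lambda}\|,
\end{align*}
where the last step uses the upper half of Assumption \ref{a1.1}. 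Since $\lambda$ is a probability measure, the lower half of Assumption \ref{a1.1} yields $\langle R_{\theta,y}^{\boldsymbol 0}(\lambda)\rangle \geq \varepsilon\,\mu_{\theta}({\cal X}|y)$. Taking the ratio proves (\ref{l1.2.5*}) with $C_{1}=1/\varepsilon^{2}$, and cancels the unknown quantity $\mu_{\theta}({\cal X}|y)$.

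For (\ref{l1.2.7*}), I would use the algebraic identity
\begin{align*}
	\frac{R_{\theta,y}^{\boldsymbol\alpha}(\tilde{\lambda})}{\langle R_{\theta,y}^{\boldsymbol 0}(\lambda)\rangle}
	-
	\frac{R_{\theta,y}^{\boldsymbol\alpha}(\tilde{\lambda}')}{\langle R_{\theta,y}^{\boldsymbol 0}(\lambda')\rangle}
	=
	\frac{R_{\theta,y}^{\boldsymbol\alpha}(\tilde{\lambda}-\tilde{\lambda}')}{\langle R_{\theta,y}^{\boldsymbol 0}(\lambda)\rangle}
	+
	\frac{R_{\theta,y}^{\boldsymbol\alpha}(\tilde{\lambda}')\,\langle R_{\theta,y}^{\boldsymbol 0}(\lambda'-\lambda)\rangle}{\langle R_{\theta,y}^{\boldsymbol 0}(\lambda)\rangle\,\langle R_{\theta,y}^{\boldsymbol 0}(\lambda')\rangle},
\end{align*}
and bound the two summands separately. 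The first summand is controlled directly by (\ref{l1.2.5*}) applied to the signed measure $\tilde{\lambda}-\tilde{\lambda}'$, giving $(\psi(y))^{|\boldsymbol\alpha|}\|\tilde{\lambda}-\tilde{\lambda}'\|/\varepsilon^{2}$. For the second summand, the numerator is estimated using the same dominations as above: $\|R_{\theta,y}^{\boldsymbol\alpha}(\tilde{\lambda}')\|\leq (\psi(y))^{|\boldsymbol\alpha|}\mu_{\theta}({\cal X}|y)\|\tilde{\lambda}'\|/\varepsilon$ and $|\langle R_{\theta,y}^{\boldsymbol 0}(\lambda'-\lambda)\rangle|\leq \mu_{\theta}({\cal X}|y)\|\lambda-\lambda'\|/\varepsilon$, while each factor in the denominator is bounded below by $\varepsilon\,\mu_{\theta}({\cal X}|y)$. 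All factors of $\mu_{\theta}({\cal X}|y)$ cancel and one obtains a bound of $(\psi(y))^{|\boldsymbol\alpha|}\|\tilde{\lambda}'\|\|\lambda-\lambda'\|/\varepsilon^{4}$. Choosing $C_{1}=1/\varepsilon^{4}$ (which also suffices for (\ref{l1.2.5*})) completes the proof.

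There is no genuine obstacle: the whole argument is a two-line estimate per summand, and its significance lies in the fact that the unknown normalising quantity $\mu_{\theta}({\cal X}|y)$ is eliminated through cancellation, leaving constants that depend only on $\varepsilon$. Everything else in the lemma follows by linearity and by handling the sign of $\tilde{\lambda},\tilde{\lambda}'$ via the Jordan decomposition.
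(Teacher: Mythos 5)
Your proposal is correct and follows essentially the same route as the paper: the same domination of $\partial_{\theta}^{\boldsymbol\alpha} r_{\theta}$ by $(\psi(y))^{|\boldsymbol\alpha|} r_{\theta}$ combined with the two-sided bounds of Assumption \ref{a1.1} (with the normalising factor $\mu_{\theta}({\cal X}|y)$ cancelling), and the same two-term decomposition of the difference of ratios, yielding the same constant $C_{1}=\varepsilon^{-4}$.
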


\begin{proof}
Throughout the proof, we rely on the following notation.
$C_{1}$ is the real number defined by $C_{1}=\varepsilon^{-4}$
($\varepsilon$ is specified in Assumption \ref{a1.1}).
$\theta$, $y$ are any elements in $\Theta$, ${\cal Y}$ (respectively).
$\lambda,\lambda'$ are any elements of ${\cal P}({\cal X} )$,
while $\tilde{\lambda},\tilde{\lambda}'$ are any elements in ${\cal M}_{s}({\cal X} )$.
$\boldsymbol\alpha$ is any element of $\mathbb{N}_{0}^{d}$
satisfying $|\boldsymbol\alpha|\leq p$.

Owing to Assumption \ref{a1.1}, we have
\begin{align}\label{l1.2.201}
	\big\langle R_{\theta,y}^{\boldsymbol 0}(\lambda ) \big\rangle
	=
	\int\int r_{\theta}(y,x'|x) \mu(dx') \lambda(dx)
	\geq
	\varepsilon \mu_{\theta}({\cal X}|y).
\end{align}
Moreover, due to Assumptions \ref{a1.1}, \ref{a1.2}, we have
\begin{align}\label{l1.2.203}
	\left\|
	R_{\theta,y}^{\boldsymbol\alpha}(\tilde{\lambda} )
	\right\|
	\leq %&
	\int\int \left|\partial_{\theta}^{\boldsymbol\alpha} r_{\theta}(y,x'|x) \right|
	\mu(dx') \: |\tilde{\lambda}|(dx)
	%\nonumber\\
	\leq &
	\left(\psi(y) \right)^{|\boldsymbol\alpha|}
	\int\int r_{\theta}(y,x'|x)
	\mu(dx') \: |\tilde{\lambda}|(dx)
	\nonumber\\
	\leq &
	\varepsilon^{-1}
	\left(\psi(y) \right)^{|\boldsymbol\alpha|} \:
	\|\tilde{\lambda} \| \mu_{\theta}({\cal X}|y).
\end{align}
Combining (\ref{l1.2.201}), (\ref{l1.2.203}), we get
\begin{align}\label{l1.2.1}
	\left\|
	\frac{R_{\theta, y}^{\boldsymbol\alpha}(\tilde{\lambda} )}
	{\big\langle R_{\theta, y}^{\boldsymbol 0}(\lambda ) \big\rangle }
	\right\|
	\leq
	\varepsilon^{-2} \left(\psi(y) \right)^{|\boldsymbol\alpha|}
	\|\tilde{\lambda} \|.
\end{align}
Consequently, we have
\begin{align}\label{l1.2.3}
	\left\|
	\frac{R_{\theta, y}^{\boldsymbol\alpha}(\tilde{\lambda} )}
	{\big\langle R_{\theta, y}^{\boldsymbol 0}(\lambda ) \big\rangle }
	-
	\frac{R_{\theta, y}^{\boldsymbol\alpha}(\tilde{\lambda}' )}
	{\big\langle R_{\theta, y}^{\boldsymbol 0}(\lambda' ) \big\rangle }
	\right\|
	\leq &
	\frac{\left\|R_{\theta, y}^{\boldsymbol\alpha}(\tilde{\lambda} )
	- R_{\theta, y}^{\boldsymbol\alpha}(\tilde{\lambda}' )\right\| }
	{\big\langle R_{\theta, y}^{\boldsymbol 0}(\lambda ) \big\rangle }
	+
	\frac{\left\|R_{\theta, y}^{\boldsymbol\alpha}(\tilde{\lambda}' )\right\|
	\left|\big\langle R_{\theta, y}^{\boldsymbol 0}(\lambda ) \big\rangle
	- \big\langle R_{\theta, y}^{\boldsymbol 0}(\lambda' ) \big\rangle\right|}
	{\big\langle R_{\theta, y}^{\boldsymbol 0}(\lambda ) \big\rangle
	\big\langle R_{\theta, y}^{\boldsymbol 0}(\lambda' ) \big\rangle }
	\nonumber\\
	\leq &
	\frac{\left\|R_{\theta, y}^{\boldsymbol\alpha}(\tilde{\lambda} - \tilde{\lambda}' ) \right\| }
	{\big\langle R_{\theta, y}^{\boldsymbol 0}(\lambda' ) \big\rangle }
	+
	\frac{\left\|R_{\theta, y}^{\boldsymbol\alpha}(\tilde{\lambda}' )\right\|
	\left\|R_{\theta, y}^{\boldsymbol 0}(\lambda - \lambda' ) \right\|}
	{\big\langle R_{\theta, y}^{\boldsymbol 0}(\lambda ) \big\rangle
	\big\langle R_{\theta, y}^{\boldsymbol 0}(\lambda' ) \big\rangle }
	\nonumber\\
	\leq &
	\varepsilon^{-4}
	\left(\psi(y) \right)^{|\boldsymbol\alpha|}
	\left(\|\tilde{\lambda} - \tilde{\lambda}' \|
	+ \|\lambda - \lambda' \| \|\tilde{\lambda}' \| \right).
\end{align}
Then, (\ref{l1.2.5*}), (\ref{l1.2.7*}) directly follow from (\ref{l1.2.1}), (\ref{l1.2.3}).
\end{proof}

\begin{lemma}\label{lemma1.4}
Let Assumptions \ref{a1.1} and \ref{a1.2} hold.
Then, there exists a real number $C_{2}\in[1,\infty )$
(depending only on $p$, $\varepsilon$)
such that
\begin{align}
	&\label{l1.4.1*}
	\left\|
	T_{\theta, y}^{\boldsymbol\alpha,\boldsymbol\beta}(\Lambda )
	\right\|
	\leq
	C_{2}
	\left(\psi(y) \right)^{|\boldsymbol\alpha-\boldsymbol\beta|}
	\|\lambda_{\boldsymbol\beta} \|,
	\\
	&\label{l1.4.3*}
	\left\|
	S_{\theta, y}^{\boldsymbol\alpha}(\Lambda )
	\right\|
	\leq
	C_{2}
	\sum_{\stackrel{\scriptstyle \boldsymbol\gamma\in\mathbb{N}_{0}^{d} }
	{\boldsymbol\gamma\leq\boldsymbol\alpha} }
	\left(\psi(y) \right)^{|\boldsymbol\alpha-\boldsymbol\gamma|}
	\|\lambda_{\boldsymbol\gamma} \|,
	\\
	&\label{l1.3.5*}
	\left\|
	T_{\theta, y}^{\boldsymbol\alpha,\boldsymbol\beta}(\Lambda )
	-
	T_{\theta, y}^{\boldsymbol\alpha,\boldsymbol\beta}(\Lambda' )
	\right\|
	\leq
	C_{2}
	\left(\psi(y) \right)^{|\boldsymbol\alpha-\boldsymbol\beta|}
	\left(
	\|\lambda_{\boldsymbol\beta} - \lambda'_{\boldsymbol\beta} \|
	+
	\|\lambda_{\boldsymbol 0} - \lambda'_{\boldsymbol 0} \|
	\|\lambda'_{\boldsymbol\beta} \|
	\right),
	\\
	&\label{l1.3.7*}
	\left\|
	S_{\theta, y}^{\boldsymbol\alpha}(\Lambda )
	-
	S_{\theta, y}^{\boldsymbol\alpha}(\Lambda' )
	\right\|
	\leq
	C_{2}
	\sum_{\stackrel{\scriptstyle \boldsymbol\gamma\in\mathbb{N}_{0}^{d} }
	{\boldsymbol\gamma\leq\boldsymbol\alpha} }
	\left(\psi(y) \right)^{|\boldsymbol\alpha-\boldsymbol\gamma|}
	\left(
	\|\lambda_{\boldsymbol\gamma} - \lambda'_{\boldsymbol\gamma} \|
	+
	\|\lambda_{\boldsymbol 0} - \lambda'_{\boldsymbol 0} \|
	\|\lambda''_{\boldsymbol\gamma} \|
	\right)
\end{align}
for all $\theta\in\Theta$, $y\in{\cal Y}$,
$\Lambda=\left\{\lambda_{\boldsymbol\gamma}:\boldsymbol\gamma\in\mathbb{N}_{0}^{d},
|\boldsymbol\gamma|\leq p \right\}\in{\cal L}_{0}({\cal X} )$,
$\Lambda'=\left\{\lambda'_{\boldsymbol\gamma}:\boldsymbol\gamma\in\mathbb{N}_{0}^{d},
|\boldsymbol\gamma|\leq p \right\}\in{\cal L}_{0}({\cal X} )$
and any multi-indices
$\boldsymbol\alpha,\boldsymbol\beta\in\mathbb{N}_{0}^{d}$,
$\boldsymbol\beta\leq\boldsymbol\alpha$, $|\boldsymbol\alpha|\leq p$.
\end{lemma}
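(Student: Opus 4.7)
The plan is to derive all four estimates directly from Lemma \ref{lemma1.2}, using the explicit definition (\ref{4.103}) of $T_{\theta,y}^{\boldsymbol\alpha,\boldsymbol\beta}(\Lambda)$, the expansion (\ref{4.151}) of $S_{\theta,y}^{\boldsymbol\alpha}(\Lambda)$, and the identity $F_{\theta,y}^{\boldsymbol 0}(\Lambda)=R_{\theta,y}^{\boldsymbol 0}(\lambda_{\boldsymbol 0})/\langle R_{\theta,y}^{\boldsymbol 0}(\lambda_{\boldsymbol 0})\rangle$ (already derived just after (\ref{4.151})). The pivotal observation is that $F_{\theta,y}^{\boldsymbol 0}(\Lambda)$ is a probability measure, because $\Lambda\in\mathcal{L}_{0}(\mathcal{X})$ forces $\lambda_{\boldsymbol 0}\in\mathcal{P}(\mathcal{X})$; in particular $\|F_{\theta,y}^{\boldsymbol 0}(\Lambda)\|=1$ and $\|\lambda'_{\boldsymbol 0}\|=\|\lambda_{\boldsymbol 0}\|=1$ throughout. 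Once this is in hand, every $T$- and $S$-expression becomes a finite linear combination (with at most $d(p)$ terms and multinomial coefficients $\binom{\boldsymbol\alpha}{\boldsymbol\beta}\leq 2^{p}$) of ratios that Lemma \ref{lemma1.2} controls, so all constants can be absorbed into a single $C_{2}$ depending only on $p$ and $\varepsilon$.

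For the size estimate (\ref{l1.4.1*}), I would split $T_{\theta,y}^{\boldsymbol\alpha,\boldsymbol\beta}(\Lambda)$ into its two summands from (\ref{4.103}). The first, $R_{\theta,y}^{\boldsymbol\alpha-\boldsymbol\beta}(\lambda_{\boldsymbol\beta})/\langle R_{\theta,y}^{\boldsymbol 0}(\lambda_{\boldsymbol 0})\rangle$, is bounded by (\ref{l1.2.5*}). For the second, $F_{\theta,y}^{\boldsymbol 0}(\Lambda)\langle R_{\theta,y}^{\boldsymbol\alpha-\boldsymbol\beta}(\lambda_{\boldsymbol\beta})\rangle/\langle R_{\theta,y}^{\boldsymbol 0}(\lambda_{\boldsymbol 0})\rangle$, I would factor out $F_{\theta,y}^{\boldsymbol 0}(\Lambda)$ in total variation, use $\|F_{\theta,y}^{\boldsymbol 0}(\Lambda)\|=1$, and bound the remaining scalar ratio again with (\ref{l1.2.5*}). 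Adding the two contributions gives (\ref{l1.4.1*}) with constant $2C_{1}$. The bound (\ref{l1.4.3*}) on $\|S_{\theta,y}^{\boldsymbol\alpha}(\Lambda)\|$ then follows immediately by applying (\ref{l1.2.5*}) term-by-term to the explicit expansion in (\ref{4.151}) and absorbing $\binom{\boldsymbol\alpha}{\boldsymbol\beta}$ into $C_{2}$.

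The Lipschitz estimates (\ref{l1.3.5*}) and (\ref{l1.3.7*}) proceed identically, but now a product-rule step is required. For the second summand of $T_{\theta,y}^{\boldsymbol\alpha,\boldsymbol\beta}(\Lambda)-T_{\theta,y}^{\boldsymbol\alpha,\boldsymbol\beta}(\Lambda')$ I would write
\begin{align*}
F_{\theta,y}^{\boldsymbol 0}(\Lambda)\,A - F_{\theta,y}^{\boldsymbol 0}(\Lambda')\,A'
=
\bigl(F_{\theta,y}^{\boldsymbol 0}(\Lambda)-F_{\theta,y}^{\boldsymbol 0}(\Lambda')\bigr)A
+
F_{\theta,y}^{\boldsymbol 0}(\Lambda')\bigl(A-A'\bigr),
\end{align*}
where $A=\langle R_{\theta,y}^{\boldsymbol\alpha-\boldsymbol\beta}(\lambda_{\boldsymbol\beta})\rangle/\langle R_{\theta,y}^{\boldsymbol 0}(\lambda_{\boldsymbol 0})\rangle$ and likewise for $A'$. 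The difference $\|F_{\theta,y}^{\boldsymbol 0}(\Lambda)-F_{\theta,y}^{\boldsymbol 0}(\Lambda')\|$ is controlled by (\ref{l1.2.7*}) with $\boldsymbol\alpha=\boldsymbol 0$ and $\tilde\lambda=\lambda_{\boldsymbol 0}$, $\tilde\lambda'=\lambda'_{\boldsymbol 0}$ (using $\|\lambda'_{\boldsymbol 0}\|=1$), giving $2C_{1}\|\lambda_{\boldsymbol 0}-\lambda'_{\boldsymbol 0}\|$; $|A|$ and $|A-A'|$ are controlled by (\ref{l1.2.5*}) and (\ref{l1.2.7*}), respectively. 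Combined with the estimate for the first summand of $T^{\boldsymbol\alpha,\boldsymbol\beta}(\Lambda)-T^{\boldsymbol\alpha,\boldsymbol\beta}(\Lambda')$, which comes directly from (\ref{l1.2.7*}), this yields (\ref{l1.3.5*}). Summing over $\boldsymbol\beta\leq\boldsymbol\alpha$ via (\ref{4.151}) and absorbing the coefficients into $C_{2}$ gives (\ref{l1.3.7*}).

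The main obstacle is purely organisational: keeping track of the algebra so that (a) the exponent of $\psi(y)$ on the right-hand side ends up as exactly $|\boldsymbol\alpha-\boldsymbol\beta|$ (or $|\boldsymbol\alpha-\boldsymbol\gamma|$ in the $S$-estimate), and (b) the only norms that appear are $\|\lambda_{\boldsymbol\gamma}\|$, $\|\lambda_{\boldsymbol\gamma}-\lambda'_{\boldsymbol\gamma}\|$ and $\|\lambda_{\boldsymbol 0}-\lambda'_{\boldsymbol 0}\|$, with no spurious cross-factors exceeding $1$. The normalisation $\|\lambda_{\boldsymbol 0}\|=\|\lambda'_{\boldsymbol 0}\|=1$ is what prevents such cross-factors from arising; the remaining dependence on $p$ lives entirely in the number of terms and the multinomial coefficients, and these are absorbed into the single constant $C_{2}$.
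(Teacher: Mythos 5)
Your proposal is correct and follows essentially the same route as the paper: bound $T_{\theta,y}^{\boldsymbol\alpha,\boldsymbol\beta}$ directly from its definition (\ref{4.103}) using Lemma \ref{lemma1.2} together with $F_{\theta,y}^{\boldsymbol 0}(\Lambda)\in{\cal P}({\cal X})$ (so $\|F_{\theta,y}^{\boldsymbol 0}(\Lambda)\|=\|\lambda_{\boldsymbol 0}\|=\|\lambda'_{\boldsymbol 0}\|=1$), then sum over $\boldsymbol\gamma\leq\boldsymbol\alpha$ in (\ref{4.151}) and absorb the at most $2^{p}$ multinomial coefficients into $C_{2}$, exactly as in the paper's choice $C_{2}=2^{p}C_{1}$. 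One minor point: your product-rule split pairs the difference $F_{\theta,y}^{\boldsymbol 0}(\Lambda)-F_{\theta,y}^{\boldsymbol 0}(\Lambda')$ with $A$ rather than $A'$, so the cross term comes out as $\|\lambda_{\boldsymbol 0}-\lambda'_{\boldsymbol 0}\|\,\|\lambda_{\boldsymbol\beta}\|$ instead of the stated $\|\lambda_{\boldsymbol 0}-\lambda'_{\boldsymbol 0}\|\,\|\lambda'_{\boldsymbol\beta}\|$; either swap the split (as the paper does) or note $\|\lambda_{\boldsymbol\beta}\|\leq\|\lambda_{\boldsymbol\beta}-\lambda'_{\boldsymbol\beta}\|+\|\lambda'_{\boldsymbol\beta}\|$ and $\|\lambda_{\boldsymbol 0}-\lambda'_{\boldsymbol 0}\|\leq 2$, which only enlarges the constant.
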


\begin{proof}
Throughout the proof, we rely on the following notation.
$C_{2}$ is the real number defined by $C_{2}=2^{p}C_{1}$
($C_{1}$ is specified in Lemma \ref{lemma1.2}).
$\theta$, $y$ are any elements in $\Theta$, ${\cal Y}$ (respectively),
while $\Lambda\!=\!\left\{\lambda_{\boldsymbol\gamma}:\boldsymbol\gamma\in\mathbb{N}_{0}^{d},
|\boldsymbol\gamma|\leq p \right\}$,
$\Lambda'=\left\{\lambda'_{\boldsymbol\gamma}:\boldsymbol\gamma\in\mathbb{N}_{0}^{d},
|\boldsymbol\gamma|\leq p \right\}$.
$\boldsymbol\alpha,\boldsymbol\beta$ are any elements of $\mathbb{N}_{0}^{d}$
satisfying
%$\boldsymbol\alpha \neq \boldsymbol 0$,
%$\boldsymbol\alpha \neq \boldsymbol\beta$,
$\boldsymbol\beta\leq\boldsymbol\alpha$, $|\boldsymbol\alpha|\leq p$.

Since
$\sum_{\stackrel{\scriptstyle \boldsymbol\gamma\in\mathbb{N}_{0}^{d} }
{\boldsymbol\gamma\leq\boldsymbol\alpha } }\left(\boldsymbol \alpha \atop \boldsymbol\gamma \right)
=2^{|\boldsymbol\alpha|}$,
Lemma \ref{lemma1.2} and (\ref{4.151}) imply
\begin{align}\label{l1.3.1}
	\left\|S_{\theta, y }^{\boldsymbol\alpha }(\Lambda ) \right\|
	\leq %&
	\sum_{\stackrel{\scriptstyle \boldsymbol\gamma\in\mathbb{N}_{0}^{d} }
	{\boldsymbol\gamma\leq\boldsymbol\alpha } }
	\left(\boldsymbol \alpha \atop \boldsymbol\gamma \right)
	\left\|
	\frac{R_{\theta,y}^{\boldsymbol\alpha-\boldsymbol\gamma}(\lambda_{\boldsymbol\gamma} ) }
	{\big\langle R_{\theta,y}^{\boldsymbol 0}(\lambda_{\boldsymbol 0} ) \big\rangle }
	\right\|
	%\nonumber\\
	\leq &
	2^{|\boldsymbol\alpha|}C_{1}
	\sum_{\stackrel{\scriptstyle \boldsymbol\gamma\in\mathbb{N}_{0}^{d} }
	{\boldsymbol\gamma\leq\boldsymbol\alpha } }
	\left(\psi(y) \right)^{|\boldsymbol\alpha-\boldsymbol\gamma|}
	\|\lambda_{\boldsymbol\gamma} \|.
\end{align}
As $F_{\theta,y}^{\boldsymbol 0}(\Lambda )=
R_{\theta,y}^{\boldsymbol 0}(\lambda_{\boldsymbol 0} ) /
\big\langle R_{\theta,y}^{\boldsymbol 0}(\lambda_{\boldsymbol 0} ) \big\rangle
\in{\cal P}({\cal X} )$,
the same arguments and (\ref{4.103}) yield
\begin{align}\label{l1.3.3}
	\left\|
	T_{\theta, y }^{\boldsymbol\alpha,\boldsymbol\beta}(\Lambda )
	\right\|
	\leq &
	\left\|
	\frac{R_{\theta,y}^{\boldsymbol\alpha-\boldsymbol\beta}(\lambda_{\boldsymbol\beta} ) }
	{\big\langle R_{\theta,y}^{\boldsymbol 0}(\lambda_{\boldsymbol 0} ) \big\rangle }
	\right\|
	+
	\left\|F_{\theta,y}^{\boldsymbol 0}(\Lambda ) \right\|
	\left|
	\frac{\big\langle R_{\theta,y}^{\boldsymbol\alpha-\boldsymbol\beta}(\lambda_{\boldsymbol\beta} ) \big\rangle }
	{\big\langle R_{\theta,y}^{\boldsymbol 0}(\lambda_{\boldsymbol 0} ) \big\rangle }
	\right|
%	\nonumber\\
	\leq %&
	2
	\left\|
	\frac{R_{\theta,y}^{\boldsymbol\alpha-\boldsymbol\beta}(\lambda_{\boldsymbol\beta} ) }
	{\big\langle R_{\theta,y}^{\boldsymbol 0}(\lambda_{\boldsymbol 0} ) \big\rangle }
	\right\|
%	\nonumber\\
	\leq %&
	2 C_{1}
	\left(\psi(y) \right)^{|\boldsymbol\alpha-\boldsymbol\beta|}
	\|\lambda_{\boldsymbol\beta}\|.
\end{align}
Then, (\ref{l1.4.1*}), (\ref{l1.4.3*})
directly follow from (\ref{l1.3.1}), (\ref{l1.3.3}).

Using Lemma \ref{lemma1.2} and (\ref{4.151}), we conclude
\begin{align}\label{l1.3.5}
	\left\|
	S_{\theta, y }^{\boldsymbol\alpha }(\Lambda )
	-
	S_{\theta, y }^{\boldsymbol\alpha }(\Lambda' )
	\right\|
	\leq &
	\sum_{\stackrel{\scriptstyle \boldsymbol\gamma\in\mathbb{N}_{0}^{d} }
	{\boldsymbol\gamma\leq\boldsymbol\alpha } }
	\left(\boldsymbol \alpha \atop \boldsymbol\gamma \right)
	\left\|
	\frac{R_{\theta,y}^{\boldsymbol\alpha-\boldsymbol\gamma}(\lambda_{\boldsymbol\gamma} ) }
	{\big\langle R_{\theta,y}^{\boldsymbol 0}(\lambda_{\boldsymbol 0} ) \big\rangle }
	-
	\frac{R_{\theta,y}^{\boldsymbol\alpha-\boldsymbol\gamma}(\lambda'_{\boldsymbol\gamma} ) }
	{\big\langle R_{\theta,y}^{\boldsymbol 0}(\lambda'_{\boldsymbol 0} ) \big\rangle }
	\right\|
	\nonumber\\
	\leq &
	2^{p}C_{1}
	\sum_{\stackrel{\scriptstyle \boldsymbol\gamma\in\mathbb{N}_{0}^{d} }
	{\boldsymbol\gamma\leq\boldsymbol\alpha } }
	\left(\psi(y) \right)^{|\boldsymbol\alpha-\boldsymbol\gamma|}
	\left(
	\|\lambda_{\boldsymbol\gamma} - \lambda'_{\boldsymbol\gamma} \|
	+
	\|\lambda_{\boldsymbol 0} - \lambda'_{\boldsymbol 0} \| \|\lambda'_{\boldsymbol\gamma} \|
	\right).
\end{align}
Relying on the same arguments and (\ref{4.103}), we deduce
\begin{align}\label{l1.3.7}
	\left\|
	T_{\theta, y }^{\boldsymbol\alpha,\boldsymbol\beta}(\Lambda )
	-
	T_{\theta, y }^{\boldsymbol\alpha,\boldsymbol\beta}(\Lambda' )
	\right\|
	\leq &
	\left\|
	\frac{R_{\theta,y}^{\boldsymbol\alpha-\boldsymbol\beta}(\lambda_{\boldsymbol\beta} ) }
	{\big\langle R_{\theta,y}^{\boldsymbol 0}(\lambda_{\boldsymbol 0} ) \big\rangle }
	-
	\frac{R_{\theta,y}^{\boldsymbol\alpha-\boldsymbol\beta}(\lambda'_{\boldsymbol\beta} ) }
	{\big\langle R_{\theta,y}^{\boldsymbol 0}(\lambda'_{\boldsymbol 0} ) \big\rangle }
	\right\|
	+
	\left\|
	F_{\theta,y}^{\boldsymbol 0}(\Lambda )
	-
	F_{\theta,y}^{\boldsymbol 0}(\Lambda' )
	\right\|
	\left|
	\frac
	{\big\langle R_{\theta,y}^{\boldsymbol\alpha-\boldsymbol\beta}(\lambda'_{\boldsymbol\beta} ) \big\rangle }
	{\big\langle R_{\theta,y}^{\boldsymbol 0}(\lambda'_{\boldsymbol 0} ) \big\rangle }
	\right|
	\nonumber\\
	&+
	\left\|
	F_{\theta,y}^{\boldsymbol 0}(\Lambda )
	\right\|
	\left|
	\frac
	{\big\langle R_{\theta,y}^{\boldsymbol\alpha-\boldsymbol\beta}(\lambda_{\boldsymbol\beta} ) \big\rangle }
	{\big\langle R_{\theta,y}^{\boldsymbol 0}(\lambda_{\boldsymbol 0} ) \big\rangle }
	-
	\frac
	{\big\langle R_{\theta,y}^{\boldsymbol\alpha-\boldsymbol\beta}(\lambda'_{\boldsymbol\beta} ) \big\rangle}
	{\big\langle R_{\theta,y}^{\boldsymbol 0}(\lambda'_{\boldsymbol 0} ) \big\rangle }
	\right|
	\nonumber\\
	\leq &
	2
	\left\|
	\frac{R_{\theta,y}^{\boldsymbol\alpha-\boldsymbol\beta}(\lambda_{\boldsymbol\beta} ) }
	{\big\langle R_{\theta,y}^{\boldsymbol 0}(\lambda_{\boldsymbol 0} ) \big\rangle }
	-
	\frac{R_{\theta,y}^{\boldsymbol\alpha-\boldsymbol\beta}(\lambda'_{\boldsymbol\beta} ) }
	{\big\langle R_{\theta,y}^{\boldsymbol 0}(\lambda'_{\boldsymbol 0} ) \big\rangle }
	\right\|
	+
	\left\|
	\frac{R_{\theta,y}^{\boldsymbol 0}(\lambda_{\boldsymbol 0} ) }
	{\big\langle R_{\theta,y}^{\boldsymbol 0}(\lambda_{\boldsymbol 0} ) \big\rangle }
	-
	\frac{R_{\theta,y}^{\boldsymbol 0}(\lambda'_{\boldsymbol 0} ) }
	{\big\langle R_{\theta,y}^{\boldsymbol 0}(\lambda'_{\boldsymbol 0} ) \big\rangle }
	\right\|
	\left\|
	\frac
	{R_{\theta,y}^{\boldsymbol\alpha-\boldsymbol\beta}(\lambda'_{\boldsymbol\beta} ) }
	{\big\langle R_{\theta,y}^{\boldsymbol 0}(\lambda'_{\boldsymbol 0} ) \big\rangle }
	\right\|
	\nonumber\\
	\leq &
	2 C_{1}
	\left(\psi(y) \right)^{|\boldsymbol\alpha-\boldsymbol\beta|}
	\left(
	\|\lambda_{\boldsymbol\beta} - \lambda'_{\boldsymbol\beta} \|
	+
	\|\lambda_{\boldsymbol 0} - \lambda'_{\boldsymbol 0} \| \|\lambda'_{\boldsymbol\beta} \|
	\right).
\end{align}
Then, (\ref{l1.3.5*}), (\ref{l1.3.7*}) directly follow from (\ref{l1.3.5}), (\ref{l1.3.7}).
\end{proof}

\begin{proposition}\label{proposition1.1}
Let Assumption \ref{a1.1} hold.
Then, there exists a real number $C_{3}\in[1,\infty )$
(depending only on $\varepsilon$) such that
\begin{align*}
	&
	\left\|G_{\theta,\boldsymbol y}^{m:n}(\lambda,\tilde{\lambda} ) \right\|
	\leq
	C_{3} \tau^{2(n-m)} \big\|\tilde{\lambda} \big\|,
	\\
	&
	\left\|G_{\theta,\boldsymbol y}^{m:n}(\lambda, \tilde{\lambda} )
	-
	G_{\theta,\boldsymbol y}^{m:n}(\lambda', \tilde{\lambda}' ) \right\|
	\leq
	C_{3} \tau^{2(n-m)}
	\left(
	\big\|\tilde{\lambda} - \tilde{\lambda}' \big\|
	+
	\big\|\lambda - \lambda' \big\| \: \big\|\tilde{\lambda}' \big\|
	\right)
\end{align*}
for all $\theta\in\Theta$, $\lambda,\lambda'\in{\cal P}({\cal X} )$,
$\tilde{\lambda},\tilde{\lambda}'\in{\cal M}_{s}({\cal X} )$,
$n\geq m\geq 0$
and any sequence $\boldsymbol y = \{y_{n}\}_{n\geq 1}$
in ${\cal Y}$
($\tau$ is defined at the beginning of Section \ref{section1.1*}).
\end{proposition}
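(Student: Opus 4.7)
My plan is to prove Proposition \ref{proposition1.1} by induction on $n-m$, exploiting long-time filter stability (Proposition \ref{proposition1.2}) rather than any per-step contraction: a direct estimate based on Lemmas \ref{lemma1.2}--\ref{lemma1.4} alone only delivers uniform per-step constants of order $\varepsilon^{-2}$, so the decay rate $\tau^{2(n-m)}$ must come from iterating filter stability. First I record two structural facts about the one-step map (\ref{4.101}): $\tilde\lambda\mapsto G_{\theta,y}(\lambda,\tilde\lambda)$ is linear, since the right-hand side of (\ref{4.101}) involves only the linear functionals $\tilde\lambda\mapsto R_{\theta,y}^{\boldsymbol 0}(\tilde\lambda)$ and $\tilde\lambda\mapsto\langle R_{\theta,y}^{\boldsymbol 0}(\tilde\lambda)\rangle$, and $\langle G_{\theta,y}(\lambda,\tilde\lambda)\rangle=0$ for every $\lambda\in{\cal P}({\cal X})$, $\tilde\lambda\in{\cal M}_s({\cal X})$, by a one-line calculation. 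Propagating these through the recursion (\ref{4.107}) shows that $G_{\theta,\boldsymbol y}^{m:k}(\lambda,\tilde\lambda)$ is zero-mass for every $k\ge m+1$, and
\begin{align*}
    G_{\theta,\boldsymbol y}^{m:n}(\lambda,\tilde\lambda)-G_{\theta,\boldsymbol y}^{m:n}(\lambda,\tilde\lambda')=G_{\theta,\boldsymbol y}^{m:n}(\lambda,\tilde\lambda-\tilde\lambda')
\end{align*}
by linearity of the iterated map in its second argument.

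For the first bound I would interpret $G_{\theta,\boldsymbol y}^{m:n}(\lambda,\tilde\lambda)$ as the G\^ateaux derivative of $\lambda\mapsto P_{\theta,\boldsymbol y}^{m:n}(\lambda)$ in direction $\tilde\lambda$, which follows by induction from the one-step identity $G_{\theta,y}(\lambda,\tilde\lambda)=\partial_{h} F_{\theta,y}^{\boldsymbol 0}(\lambda+h\tilde\lambda)|_{h=0}$ together with the chain rule applied to (\ref{4.107}). Applying Proposition \ref{proposition1.2}, which supplies $\|P_{\theta,\boldsymbol y}^{m:n}(\lambda)-P_{\theta,\boldsymbol y}^{m:n}(\lambda')\|\le K\tau^{2(n-m)}\|\lambda-\lambda'\|$ for probability measures $\lambda,\lambda'$, with $\lambda'=\lambda+h\tilde\lambda$ (valid for small $h>0$ when $\tilde\lambda$ is zero-mass and $\lambda+h\tilde\lambda\in{\cal P}({\cal X})$), dividing by $h$ and letting $h\to 0$, yields $\|G_{\theta,\boldsymbol y}^{m:n}(\lambda,\tilde\lambda)\|\le K\tau^{2(n-m)}\|\tilde\lambda\|$ for zero-mass $\tilde\lambda$. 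A general $\tilde\lambda\in{\cal M}_s({\cal X})$ is handled through the decomposition $\tilde\lambda=\langle\tilde\lambda\rangle\lambda+(\tilde\lambda-\langle\tilde\lambda\rangle\lambda)$: the colinear part contributes nothing after one step because $G_{\theta,y}(\lambda,\lambda)=0$ (direct from (\ref{4.101})), while the zero-mass part is already controlled. This gives the first estimate with $C_3$ depending only on $\varepsilon$.

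For the Lipschitz estimate I would split
\begin{align*}
    G_{\theta,\boldsymbol y}^{m:n}(\lambda,\tilde\lambda)-G_{\theta,\boldsymbol y}^{m:n}(\lambda',\tilde\lambda')
    =G_{\theta,\boldsymbol y}^{m:n}(\lambda,\tilde\lambda-\tilde\lambda')
    +\bigl[G_{\theta,\boldsymbol y}^{m:n}(\lambda,\tilde\lambda')-G_{\theta,\boldsymbol y}^{m:n}(\lambda',\tilde\lambda')\bigr],
\end{align*}
using linearity in the second argument; the first piece is handled by the first estimate already proved. The second piece is treated by a parallel induction on $n-m$: each step of the recursion yields one term whose second argument is the zero-mass difference $G_{\theta,\boldsymbol y}^{m:n-1}(\lambda,\tilde\lambda')-G_{\theta,\boldsymbol y}^{m:n-1}(\lambda',\tilde\lambda')$ (to which the first estimate applies to produce a $\tau^{2}$ damping factor) plus a residual $G_{\theta,y_n}(P_{\theta,\boldsymbol y}^{m:n-1}(\lambda),\sigma)-G_{\theta,y_n}(P_{\theta,\boldsymbol y}^{m:n-1}(\lambda'),\sigma)$ bounded by combining the Lipschitz-in-first-argument estimate of Lemma \ref{lemma1.2}, Proposition \ref{proposition1.2} applied to $\|P_{\theta,\boldsymbol y}^{m:n-1}(\lambda)-P_{\theta,\boldsymbol y}^{m:n-1}(\lambda')\|$, and the first estimate applied to $\|\sigma\|=\|G_{\theta,\boldsymbol y}^{m:n-1}(\lambda',\tilde\lambda')\|$. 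Summing the resulting geometric series in $\tau^{2}$ closes the induction with the desired rate $\tau^{2(n-m)}$. The main obstacle is extracting the correct decay rate in the first bound without the luxury of a one-step contraction: the proof must genuinely invoke the long-time filter stability of Proposition \ref{proposition1.2} to absorb the $\varepsilon^{-2}$ per-step blowup into a fixed overall constant $C_3$.
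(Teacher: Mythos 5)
The paper does not actually prove Proposition \ref{proposition1.1} internally: its ``proof'' is a citation to \cite[Lemmas 6.6, 6.7]{tadic&doucet1}, where these two bounds are established directly for the recursion (\ref{4.107}) from the mixing condition. Your plan --- realize $G_{\theta,\boldsymbol y}^{m:n}(\lambda,\tilde\lambda)$ as the directional derivative of $\lambda\mapsto P_{\theta,\boldsymbol y}^{m:n}(\lambda)$ and transfer the forgetting bound of Proposition \ref{proposition1.2} --- is therefore necessarily a different route, and its structural ingredients (linearity of $G_{\theta,y}(\lambda,\cdot)$, zero total mass of $G_{\theta,\boldsymbol y}^{m:k}(\lambda,\tilde\lambda)$ for $k>m$, the reduction $\tilde\lambda=\langle\tilde\lambda\rangle\lambda+\sigma$ with $G_{\theta,y}(\lambda,\lambda)=0$) are correct. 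But note it is not more self-contained than the paper, since Proposition \ref{proposition1.2} is itself quoted from \cite[Theorem 3.1]{tadic&doucet1} and is stated under Assumptions \ref{a1.1} and \ref{a1.2}, whereas Proposition \ref{proposition1.1} claims only Assumption \ref{a1.1}; you should at least remark that only the $\boldsymbol\alpha=\boldsymbol 0$ objects enter, so Assumption \ref{a1.2} is not genuinely used. The first concrete gap is in the G\^{a}teaux step: for a general zero-mass $\sigma\in{\cal M}_{s}({\cal X})$ the segment $\lambda+h\sigma$ need not lie in ${\cal P}({\cal X})$ for any $h>0$ (take the negative part of $\sigma$ singular with respect to $\lambda$), so the difference quotient you want to send to zero is not covered by Proposition \ref{proposition1.2} as written. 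The repair is to write $\sigma=\tfrac{1}{2}\|\sigma\|(\mu_{1}-\mu_{2})$ with $\mu_{1},\mu_{2}\in{\cal P}({\cal X})$ (Hahn--Jordan), use linearity to reduce to the admissible directions $\mu_{i}-\lambda$, and actually prove (by induction in $n$, using that the denominators are bounded below by $\varepsilon\mu_{\theta}({\cal X}|y)$ under Assumption \ref{a1.1}) that $h\mapsto P_{\theta,\boldsymbol y}^{m:n}(\lambda+h(\mu_{i}-\lambda))$ is differentiable at $h=0$ with derivative $G_{\theta,\boldsymbol y}^{m:n}(\lambda,\mu_{i}-\lambda)$; you only assert this via ``chain rule''.

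The second and more serious gap is the ``parallel induction'' for the Lipschitz bound: there is no per-step contraction available, since the one-step estimates (Lemma \ref{lemma1.2}, or the one-step case of your first bound) carry constants that are at least $1$, so a recursion of the form $\|D_{n}\|\leq C\tau^{2}\|D_{n-1}\|+\mathrm{res}_{n}$ only yields $(C\tau^{2})^{n-m}$, which is useless when $C\tau^{2}>1$; as described, the induction does not close with a constant independent of $n-m$. What does work is to unroll the recursion into a telescoping sum and apply the \emph{long-horizon} first estimate to whole blocks: writing $\sigma_{k}=G_{\theta,\boldsymbol y}^{m:k-1}(\lambda',\tilde\lambda')$, one has $G_{\theta,\boldsymbol y}^{m:n}(\lambda,\tilde\lambda')-G_{\theta,\boldsymbol y}^{m:n}(\lambda',\tilde\lambda')=\sum_{k=m+1}^{n}G_{\theta,\boldsymbol y}^{k:n}\bigl(P_{\theta,\boldsymbol y}^{m:k}(\lambda),\,G_{\theta,y_{k}}(P_{\theta,\boldsymbol y}^{m:k-1}(\lambda),\sigma_{k})-G_{\theta,y_{k}}(P_{\theta,\boldsymbol y}^{m:k-1}(\lambda'),\sigma_{k})\bigr)$, by the semigroup property of the filter; then the $k$-th term is at most $C_{3}\tau^{2(n-k)}$ (first bound applied to the block, one constant, not one per step) times a one-step Lipschitz-in-first-argument constant times $C_{4}\tau^{2(k-1-m)}\|\lambda-\lambda'\|$ (Proposition \ref{proposition1.2}) times $C_{3}\tau^{2(k-1-m)}\|\tilde\lambda'\|$ (first bound again), and summing the geometric series in $\tau^{2(k-m-1)}$ gives the rate $\tau^{2(n-m)}$ with a constant depending only on $\varepsilon$ (since $\tau=(1-\varepsilon^{2})^{1/2}$). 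This block decomposition is exactly the mechanism the paper itself uses for the higher-order terms via Lemma \ref{lemma1.12} and Proposition \ref{proposition1.3}. With these two repairs your plan goes through; as written, it does not.
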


\begin{proof}
See \cite[Lemmas 6.6, 6.7]{tadic&doucet1}.
\end{proof}

\begin{proposition}\label{proposition1.2}
Let Assumptions \ref{a1.1} and \ref{a1.2} hold.
Then, there exists a real number $C_{4}\in[1,\infty )$
(depending only on $\varepsilon$) such that
\begin{align*}
%&\label{p1.2.3*}
\left\|F_{\theta, \boldsymbol y}^{\boldsymbol 0, m:n}(\Lambda )
-
F_{\theta, \boldsymbol y}^{\boldsymbol 0, m:n}(\Lambda' ) \right\|
\leq
C_{4}
\tau^{2(n-m)}
K_{\boldsymbol 0}(\Lambda,\Lambda')
\end{align*}
for all $\theta\in\Theta$, $\Lambda,\Lambda'\in {\cal L}_{0}({\cal X} )$, $n\geq m\geq 0$
and any sequence $\boldsymbol y = \{y_{n} \}_{n\geq 1}$ in ${\cal Y}$
($\tau$ is defined at the beginning of Section \ref{section1.1*}).
\end{proposition}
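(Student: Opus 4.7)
The plan is to reduce the claim to the classical forgetting property of the optimal filter and then apply Proposition \ref{proposition1.1} via a linear-interpolation identity. The key observation is that, by (\ref{1.1}) and (\ref{1.703}), the component $\boldsymbol 0$ of $F_{\theta,y}(\Lambda)$ is
$F_{\theta,y}^{\boldsymbol 0}(\Lambda) = R_{\theta,y}^{\boldsymbol 0}(\lambda_{\boldsymbol 0})/\langle R_{\theta,y}^{\boldsymbol 0}(\lambda_{\boldsymbol 0})\rangle$,
which depends only on $\lambda_{\boldsymbol 0}$ and is exactly the one-step Bayes update of $\lambda_{\boldsymbol 0}$ at observation $y$. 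Iterating the recursion (\ref{1.705}), an induction on $n$ then gives $F_{\theta,\boldsymbol y}^{\boldsymbol 0, m:n}(\Lambda) = P_{\theta,\boldsymbol y}^{m:n}(\lambda_{\boldsymbol 0})$, so the inequality to be proved reduces to the standard stability estimate
\begin{align*}
\|P_{\theta,\boldsymbol y}^{m:n}(\lambda_{\boldsymbol 0}) - P_{\theta,\boldsymbol y}^{m:n}(\lambda'_{\boldsymbol 0})\| \leq C_{4}\,\tau^{2(n-m)}\, K_{\boldsymbol 0}(\Lambda, \Lambda').
\end{align*}

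Second, I would establish the interpolation identity
\begin{align*}
P_{\theta,\boldsymbol y}^{m:n}(\lambda_{\boldsymbol 0}) - P_{\theta,\boldsymbol y}^{m:n}(\lambda'_{\boldsymbol 0}) = \int_{0}^{1} G_{\theta,\boldsymbol y}^{m:n}\bigl(\lambda_{s},\; \lambda_{\boldsymbol 0} - \lambda'_{\boldsymbol 0}\bigr)\, ds,
\end{align*}
where $\lambda_{s} := (1-s)\lambda'_{\boldsymbol 0} + s\,\lambda_{\boldsymbol 0} \in {\cal P}({\cal X})$ for each $s \in [0,1]$. At $n = m+1$, a direct quotient-rule computation on $\lambda \mapsto R_{\theta,y}^{\boldsymbol 0}(\lambda)/\langle R_{\theta,y}^{\boldsymbol 0}(\lambda)\rangle$ gives $\frac{d}{ds} P_{\theta,\boldsymbol y}^{m:m+1}(\lambda_{s}) = G_{\theta,y_{m+1}}(\lambda_{s},\; \lambda_{\boldsymbol 0} - \lambda'_{\boldsymbol 0})$, exactly matching (\ref{4.101}). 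For $n > m+1$, induction on $n$ combining the filter recursion $P_{\theta,\boldsymbol y}^{m:n+1}(\lambda) = R_{\theta,y_{n+1}}^{\boldsymbol 0}(P_{\theta,\boldsymbol y}^{m:n}(\lambda))/\langle R_{\theta,y_{n+1}}^{\boldsymbol 0}(P_{\theta,\boldsymbol y}^{m:n}(\lambda))\rangle$ with the $G^{m:n}$ recursion (\ref{4.107}) and the chain rule yields the identity in full generality. Applying Proposition \ref{proposition1.1} pointwise in $s$ then gives $\|G_{\theta,\boldsymbol y}^{m:n}(\lambda_{s}, \lambda_{\boldsymbol 0} - \lambda'_{\boldsymbol 0})\| \leq C_{3}\tau^{2(n-m)} \|\lambda_{\boldsymbol 0} - \lambda'_{\boldsymbol 0}\|$, uniformly in $s$, so integration in $s$ delivers
\begin{align*}
\|F_{\theta,\boldsymbol y}^{\boldsymbol 0, m:n}(\Lambda) - F_{\theta,\boldsymbol y}^{\boldsymbol 0, m:n}(\Lambda')\| \leq C_{3}\,\tau^{2(n-m)}\, \|\lambda_{\boldsymbol 0} - \lambda'_{\boldsymbol 0}\|.
\end{align*}

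Third, to reconcile this with $K_{\boldsymbol 0}(\Lambda, \Lambda') = \min\{1, \|\lambda_{\boldsymbol 0} - \lambda'_{\boldsymbol 0}\|\}$, I split into cases. If $\|\lambda_{\boldsymbol 0} - \lambda'_{\boldsymbol 0}\| \leq 1$, the above bound is already of the required form with $C_{4} = C_{3}$. If $\|\lambda_{\boldsymbol 0} - \lambda'_{\boldsymbol 0}\| > 1$, the triangle inequality $\|\lambda_{\boldsymbol 0} - \lambda'_{\boldsymbol 0}\| \leq \|\lambda_{\boldsymbol 0}\| + \|\lambda'_{\boldsymbol 0}\| = 2$ combined with $K_{\boldsymbol 0}(\Lambda, \Lambda') = 1$ yields $C_{3}\tau^{2(n-m)} \|\lambda_{\boldsymbol 0} - \lambda'_{\boldsymbol 0}\| \leq 2C_{3}\tau^{2(n-m)} K_{\boldsymbol 0}(\Lambda, \Lambda')$, so the choice $C_{4} := 2C_{3}$ works in both cases. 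The main obstacle is the second step: rigorously justifying the interpolation identity for general $m < n$ requires propagating the chain rule through both the filter recursion and the $G^{m:n}$ recursion (\ref{4.107}), and checking that all $s$-derivatives are well-defined, which is routine given that Assumption \ref{a1.1} bounds $\langle R_{\theta,y}^{\boldsymbol 0}(\lambda_{s})\rangle$ from below uniformly in $s$ (so denominators never blow up along the path). Everything thereafter is a direct application of Proposition \ref{proposition1.1} and a two-case split.
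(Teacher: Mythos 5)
Your proof is correct, and its first step coincides with the paper's: showing via (\ref{1.903}), (\ref{1.905}) and the recursion (\ref{1.705}) that $F_{\theta,\boldsymbol y}^{\boldsymbol 0,m:n}(\Lambda)=P_{\theta,\boldsymbol y}^{m:n}(\lambda_{\boldsymbol 0})$, so that only the $\boldsymbol 0$-components of $\Lambda$, $\Lambda'$ matter. Where you genuinely diverge is the stability estimate itself: the paper at this point simply invokes Theorem 3.1 of the earlier Tadi\'c--Doucet reference \cite{tadic&doucet1} for the exponential forgetting of the filter, whereas you re-derive it inside the present framework by the interpolation identity
$P_{\theta,\boldsymbol y}^{m:n}(\lambda_{\boldsymbol 0})-P_{\theta,\boldsymbol y}^{m:n}(\lambda'_{\boldsymbol 0})=\int_{0}^{1}G_{\theta,\boldsymbol y}^{m:n}(\lambda_{s},\lambda_{\boldsymbol 0}-\lambda'_{\boldsymbol 0})\,ds$
along the segment $\lambda_{s}=(1-s)\lambda'_{\boldsymbol 0}+s\lambda_{\boldsymbol 0}\in{\cal P}({\cal X})$, followed by the uniform bound of Proposition \ref{proposition1.1}. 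This is legitimate: since $G_{\theta,y}(\mu,\cdot)$ in (\ref{4.101}) is exactly the directional derivative of the Bayes map and (\ref{4.107}) chain-rules it through the recursion, and since by (\ref{1.903}) the quantity $P_{\theta,\boldsymbol y}^{m:n}(B|\lambda_{s})$ is a ratio of affine functions of $s$ whose denominator is bounded below by iterating Assumption \ref{a1.1}, the setwise differentiation and the measure-valued fundamental theorem of calculus you appeal to are routine. What your route buys is self-containedness (the only imported ingredient is Proposition \ref{proposition1.1}, i.e.\ Lemmas 6.6--6.7 of the same reference, rather than an additional external theorem), at the cost of carrying out the chain-rule/FTC verification that the citation makes unnecessary. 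Your final case split on $\|\lambda_{\boldsymbol 0}-\lambda'_{\boldsymbol 0}\|\lessgtr 1$ with $C_{4}=2C_{3}$ is also slightly more careful than the paper's last display, which writes $\|\lambda_{\boldsymbol 0}-\lambda'_{\boldsymbol 0}\|=K_{\boldsymbol 0}(\Lambda,\Lambda')$ even though for probability measures the total variation distance can reach $2$; your factor $2$ (or a $\min$ absorbed into the cited theorem's constant) is exactly what is needed there.
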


\begin{proof}
Let $\theta$ be any element of $\Theta$, while
$\boldsymbol y = \{y_{n} \}_{n\geq 1}$ is any sequence in ${\cal Y}$.
Moreover, let
$\Lambda = \big\{\lambda_{\boldsymbol\beta}:
\boldsymbol\beta\in\mathbb{N}_{0}^{d}, |\boldsymbol\beta|\leq p \big\}$,
$\Lambda' = \big\{\lambda'_{\boldsymbol\beta}:
\boldsymbol\beta\in\mathbb{N}_{0}^{d}, |\boldsymbol\beta|\leq p \big\}$
be any elements of ${\cal L}_{0}({\cal X} )$,
while $n,m$ are any integers satisfying $n\geq m\geq 0$.

Using (\ref{1.903}), (\ref{1.905}), we conclude
$P_{\theta, \boldsymbol y}^{m:m+1}(\lambda_{\boldsymbol 0} ) =
F_{\theta, y_{m+1} }^{\boldsymbol 0}(\lambda_{\boldsymbol 0} )$,
$P_{\theta, \boldsymbol y}^{m:m+1}(\lambda'_{\boldsymbol 0} ) =
F_{\theta, y_{m+1} }^{\boldsymbol 0}(\lambda'_{\boldsymbol 0} )$
and
\begin{align*}
	P_{\theta, \boldsymbol y}^{m:n+1}(\lambda_{\boldsymbol 0} )
	=
	F_{\theta, y_{n+1} }^{\boldsymbol 0}\left(P_{\theta, \boldsymbol y}^{m:n}(\lambda_{\boldsymbol 0} ) \right),
	\;\;\;\;\;
	P_{\theta, \boldsymbol y}^{m:n+1}(\lambda'_{\boldsymbol 0} )
	=
	F_{\theta, y_{n+1} }^{\boldsymbol 0}\left(P_{\theta, \boldsymbol y}^{m:n}(\lambda'_{\boldsymbol 0} ) \right).
\end{align*}
Comparing this with (\ref{1.705}), we get
\begin{align*}%\label{p1.2.1}
	F_{\theta, \boldsymbol y}^{\boldsymbol 0, m:n}(\Lambda )
	=
	P_{\theta, \boldsymbol y}^{m:n}(\lambda_{\boldsymbol 0} ),
	\;\;\;\;\;
	F_{\theta, \boldsymbol y}^{\boldsymbol 0, m:n}(\Lambda' )
	=
	P_{\theta, \boldsymbol y}^{m:n}(\lambda'_{\boldsymbol 0} )
\end{align*}
(i.e., $F_{\theta, \boldsymbol y}^{\boldsymbol 0, m:n}(\Lambda )$,
$F_{\theta, \boldsymbol y}^{\boldsymbol 0, m:n}(\Lambda' )$
are the filtering distributions initialized by $\lambda_{\boldsymbol 0}$, $\lambda'_{\boldsymbol 0}$).
Consequently, \cite[Theorem 3.1]{tadic&doucet1} implies that
there exists a real number $C_{4}\in[1,\infty )$
(depending only on $\varepsilon$) such that
\begin{align}
	&\label{p1.2.3}
	\left\|F_{\theta,\boldsymbol y}^{\boldsymbol 0, m:n}(\Lambda)
	-
	F_{\theta,\boldsymbol y}^{\boldsymbol 0, m:n}(\Lambda')
	\right\|
	\leq
	C_{4} \tau^{2(n-m)} \|\lambda_{\boldsymbol 0} - \lambda'_{\boldsymbol 0} \|
	=
	C_{4} \tau^{2(n-m)} K_{\boldsymbol 0}(\Lambda,\Lambda' ).
\end{align}
\end{proof}

\begin{lemma}\label{lemma1.12}
Let Assumptions \ref{a1.1} and \ref{a1.2} hold.
Then, we have
\begin{align}\label{l1.12.1*}
	F_{\theta,\boldsymbol y}^{\boldsymbol\alpha, m:n }(\Lambda )
	=
	V_{\theta,\boldsymbol y}^{\boldsymbol\alpha, m:n }(\Lambda )
	+
	\sum_{k=m+1}^{n}
	G_{\theta,\boldsymbol y}^{k:n}\left(
	F_{\theta,\boldsymbol y}^{\boldsymbol 0, m:k}(\Lambda ),
	W_{\theta,\boldsymbol y}^{\boldsymbol\alpha, m:k }(\Lambda )
	\right)
\end{align}
for all $\theta\in\Theta$, $\Lambda\in{\cal L}_{0}({\cal X} )$, $n\geq m\geq 0$,
any multi-index $\boldsymbol\alpha\in\mathbb{N}_{0}^{d}$, $|\boldsymbol\alpha|\leq p$
and any sequence $\boldsymbol y = \{y_{n} \}_{n\geq 1}$ in ${\cal Y}$.
Here and throughout the paper, we rely on the convention that $\sum_{k=i}^{j}$ is zero
whenever $j<i$.
\end{lemma}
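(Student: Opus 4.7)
My plan is to prove this by induction on $n-m$, exploiting the recursive definition of $F_{\theta,\boldsymbol y}^{m:n}$ together with the identity $F_{\theta,y}^{\boldsymbol\alpha} = G_{\theta,y}^{\boldsymbol\alpha} + H_{\theta,y}^{\boldsymbol\alpha}$ already established in (\ref{4.1}). The base case $n=m$ is trivial: $F_{\theta,\boldsymbol y}^{\boldsymbol\alpha,m:m}(\Lambda) = \lambda_{\boldsymbol\alpha} = G_{\theta,\boldsymbol y}^{m:m}(\lambda_{\boldsymbol 0},\lambda_{\boldsymbol\alpha}) = V_{\theta,\boldsymbol y}^{\boldsymbol\alpha,m:m}(\Lambda)$, and the sum on the right-hand side is empty by the convention stated.

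For the inductive step, the two structural ingredients I would rely on are: (i) the map $\tilde\lambda\mapsto G_{\theta,y}(\lambda,\tilde\lambda)$ is linear in $\tilde\lambda$, which is immediate from (\ref{4.101}) because $R_{\theta,y}^{\boldsymbol 0}(\tilde\lambda)$ and $\langle R_{\theta,y}^{\boldsymbol 0}(\tilde\lambda)\rangle$ are both linear in $\tilde\lambda$; and (ii) the semigroup identity $P_{\theta,\boldsymbol y}^{k:n}(P_{\theta,\boldsymbol y}^{m:k}(\lambda_{\boldsymbol 0})) = P_{\theta,\boldsymbol y}^{m:n}(\lambda_{\boldsymbol 0})$ for the optimal filter. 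Combined with the identity $F_{\theta,\boldsymbol y}^{\boldsymbol 0,m:n}(\Lambda) = P_{\theta,\boldsymbol y}^{m:n}(\lambda_{\boldsymbol 0})$ established inside the proof of Proposition \ref{proposition1.2}, ingredient (ii) lets me promote any $G_{\theta,\boldsymbol y}^{k:n}(F^{\boldsymbol 0,m:k}_{\theta,\boldsymbol y}(\Lambda),\cdot)$ inside the sum into $G_{\theta,\boldsymbol y}^{k:n+1}(F^{\boldsymbol 0,m:k}_{\theta,\boldsymbol y}(\Lambda),\cdot)$ after one application of $G_{\theta,y_{n+1}}(F^{\boldsymbol 0,m:n}_{\theta,\boldsymbol y}(\Lambda),\cdot)$, via the recursion (\ref{4.107}).

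Concretely, starting from the recursion (\ref{1.705}) and (\ref{4.1}),
\begin{align*}
F_{\theta,\boldsymbol y}^{\boldsymbol\alpha,m:n+1}(\Lambda)
= F_{\theta,y_{n+1}}^{\boldsymbol\alpha}(F_{\theta,\boldsymbol y}^{m:n}(\Lambda))
= G_{\theta,y_{n+1}}\bigl(F^{\boldsymbol 0,m:n}_{\theta,\boldsymbol y}(\Lambda),F^{\boldsymbol\alpha,m:n}_{\theta,\boldsymbol y}(\Lambda)\bigr) + W_{\theta,\boldsymbol y}^{\boldsymbol\alpha,m:n+1}(\Lambda).
\end{align*}
I then substitute the inductive hypothesis for $F^{\boldsymbol\alpha,m:n}_{\theta,\boldsymbol y}(\Lambda)$ into the first term on the right. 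Linearity in the second argument lets me distribute $G_{\theta,y_{n+1}}(F^{\boldsymbol 0,m:n}_{\theta,\boldsymbol y}(\Lambda),\cdot)$ across $V^{\boldsymbol\alpha,m:n}_{\theta,\boldsymbol y}(\Lambda)$ and across each summand of the sum $\sum_{k=m+1}^{n}$. The $V$-term becomes $G_{\theta,y_{n+1}}(P^{m:n}_{\theta,\boldsymbol y}(\lambda_{\boldsymbol 0}),G^{m:n}_{\theta,\boldsymbol y}(\lambda_{\boldsymbol 0},\lambda_{\boldsymbol\alpha})) = V^{\boldsymbol\alpha,m:n+1}_{\theta,\boldsymbol y}(\Lambda)$ by (\ref{4.107}); each summand becomes $G^{k:n+1}_{\theta,\boldsymbol y}(F^{\boldsymbol 0,m:k}_{\theta,\boldsymbol y}(\Lambda),W^{\boldsymbol\alpha,m:k}_{\theta,\boldsymbol y}(\Lambda))$ by the same recursion applied at level $k$ together with the semigroup identity. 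Finally, $W^{\boldsymbol\alpha,m:n+1}_{\theta,\boldsymbol y}(\Lambda)$ is exactly the $k=n+1$ term of the extended sum, since $G^{n+1:n+1}_{\theta,\boldsymbol y}(\cdot,\tilde\lambda)=\tilde\lambda$. Collecting these yields (\ref{l1.12.1*}) at $n+1$.

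The main (though mild) obstacle is the bookkeeping: one has to verify carefully that the linearity of $G_{\theta,y}(\lambda,\cdot)$ is exactly the right property to push $G_{\theta,y_{n+1}}(F^{\boldsymbol 0,m:n}_{\theta,\boldsymbol y}(\Lambda),\cdot)$ through the telescoping sum, and that the semigroup identity aligns the base measure $F^{\boldsymbol 0,m:k}_{\theta,\boldsymbol y}(\Lambda)$ inside each term with the filter-propagation step demanded by (\ref{4.107}). No new analytic estimates are required — Assumption \ref{a1.1} enters only implicitly, through the well-posedness of all denominators appearing in (\ref{4.101}) and in the definition of $P^{m:n}_{\theta,\boldsymbol y}(\lambda)$.
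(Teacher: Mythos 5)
Your proposal is correct and follows essentially the same route as the paper's proof: induction on $n$ with base case $n=m$, the decomposition $F_{\theta,y}^{\boldsymbol\alpha}=G_{\theta,y}^{\boldsymbol\alpha}+H_{\theta,y}^{\boldsymbol\alpha}$ from (\ref{4.1}), linearity of $G_{\theta,y}(\lambda,\cdot)$, the identification $F_{\theta,\boldsymbol y}^{\boldsymbol 0,m:n}(\Lambda)=P_{\theta,\boldsymbol y}^{m:n}(\lambda_{\boldsymbol 0})$ together with the filter semigroup identity to advance each summand via (\ref{4.107}), and the observation that $W_{\theta,\boldsymbol y}^{\boldsymbol\alpha,m:n+1}(\Lambda)$ is exactly the $k=n+1$ term. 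No gaps.
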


\begin{proof}
Throughout the proof, the following notation is used.
$\theta$ is any element of $\Theta$,
while
%\linebreak
$\Lambda = \big\{\lambda_{\boldsymbol\beta}:
\boldsymbol\beta \in \mathbb{N}_{0}^{d}, |\boldsymbol\beta|\leq p \big\}$
is any element of ${\cal L}_{0}({\cal X} )$.
$m$ is any non-negative integer, while
$\boldsymbol\alpha$ is any element of $\mathbb{N}_{0}^{d}$
satisfying $|\boldsymbol\alpha|\leq p$.
$\boldsymbol y = \{y_{n} \}_{n\geq 1}$ is any sequence in ${\cal Y}$.

We prove (\ref{l1.12.1*}) by induction in $n$.
Owing to (\ref{1.705}), (\ref{4.107}), (\ref{4.121}), we have
\begin{align*}
	F_{\theta,\boldsymbol y}^{\boldsymbol\alpha, m:m}(\Lambda )
	=
	\lambda_{\boldsymbol\alpha},
	\;\;\;\;\;
	V_{\theta,\boldsymbol y}^{\boldsymbol\alpha, m:m}(\Lambda )
	=
	G_{\theta,\boldsymbol y}^{m:m}(\lambda_{\boldsymbol 0}, \lambda_{\boldsymbol\alpha} )
	=
	\lambda_{\boldsymbol\alpha}.
\end{align*}
Hence, (\ref{l1.12.1*}) is true when $n=m$.
Now, suppose that (\ref{l1.12.1*}) holds for some integer $n$ satisfying $n\geq m$.
As $G_{\theta,y}(\lambda,\tilde{\lambda} )$ is linear in $\tilde{\lambda}$,
we then get
\begin{align}\label{l1.12.1}
	G_{\theta,y_{n+1} }\left(
	F_{\theta,\boldsymbol y}^{\boldsymbol 0, m:n}(\Lambda ),
	F_{\theta,\boldsymbol y}^{\boldsymbol\alpha, m:n}(\Lambda )
	\right)
	=&
	G_{\theta,y_{n+1} }\left(
	F_{\theta,\boldsymbol y}^{\boldsymbol 0, m:n}(\Lambda ),
	V_{\theta,\boldsymbol y}^{\boldsymbol\alpha, m:n}(\Lambda )
	\right)
	\nonumber\\
	&
	+
	\sum_{k=m+1}^{n}
	G_{\theta,y_{n+1} }\left(
	F_{\theta,\boldsymbol y}^{\boldsymbol 0, m:n}(\Lambda ),
	G_{\theta,\boldsymbol y}^{k:n}\left(
	F_{\theta,\boldsymbol y}^{\boldsymbol 0, m:k}(\Lambda ),
	W_{\theta,\boldsymbol y}^{\boldsymbol\alpha, m:k}(\Lambda ) \right)
	\right).
\end{align}
Since
$P_{\theta,\boldsymbol y}^{m:n}(\lambda_{\boldsymbol 0} ) =
F_{\theta,\boldsymbol y}^{\boldsymbol 0, m:n}(\Lambda )$
(for further details, see the proof of Proposition \ref{proposition1.2}),
(\ref{4.107}), (\ref{4.121}) imply
\begin{align}\label{l1.12.3}
	G_{\theta,y_{n+1} }\left(
	F_{\theta,\boldsymbol y}^{\boldsymbol 0, m:n}(\Lambda ),
	V_{\theta,\boldsymbol y}^{\boldsymbol\alpha, m:n}(\Lambda )
	\right)
	=\!
	G_{\theta,y_{n+1} }\left(
	P_{\theta,\boldsymbol y}^{m:n}(\lambda_{\boldsymbol 0} ),
	G_{\theta,\boldsymbol y}^{m:n}(\lambda_{\boldsymbol 0}, \lambda_{\boldsymbol\alpha} )
	\right)
	=\!
	G_{\theta,\boldsymbol y}^{m:n+1}(\lambda_{\boldsymbol 0}, \lambda_{\boldsymbol\alpha} )
	=\!
	V_{\theta,\boldsymbol y}^{\boldsymbol\alpha, m:n+1}(\Lambda ).
\end{align}
Moreover, due to (\ref{1.705}), we have
\begin{align*}
	F_{\theta,\boldsymbol y}^{\boldsymbol 0,m:n}(\Lambda )
	=
	F_{\theta,\boldsymbol y}^{\boldsymbol 0,k:n}(F_{\theta,\boldsymbol y}^{\boldsymbol 0,m:k}(\Lambda ) )
	=
	P_{\theta,\boldsymbol y}^{k:n}(F_{\theta,\boldsymbol y}^{\boldsymbol 0,m:k}(\Lambda ) )
\end{align*}
(for further details, see again the proof of Proposition \ref{proposition1.2}).
Consequently, (\ref{4.107}), (\ref{4.121}) yield
\begin{align}\label{l1.12.5}
	&
	G_{\theta,y_{n+1} }\left(
	F_{\theta,\boldsymbol y}^{\boldsymbol 0, m:n}(\Lambda ),
	G_{\theta,\boldsymbol y}^{k:n}\left(
	F_{\theta,\boldsymbol y}^{\boldsymbol 0, m:k}(\Lambda ),
	W_{\theta,\boldsymbol y}^{\boldsymbol\alpha, m:k}(\Lambda ) \right)
	\right)
	\nonumber\\
	&
	=
	G_{\theta,y_{n+1} }\left(
	P_{\theta,\boldsymbol y}^{k:n}\left(
	F_{\theta,\boldsymbol y}^{\boldsymbol 0, m:k}(\Lambda )
	\right),
	G_{\theta,\boldsymbol y}^{k:n}\left(
	F_{\theta,\boldsymbol y}^{\boldsymbol 0, m:k}(\Lambda ),
	W_{\theta,\boldsymbol y}^{\boldsymbol\alpha, m:k}(\Lambda ) \right)
	\right)
	\nonumber\\
	&
	=
	G_{\theta,\boldsymbol y}^{k:n+1}\left(
	F_{\theta,\boldsymbol y}^{\boldsymbol 0, m:k}(\Lambda ),
	W_{\theta,\boldsymbol y}^{\boldsymbol\alpha, m:k}(\Lambda ) \right)
\end{align}
for $n\geq k> m$.
Similarly, (\ref{4.107}) implies
\begin{align}\label{l1.12.7}
	W_{\theta,\boldsymbol y}^{\boldsymbol\alpha, m:n+1}(\Lambda )
	=
	G_{\theta,\boldsymbol y}^{n+1:n+1}\left(
	F_{\theta,\boldsymbol y}^{\boldsymbol 0, m:n+1}(\Lambda ),
	W_{\theta,\boldsymbol y}^{\boldsymbol\alpha, m:n+1}(\Lambda ) \right).
\end{align}
Combining (\ref{l1.12.1}) -- (\ref{l1.12.5}), we get
\begin{align*}
	G_{\theta,y_{n+1} }\left(
	F_{\theta,\boldsymbol y}^{\boldsymbol 0, m:n}(\Lambda ),
	F_{\theta,\boldsymbol y}^{\boldsymbol\alpha, m:n}(\Lambda )
	\right)
	=&
	V_{\theta,\boldsymbol y}^{\boldsymbol\alpha, m:n+1}(\Lambda )
	+
	\sum_{k=m+1}^{n}
	G_{\theta,\boldsymbol y}^{k:n+1}\left(
	F_{\theta,\boldsymbol y}^{\boldsymbol 0, m:k}(\Lambda ),
	W_{\theta,\boldsymbol y}^{\boldsymbol\alpha, m:k}(\Lambda ) \right).
\end{align*}
Consequently,
(\ref{4.105}), (\ref{4.1}), (\ref{l1.12.7}) imply
\begin{align*}
	F_{\theta,\boldsymbol y}^{\boldsymbol\alpha, m:n+1}(\Lambda )
	=
	F_{\theta,y_{n+1} }^{\boldsymbol\alpha}\left(
	F_{\theta,\boldsymbol y}^{m:n}(\Lambda ) \right)
	=&
	G_{\theta,y_{n+1} }^{\boldsymbol\alpha}\left(
	F_{\theta,\boldsymbol y}^{m:n}(\Lambda ) \right)
	+
	H_{\theta,y_{n+1} }^{\boldsymbol\alpha}\left(
	F_{\theta,\boldsymbol y}^{m:n}(\Lambda ) \right)
	\\
	=&
	G_{\theta,y_{n+1} }\left(
	F_{\theta,\boldsymbol y}^{\boldsymbol 0, m:n}(\Lambda ),
	F_{\theta,\boldsymbol y}^{\boldsymbol\alpha, m:n}(\Lambda )
	\right)
	+
	W_{\theta,\boldsymbol y}^{\boldsymbol\alpha, m:n+1}(\Lambda )
	\\
	=&
	V_{\theta,\boldsymbol y}^{\boldsymbol\alpha, m:n+1}(\Lambda )
	+
	\sum_{k=m+1}^{n+1}
	G_{\theta,\boldsymbol y}^{k:n+1}\left(
	F_{\theta,\boldsymbol y}^{\boldsymbol 0, m:k}(\Lambda ),
	W_{\theta,\boldsymbol y}^{\boldsymbol\alpha, m:k}(\Lambda ) \right).
\end{align*}
Hence, (\ref{l1.12.1*}) is true for $n+1$.
Then, the lemma directly follows by the principle of mathematical induction.
\end{proof}

\begin{proposition}\label{proposition1.3}
Let Assumptions \ref{a1.1} and \ref{a1.2} hold.
Then, for each multi-index $\boldsymbol\alpha\in\mathbb{N}_{0}^{d}$, $|\boldsymbol\alpha|\leq p$,
there exists a real numbers $A_{\boldsymbol\alpha}\in [1,\infty )$
(depending only on $p$, $\varepsilon$)
such that
\begin{align}
	&\label{p1.3.1*}
	\left\|F_{\theta, \boldsymbol y}^{\boldsymbol\alpha, m:n}(\Lambda ) \right\|
	\leq
	A_{\boldsymbol\alpha}
	M_{\boldsymbol\alpha, \boldsymbol y}^{m:n}(\Lambda),
	\\
	&\label{p1.3.3*}
	\left\|F_{\theta, \boldsymbol y}^{\boldsymbol\alpha, m:n}(\Lambda )
	-
	F_{\theta, \boldsymbol y}^{\boldsymbol\alpha, m:n}(\Lambda' ) \right\|
	\leq
	\tau^{2(n-m)}
	A_{\boldsymbol\alpha} K_{\boldsymbol\alpha}(\Lambda,\Lambda')
	L_{\boldsymbol\alpha, \boldsymbol y}^{m:n}(\Lambda,\Lambda')
\end{align}
for all $\theta\in\Theta$, $\Lambda,\Lambda'\in {\cal L}_{0}({\cal X} )$, $n\geq m\geq 0$
and any sequence $\boldsymbol y = \{y_{n} \}_{n\geq 1}$ in ${\cal Y}$
($\tau$ is defined at the beginning of Section \ref{section1.1*}).
\end{proposition}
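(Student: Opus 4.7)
The plan is to prove Proposition \ref{proposition1.3} by strong induction on $|\boldsymbol\alpha|$, using the decomposition of $F^{\boldsymbol\alpha, m:n}$ provided by Lemma \ref{lemma1.12} as the engine and Proposition \ref{proposition1.1} to control the $G$--factors that appear.

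For the base case $|\boldsymbol\alpha|=0$, we already have $F^{\boldsymbol 0,m:n}(\Lambda)\in{\cal P}({\cal X})$, so $\|F^{\boldsymbol 0,m:n}(\Lambda)\|=1=M_{\boldsymbol 0,\boldsymbol y}^{m:n}(\Lambda)$ (since $|\boldsymbol 0|=0$), and the Lipschitz estimate \eqref{p1.3.3*} is exactly the content of Proposition \ref{proposition1.2}, noting that $L_{\boldsymbol 0,\boldsymbol y}^{m:n}(\Lambda,\Lambda')=1$.

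For the inductive step, fix $\boldsymbol\alpha$ with $|\boldsymbol\alpha|\ge 1$ and assume the two bounds hold for every $\boldsymbol\beta\in\mathbb{N}_0^d$ with $|\boldsymbol\beta|<|\boldsymbol\alpha|$. By Lemma \ref{lemma1.12},
\begin{align*}
F_{\theta,\boldsymbol y}^{\boldsymbol\alpha,m:n}(\Lambda)
=V_{\theta,\boldsymbol y}^{\boldsymbol\alpha,m:n}(\Lambda)
+\sum_{k=m+1}^{n}G_{\theta,\boldsymbol y}^{k:n}\!\left(F_{\theta,\boldsymbol y}^{\boldsymbol 0,m:k}(\Lambda),W_{\theta,\boldsymbol y}^{\boldsymbol\alpha,m:k}(\Lambda)\right).
\end{align*}
I would bound the first term via Proposition \ref{proposition1.1}, giving $\|V^{\boldsymbol\alpha,m:n}(\Lambda)\|\le C_3\tau^{2(n-m)}\|\lambda_{\boldsymbol\alpha}\|\le C_3\tau^{2(n-m)}M_{\boldsymbol\alpha}(\Lambda)$. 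For each summand, another application of Proposition \ref{proposition1.1} yields $\|G^{k:n}(\,\cdot\,,W^{\boldsymbol\alpha,m:k})\|\le C_3\tau^{2(n-k)}\|W^{\boldsymbol\alpha,m:k}(\Lambda)\|$. The key work is estimating $W^{\boldsymbol\alpha,m:k}(\Lambda)=H_{\theta,y_k}^{\boldsymbol\alpha}(F_{\theta,\boldsymbol y}^{m:k-1}(\Lambda))$: by definition \eqref{4.105} this is a sum of terms of the form $T_{\theta,y_k}^{\boldsymbol\alpha,\boldsymbol\beta}$ and $F_{\theta,y_k}^{\boldsymbol\beta}\langle S_{\theta,y_k}^{\boldsymbol\alpha-\boldsymbol\beta}\rangle$ evaluated at $F_{\theta,\boldsymbol y}^{m:k-1}(\Lambda)$, and in every such term the indices $\boldsymbol\beta$ involved satisfy $|\boldsymbol\beta|<|\boldsymbol\alpha|$. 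Lemma \ref{lemma1.4} together with the inductive hypothesis applied to these lower-order components of $F_{\theta,\boldsymbol y}^{m:k-1}(\Lambda)$ then give
\begin{align*}
\|W_{\theta,\boldsymbol y}^{\boldsymbol\alpha,m:k}(\Lambda)\|
\le C\sum_{\boldsymbol\beta\le\boldsymbol\alpha,\,\boldsymbol\beta\ne\boldsymbol\alpha}\psi(y_k)^{|\boldsymbol\alpha-\boldsymbol\beta|}M_{\boldsymbol\beta,\boldsymbol y}^{m:k-1}(\Lambda)
\end{align*}
(with an analogous bound featuring extra polynomial factors when necessary, all absorbable into the $M$-notation). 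Since $\lambda_{\boldsymbol 0}$ is a probability measure we have $M_{\boldsymbol\alpha}(\Lambda)\ge 1$, so $M_{\boldsymbol\alpha}(\Lambda)^{|\boldsymbol\beta|}\le M_{\boldsymbol\alpha}(\Lambda)^{|\boldsymbol\alpha|}$; combined with $\psi(y_k)\le\Psi_{\boldsymbol y}^{m:n}$ and $\Psi_{\boldsymbol y}^{m:k-1}\le\Psi_{\boldsymbol y}^{m:n}$, every summand is majorised by $M_{\boldsymbol\alpha,\boldsymbol y}^{m:n}(\Lambda)$. The outer sum $\sum_{k=m+1}^{n}\tau^{2(n-k)}$ is bounded by $1/(1-\tau^2)$, which is absorbed into $A_{\boldsymbol\alpha}$, yielding \eqref{p1.3.1*}.

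For the Lipschitz bound \eqref{p1.3.3*} I would subtract the Lemma \ref{lemma1.12} representations for $\Lambda$ and $\Lambda'$, and estimate the $V$--difference via the second inequality in Proposition \ref{proposition1.1} and the summand differences by writing
\begin{align*}
G^{k:n}(F^{\boldsymbol 0,m:k}(\Lambda),W^{\boldsymbol\alpha,m:k}(\Lambda))-G^{k:n}(F^{\boldsymbol 0,m:k}(\Lambda'),W^{\boldsymbol\alpha,m:k}(\Lambda'))
\end{align*}
as a sum of two pieces: the one where only the first $G$-argument changes (bounded through Propositions \ref{proposition1.1} and \ref{proposition1.2}) and the one where only the second argument changes (bounded through Proposition \ref{proposition1.1} and the Lipschitz estimate for $W^{\boldsymbol\alpha,m:k}$ coming from Lemma \ref{lemma1.4} and the induction hypothesis on $F^{\boldsymbol\beta,m:k-1}$ for $|\boldsymbol\beta|<|\boldsymbol\alpha|$). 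The main obstacle will be bookkeeping these nested differences so that the final estimate collapses to the clean form $K_{\boldsymbol\alpha}(\Lambda,\Lambda')\,L_{\boldsymbol\alpha,\boldsymbol y}^{m:n}(\Lambda,\Lambda')$; the factor $K_{\boldsymbol\alpha}$ (truncated at $1$) is essential because the iterated use of Lemma \ref{lemma1.4} produces cross-terms of the form $\|\lambda_{\boldsymbol 0}-\lambda'_{\boldsymbol 0}\|\cdot\|\lambda'_{\boldsymbol\beta}\|$, which must be dominated by the product $\min\{1,M_{\boldsymbol\alpha}(\Lambda-\Lambda')\}\cdot (M_{\boldsymbol\alpha}(\Lambda)+M_{\boldsymbol\alpha}(\Lambda'))^{|\boldsymbol\alpha|}$. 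Once these terms are organised, summing $\tau^{2(n-k)}$ in $k$ yields the geometric decay $\tau^{2(n-m)}$ (after possibly absorbing a polynomial $(n-m)$ factor from $\Phi_{\boldsymbol y}^{m:n}$ into a marginal power of $\tau$), producing \eqref{p1.3.3*}.
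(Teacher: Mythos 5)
Your proposal is correct and follows essentially the same route as the paper's own proof: induction on $|\boldsymbol\alpha|$ driven by the decomposition of Lemma \ref{lemma1.12}, with Proposition \ref{proposition1.1} controlling the $G$-factors, Proposition \ref{proposition1.2} the zero-order case, and Lemma \ref{lemma1.4} together with the induction hypothesis controlling $W_{\theta,\boldsymbol y}^{\boldsymbol\alpha,m:k}(\Lambda)$ and its differences, the constants being absorbed into suitably chosen $A_{\boldsymbol\alpha}$. The only caution concerns your final parenthetical: the factor $(n-m)$ produced by summing over $k$ must be absorbed into the $(n-m)^{|\boldsymbol\alpha|}$ already built into $L_{\boldsymbol\alpha,\boldsymbol y}^{m:n}(\Lambda,\Lambda')$ through $\Phi_{\boldsymbol y}^{m:n}$ (your first option), not into a weakened power of $\tau$, since the bound with the rate $\tau^{2(n-m)}$ must be reused verbatim at the next induction level — which is exactly how the paper handles it.
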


\begin{proof}
Throughout the proof, the following notation is used.
$\theta$ is any element of $\Theta$, while
$\boldsymbol y = \{y_{n} \}_{n\geq 1}$ is any sequence in ${\cal Y}$.
$\tilde{C}_{1}$, $\tilde{C}_{2}$, $\tilde{C}_{3}$ are the real numbers defined by
\begin{align*}
	\tilde{C}_{1}
	=
	\frac{4^{p}C_{2}C_{3}C_{4} }
	{\tau^{2}(1-\tau^{2} ) },
	\;\;\;\;\;
	\tilde{C}_{2}
	=
	\frac{\tilde{C}_{1} }{4^{p} },
	\;\;\;\;\;
	\tilde{C}_{3}
	=
	\frac{\tilde{C}_{2} }{C_{3}C_{4} }
\end{align*}
($C_{2}$, $C_{3}$, $C_{4}$ are specified in Lemma \ref{lemma1.4} and Propositions \ref{proposition1.1}, \ref{proposition1.2}).
$A_{\boldsymbol\alpha}$ is the real number defined by
$A_{\boldsymbol\alpha} = \exp\big(8\tilde{C}_{1}^{2}(|\boldsymbol\alpha|^{2} + 1 ) \big)$
for $\boldsymbol\alpha\in\mathbb{N}_{0}^{d}$.
Then, it easy to show
\begin{align}\label{p1.3.701}
	A_{\boldsymbol\beta}
	\leq
	\frac{A_{\boldsymbol\alpha} }{\exp(8\tilde{C}_{1}^{2} ) }
	\leq
	\frac{A_{\boldsymbol\alpha} }{8\tilde{C}_{1}^{2} },
	\;\;\;\;\;
	A_{\boldsymbol\gamma} A_{\boldsymbol\alpha-\boldsymbol\gamma}
	\leq
	\frac{A_{\boldsymbol\alpha} }{\exp(8\tilde{C}_{1}^{2} ) }
	\leq
	\frac{A_{\boldsymbol\alpha} }{8\tilde{C}_{1}^{2} }
\end{align}
for
$\boldsymbol\beta\in\mathbb{N}_{0}^{d}\setminus\{\boldsymbol\alpha\}$,
$\boldsymbol\gamma\in\mathbb{N}_{0}^{d}\setminus\{\boldsymbol 0,\boldsymbol\alpha\}$,
$\boldsymbol\beta\leq\boldsymbol\alpha$, $\boldsymbol\gamma\leq\boldsymbol\alpha$.

Since $F_{\theta,\boldsymbol y}^{m:m}(\Lambda)=\Lambda$
(due to (\ref{1.705})), (\ref{p1.3.1*}), (\ref{p1.3.3*}) are trivially satisfied
when $n=m\geq 0$.
For $n>m\geq 0$, we prove (\ref{p1.3.1*}), (\ref{p1.3.3*})
by the mathematical induction in $|\boldsymbol\alpha |$.
As $F_{\theta,\boldsymbol y}^{\boldsymbol 0,m:n}(\Lambda)\in{\cal P}({\cal X})$,
Proposition \ref{proposition1.2} implies
that when $|\boldsymbol\alpha|=0$ (i.e., $\boldsymbol\alpha=\boldsymbol 0$),
(\ref{p1.3.1*}), (\ref{p1.3.3*}) are true
for all $\Lambda, \Lambda'\in{\cal L}_{0}({\cal X} )$, $n,m\in\mathbb{N}_{0}$
fulfilling $n>m\geq 0$.
Now, the induction hypothesis is formulated:
Suppose that (\ref{p1.3.1*}), (\ref{p1.3.3*}) hold
for some $l\in\mathbb{N}_{0}$ and
all $\Lambda, \Lambda'\in{\cal L}_{0}({\cal X} )$, $n,m\in\mathbb{N}_{0}$,
$\boldsymbol\alpha\in\mathbb{N}_{0}^{d}$
satisfying $0\leq l< p$, $n>m\geq 0$, $|\boldsymbol\alpha |\leq l$.
Then, to prove (\ref{p1.3.1*}), (\ref{p1.3.3*}),
%for each $\Lambda, \Lambda', \Lambda'' \in{\cal L}_{0}({\cal X} )$, $n,m\in\mathbb{N}_{0}$,
%$\boldsymbol\alpha \in \mathbb{N}_{0}^{d}$
%satisfying $n\geq m\geq 0$, $|\boldsymbol\alpha |\leq p$,
it is sufficient to show (\ref{p1.3.1*}), (\ref{p1.3.3*})
for any $\Lambda, \Lambda'\in{\cal L}_{0}({\cal X} )$, $n,m\in\mathbb{N}_{0}$,
$\boldsymbol\alpha \in \mathbb{N}_{0}^{d}$
fulfilling $n>m\geq 0$, $|\boldsymbol\alpha | = l+1$.
In what follows in the proof,
$\Lambda = \left\{\lambda_{\boldsymbol\alpha}:
\boldsymbol\alpha\in\mathbb{N}_{0}^{d}, |\boldsymbol\alpha|\leq p \right\}$,
$\Lambda' = \left\{\lambda'_{\boldsymbol\alpha}:
\boldsymbol\alpha\in\mathbb{N}_{0}^{d}, |\boldsymbol\alpha|\leq p \right\}$
are any elements of ${\cal L}_{0}({\cal X} )$.
$\boldsymbol\delta$ is any element of $\mathbb{N}_{0}^{d}$,
while $\boldsymbol\alpha$ is any element of $\mathbb{N}_{0}^{d}$
satisfying $|\boldsymbol\alpha | = l+1$.
$\boldsymbol\beta,\boldsymbol\gamma$ are any elements of $\mathbb{N}_{0}^{d}\setminus\{\boldsymbol\alpha\}$
fulfilling $\boldsymbol\beta\leq\boldsymbol\alpha$, $\boldsymbol\gamma\leq\boldsymbol\alpha$.
$n,m$ are any integers satisfying $n>m\geq 0$.

Since $\boldsymbol\beta\leq\boldsymbol\alpha$,
$\boldsymbol\beta\neq\boldsymbol\alpha$, we have $|\boldsymbol\beta|\leq |\boldsymbol\alpha|-1 = l$.
As (\ref{p1.3.1*}), (\ref{p1.3.3*}) are trivially satisfied for $n=m$,
the induction hypothesis imply
\begin{align}
	&\label{p1.3.21}
	\max\left\{
	\frac{
	\big\| F_{\theta, \boldsymbol y}^{\boldsymbol\gamma, m:k}(\Lambda) \big\| }
	{M_{\boldsymbol\gamma, \boldsymbol y}^{m:k}(\Lambda) },
	\frac{
	\big\| F_{\theta, \boldsymbol y}^{\boldsymbol\gamma, m:k}(\Lambda') \big\| }
	{M_{\boldsymbol\gamma, \boldsymbol y}^{m:k}(\Lambda') }
	\right\}
	\leq
	A_{\boldsymbol\gamma},
	\\
	&\label{p1.3.43}
	\frac{
	\big\|F_{\theta, \boldsymbol y}^{\boldsymbol\gamma, m:k}(\Lambda )
	-
	F_{\theta, \boldsymbol y}^{\boldsymbol\gamma, m:k}(\Lambda' ) \big\| }
	{L_{\boldsymbol\gamma, \boldsymbol y}^{m:k}(\Lambda,\Lambda') }
	\leq
	\tau^{2(k-m) }
	A_{\boldsymbol\gamma} K_{\boldsymbol\gamma}(\Lambda,\Lambda')
\end{align}
for $k\geq m\geq 0$.
Moreover, since
$|\boldsymbol\gamma+\boldsymbol\delta|=|\boldsymbol\gamma|+|\boldsymbol\delta|$ and
$M_{\boldsymbol\gamma}(\Lambda)\geq 1$,
(\ref{4.5}) yields
\begin{align}
	&\label{p1.3.705}
	M_{\boldsymbol\gamma, \boldsymbol y}^{m:n-1}(\Lambda)
	\leq
	M_{\boldsymbol\gamma, \boldsymbol y}^{m:n}(\Lambda)
	\leq
	\frac{M_{\boldsymbol\gamma+\boldsymbol\delta, \boldsymbol y}^{m:n}(\Lambda) }
	{\left(\psi(y_{n} ) \right)^{|\boldsymbol\delta|} },
	%\\
	%&\label{p1.3.703}
	\;\;\;\;\;
	M_{\boldsymbol\gamma, \boldsymbol y}^{m:n}(\Lambda)
	M_{\boldsymbol\delta, \boldsymbol y}^{m:n}(\Lambda)
	\leq
	M_{\boldsymbol\gamma+\boldsymbol\delta,\boldsymbol y}^{m:n}(\Lambda ).
\end{align}
Similarly, (\ref{4.5}) leads to
\begin{align}
	&\label{p1.3.709}
	L_{\boldsymbol\gamma,\boldsymbol y}^{m:n-1}(\Lambda,\Lambda')
	\leq
	L_{\boldsymbol\gamma,\boldsymbol y}^{m:n}(\Lambda,\Lambda')
	\leq
	\frac{L_{\boldsymbol\gamma+\boldsymbol\delta,\boldsymbol y}^{m:n}(\Lambda,\Lambda') }
	{\left( \psi(y_{n} ) (n-m) \right)^{|\boldsymbol\delta|} },
	%\\
	%&\label{p1.3.707}
	\;\;\;\;\;
	L_{\boldsymbol\gamma, \boldsymbol y}^{m:n}(\Lambda,\Lambda')
	L_{\boldsymbol\delta, \boldsymbol y}^{m:n}(\Lambda,\Lambda')
	\leq
	L_{\boldsymbol\gamma+\boldsymbol\delta,\boldsymbol y}^{m:n}(\Lambda,\Lambda').
\end{align}
The same arguments also imply
\begin{align}\label{p1.3.721}
	M_{\boldsymbol\delta, \boldsymbol y}^{m:n}(\Lambda)
	+
	M_{\boldsymbol\delta, \boldsymbol y}^{m:n}(\Lambda')
	\leq
	\frac{L_{\boldsymbol\delta, \boldsymbol y}^{m:n}(\Lambda,\Lambda')}
	{(n-m)^{|\boldsymbol\delta|} }.
\end{align}

Using (\ref{p1.3.21}), (\ref{p1.3.705}), we conclude
\begin{align}\label{p1.3.301}
	\left\|F_{\theta, \boldsymbol y}^{\boldsymbol\gamma, m:n-1}(\Lambda ) \right\|
	\leq
	A_{\boldsymbol\gamma} M_{\boldsymbol\gamma,\boldsymbol y}^{m:n}(\Lambda)
	\leq
	\frac{A_{\boldsymbol\gamma } M_{\boldsymbol\gamma+\boldsymbol\delta,\boldsymbol y}^{m:n}(\Lambda) }
	{\left(\psi(y_{n} ) \right)^{|\boldsymbol\delta| } }.
\end{align}
Then, Lemma \ref{lemma1.4} and (\ref{p1.3.701}), (\ref{p1.3.301}) imply
\begin{align}\label{p1.3.23}
	\left\|T_{\theta, y_{n} }^{\boldsymbol\alpha, \boldsymbol\beta }
	(F_{\theta, \boldsymbol y}^{m:n-1}(\Lambda ) ) \right\|
	\leq %&
	C_{2}
	\left(\psi(y_{n} ) \right)^{|\boldsymbol\alpha-\boldsymbol\beta | }
	\left\|F_{\theta, \boldsymbol y}^{\boldsymbol\beta, m:n-1}(\Lambda ) \right\|
	\leq &
	C_{2} A_{\boldsymbol\beta } M_{\boldsymbol\alpha,\boldsymbol y}^{m:n}(\Lambda)
%	\nonumber\\
	\leq %&
	\frac{A_{\boldsymbol\alpha } M_{\boldsymbol\alpha,\boldsymbol y}^{m:n}(\Lambda) }
	{2\tilde{C}_{1} }
\end{align}
(as $C_{2}\leq 4\tilde{C}_{1}$).
The same lemma and (\ref{p1.3.301}) yield
\begin{align}\label{p1.3.25}
	\left\|S_{\theta, y_{n} }^{\boldsymbol\gamma }
	(F_{\theta, \boldsymbol y}^{m:n-1}(\Lambda ) ) \right\|
	\leq %&
	C_{2}
	\sum_{\stackrel{\scriptstyle \boldsymbol\delta \in \mathbb{N}_{0}^{d} }
	{\boldsymbol\delta \leq \boldsymbol\gamma} }
	\left(\psi(y_{n} ) \right)^{|\boldsymbol\gamma-\boldsymbol\delta | }
	\left\|F_{\theta, \boldsymbol y}^{\boldsymbol\delta, m:n-1}(\Lambda ) \right\|
	\leq %&
	2^{|\boldsymbol\gamma|} C_{2} A_{\boldsymbol\gamma} M_{\boldsymbol\gamma,\boldsymbol y}^{m:n}(\Lambda)
	%\nonumber\\
	\leq %&
	\tilde{C}_{1} A_{\boldsymbol\gamma} M_{\boldsymbol\gamma,\boldsymbol y}^{m:n}(\Lambda)
\end{align}
(since $2^{|\boldsymbol\gamma|}C_{2}\leq 2^{p}C_{2}\leq\tilde{C}_{1}/2$).
If $\boldsymbol\beta\neq\boldsymbol 0$,
(\ref{p1.3.701}), (\ref{p1.3.21}), (\ref{p1.3.705}), (\ref{p1.3.25}) lead to
\begin{align}\label{p1.3.29}
	\left\|F_{\theta, \boldsymbol y}^{\boldsymbol\beta, m:n}(\Lambda ) \:
	\big\langle S_{\theta, y_{n} }^{\boldsymbol\alpha - \boldsymbol\beta }
	(F_{\theta, \boldsymbol y}^{m:n-1}(\Lambda ) )
	\big\rangle \right\|
	\leq &
	\left\|F_{\theta, \boldsymbol y}^{\boldsymbol\beta, m:n}(\Lambda ) \right\|
	\left\|S_{\theta, y_{n} }^{\boldsymbol\alpha - \boldsymbol\beta }
	(F_{\theta, \boldsymbol y}^{m:n-1}(\Lambda ) ) \right\|
	\nonumber\\
	\leq &
	\tilde{C}_{1} A_{\boldsymbol\beta} A_{\boldsymbol\alpha-\boldsymbol\beta}
	M_{\boldsymbol\beta,\boldsymbol y}^{m:n}(\Lambda)
	M_{\boldsymbol\alpha-\boldsymbol\beta,\boldsymbol y}^{m:n}(\Lambda)
	\nonumber\\
	\leq &
	\frac{A_{\boldsymbol\alpha} M_{\boldsymbol\alpha,\boldsymbol y}^{m:n}(\Lambda) }
	{2\tilde{C}_{1} }.
\end{align}
Consequently, (\ref{1.705}), (\ref{4.105})
(\ref{4.121}), (\ref{p1.3.23}) imply
\begin{align}\label{p1.3.33}
	\left\|W_{\theta, \boldsymbol y }^{\boldsymbol\alpha, m:n}(\Lambda ) \right\|
	\leq &
	\sum_{\stackrel{\scriptstyle \boldsymbol\beta \in \mathbb{N}_{0}^{d}
	\setminus\{\boldsymbol\alpha \} }
	{\boldsymbol\beta \leq \boldsymbol\alpha } }
	\left(\boldsymbol\alpha \atop \boldsymbol\beta \right)
	\left\|T_{\theta, y_{n} }^{\boldsymbol\alpha, \boldsymbol\beta }
	(F_{\theta, \boldsymbol y}^{m:n-1}(\Lambda ) ) \right\|
	%\nonumber\\
	%&+
	+\!\!\!
	\sum_{\stackrel{\scriptstyle \boldsymbol\beta \in \mathbb{N}_{0}^{d}
	\setminus\{\boldsymbol 0, \boldsymbol\alpha \} }
	{\boldsymbol\beta \leq \boldsymbol\alpha } }
	\left(\boldsymbol\alpha \atop \boldsymbol\beta \right)
	\left\|F_{\theta, \boldsymbol y}^{\boldsymbol\beta, m:n}(\Lambda ) \:
	\big\langle S_{\theta, y_{n} }^{\boldsymbol\alpha - \boldsymbol\beta }
	(F_{\theta, \boldsymbol y}^{m:n-1}(\Lambda ) )
	\big\rangle \right\|
	\nonumber\\
	\leq &
	\frac{2^{|\boldsymbol\alpha|} A_{\boldsymbol\alpha } M_{\boldsymbol\alpha,\boldsymbol y}^{m:n}(\Lambda) }
	{\tilde{C}_{1} }
	\nonumber\\
	\leq &
	\frac{A_{\boldsymbol\alpha } M_{\boldsymbol\alpha,\boldsymbol y}^{m:n}(\Lambda) }
	{\tilde{C}_{2} }
\end{align}
(as $\tilde{C}_{1}/2^{|\boldsymbol\alpha|}\geq\tilde{C}_{1}/2^{p}\geq\tilde{C}_{2}$).
Then, owing to Proposition \ref{proposition1.1}, we have
\begin{align}\label{p1.3.1}
	\left\|
	G_{\theta, \boldsymbol y}^{k:n}
	\left(F_{\theta, \boldsymbol y }^{\boldsymbol 0, m:k}(\Lambda ),
	W_{\theta, \boldsymbol y }^{\boldsymbol\alpha, m:k}(\Lambda ) \right)
	\right\|
	\leq %&
	C_{3} \tau^{2(n-k)}
	\left\|
	W_{\theta, \boldsymbol y }^{\boldsymbol\alpha, m:k}(\Lambda )
	\right\|
	%\nonumber\\
	\leq &
	\frac{C_{3} \tau^{2(n-k)} A_{\boldsymbol\alpha } M_{\boldsymbol\alpha,\boldsymbol y}^{m:k}(\Lambda) }
	{\tilde{C}_{2} }
	\nonumber\\
	\leq &
	\frac{\tau^{2(n-k)} A_{\boldsymbol\alpha } M_{\boldsymbol\alpha,\boldsymbol y}^{m:n}(\Lambda) }
	{\tilde{C}_{3} }
\end{align}
for $n\geq k>m$
(since $C_{3}/\tilde{C}_{2}\leq 1/\tilde{C}_{3}$).
Due to the same proposition and (\ref{4.121}), we have
\begin{align}\label{p1.3.3}
	\left\|V_{\theta, \boldsymbol y}^{\boldsymbol\alpha, m:n}(\Lambda ) \right\|
	\leq
	C_{3} \tau^{2(n-m)} \|\lambda_{\boldsymbol\alpha } \|
	\leq
	C_{3} \tau^{2(n-m)} M_{\boldsymbol\alpha}(\Lambda)
	\leq
	\frac{\tau^{2(n-m)} A_{\boldsymbol\alpha } M_{\boldsymbol\alpha,\boldsymbol y}^{m:n}(\Lambda) }
	{\tilde{C}_{3} }
\end{align}
(as $A_{\boldsymbol\alpha }\geq\tilde{C}_{1}^{2}\geq C_{3}\tilde{C}_{3}$).
Combining Lemma \ref{lemma1.12} and (\ref{p1.3.1}), (\ref{p1.3.3}), we get
\begin{align}\label{p1.3.35}
	\left\|F_{\theta, \boldsymbol y }^{\boldsymbol\alpha, m:n}(\Lambda ) \right\|
	\leq %&
	\left\|V_{\theta, \boldsymbol y}^{m:n}(\Lambda ) \right\|
	+
	\sum_{k=m+1}^{n}
	\left\|
	G_{\theta, \boldsymbol y}^{k:n}
	\left(F_{\theta, \boldsymbol y }^{\boldsymbol 0, m:k}(\Lambda ),
	W_{\theta, \boldsymbol y }^{\boldsymbol\alpha, m:k}(\Lambda ) \right)
	\right\|
	%\nonumber\\
	\leq &
	\frac{A_{\boldsymbol\alpha} M_{\boldsymbol\alpha,\boldsymbol y}^{m:n}(\Lambda) }
	{\tilde{C}_{3} }
	\sum_{k=m}^{n} \tau^{2(n-k)}
	\nonumber\\
	\leq &
	\frac{A_{\boldsymbol\alpha} M_{\boldsymbol\alpha,\boldsymbol y}^{m:n}(\Lambda) }
	{\tilde{C}_{3} (1-\tau^{2} ) }
	\nonumber\\
	\leq &
	A_{\boldsymbol\alpha }
	M_{\boldsymbol\alpha,\boldsymbol y}^{m:n}(\Lambda)
\end{align}
(since $\tilde{C}_{3}(1-\tau^{2} )\geq 1$).
Hence, (\ref{p1.3.1*}) holds for
$\boldsymbol\alpha\in\mathbb{N}_{0}^{d}$, $|\boldsymbol\alpha|=l+1$.

Now, (\ref{p1.3.3*}) is proved.
Relying on (\ref{p1.3.43}), (\ref{p1.3.709}), we deduce
\begin{align}\label{p1.3.303}
	\left\|
	F_{\theta, \boldsymbol y}^{\boldsymbol\gamma, m:n-1}(\Lambda )
	-
	F_{\theta, \boldsymbol y}^{\boldsymbol\gamma, m:n-1}(\Lambda' )
	\right\|
	\leq &
	\tau^{2(n-m-1)} A_{\boldsymbol\gamma} K_{\boldsymbol\gamma}(\Lambda,\Lambda')
	L_{\boldsymbol\gamma,\boldsymbol y}^{m:n}(\Lambda,\Lambda')
	\nonumber\\
	\leq &
	\frac{\tau^{2(n-m-1)} A_{\boldsymbol\gamma} K_{\boldsymbol\gamma+\boldsymbol\delta}(\Lambda,\Lambda')
	L_{\boldsymbol\gamma+\boldsymbol\delta,\boldsymbol y}^{m:n}(\Lambda,\Lambda') }
	{\left(\psi(y_{n} ) (n-m) \right)^{|\boldsymbol\delta| } }.
\end{align}
Similarly, using Proposition \ref{proposition1.2} and
%(\ref{p1.3.21}), (\ref{p1.3.705}),
(\ref{p1.3.709}), (\ref{p1.3.301}), we conclude
\begin{align}\label{p1.3.305}
	\left\|
	F_{\theta, \boldsymbol y}^{\boldsymbol 0, m:n-1}(\Lambda )
	-
	F_{\theta, \boldsymbol y}^{\boldsymbol 0, m:n-1}(\Lambda' )
	\right\|
	\left\|
	F_{\theta, \boldsymbol y}^{\boldsymbol\gamma, m:n-1}(\Lambda' )
	\right\|
	\leq &
	C_{4} \tau^{2(n-m-1)} A_{\boldsymbol\gamma}
	K_{\boldsymbol 0}(\Lambda,\Lambda')
	M_{\boldsymbol\gamma,\boldsymbol y}^{m:n}(\Lambda')
	\nonumber\\
	\leq &
	\frac{C_{4} \tau^{2(n-m-1)} A_{\boldsymbol\gamma}
	K_{\boldsymbol\gamma+\boldsymbol\delta}(\Lambda,\Lambda')
	L_{\boldsymbol\gamma+\boldsymbol\delta,\boldsymbol y}^{m:n}(\Lambda,\Lambda') }
	{\left(\psi(y_{n} ) (n-m) \right)^{|\boldsymbol\delta|} }
\end{align}
(since
$M_{\boldsymbol\gamma,\boldsymbol y}^{m:n}(\Lambda')\leq
L_{\boldsymbol\gamma,\boldsymbol y}^{m:n}(\Lambda,\Lambda')$).
Then, Lemma \ref{lemma1.4} and (\ref{p1.3.701}) imply
\begin{align}\label{p1.3.45}
	&
	\begin{aligned}
	&
	\left\|
	T_{\theta, y_{n} }^{\boldsymbol\alpha, \boldsymbol\beta }
	(F_{\theta, \boldsymbol y}^{m:n-1}(\Lambda ) )
	-
	T_{\theta, y_{n} }^{\boldsymbol\alpha, \boldsymbol\beta }
	(F_{\theta, \boldsymbol y}^{m:n-1}(\Lambda' ) )
	\right\|
	%\nonumber\\
	%&
	%\begin{aligned}[b]
	\leq %&
	C_{2}
	\left(\psi(y_{n} ) \right)^{|\boldsymbol\alpha-\boldsymbol\beta | }
	\left\|
	F_{\theta, \boldsymbol y}^{\boldsymbol\beta, m:n-1}(\Lambda )
	-
	F_{\theta, \boldsymbol y}^{\boldsymbol\beta, m:n-1}(\Lambda' )
	\right\|&
	\\
	&\hspace{1em}
	+
	C_{2}
	\left(\psi(y_{n} ) \right)^{|\boldsymbol\alpha-\boldsymbol\beta | }
	\left\|
	F_{\theta, \boldsymbol y}^{\boldsymbol 0, m:n-1}(\Lambda )
	-
	F_{\theta, \boldsymbol y}^{\boldsymbol 0, m:n-1}(\Lambda' )
	\right\|
	\left\|
	F_{\theta, \boldsymbol y}^{\boldsymbol\beta, m:n-1}(\Lambda' )
	\right\|
	\end{aligned}
	\nonumber\\
	&\leq
	\frac{2C_{2} C_{4} \tau^{2(n-m-1)}
	A_{\boldsymbol\beta}
	K_{\boldsymbol\alpha}(\Lambda,\Lambda' )
	L_{\boldsymbol\alpha,\boldsymbol y}^{m:n}(\Lambda,\Lambda') }
	{(n-m)^{|\boldsymbol\alpha-\boldsymbol\beta|} }
	\nonumber\\
	&\leq
	\frac{\tau^{2(n-m)} A_{\boldsymbol\alpha}
	K_{\boldsymbol\alpha}(\Lambda,\Lambda')
	L_{\boldsymbol\alpha,\boldsymbol y}^{m:n}(\Lambda,\Lambda') }
	{4\tilde{C}_{1} (n-m) }
\end{align}
(as $|\boldsymbol\alpha-\boldsymbol\beta|\geq 1$, $C_{2}C_{4}\leq\tilde{C}_{1}\tau^{2}$).
The same lemma and (\ref{p1.3.303}), (\ref{p1.3.305}) yield
\begin{align}\label{p1.3.47}
	&
	\begin{aligned}
	&
	\left\|
	S_{\theta, y_{n} }^{\boldsymbol\gamma }
	(F_{\theta, \boldsymbol y}^{m:n-1}(\Lambda ) )
	-
	S_{\theta, y_{n} }^{\boldsymbol\gamma }
	(F_{\theta, \boldsymbol y}^{m:n-1}(\Lambda' ) )
	\right\|
	%\nonumber\\
	%&
	%\begin{aligned}[b]
	\leq %&
	C_{2} %\!\!
	\sum_{\stackrel{\scriptstyle \boldsymbol\delta \in \mathbb{N}_{0}^{d} }
	{\boldsymbol\delta \leq \boldsymbol\gamma} } \!
	\left(\psi(y_{n} ) \right)^{|\boldsymbol\gamma-\boldsymbol\delta | } \!
	\left\|
	F_{\theta, \boldsymbol y}^{\boldsymbol\delta, m:n-1}(\Lambda )
	-
	F_{\theta, \boldsymbol y}^{\boldsymbol\delta, m:n-1}(\Lambda' )
	\right\|
	\\
	&\hspace{1em}
	+
	C_{2} %\!\!
	\sum_{\stackrel{\scriptstyle \boldsymbol\delta \in \mathbb{N}_{0}^{d} }
	{\boldsymbol\delta \leq \boldsymbol\gamma} }
	\left(\psi(y_{n} ) \right)^{|\boldsymbol\gamma-\boldsymbol\delta | }
	\left\|
	F_{\theta, \boldsymbol y}^{\boldsymbol 0, m:n-1}(\Lambda )
	-
	F_{\theta, \boldsymbol y}^{\boldsymbol 0, m:n-1}(\Lambda' )
	\right\|
	\left\|
	F_{\theta, \boldsymbol y}^{\boldsymbol\delta, m:n-1}(\Lambda' )
	\right\|
	\end{aligned}
	\nonumber\\
	&
	\leq
	4^{|\boldsymbol\gamma|}C_{2}C_{4}\tau^{2(n-m-1)}
	A_{\boldsymbol\gamma}
	K_{\boldsymbol\gamma}(\Lambda,\Lambda' )
	L_{\boldsymbol\gamma,\boldsymbol y}^{m:n}(\Lambda,\Lambda')
	\nonumber\\
	&
	\leq
	\tilde{C}_{1}\tau^{2(n-m)}
	A_{\boldsymbol\gamma }
	K_{\boldsymbol\gamma}(\Lambda,\Lambda' )
	L_{\boldsymbol\gamma,\boldsymbol y}^{m:n}(\Lambda,\Lambda')
\end{align}
(since $4^{|\boldsymbol\gamma|}C_{2}C_{4}\leq\tilde{C}_{1}\tau^{2}$).
Then, (\ref{p1.3.21}), (\ref{p1.3.43}), (\ref{p1.3.25}) lead to
\begin{align*}
	&
	\left\|
	F_{\theta, \boldsymbol y}^{\boldsymbol\beta, m:n}(\Lambda )
	\:
	\big\langle S_{\theta, y_{n} }^{\boldsymbol\alpha - \boldsymbol\beta }
	(F_{\theta, \boldsymbol y}^{m:n-1}(\Lambda ) )
	\big\rangle
	-
	F_{\theta, \boldsymbol y}^{\boldsymbol\beta, m:n}(\Lambda' )
	\:
	\big\langle S_{\theta, y_{n} }^{\boldsymbol\alpha - \boldsymbol\beta }
	(F_{\theta, \boldsymbol y}^{m:n-1}(\Lambda' ) )
	\big\rangle
	\right\|
	\nonumber\\
	&
	\begin{aligned}[b]
	\leq &
	\left\|F_{\theta, \boldsymbol y}^{\boldsymbol\beta, m:n}(\Lambda ) \right\|
	\left\|
	S_{\theta, y_{n} }^{\boldsymbol\alpha - \boldsymbol\beta }
	(F_{\theta, \boldsymbol y}^{m:n-1}(\Lambda ) )
	-
	S_{\theta, y_{n} }^{\boldsymbol\alpha - \boldsymbol\beta }
	(F_{\theta, \boldsymbol y}^{m:n-1}(\Lambda' ) )
	\right\|
	\\
	&
	+
	\left\|
	F_{\theta, \boldsymbol y}^{\boldsymbol\beta, m:n}(\Lambda )
	-
	F_{\theta, \boldsymbol y}^{\boldsymbol\beta, m:n}(\Lambda' )
	\right\|
	\left\|
	S_{\theta, y_{n} }^{\boldsymbol\alpha - \boldsymbol\beta }
	(F_{\theta, \boldsymbol y}^{m:n-1}(\Lambda' ) )
	\right\|
	\end{aligned}
	\nonumber\\
	&
	\begin{aligned}[b]
	\leq &
	\tilde{C}_{1}\tau^{2(n-m)}
	A_{\boldsymbol\beta} A_{\boldsymbol\alpha-\boldsymbol\beta}
	K_{\boldsymbol\alpha-\boldsymbol\beta}(\Lambda,\Lambda')
	L_{\boldsymbol\alpha-\boldsymbol\beta,\boldsymbol y}^{m:n}(\Lambda,\Lambda')
	M_{\boldsymbol\beta,\boldsymbol y}^{m:n}(\Lambda)
	\\
	&+
	\tilde{C}_{1}\tau^{2(n-m)}
	A_{\boldsymbol\beta} A_{\boldsymbol\alpha-\boldsymbol\beta}
	K_{\boldsymbol\beta}(\Lambda,\Lambda')
	L_{\boldsymbol\beta,\boldsymbol y}^{m:n}(\Lambda,\Lambda')
	M_{\boldsymbol\alpha-\boldsymbol\beta,\boldsymbol y}^{m:n}(\Lambda').
	\end{aligned}
\end{align*}
Hence, if $\boldsymbol\beta\neq\boldsymbol 0$,
(\ref{p1.3.701}), (\ref{p1.3.709}), (\ref{p1.3.721}) imply
\begin{align}\label{p1.3.51}
	&
	\left\|
	F_{\theta, \boldsymbol y}^{\boldsymbol\beta, m:n}(\Lambda )
	\:
	\big\langle S_{\theta, y_{n} }^{\boldsymbol\alpha - \boldsymbol\beta }
	(F_{\theta, \boldsymbol y}^{m:n-1}(\Lambda ) )
	\big\rangle
	-
	F_{\theta, \boldsymbol y}^{\boldsymbol\beta, m:n}(\Lambda' )
	\:
	\big\langle S_{\theta, y_{n} }^{\boldsymbol\alpha - \boldsymbol\beta }
	(F_{\theta, \boldsymbol y}^{m:n-1}(\Lambda' ) )
	\big\rangle
	\right\|
	\nonumber\\
	&
	\leq
	\frac{\tau^{2(n-m)} A_{\boldsymbol\alpha}
	K_{\boldsymbol\alpha}(\Lambda,\Lambda' )
	L_{\boldsymbol\alpha,\boldsymbol y}^{m:n}(\Lambda,\Lambda') }
	{4\tilde{C}_{1} (n-m) }
\end{align}
(as $\boldsymbol\beta\neq\boldsymbol 0$, $\boldsymbol\alpha-\boldsymbol\beta\neq\boldsymbol 0$).
Consequently, (\ref{1.705}), (\ref{4.105}), (\ref{4.121}), (\ref{p1.3.45}) yield
\begin{align}\label{p1.3.55}
	&
	\begin{aligned}
	&
	\left\|
	W_{\theta, \boldsymbol y }^{\boldsymbol\alpha, m:n}(\Lambda )
	-
	W_{\theta, \boldsymbol y }^{\boldsymbol\alpha, m:n}(\Lambda' )
	\right\|
	%\nonumber\\
	%&
	%\begin{aligned}[b]
	\leq %&
	\sum_{\stackrel{\scriptstyle \boldsymbol\beta \in \mathbb{N}_{0}^{d}
	\setminus\{\boldsymbol\alpha \} }
	{\boldsymbol\beta \leq \boldsymbol\alpha } }
	\left(\boldsymbol\alpha \atop \boldsymbol\beta \right)
	\left\|
	T_{\theta, y_{n} }^{\boldsymbol\alpha, \boldsymbol\beta }
	(F_{\theta, \boldsymbol y}^{m:n-1}(\Lambda ) )
	-
	T_{\theta, y_{n} }^{\boldsymbol\alpha, \boldsymbol\beta }
	(F_{\theta, \boldsymbol y}^{m:n-1}(\Lambda' ) )
	\right\|
	\\
	&\hspace{1em}
	+
	\sum_{\stackrel{\scriptstyle \boldsymbol\beta \in \mathbb{N}_{0}^{d}
	\setminus\{\boldsymbol 0, \boldsymbol\alpha \} }
	{\boldsymbol\beta \leq \boldsymbol\alpha } }
	\left(\boldsymbol\alpha \atop \boldsymbol\beta \right)
	\left\|
	F_{\theta, \boldsymbol y}^{\boldsymbol\beta, m:n}(\Lambda ) \:
	\big\langle S_{\theta, y_{n} }^{\boldsymbol\alpha - \boldsymbol\beta }
	(F_{\theta, \boldsymbol y}^{m:n-1}(\Lambda ) )
	\big\rangle
	-
	F_{\theta, \boldsymbol y}^{\boldsymbol\beta, m:n}(\Lambda' ) \:
	\big\langle S_{\theta, y_{n} }^{\boldsymbol\alpha - \boldsymbol\beta }
	(F_{\theta, \boldsymbol y}^{m:n-1}(\Lambda' ) )
	\big\rangle
	\right\|
	\end{aligned}
	\nonumber\\
	&
	\leq
	\frac{4^{|\boldsymbol\alpha|} \tau^{2(n-m)} A_{\boldsymbol\alpha }
	K_{\boldsymbol\alpha}(\Lambda,\Lambda' )
	L_{\boldsymbol\alpha,\boldsymbol y}^{m:n}(\Lambda,\Lambda') }
	{2\tilde{C}_{1} (n-m) }
	\nonumber\\
	&
	\leq
	\frac{\tau^{2(n-m)} A_{\boldsymbol\alpha }
	K_{\boldsymbol\alpha}(\Lambda,\Lambda' )
	L_{\boldsymbol\alpha,\boldsymbol y}^{m:n}(\Lambda,\Lambda') }
	{2\tilde{C}_{2} (n-m) }
\end{align}
(since $\tilde{C}_{1}/4^{|\boldsymbol\alpha|}\geq\tilde{C}_{1}/4^{p}=\tilde{C}_{2}$).
Then, owing to Propositions \ref{proposition1.1}, \ref{proposition1.2}
and (\ref{p1.3.721}), (\ref{p1.3.33}),
we have
\begin{align}\label{p1.3.5}
	&
	\left\|
	G_{\theta, \boldsymbol y}^{k:n}
	\left(F_{\theta, \boldsymbol y }^{\boldsymbol 0, m:k}(\Lambda ),
	W_{\theta, \boldsymbol y }^{\boldsymbol\alpha, m:k}(\Lambda ) \right)
	-
	G_{\theta, \boldsymbol y}^{k:n}
	\left(F_{\theta, \boldsymbol y }^{\boldsymbol 0, m:k}(\Lambda' ),
	W_{\theta, \boldsymbol y }^{\boldsymbol\alpha, m:k}(\Lambda' ) \right)
	\right\|
	\nonumber\\
	&
	\leq
	C_{3} \tau^{2(n-k)}
	\left\|
	W_{\theta, \boldsymbol y }^{\boldsymbol\alpha, m:k}(\Lambda )
	-
	W_{\theta, \boldsymbol y }^{\boldsymbol\alpha, m:k}(\Lambda' )
	\right\|
	+
	C_{3} \tau^{2(n-k)}
	\left\|
	F_{\theta, \boldsymbol y }^{\boldsymbol 0, m:k}(\Lambda )
	-
	F_{\theta, \boldsymbol y }^{\boldsymbol 0, m:k}(\Lambda' )
	\right\|
	\left\|
	W_{\theta, \boldsymbol y }^{\boldsymbol\alpha, m:k}(\Lambda' )
	\right\|
	\nonumber\\
	&
	\leq
	\frac{C_{3} \tau^{2(n-m)} A_{\boldsymbol\alpha}
	K_{\boldsymbol\alpha}(\Lambda,\Lambda')
	L_{\boldsymbol\alpha,\boldsymbol y}^{m:k}(\Lambda,\Lambda') }
	{2\tilde{C}_{2} (n-m) }
	+
	\frac{C_{3}C_{4} \tau^{2(n-m)} A_{\boldsymbol\alpha}
	K_{\boldsymbol 0}(\Lambda,\Lambda')
	M_{\boldsymbol\alpha,\boldsymbol y}^{m:k}(\Lambda') }
	{2\tilde{C}_{2} }
	\nonumber\\
	&\leq
	\frac{\tau^{2(n-m)} A_{\boldsymbol\alpha}
	K_{\boldsymbol\alpha}(\Lambda,\Lambda')
	L_{\boldsymbol\alpha,\boldsymbol y}^{m:n}(\Lambda,\Lambda') }
	{\tilde{C}_{3} (n-m) }
\end{align}
for $n\geq k> m$
(as $C_{3}/\tilde{C}_{2}\leq C_{3}C_{4}/\tilde{C}_{2} = 1/\tilde{C}_{3}$).
Due to the same propositions and (\ref{4.121}), we have
\begin{align}\label{p1.3.5'}
	\left\|
	V_{\theta, \boldsymbol y}^{\alpha,m:n}(\Lambda)
	-
	V_{\theta, \boldsymbol y}^{\alpha,m:n}(\Lambda')
	\right\|
	\leq &
	C_{3} \tau^{2(n-m)}
	\left(
	\|\lambda_{\boldsymbol\alpha } - \lambda'_{\boldsymbol\alpha } \|
	+
	\|\lambda_{\boldsymbol 0} - \lambda'_{\boldsymbol 0} \|
	\|\lambda'_{\boldsymbol\alpha } \|
	\right).
\end{align}
Moreover, we have
\begin{align*}
	&
	\|\lambda_{\boldsymbol\alpha } - \lambda'_{\boldsymbol\alpha } \|
	+
	\|\lambda_{\boldsymbol 0} - \lambda'_{\boldsymbol 0} \|
	\|\lambda'_{\boldsymbol\alpha } \|
	\leq
	M_{\boldsymbol\alpha}(\Lambda-\Lambda')
	+
	M_{\boldsymbol\alpha}(\Lambda-\Lambda') M_{\boldsymbol\alpha}(\Lambda')
	\leq
	2M_{\boldsymbol\alpha}(\Lambda-\Lambda') L_{\boldsymbol\alpha}(\Lambda,\Lambda'),
	\\
	&
	\|\lambda_{\boldsymbol\alpha } - \lambda'_{\boldsymbol\alpha } \|
	+
	\|\lambda_{\boldsymbol 0} - \lambda'_{\boldsymbol 0} \|
	\|\lambda'_{\boldsymbol\alpha } \|
	\leq
	\|\lambda_{\boldsymbol\alpha } \| + 2\|\lambda'_{\boldsymbol\alpha } \|
	\leq
	3L_{\boldsymbol\alpha}(\Lambda,\Lambda')
\end{align*}
(since $M_{\boldsymbol\alpha}(\Lambda')\geq \|\lambda'_{\boldsymbol 0} \| = 1$).
Hence, we get
\begin{align*}
	\|\lambda_{\boldsymbol\alpha } - \lambda'_{\boldsymbol\alpha } \|
	+
	\|\lambda_{\boldsymbol 0} - \lambda'_{\boldsymbol 0} \|
	\|\lambda'_{\boldsymbol\alpha } \|
	\leq
	3\min\left\{1, M_{\boldsymbol\alpha}(\Lambda-\Lambda') \right\}
	L_{\boldsymbol\alpha}(\Lambda,\Lambda')
	=
	3 K_{\boldsymbol\alpha}(\Lambda,\Lambda')
	L_{\boldsymbol\alpha}(\Lambda,\Lambda').
\end{align*}
Therefore, (\ref{p1.3.5'}) implies
\begin{align}\label{p1.3.7}
	\left\|
	V_{\theta, \boldsymbol y}^{\alpha,m:n}(\Lambda)
	-
	V_{\theta, \boldsymbol y}^{\alpha,m:n}(\Lambda')
	\right\|
	\leq&
	3C_{3}\tau^{2(n-m)} K_{\boldsymbol\alpha}(\Lambda,\Lambda')
	L_{\boldsymbol\alpha}(\Lambda,\Lambda')
	\nonumber\\
	\leq &
	\frac{\tau^{2(n-m)} A_{\boldsymbol\alpha} K_{\boldsymbol\alpha}(\Lambda,\Lambda')
	L_{\boldsymbol\alpha,\boldsymbol y}^{m:n}(\Lambda,\Lambda' ) }
	{\tilde{C}_{3} }
\end{align}
(as $A_{\boldsymbol\alpha }\geq\tilde{C}_{1}^{2}\geq 3C_{3}\tilde{C}_{3}$).
Combining Lemma \ref{lemma1.12} and (\ref{p1.3.5}), (\ref{p1.3.7}), we get
\begin{align}\label{p1.3.57}
	&
	\begin{aligned}
	&
	\left\|
	F_{\theta, \boldsymbol y }^{\boldsymbol\alpha, m:n}(\Lambda )
	-
	F_{\theta, \boldsymbol y }^{\boldsymbol\alpha, m:n}(\Lambda' )
	\right\|
	%\nonumber\\
	%&
	%\begin{aligned}[b]
	\leq %&
	\left\|
	V_{\theta, \boldsymbol y}^{\alpha,m:n}(\Lambda)
	-
	V_{\theta, \boldsymbol y}^{\alpha,m:n}(\Lambda')
	\right\|
	\\
	&\hspace{1em}
	+
	\sum_{k=m+1}^{n}
	\left\|
	G_{\theta, \boldsymbol y}^{k:n}
	\left(F_{\theta, \boldsymbol y }^{\boldsymbol 0, m:k}(\Lambda ),
	W_{\theta, \boldsymbol y }^{\boldsymbol\alpha, m:k}(\Lambda ) \right)
	-
	G_{\theta, \boldsymbol y}^{k:n}
	\left(F_{\theta, \boldsymbol y }^{\boldsymbol 0, m:k}(\Lambda' ),
	W_{\theta, \boldsymbol y }^{\boldsymbol\alpha, m:k}(\Lambda' ) \right)
	\right\|
	\end{aligned}
	\nonumber\\
	&
	\leq
	\frac{2\tau^{2(n-m)} A_{\boldsymbol\alpha} K_{\boldsymbol\alpha}(\Lambda,\Lambda')
	L_{\boldsymbol\alpha,\boldsymbol y}^{m:n}(\Lambda,\Lambda' ) }
	{\tilde{C}_{3} }
	\nonumber\\
	&\leq
	\tau^{2(n-m)} A_{\boldsymbol\alpha} K_{\boldsymbol\alpha}(\Lambda,\Lambda')
	L_{\boldsymbol\alpha,\boldsymbol y}^{m:n}(\Lambda,\Lambda' )
\end{align}
(since $\tilde{C}_{3}\geq 2$).
Hence, (\ref{p1.3.3*}) holds for
$\boldsymbol\alpha\in\mathbb{N}_{0}^{d}$, $|\boldsymbol\alpha|=l+1$.
Then, the proposition directly follows by the principle of mathematical induction.
\end{proof}

\begin{proof}[\rm\bf Proof of Theorem \ref{theorem1.2}]
Let $\tilde{C}_{1}$, $\tilde{C}_{2}$ be the real numbers defined by
$\tilde{C}_{1}=\max_{n\geq 1} \tau^{n-1}n^{p}$,
$\tilde{C}_{2}=\max\{A_{\boldsymbol\alpha}:
\boldsymbol\alpha\in\mathbb{N}_{0}^{d}, |\boldsymbol\alpha|\leq p \}$,
while $K$ is the real number defined by $K=\tilde{C}_{1}\tilde{C}_{2}$
($A_{\boldsymbol\alpha}$ is specified in Proposition \ref{proposition1.3},
while $\tau$ is defined at the beginning of Section \ref{section1.1*}).
Then, Proposition \ref{proposition1.3} implies
\begin{align}\label{t1.2.1}
	\left\| F_{\theta,\boldsymbol y}^{\boldsymbol\alpha,m:n}(\Lambda)
	-
	F_{\theta,\boldsymbol y}^{\boldsymbol\alpha,m:n}(\Lambda') \right\|
	\leq &
	\tilde{C}_{2}\tau^{2(n-m)}(n-m)^{p} \|\Lambda-\Lambda'\|
	\left( (\|\Lambda\|+\|\Lambda'\| ) \Psi_{\boldsymbol y}^{m:n} \right)^{p}
	\nonumber\\
	\leq &
	\tilde{C}_{1}\tilde{C}_{2}\tau^{n-m} \|\Lambda-\Lambda'\|
	\left( (\|\Lambda\|+\|\Lambda'\| ) \Psi_{\boldsymbol y}^{m:n} \right)^{p}
	\nonumber\\
	\leq &
	K\tau^{n-m} \|\Lambda-\Lambda'\|
	(\|\Lambda\|+\|\Lambda'\| )^{p}
	\left( \sum_{k=m+1}^{n} \psi(y_{k} ) \right)^{p}
\end{align}
for $\theta\in\Theta$, $\Lambda,\Lambda'\in{\cal L}_{0}({\cal X} )$, $n>m\geq 0$,
$\boldsymbol\alpha\in\mathbb{N}_{0}^{d}$, $|\boldsymbol\alpha|\leq p$
and a sequence $\boldsymbol y = \{y_{n} \}_{n\geq 1}$ in ${\cal Y}$.
Proposition \ref{proposition1.3} also yields
\begin{align}\label{t1.2.3}
	\left\| F_{\theta,\boldsymbol y}^{\boldsymbol\alpha,m:n}(\Lambda) \right\|
	\leq
	\tilde{C}_{2} \left( \|\Lambda\| \Psi_{\boldsymbol y}^{m:n} \right)^{p}
	\leq
	K\|\Lambda\|^{p}
	\left( \sum_{k=m+1}^{n} \psi(y_{k} ) \right)^{p}
\end{align}
for the same $\theta$, $\Lambda$, $n,m$,
$\boldsymbol\alpha$, $\boldsymbol y$.
As (\ref{t1.2.1*}), (\ref{t1.2.3*}) are trivially satisfied when $n=m$,
the theorem directly follows from (\ref{t1.2.1}), (\ref{t1.2.3}).
\end{proof}

\section{Proof of Theorem \ref{theorem1.3} }\label{section1.2*}

In this section, we rely on the following notation.
$\tilde{\Phi}_{\theta}(x,y,\Lambda)$ is the function defined by
\begin{align}\label{5.101}
	\tilde{\Phi}_{\theta}(x,y,\Lambda)
	=
	\int\int \Phi_{\theta}(x',y',\Lambda) Q(x',dy') P(x,dx')
\end{align}
for $\theta\in\Theta$, $x\in{\cal X}$, $y\in{\cal Y}$,
$\Lambda\in{\cal L}_{0}({\cal X} )$.
$\boldsymbol X$ and $\boldsymbol Y$ denote stochastic processes
$\{X_{n} \}_{n\geq 1}$ and $\{Y_{n} \}_{n\geq 1}$
(i.e., $\boldsymbol X = \{X_{n} \}_{n\geq 1}$, $\boldsymbol Y = \{Y_{n} \}_{n\geq 1}$).
$G_{\theta,\boldsymbol X,\boldsymbol Y}^{m:n}(\Lambda)$ and
$H_{\theta,\boldsymbol X,\boldsymbol Y}^{m:n}(\Lambda)$ are the random functions defined by
\begin{align*}
	G_{\theta,\boldsymbol X,\boldsymbol Y}^{m:n}(\Lambda)
	=
	\Phi_{\theta}\left(X_{n},Y_{n},F_{\theta,\boldsymbol Y}^{m:n}(\Lambda) \right),
	\;\;\;\;\;
	H_{\theta,\boldsymbol X,\boldsymbol Y}^{m:n}(\Lambda)
	=
	\Phi_{\theta}\left(X_{n+1},Y_{n+1},F_{\theta,\boldsymbol Y}^{m:n}(\Lambda) \right)
\end{align*}
for $n\geq m\geq 0$.
$A_{\theta}^{n}(x,\Lambda)$ and $B_{\theta}^{n}(x,\Lambda)$ are the functions defined by
\begin{align*}
	&
	A_{\theta}^{n}(x,\Lambda)
	=
	E\left(\left.
	G_{\theta,\boldsymbol X,\boldsymbol Y}^{0:n}(\Lambda)
	-
	G_{\theta,\boldsymbol X,\boldsymbol Y}^{1:n}(\Lambda)
	\right|X_{0}=x \right),
	%\\
	%&
	\;\;\;\;\;
	B_{\theta}^{n}(x,\Lambda)
	=
	E\left(\left.
	G_{\theta,\boldsymbol X,\boldsymbol Y}^{0:n}(\Lambda)
	\right|X_{0}=x \right)
\end{align*}
for $n\geq 1$.
$C_{\theta}^{n}(x,y,\Lambda)$ and $D_{\theta}^{n}(x,y,\Lambda)$ are the functions defined by
\begin{align*}
	&
	C_{\theta}^{n}(x,y,\Lambda)
	=
	E\left(\left.
	H_{\theta,\boldsymbol X,\boldsymbol Y}^{0:n}(\Lambda)
	-
	H_{\theta,\boldsymbol X,\boldsymbol Y}^{1:n}(\Lambda)
	\right|X_{1}=x,Y_{1}=y \right),
	\\
	&
	D_{\theta}^{n}(x,y,\Lambda)
	=
	E\left(\left.
	H_{\theta,\boldsymbol X,\boldsymbol Y}^{0:n}(\Lambda)
	\right|X_{1}=x,Y_{1}=y \right).
\end{align*}
$\tilde{A}_{\theta}^{m,n}(x,\Lambda)$ and $\tilde{B}_{\theta}^{n}(x,\Lambda)$ are the functions
defined by
\begin{align*}
	&
	\tilde{A}_{\theta}^{m,n}(x,\Lambda)
	=
	\int A_{\theta}^{n-m}(x',\Lambda) (P^{m}-\pi)(x,dx'),
	\\
	&
	\tilde{B}_{\theta}^{n}(x,\Lambda)
	=
	\int\int \Phi_{\theta}(x',y',\Lambda) Q(x',dy') (P^{n}-\pi)(x,dx')
\end{align*}
for $n>m\geq 0$.

\begin{lemma}\label{lemma2.1}
Let Assumptions \ref{a1.1}, \ref{a1.2} and \ref{a1.4} hold.
Then, there exists a real number
$C_{5}\in[1,\infty )$ (depending only on $p$, $q$, $\varepsilon$, $L_{0}$) such
that
\begin{align*}
	&
	\max\left\{
	\left| G_{\theta,\boldsymbol X,\boldsymbol Y}^{0:n}({\cal E}_{\lambda} )
	-
	G_{\theta,\boldsymbol X,\boldsymbol Y}^{0:n}(\Lambda) \right|,
	\left| G_{\theta,\boldsymbol X,\boldsymbol Y}^{0:n}({\cal E}_{\lambda} )
	-
	G_{\theta,\boldsymbol X,\boldsymbol Y}^{1:n}(\Lambda) \right|
	\right\}
	\leq
	C_{5}\tau^{n}\|\Lambda\|^{s}\varphi(X_{n},Y_{n} )
	\sum_{k=1}^{n} \psi^{r}(Y_{k} ),
	\\
	&
	\max\left\{
	\left| H_{\theta,\boldsymbol X,\boldsymbol Y}^{0:n}({\cal E}_{\lambda} )
	-
	H_{\theta,\boldsymbol X,\boldsymbol Y}^{0:n}(\Lambda) \right|,
	\left| H_{\theta,\boldsymbol X,\boldsymbol Y}^{0:n}({\cal E}_{\lambda} )
	-
	H_{\theta,\boldsymbol X,\boldsymbol Y}^{1:n}(\Lambda) \right|
	\right\}
	\leq
	C_{5}\tau^{n}\|\Lambda\|^{s}\varphi(X_{n+1},Y_{n+1} )
	\sum_{k=1}^{n} \psi^{r}(Y_{k} )
\end{align*}
for all $\theta\in\Theta$,
$\lambda\in{\cal P}({\cal X} )$, $\Lambda\in{\cal L}_{0}({\cal X} )$, $n\geq 1$
($r$ and $s$ are specified in Assumption \ref{a1.5} and
Theorem \ref{theorem1.3}, while $\tau$ is defined at the beginning of Section \ref{section1.1*}).
\end{lemma}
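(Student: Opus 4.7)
The plan is to combine the Lipschitz and growth bounds on $\Phi_{\theta}$ from Assumption \ref{a1.4} with the filter boundedness and forgetting estimates (\ref{t1.2.1*}) and (\ref{t1.2.3*}) of Theorem \ref{theorem1.2}, exploiting the key fact that $\|{\cal E}_{\lambda}\|=1$ while any $\Lambda\in{\cal L}_{0}({\cal X})$ satisfies $\|\Lambda\|\geq 1$ because its component $\boldsymbol{0}$ is a probability measure.

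For $|G^{0:n}_{\theta,\boldsymbol{X},\boldsymbol{Y}}({\cal E}_{\lambda}) - G^{0:n}_{\theta,\boldsymbol{X},\boldsymbol{Y}}(\Lambda)|$ I would treat the difference directly. Assumption \ref{a1.4} yields
\begin{align*}
\bigl|G^{0:n}_{\theta,\boldsymbol{X},\boldsymbol{Y}}({\cal E}_{\lambda}) - G^{0:n}_{\theta,\boldsymbol{X},\boldsymbol{Y}}(\Lambda)\bigr|
\leq \varphi(X_{n},Y_{n})\bigl\|F^{0:n}_{\theta,\boldsymbol{Y}}({\cal E}_{\lambda}) - F^{0:n}_{\theta,\boldsymbol{Y}}(\Lambda)\bigr\|\bigl(\|F^{0:n}_{\theta,\boldsymbol{Y}}({\cal E}_{\lambda})\| + \|F^{0:n}_{\theta,\boldsymbol{Y}}(\Lambda)\|\bigr)^{q}.
\end{align*}
The middle factor is bounded via (\ref{t1.2.3*}) and the two norms via (\ref{t1.2.1*}); using $\|{\cal E}_{\lambda}\|=1\leq\|\Lambda\|$ and $\psi\geq 1$, the combined bound takes the form $C\tau^{n}\|\Lambda\|^{p(q+1)+1}\bigl(\sum_{k=1}^{n}\psi(Y_{k})\bigr)^{p(q+1)}\varphi(X_{n},Y_{n})$, which is of the shape demanded by the lemma with $\psi$-exponent equal to $s$.

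The mixed comparison $|G^{0:n}_{\theta,\boldsymbol{X},\boldsymbol{Y}}({\cal E}_{\lambda}) - G^{1:n}_{\theta,\boldsymbol{X},\boldsymbol{Y}}(\Lambda)|$ requires one preliminary step: the semigroup identity that follows from (\ref{1.705}) gives
\begin{align*}
F^{0:n}_{\theta,\boldsymbol{Y}}({\cal E}_{\lambda})
= F^{1:n}_{\theta,\boldsymbol{Y}}\bigl(F_{\theta,Y_{1}}({\cal E}_{\lambda})\bigr),
\end{align*}
reducing the comparison to two filter trajectories driven by $Y_{2},\dots,Y_{n}$, which I again handle via (\ref{t1.2.3*}) with $m=1$. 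The size of the intermediate measure $F_{\theta,Y_{1}}({\cal E}_{\lambda})$ is controlled by $\|F_{\theta,Y_{1}}({\cal E}_{\lambda})\|\leq K\psi(Y_{1})^{p}$, obtained from (\ref{t1.2.1*}) with $m=0$, $n=1$, which introduces an additional $\psi(Y_{1})^{p(p+1)}$ factor in the resulting bound. The two $H$-inequalities follow by the identical chain of estimates, with $\Phi_{\theta}$ now evaluated at $(X_{n+1},Y_{n+1})$, which does not touch the filter-side analysis.

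The main bookkeeping hurdle, carried out at the end, is to convert the multi-factor $\psi$-expression $\psi(Y_{1})^{p(p+1)}\bigl(\sum_{k=1}^{n}\psi(Y_{k})\bigr)^{s}$ into the single sum $\sum_{k=1}^{n}\psi^{r}(Y_{k})$ with $r=p(p+q+1)$. This I would do via the power-mean inequality $\bigl(\sum_{k=1}^{n}a_{k}\bigr)^{m}\leq n^{m-1}\sum_{k=1}^{n}a_{k}^{m}$, the monotonicity $\psi^{m}\leq\psi^{r}$ for $m\leq r$ (valid because $\psi\geq 1$), expansion of products of sums via AM-GM when needed, and absorption of the resulting polynomial-in-$n$ prefactors into $\tau^{n}$ by replacing $\tau$ with a slightly larger constant in $(0,1)$ and folding the loss into $C_{5}$. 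The choice $r=p(p+q+1)$ is precisely what is needed so that every monomial arising in these expansions is dominated by $\sum_{k=1}^{n}\psi^{r}(Y_{k})$, and is the same exponent that enters Assumption \ref{a1.5}.
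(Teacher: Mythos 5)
Your overall architecture is the same as the paper's: bound the $\Phi_{\theta}$-differences through Assumption \ref{a1.4}, control the filter differences via exponential forgetting, handle the $0{:}n$ versus $1{:}n$ comparison by the semigroup identity $F^{0:n}_{\theta,\boldsymbol Y}({\cal E}_{\lambda})=F^{1:n}_{\theta,\boldsymbol Y}(F_{\theta,Y_{1}}({\cal E}_{\lambda}))$, and finish with power-mean bookkeeping. The gap is in the specific stability input you use. You invoke Theorem \ref{theorem1.2}, whose Lipschitz bound (\ref{t1.2.3*}) carries the uncapped factor $\|\Lambda-\Lambda'\|$. With $\Lambda'={\cal E}_{\lambda}$ (or $\Lambda'=F_{\theta,Y_{1}}({\cal E}_{\lambda})$) this factor can only be bounded by roughly $\|\Lambda\|$ (respectively $K\psi^{p}(Y_{1})\|\Lambda\|$), and this is exactly where your exponents drift: for the $0{:}n$ comparison you yourself arrive at $\|\Lambda\|^{p(q+1)+1}$, which is \emph{not} the shape demanded — the lemma requires $\|\Lambda\|^{s}$ with $s=p(q+1)$, and since $\|\Lambda\|$ is unbounded the extra power cannot be discarded; for the mixed comparison the same factor also adds $\psi^{p}(Y_{1})$, so your monomials have total $\psi$-degree $p(p+1)+p(q+1)=p(p+q+2)=r+p$, and they are \emph{not} dominated by $\sum_{k}\psi^{r}(Y_{k})$ (Young's inequality produces pure powers $\psi^{r+p}$, and $\psi$ is unbounded), contrary to your final claim about the choice of $r$. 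The paper avoids both problems by using Proposition \ref{proposition1.3} rather than Theorem \ref{theorem1.2}: there the Lipschitz constant is $K_{\boldsymbol\alpha}(\Lambda,\Lambda')=\min\{1,M_{\boldsymbol\alpha}(\Lambda-\Lambda')\}\leq 1$, so no $\|\Lambda-\Lambda'\|$ factor is paid at all, and the exponents come out as exactly $\|\Lambda\|^{s}$ and total $\psi$-degree $r$.

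A second, smaller issue is the rate. Theorem \ref{theorem1.2} only gives $\tau^{n-m}$, whereas the polynomial-in-$n$ prefactors produced by the power-mean step cannot be absorbed into $\tau^{n}$ for the \emph{specific} $\tau=(1-\varepsilon^{2})^{1/2}$ fixed in the lemma's statement ($\tau^{n-1}n^{c}/\tau^{n}\to\infty$); your plan to "replace $\tau$ with a slightly larger constant" proves a weaker statement than the one asserted. Proposition \ref{proposition1.3} supplies the squared rate $\tau^{2(n-m)}$, which leaves exactly the slack needed ($\tau^{2(n-1)}n^{2r}\leq C\tau^{n}$). With Proposition \ref{proposition1.3} in place of Theorem \ref{theorem1.2}, your outline — including the semigroup step and the bound $\|F_{\theta,Y_{1}}({\cal E}_{\lambda})\|\leq K\psi^{p}(Y_{1})$ exploiting $\|{\cal E}_{\lambda}\|=1$ — goes through and reproduces the stated constants.
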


\begin{proof}
Throughout the proof, the following notation is used.
$\tilde{C}_{1}$, $\tilde{C}_{2}$ are the real numbers defined by
$\tilde{C}_{1}=\max_{n\geq 1} \tau^{n-1}n^{2r}$,
$\tilde{C}_{2}=\max\{A_{\boldsymbol\alpha}:\boldsymbol\alpha\in\mathbb{N}_{0},|\boldsymbol\alpha|\leq p \big\}$
($A_{\boldsymbol\alpha}$ is specified in Proposition \ref{proposition1.3}).
$\tilde{C}_{3}$, $\tilde{C}_{4}$ are the real numbers defined by
$\tilde{C}_{3}=2^{p}\tilde{C}_{2}^{p+1}$,
$\tilde{C}_{4}=2^{q}\tilde{C}_{2}^{q}\tilde{C}_{3}$,
while $C_{5}$ is the real number defined by $C_{5}=\tilde{C}_{1}\tilde{C}_{4}\tau^{-2}$.
$\theta$, $x$, $y$, $\lambda$ are any elements of
$\Theta$, ${\cal X}$, ${\cal Y}$, ${\cal P}({\cal X} )$ (respectively), while
$\Lambda,\Lambda'$ are any elements of ${\cal L}_{0}({\cal X} )$.
$\boldsymbol y = \{y_{n} \}_{n\geq 1}$ is any sequence in ${\cal Y}$.
$n$, $m$, $k$ are any integers satisfying $n\geq 1$,
$k\geq m\geq 0$.

Owing to Proposition \ref{proposition1.3}, we have
\begin{align}\label{l2.1.1}
	\left\| F_{\theta,\boldsymbol y}^{m:k}(\Lambda) \right\|
	\leq
	\tilde{C}_{2} \left( \|\Lambda\| \Phi_{\boldsymbol y}^{0:k} \right)^{p},
	\;\;\;\;\;
	\left\| F_{\theta,\boldsymbol y}^{m:k}(\Lambda) - F_{\theta,\boldsymbol y}^{m:k}(\Lambda') \right\|
	\leq
	\tilde{C}_{2} \tau^{2(k-m)} \left( (\|\Lambda\|+\|\Lambda'\| ) \Phi_{\boldsymbol y}^{0:k} \right)^{p}
\end{align}
(as $\Phi_{\boldsymbol y}^{0:k}\geq\Phi_{\boldsymbol y}^{m:k}\geq\Psi_{\boldsymbol y}^{m:k}$).
Consequently, we have
\begin{align}
	&\label{l2.1.3}
	\left\| F_{\theta,\boldsymbol y}^{0:k}({\cal E}_{\lambda} ) \right\|
	+
	\left\| F_{\theta,\boldsymbol y}^{m:k}(\Lambda) \right\|
	\leq
	2\tilde{C}_{2} \left( \|\Lambda\| \Phi_{\boldsymbol y}^{0:k} \right)^{p},
	\\
	&
	\left\| F_{\theta,\boldsymbol y}^{0:m}({\cal E}_{\lambda} ) \right\|
	+
	\|\Lambda\|
	\leq
	2\tilde{C}_{2} \left( \|\Lambda\| \Phi_{\boldsymbol y}^{0:m} \right)^{p}
	\leq
	2\tilde{C}_{2} \left( \|\Lambda\| \Phi_{\boldsymbol y}^{0:k} \right)^{p}.
	\nonumber
\end{align}
Then, (\ref{1.705}), (\ref{l2.1.1}) imply
\begin{align}\label{l2.1.5}
	\left\| F_{\theta,\boldsymbol y}^{0:k}({\cal E}_{\lambda} ) - F_{\theta,\boldsymbol y}^{m:k}(\Lambda) \right\|
	=%&
	\left\| F_{\theta,\boldsymbol y}^{m:k}\left( F_{\theta,\boldsymbol y}^{0:m}({\cal E}_{\lambda} ) \right)
	-
	F_{\theta,\boldsymbol y}^{m:k}(\Lambda) \right\|
	%\nonumber\\
	\leq &
	\tilde{C}_{2} \tau^{2(k-m)}
	\left( \left( \left\| F_{\theta,\boldsymbol y}^{0:m}({\cal E}_{\lambda} ) \right\| + \|\Lambda\| \right)
	\Phi_{\boldsymbol y}^{0:k} \right)^{p}
	\nonumber\\
	\leq &
	\tilde{C}_{3} \tau^{2(k-m)} \|\Lambda\|^{p^{2} } \left( \Phi_{\boldsymbol y}^{0:k} \right)^{p(p+1)}
\end{align}
(as $\Phi_{\boldsymbol y}^{0:k}\geq 1$).
Combining Assumption \ref{a1.4} with (\ref{l2.1.3}), (\ref{l2.1.5}), we get
\begin{align}\label{l2.1.7}
	\left| \Phi_{\theta}(x,y,F_{\theta,\boldsymbol y}^{0:k}({\cal E}_{\lambda} ) )
	-
	\Phi_{\theta}(x,y,F_{\theta,\boldsymbol y}^{m:k}(\Lambda ) ) \right|
	\leq &
	\varphi(x,y) \left\| F_{\theta,\boldsymbol y}^{0:k}({\cal E}_{\lambda} ) - F_{\theta,\boldsymbol y}^{m:k}(\Lambda) \right\|
	\left(
	\left\| F_{\theta,\boldsymbol y}^{0:k}({\cal E}_{\lambda} ) \right\|
	+
	\left\| F_{\theta,\boldsymbol y}^{m:k}(\Lambda) \right\|
	\right)^{q}
	\nonumber\\
	\leq &
	\tilde{C}_{4} \varphi(x,y) \tau^{2(k-m)} \|\Lambda\|^{s} \left( \Phi_{\boldsymbol y}^{0:k} \right)^{r}.
\end{align}
Using (\ref{l2.1.7}), we deduce
\begin{align*}
	&
	\max\left\{
	\left| G_{\theta,\boldsymbol X,\boldsymbol Y}^{0:n}({\cal E}_{\lambda} )
	-
	G_{\theta,\boldsymbol X,\boldsymbol Y}^{0:n}(\Lambda) \right|,
	\left| G_{\theta,\boldsymbol X,\boldsymbol Y}^{0:n}({\cal E}_{\lambda} )
	-
	G_{\theta,\boldsymbol X,\boldsymbol Y}^{1:n}(\Lambda) \right|
	\right\}
	\\
	&\leq
	\tilde{C}_{4} \tau^{2(n-1)} n^{2r} \|\Lambda\|^{s}
	\varphi(X_{n},Y_{n} ) \sum_{k=1}^{n} \psi^{r}(Y_{k} )
	%\\
	\leq %&
	C_{5} \tau^{n} \|\Lambda\|^{s}
	\varphi(X_{n},Y_{n} )	\sum_{k=1}^{n} \psi^{r}(Y_{k} ).
\end{align*}
Similarly, we conclude
\begin{align*}
	&
	\max\left\{
	\left| H_{\theta,\boldsymbol X,\boldsymbol Y}^{0:n}({\cal E}_{\lambda} )
	\!-\!
	H_{\theta,\boldsymbol X,\boldsymbol Y}^{0:n}(\Lambda) \right|\!,
	\left| H_{\theta,\boldsymbol X,\boldsymbol Y}^{0:n}({\cal E}_{\lambda} )
	\!-\!
	H_{\theta,\boldsymbol X,\boldsymbol Y}^{1:n}(\Lambda) \right|
	\right\}
	\\
	&\leq
	\tilde{C}_{4} \tau^{2(n-\!1)} n^{2r} \|\Lambda\|^{s}
	\varphi(X_{n+\!1},Y_{n+\!1} ) \! \sum_{k=1}^{n} \psi^{r}(Y_{k} )
	\leq
	C_{5} \tau^{n} \|\Lambda\|^{s}
	\varphi(X_{n+\!1},Y_{n+\!1} )	\sum_{k=1}^{n} \psi^{r}(Y_{k} ).
\end{align*}
\end{proof}

\begin{lemma}\label{lemma2.2}
Let Assumptions \ref{a1.1}, \ref{a1.2} and \ref{a1.3} -- \ref{a1.5} hold.
Moreover, let $\rho=\max\{\tau^{1/3},\delta^{1/3} \}$
($\delta$ is specified in Assumption \ref{a1.3},
while $\tau$ is defined at the beginning of Section \ref{section1.1*}).
Then, the following is true.

(i) There exists a real number $C_{6}\in[1,\infty)$ (depending only on $p$, $q$, $\varepsilon$, $\delta$,
$K_{0}$, $L_{0}$) such that
\begin{align*}
	&
	\max\left\{ \big| A_{\theta}^{n}(x,{\cal E}_{\lambda} ) \big|,
	\big|\tilde{A}_{\theta}^{m,n}(x,{\cal E}_{\lambda} ) \big|,
	\big|\tilde{B}_{\theta}^{n}(x,{\cal E}_{\lambda} ) \big|
	\right\}
	\leq
	C_{6}\rho^{2n},
	%\\
	%&
	\;\;\;\;\;
	\left| B_{\theta}^{n}(x,{\cal E}_{\lambda} ) - B_{\theta}^{n}(x,\Lambda) \right|
	\leq
	C_{6}\rho^{2n}\|\Lambda\|^{s}
\end{align*}
for all $\theta\in\Theta$, $x\in{\cal X}$, $\lambda\in{\cal P}({\cal X} )$,
$\Lambda\in{\cal L}_{0}({\cal X} )$, $n>m\geq 0$.

(ii) There exists a real number $C_{7}\in[1,\infty)$ (depending only on $p$, $q$, $\varepsilon$, $\delta$,
$K_{0}$, $L_{0}$) such that
\begin{align*}
	&
	\big| C_{\theta}^{n}(x,y,{\cal E}_{\lambda} ) \big|
	\leq
	C_{7}\rho^{2n}\psi^{r}(y),
	%\\
	%&
	\;\;\;\;\;
	\left| D_{\theta}^{n}(x,y,{\cal E}_{\lambda} ) - D_{\theta}^{n}(x,y,\Lambda) \right|
	\leq
	C_{7}\rho^{2n}\|\Lambda\|^{s}\psi^{r}(y)
\end{align*}
for all $\theta\in\Theta$, $x\in{\cal X}$, $y\in{\cal Y}$, $\lambda\in{\cal P}({\cal X} )$,
$\Lambda\in{\cal L}_{0}({\cal X} )$, $n\geq 1$.
\end{lemma}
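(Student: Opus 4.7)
The plan is to leverage Lemma \ref{lemma2.1} for pointwise control of the integrands inside the conditional expectations, and then to combine it with Assumption \ref{a1.5} (to handle moments of $\varphi$ and $\psi^{r}$) and Assumption \ref{a1.3} (to exploit the geometric ergodicity of $\{X_{n}\}$). The key rate comparison I will use throughout is that, since $\rho\geq\tau^{1/3}$ and $\rho\geq\delta^{1/3}$, any bound of the form $n^{c}\max(\tau,\delta)^{n}$ is dominated by a constant multiple of $\rho^{2n}$; in particular, $n^{c}\delta^{m}\tau^{n-m}$ is dominated uniformly in $0\leq m\leq n$ through the elementary inequality $\delta^{m}\tau^{n-m}\leq\max(\tau,\delta)^{n}$, with the polynomial factor absorbed by the extra $\rho^{n/3}$ of slack.

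For Part (i), I would proceed in four steps. First, for $A_{\theta}^{n}(x,{\cal E}_{\lambda})$: take $\Lambda={\cal E}_{\lambda}$ in the second estimate of Lemma \ref{lemma2.1} (noting $\|{\cal E}_{\lambda}\|=1$), then take conditional expectation given $X_{0}=x$, and bound each term $E[\varphi(X_{n},Y_{n})\psi^{r}(Y_{k})|X_{0}=x]$ using the Markov property: for $k<n$, $Y_{n}$ and $Y_{k}$ are conditionally independent given the $X$-process, so conditioning successively on $X_{n}$ and $X_{k}$ and applying Assumption \ref{a1.5} (with $\varphi,\psi\geq 1$) yields at most $L_{0}^{2}$; for $k=n$ a single application of Assumption \ref{a1.5} gives at most $L_{0}$. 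Summing over $k=1,\ldots,n$ produces $|A_{\theta}^{n}(x,{\cal E}_{\lambda})|\leq C_{5}L_{0}^{2}n\tau^{n}$, which converts to the desired $\rho^{2n}$ bound. Second, for $|B_{\theta}^{n}(x,{\cal E}_{\lambda})-B_{\theta}^{n}(x,\Lambda)|$: apply the first estimate of Lemma \ref{lemma2.1} and the identical moment argument, picking up an extra factor of $\|\Lambda\|^{s}$. Third, for $\tilde{B}_{\theta}^{n}(x,{\cal E}_{\lambda})$: observe that the inner integral $\int\Phi_{\theta}(x',y',{\cal E}_{\lambda})Q(x',dy')$ is bounded by $L_{0}$ uniformly in $x'$ (via Assumption \ref{a1.4} with $\|{\cal E}_{\lambda}\|=1$ together with Assumption \ref{a1.5} and $\psi^{r}\geq 1$); the total-variation bound $\|P^{n}(x,\cdot)-\pi\|\leq 2K_{0}\delta^{n}$ from Assumption \ref{a1.3} then gives $|\tilde{B}_{\theta}^{n}(x,{\cal E}_{\lambda})|\leq 2K_{0}L_{0}\delta^{n}\leq C\rho^{2n}$. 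Fourth, for $\tilde{A}_{\theta}^{m,n}(x,{\cal E}_{\lambda})$: combine the uniform estimate $\sup_{x'}|A_{\theta}^{n-m}(x',{\cal E}_{\lambda})|\leq C(n-m)\tau^{n-m}$ from Step 1 with the same total-variation bound to obtain $|\tilde{A}_{\theta}^{m,n}(x,{\cal E}_{\lambda})|\leq C'(n-m)\delta^{m}\tau^{n-m}$, and then apply the rate-comparison stated above.

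For Part (ii), the argument is entirely parallel, except that the conditioning is now on $(X_{1},Y_{1})=(x,y)$ rather than on $X_{0}$. The only new feature is that the $k=1$ term in the sum $\sum_{k=1}^{n}\psi^{r}(Y_{k})$ becomes the deterministic quantity $\psi^{r}(y)$, while all other terms are controlled by Assumption \ref{a1.5} exactly as in Part (i); this is the source of the explicit $\psi^{r}(y)$ factor in the claimed bounds on $C_{\theta}^{n}$ and $D_{\theta}^{n}$. The main obstacle, both here and in Part (i), is the need to combine the Markov-ergodicity rate $\delta$ with the filter-forgetting rate $\tau$ into a single geometric rate valid uniformly in $m$ and $n$; fortunately, this reduces entirely to the inequality $\delta^{m}\tau^{n-m}\leq\max(\tau,\delta)^{n}$, and the choice $\rho=\max(\tau,\delta)^{1/3}$ provides enough polynomial slack to absorb all factors of $n$ and $n-m$ arising in the moment estimates.
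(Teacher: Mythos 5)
Your proposal is correct and follows essentially the same route as the paper's proof: pointwise bounds from Lemma \ref{lemma2.1} with $\Lambda={\cal E}_{\lambda}$, moment bounds $L_{0}$, $L_{0}^{2}$ (and $L_{0}\psi^{r}(y)$ in part (ii)) obtained from Assumption \ref{a1.5} via the Markov/conditional-independence structure, the total-variation bound from Assumption \ref{a1.3} for $\tilde{A}_{\theta}^{m,n}$ and $\tilde{B}_{\theta}^{n}$, and the rate comparison $n^{c}\delta^{m}\tau^{n-m}\leq n^{c}\rho^{3n}\leq C\rho^{2n}$ afforded by the choice $\rho=\max\{\tau^{1/3},\delta^{1/3}\}$. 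No gaps.
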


\begin{proof}
Throughout the proof, the following notation is used.
$\tilde{C}_{1}$, $\tilde{C}_{2}$ are the real numbers defined by $\tilde{C}_{1}=\max_{n\geq 1} \rho^{n-1}n$,
$\tilde{C}_{2}=L_{0}^{2}$ ($L_{0}$ is specified in Assumption \ref{a1.5}).
$\theta$, $x$, $y$, $\lambda$, $\Lambda$ are any elements of
$\Theta$, ${\cal X}$, ${\cal Y}$, ${\cal P}({\cal X} )$, ${\cal L}_{0}({\cal X} )$ (respectively).
$n,m$ are any integers satisfying $n>m\geq 0$.

Owing to Assumption \ref{a1.4}, we have
\begin{align}\label{l2.2.1}
	E\left(\left.
	\varphi(X_{k},Y_{k} ) \psi^{r}(Y_{k} )
	\right|X_{0}=x \right)
	=
	E\left(\left.
	\int\varphi(X_{k},y) \psi^{r}(y) Q(X_{k},dy)
	\right|X_{0}=x \right)
	\leq
	L_{0}
\end{align}
for $k\geq 0$.
Due to the same assumption, we have
\begin{align}\label{l2.2.5}
	\max\left\{
	\int \varphi(x,y') Q(x,dy'), \int \psi^{r}(y') Q(x,dy')
	\right\}
	\leq
	\int \varphi(x,y') \psi^{r}(y') Q(x,dy')
	\leq
	L_{0}.
\end{align}
Consequently, we get
\begin{align}\label{l2.2.3}
	E\left(\left.
	\varphi(X_{l},Y_{l} ) \psi^{r}(Y_{k} )
	\right|X_{0}=x \right)
	=
	E\left(\left.
	\int \varphi(X_{l},y) Q(X_{l},dy) \int \psi^{r}(y) Q(X_{k},dy)
	\right|X_{0}=x \right)
	\leq
	L_{0}^{2}
\end{align}
for $l>k\geq 0$.
Similarly, we get
\begin{align}
	&\label{l2.2.7}
	E\left(\left.
	\varphi(X_{k},Y_{k} ) \psi^{r}(Y_{1} )
	\right|X_{1}=x,Y_{1}=y \right)
	=
	\psi^{r}(y)
	E\left(\left.
	\int \varphi(X_{k},y') Q(X_{k},dy')
	\right|X_{1}=x \right)
	\leq
	L_{0} \psi^{r}(y),
	\\
	&\label{l2.2.9}
	E\left(\left.
	\varphi(X_{l},Y_{l} ) \psi^{r}(Y_{k} )
	\right|X_{1}=x,Y_{1}=y \right)
	=
	E\left(\left.
	\int \varphi(X_{l},y') Q(X_{l},dy') \int \psi^{r}(y') Q(X_{k},dy')
	\right|X_{1}=x \right)
	\leq
	L_{0}^{2}
\end{align}
for $l>k>1$.

Let $C_{6}$ be the real number defined by $C_{6}=\tilde{C}_{1}\tilde{C}_{2}C_{5}K_{0}$
($K_{0}$, $C_{5}$ are specified in Assumption \ref{a1.3} and Lemma \ref{lemma2.1}).
Since $\tau^{n}n\leq \rho^{3n}(n+1)\leq\tilde{C}_{1}\rho^{2n}$,
Lemma \ref{lemma2.1} and (\ref{l2.2.1}), (\ref{l2.2.3}) imply
\begin{align*}
	\big| A_{\theta}^{n}(x,{\cal E}_{\lambda} ) \big|
	\leq &
	E\left(\left.
	\left|
	G_{\theta,\boldsymbol X,\boldsymbol Y}^{0:n}({\cal E}_{\lambda} )
	-
	G_{\theta,\boldsymbol X,\boldsymbol Y}^{1:n}({\cal E}_{\lambda} )
	\right|\:
	\right|X_{0}=x \right)
	\\
	\leq &
	C_{5} \tau^{n}
	\sum_{k=1}^{n}
	E\left(\left. \varphi(X_{n},Y_{n} ) \psi^{r}(Y_{k} ) \right|X_{0}=x \right)
	\\
	\leq &
	\tilde{C}_{2}C_{5}\tau^{n}n
	%\\
	\leq %&
	C_{6}\rho^{2n}.
\end{align*}
As $\tau^{n-m}\delta^{m}(n-m)\leq \rho^{3n}(n+1)\leq\tilde{C}_{1}\rho^{2n}$,
Assumption \ref{a1.4} yields
\begin{align*}
	\big| \tilde{A}_{\theta}^{m,n}(x,{\cal E}_{\lambda} ) \big|
	\leq
	\int \big| A_{\theta}^{n-m}(x',{\cal E}_{\lambda} ) \big| |P^{m}-\pi|(x,dx')
	\leq
	\tilde{C}_{2}C_{5}K_{0}\tau^{n-m}\delta^{m}(n-m)
	\leq
	C_{6}\rho^{2n}.
\end{align*}
Moreover, owing to Lemma \ref{lemma2.1} and (\ref{l2.2.1}), (\ref{l2.2.3}), we have
\begin{align*}
	\big| B_{\theta}^{n}(x,{\cal E}_{\lambda} ) - B_{\theta}^{n}(x,\Lambda) \big|
	\leq &
	E\left(\left.
	\left|
	G_{\theta,\boldsymbol X,\boldsymbol Y}^{0:n}({\cal E}_{\lambda} )
	-
	G_{\theta,\boldsymbol X,\boldsymbol Y}^{0:n}(\Lambda)
	\right|\:
	\right|X_{0}=x \right)
	\\
	\leq &
	C_{5} \tau^{n} \|\Lambda\|^{s}
	\sum_{k=1}^{n}
	E\left(\left. \varphi(X_{n},Y_{n} ) \psi^{r}(Y_{k} ) \right|X_{0}=x \right)
	\\
	\leq &
	\tilde{C}_{2}C_{5}\tau^{n}n \|\Lambda\|^{s}
	%\\
	\leq %&
	C_{6}\rho^{2n} \|\Lambda\|^{s}.
\end{align*}
Similarly, due to Assumptions \ref{a1.3}, \ref{a1.4} and (\ref{l2.2.5}), we have
\begin{align*}
	\big| \tilde{B}_{\theta}^{n}(x,{\cal E}_{\lambda} ) \big|
	\leq &
	\int\int \big| \Phi_{\theta}(x',y',{\cal E}_{\lambda} ) \big| Q(x',dy') |P^{n}-\pi|(x,dx')
	\\
	\leq &
	\int\int \varphi(x',y') Q(x',dy') |P^{n}-\pi|(x,dx')
	\\
	\leq &
	\tilde{C}_{2}K_{0}\delta^{n}
	%\\
	\leq %&
	C_{6}\rho^{2n}.
\end{align*}

Let $C_{7}$ be the real number defined by $C_{7}=\tilde{C}_{1}\tilde{C}_{2}C_{5}$
($C_{5}$ is specified in Lemma \ref{lemma2.1}).
Relying on Lemma \ref{lemma2.1} and (\ref{l2.2.7}), (\ref{l2.2.9}), we deduce
\begin{align*}
	\big| C_{\theta}^{n}(x,y,{\cal E}_{\lambda} ) \big|
	\leq &
	E\left(\left.
	\left|
	H_{\theta,\boldsymbol X,\boldsymbol Y}^{0:n}({\cal E}_{\lambda} )
	-
	H_{\theta,\boldsymbol X,\boldsymbol Y}^{1:n}({\cal E}_{\lambda} )
	\right|\:
	\right|X_{1}=x,Y_{1}=y \right)
	\\
	\leq &
	C_{5} \tau^{n}
	\sum_{k=1}^{n}
	E\left(\left. \varphi(X_{n+1},Y_{n+1} ) \psi^{r}(Y_{k} ) \right|X_{1}=x,Y_{1}=y \right)
	\\
	\leq &
	\tilde{C}_{2}C_{5}\tau^{n}n \psi^{r}(y)
	%\\
	\leq %&
	C_{7}\rho^{2n} \psi^{r}(y).
\end{align*}
Using the same arguments, we conclude
\begin{align*}
	\big| D_{\theta}^{n}(x,y,{\cal E}_{\lambda} ) - D_{\theta}^{n}(x,y,\Lambda ) \big|
	\leq &
	E\left(\left.
	\left|
	H_{\theta,\boldsymbol X,\boldsymbol Y}^{0:n}({\cal E}_{\lambda} )
	\!-\!
	H_{\theta,\boldsymbol X,\boldsymbol Y}^{0:n}(\Lambda)
	\right|\:
	\right|X_{1}=x,Y_{1}=y \right)
	\\
	\leq &
	C_{5} \tau^{n} \|\Lambda\|^{s}
	\sum_{k=1}^{n}
	E\left(\left. \varphi(X_{n+1},Y_{n+1} ) \psi^{r}(Y_{k} ) \right|X_{1}=x,Y_{1}=y \right)
	\\
	\leq &
	\tilde{C}_{2}C_{5}\tau^{n}n \|\Lambda\|^{s}\psi^{r}(y)
	%\\
	\leq %&
	C_{7}\rho^{2n} \|\Lambda\|^{s}\psi^{r}(y).
\end{align*}
\end{proof}

\begin{proof}[\rm\bf Proof of Theorem \ref{theorem1.3}]
Throughout the proof, the following notation is used.
$\tilde{C}_{1}$ is the real number defined by
$\tilde{C}_{1}=\max_{n\geq 1}\rho^{n-1}n$,
while $\tilde{C}_{2}$, $\tilde{C}_{3}$ are the real numbers
defined by
$\tilde{C}_{2}=4\tilde{C}_{1}C_{6}$, $\tilde{C}_{3}=\tilde{C}_{2}(1-\rho)^{-1}$
($\rho$, $C_{6}$ are specified in Lemma \ref{lemma2.2}).
$L$ is the real number defined by $L=4\tilde{C}_{3}C_{7}L_{0}\rho^{-1}$
($L_{0}$, $C_{7}$ are specified in Assumption \ref{a1.5} and Lemma \ref{lemma2.2}).
$\theta$ is any element of
$\Theta$.
$x,x'$ are any elements of ${\cal X}$, while
$y,y'$ are any elements of ${\cal Y}$.
$\lambda,\lambda'$ are any elements of ${\cal P}({\cal X} )$, while
$\Lambda,\Lambda'$ are any elements of ${\cal L}_{0}({\cal X} )$.
$n$ is any (strictly) positive integer.

It is easy to notice that $G_{\theta,\boldsymbol X,\boldsymbol Y}^{l:n}({\cal E}_{\lambda} )$ does not depend
on $X_{0},Y_{0},\dots,X_{k},Y_{k}$ for $n\geq l\geq k\geq 0$.
It is also easy to show
\begin{align*}
	E\left(\left.G_{\theta,\boldsymbol X,\boldsymbol Y}^{l:n}({\cal E}_{\lambda} )\right|X_{k}=x\right)=
	E\left(\left.G_{\theta,\boldsymbol X,\boldsymbol Y}^{l-k:n-k}({\cal E}_{\lambda} )\right|X_{0}=x\right)
\end{align*}
for the same $k,l$.
Then, we conclude
\begin{align}\label{t1.3.1}
	(\Pi^{n}\Phi)_{\theta}(x,y,\Lambda)
	= %&
	E\left(\left.
	G_{\theta,\boldsymbol X,\boldsymbol Y}^{0:n}(\Lambda)
	\right|X_{0}=x\right)
	%\nonumber\\
	=&
	\sum_{k=0}^{n-1}
	E\left(\left.
	E\left(\left.
	G_{\theta,\boldsymbol X,\boldsymbol Y}^{k:n}({\cal E}_{\lambda} )
	-
	G_{\theta,\boldsymbol X,\boldsymbol Y}^{k+1:n}({\cal E}_{\lambda} )
	\right|X_{k}\right)
	\right|X_{0}=x \right)
	\nonumber\\
	&+
	E\left(\left.
	G_{\theta,\boldsymbol X,\boldsymbol Y}^{0:n}(\Lambda)
	-
	G_{\theta,\boldsymbol X,\boldsymbol Y}^{0:n}({\cal E}_{\lambda} )
	\right|X_{0}=x \right)
	\nonumber\\
	&+
	E\left(\left.E\left(\left.
	\Phi_{\theta}(X_{n},Y_{n},{\cal E}_{\lambda} )
	\right|X_{n}\right)\right|X_{0}=x \right)
	\nonumber\\
	=&
	\sum_{k=0}^{n-1} \left(
	\tilde{A}_{\theta}^{k,n}(x,{\cal E}_{\lambda} ) + \bar{A}_{\theta}^{k,n}({\cal E}_{\lambda} )
	\right)
	+
	B_{\theta}^{n}(x,\Lambda)
	-
	B_{\theta}^{n}(x,{\cal E}_{\lambda} )
	\nonumber\\
	&+
	\tilde{B}_{\theta}^{n}(x,{\cal E}_{\lambda} )
	+
	\bar{B}_{\theta}^{n}({\cal E}_{\lambda} ),
\end{align}
where
\begin{align*}
	\bar{A}_{\theta}^{k,n}({\cal E}_{\lambda} )
	=
	\int A_{\theta}^{n-k}(x',{\cal E}_{\lambda} ) \pi(dx'),
	\;\;\;\;\;
	\bar{B}_{\theta}^{n}({\cal E}_{\lambda} )
	=
	\int\int \Phi_{\theta}^{n}(x',y',{\cal E}_{\lambda} ) Q(x',dy')\pi(dx').
\end{align*}
We also deduce
\begin{align}\label{t1.3.3}
	(\Pi^{n}\Phi)_{\theta}(x,y,\Lambda)
	=%&
	E\left(\left.
	G_{\theta,\boldsymbol X,\boldsymbol Y}^{0:n}(\Lambda)
	\right|X_{0}=x\right)
	%\nonumber\\
	=&
	E\left(\left.
	G_{\theta,\boldsymbol X,\boldsymbol Y}^{0:n}(\Lambda)
	-
	G_{\theta,\boldsymbol X,\boldsymbol Y}^{1:n}(\Lambda)
	\right|X_{0}=x\right)
	\nonumber\\
	&+
	E\left(\left.E\left(\left.
	G_{\theta,\boldsymbol X,\boldsymbol Y}^{1:n}(\Lambda)
	\right|X_{1}\right)\right|X_{0}=x\right)
	\nonumber\\
	=&
	A_{\theta}^{n}(x,\Lambda)
	+
	E\left(\left.
	(\Pi^{n-1}\Phi)_{\theta}(X_{1},Y_{1},\Lambda)
	\right|X_{0}=x\right).
\end{align}
Since $\rho^{2n}(n+1)\leq\tilde{C}_{1}\rho^{n}$,
Lemma \ref{lemma2.2} and (\ref{t1.3.1}) imply
\begin{align}\label{t1.3.5}
	\left|
	(\Pi^{n}\Phi )_{\theta}(x,y,\Lambda) - (\Pi^{n}\Phi )_{\theta}(x',y', {\cal E}_{\lambda} )
	\right|
	\leq &
	\left|\tilde{B}_{\theta}^{n}(x,{\cal E}_{\lambda} ) \right|
	+
	\left|\tilde{B}_{\theta}^{n}(x',{\cal E}_{\lambda} ) \right|
	+
	\left| B_{\theta}^{n}(x,\Lambda) - B_{\theta}^{n}(x,{\cal E}_{\lambda} ) \right|
	\nonumber\\
	&+
	\sum_{k=0}^{n-1}
	\left|\tilde{A}_{\theta}^{k:n}(x,{\cal E}_{\lambda} ) \right|
	+
	\sum_{k=0}^{n-1}
	\left|\tilde{A}_{\theta}^{k:n}(x',{\cal E}_{\lambda} ) \right|
	\nonumber\\
	\leq &
	2C_{6} \rho^{2n} (n+1) + C_{6}\rho^{2n} \|\Lambda\|^{s}
	%\nonumber\\
	\leq %&
	\tilde{C}_{2}\rho^{n}\|\Lambda\|^{s}.
\end{align}
Then, Lemma \ref{lemma2.2} and (\ref{t1.3.3}) yield
\begin{align}\label{t1.3.7}
	\left|
	(\Pi^{n+1} \Phi )_{\theta}(x,y,{\cal E}_{\lambda} )
	-
	(\Pi^{n} \Phi )_{\theta}(x,y,{\cal E}_{\lambda} )
	\right|
	\leq &
	E\left(\left|
	(\Pi^{n}\Phi )_{\theta}(X_{1},Y_{1}, {\cal E}_{\lambda} )
	-
	(\Pi^{n}\Phi )_{\theta}(x,y,{\cal E}_{\lambda} )
	\right|
	\Big|X_{0}=x
	\right)
	\nonumber\\
	&+
	\left| A_{\theta}^{n+1}(x,{\cal E}_{\lambda} )\right|
	\nonumber\\
	\leq &
	2C_{6} \rho^{2n} (n+2) + C_{6} \rho^{2(n+1) }
	%\nonumber\\
	\leq %&
	\tilde{C}_{2} \rho^{n}.
\end{align}

Let $\phi_{\theta }(x,y, {\cal E}_{\lambda} )$ be the function defined by
\begin{align*}
	\phi_{\theta }(x,y, {\cal E}_{\lambda} )
	=
	\Phi_{\theta}(x,y, {\cal E}_{\lambda} )
	+
	\sum_{n=0}^{\infty }
	\left(
	(\Pi^{n+1} \Phi )_{\theta}(x,y, {\cal E}_{\lambda} )
	-
	(\Pi^{n} \Phi )_{\theta}(x,y, {\cal E}_{\lambda} )
	\right).
\end{align*}
Owing to (\ref{t1.3.7}), $\phi_{\theta }(x,y, {\cal E}_{\lambda} )$ is well-defined.
Due to the same inequality, we have
\begin{align}\label{t1.3.9}
	\left|
	(\Pi^{n} \Phi )_{\theta}(x,y, {\cal E}_{\lambda} )
	-
	\phi_{\theta}(x,y, {\cal E}_{\lambda} )
	\right|
	\leq &
	\sum_{k=n}^{\infty }
	\left|
	(\Pi^{k+1} \Phi )_{\theta}(x,y, {\cal E}_{\lambda} )
	-
	(\Pi^{k} \Phi )_{\theta}(x,y, {\cal E}_{\lambda} )
	\right|
	%\nonumber\\
	\leq %&
	\tilde{C}_{2}
	\sum_{k=n}^{\infty } \rho^{k}
	%\nonumber\\
	= %&
	\tilde{C}_{3} \rho^{n}.
\end{align}
Consequently, (\ref{t1.3.5}) yields
\begin{align*}
	\left|
	\phi_{\theta}(x,y, {\cal E}_{\lambda} )
	-
	\phi_{\theta}(x',y', {\cal E}_{\lambda'} )
	\right|
	\leq &
	\left|
	(\Pi^{n} \Phi )_{\theta}(x,y, {\cal E}_{\lambda} )
	-
	(\Pi^{n} \Phi )_{\theta}(x',y', {\cal E}_{\lambda'} )
	\right|
	\\
	&+
	\left|
	(\Pi^{n} \Phi )_{\theta}(x,y, {\cal E}_{\lambda} )
	-
	\phi_{\theta}(x,y, {\cal E}_{\lambda} )
	\right|
	\\
	&+
	\left|
	(\Pi^{n} \Phi )_{\theta}(x',y', {\cal E}_{\lambda'} )
	-
	\phi_{\theta}(x',y', {\cal E}_{\lambda'} )
	\right|
	\\
	\leq &
	(\tilde{C}_{2}+2\tilde{C}_{3} ) \rho^{n}.
\end{align*}
Letting $n\rightarrow\infty$, we conclude
$\phi_{\theta}(x,y, {\cal E}_{\lambda} ) = \phi_{\theta}(x',y', {\cal E}_{\lambda'} )$.
Hence,
there exists a function $\phi_{\theta}$ which maps $\theta$ to $\mathbb{R}$
and satisfies $\phi_{\theta} = \phi_{\theta}(x,y, {\cal E}_{\lambda} )$
for each $\theta\in\Theta$, $x\in{\cal X}$, $y\in{\cal Y}$,
$\lambda\in{\cal P}({\cal X} )$.
Then, (\ref{t1.3.5}), (\ref{t1.3.9}) imply
\begin{align}\label{t1.3.51}
	\left|
	(\Pi^{n} \Phi )_{\theta }(x,y,\Lambda)
	-
	\phi_{\theta}
	\right|
	\leq &
	\left|
	(\Pi^{n} \Phi )_{\theta }(x,y,\Lambda)
	-
	(\Pi^{n} \Phi )_{\theta }(x,y,{\cal E}_{\lambda} )
	\right|
	+
	\left|
	(\Pi^{n} \Phi )_{\theta }(x,y,{\cal E}_{\lambda} )
	-
	\phi_{\theta}
	\right|
	\nonumber\\
	\leq &
	\tilde{C}_{2}\rho^{n}\|\Lambda\|^{s} + \tilde{C}_{3}\rho^{n}
	%\nonumber\\
	\leq %&
	2\tilde{C}_{3}\rho^{n}\|\Lambda\|^{s}
	%\nonumber\\
	\leq %&
	L\rho^{n}\|\Lambda\|^{s}
\end{align}
(as $\|\Lambda\|\geq 1$).

Owing to Assumption \ref{a1.4}, we have
\begin{align}\label{t1.3.53}
	&
	\left|\tilde{\Phi}_{\theta}(x,y,\Lambda ) \right|
	\leq
	\int\int \left|\Phi_{\theta}(x',y',\Lambda ) \right| Q(x',dy') P(x,dx')
	\leq
	\int\int \varphi(x',y')\|\Lambda\|^{q} Q(x',dy') P(x,dx')
	\leq
	L_{0}\|\Lambda\|^{q}
\end{align}
(see also (\ref{l2.2.5})).
Due to the same assumption, we have
\begin{align}\label{t1.3.55}
	\left|\tilde{\Phi}_{\theta}(x,y,\Lambda ) - \tilde{\Phi}_{\theta}(x,y,\Lambda' ) \right|
	\leq &
	\int\int \left|\Phi_{\theta}(x',y',\Lambda ) - \Phi_{\theta}(x',y',\Lambda' ) \right|
	Q(x',dy') P(x,dx')
	\nonumber\\
	\leq &
	\int\int \varphi(x',y') \|\Lambda - \Lambda' \|
	\left(\|\Lambda \| + \|\Lambda' \| \right)^{q}
	Q(x',dy') P(x,dx')
	\nonumber\\
	\leq &
	L_{0} \|\Lambda - \Lambda' \|
	\left(\|\Lambda \| + \|\Lambda' \| \right)^{q}.
\end{align}
Using (\ref{t1.3.53}), (\ref{t1.3.55}),
we conclude that
Assumption \ref{a1.4} holds when $\Phi_{\theta}(x,y,\Lambda)$ is replaced by
$\tilde{\Phi}_{\theta}(x,y,\Lambda)/L_{0}$.
Consequently, Assumption \ref{a1.4} and (\ref{t1.3.51}) imply that
there exists a function $\tilde{\phi}_{\theta}$ mapping $\theta$ to $\mathbb{R}$
such that (\ref{t1.3.51})
is still true when $\Phi_{\theta}(x,y,\Lambda)$, $\phi_{\theta}$ are replaced
with $\tilde{\Phi}_{\theta}(x,y,\Lambda)/L_{0}$, $\tilde{\phi}_{\theta}/L_{0}$
(respectively).
Hence, we get
\begin{align}\label{t1.3.21}
	\left|(\Pi^{n}\tilde{\Phi} )_{\theta}(x,y,\Lambda ) - \tilde{\phi}_{\theta } \right|
	\leq
	2\tilde{C}_{3}L_{0}\rho^{n} \|\Lambda \|^{s}.
\end{align}
Moreover, it is easy notice that $H_{\theta,\boldsymbol X,\boldsymbol Y}^{1:n}({\cal E}_{\lambda} )$
does not depend on $X_{1},Y_{1},X_{2},Y_{2}$.
Then, we conclude
\begin{align*}%\label{t1.3.23}
	(\tilde{\Pi}^{n} \Phi)_{\theta}(x,y,\Lambda)
	=%&
	E\left(\left.
	H_{\theta,\boldsymbol X,\boldsymbol Y}^{0:n}(\Lambda)
	\right|X_{1}=x,Y_{1}=y\right)
	%\nonumber\\
	=&
	E\left(\left.
	H_{\theta,\boldsymbol X,\boldsymbol Y}^{0:n}(\Lambda)
	-
	H_{\theta,\boldsymbol X,\boldsymbol Y}^{0:n}({\cal E}_{\lambda} )
	\right|X_{1}=x,Y_{1}=y\right)
	\nonumber\\
	&+
	E\left(\left.
	H_{\theta,\boldsymbol X,\boldsymbol Y}^{0:n}({\cal E}_{\lambda} )
	-
	H_{\theta,\boldsymbol X,\boldsymbol Y}^{1:n}({\cal E}_{\lambda} )
	\right|X_{1}=x,Y_{1}=y\right)
	\nonumber\\
	&+
	E\left(\left.E\left(\left.
	H_{\theta,\boldsymbol X,\boldsymbol Y}^{1:n}({\cal E}_{\lambda} )
	\right|X_{2:n},Y_{2:n}\right)\right|X_{1}=x,Y_{1}=y\right)
	\nonumber\\
	=&
	C_{\theta}^{n}(x,y,{\cal E}_{\lambda} )
	+
	D_{\theta}^{n}(x,y,\Lambda)
	-
	D_{\theta}^{n}(x,y,{\cal E}_{\lambda} )
	\nonumber\\
	&+
	E\left(\left.
	\tilde{\Phi}_{\theta}(X_{n},Y_{n},F_{\theta,\boldsymbol Y}^{1:n}({\cal E}_{\lambda} ) )
	\right|X_{1}=x,Y_{1}=y\right)
	\\
	=&
	C_{\theta}^{n}(x,y,{\cal E}_{\lambda} )
	+
	D_{\theta}^{n}(x,y,\Lambda)
	-
	D_{\theta}^{n}(x,y,{\cal E}_{\lambda} )
	\nonumber\\
	&+
	(\Pi^{n-1}\tilde{\Phi} )_{\theta}(x,y,{\cal E}_{\lambda} ).
\end{align*}
Combining this with Lemma \ref{lemma2.2} and (\ref{t1.3.21}), we get
\begin{align*}%\label{t1.3.23}
	\left| (\tilde{\Pi}^{n}\Phi )_{\theta}(x,y,\Lambda ) - \tilde{\phi}_{\theta } \right|
	\leq &
	\left|(\Pi^{n-1}\tilde{\Phi} )_{\theta}(x,y,{\cal E}_{\lambda} ) - \tilde{\phi}_{\theta } \right|
	+
	\left| C_{\theta}^{n}(x,y,{\cal E}_{\lambda} ) \right|
	+
	\left| D_{\theta}^{n}(x,y,\Lambda) - D_{\theta}^{n}(x,y,{\cal E}_{\lambda} ) \right|
	\nonumber\\
	\leq &
	2\tilde{C}_{3}L_{0}\rho^{n-1}
	+
	C_{7} \rho^{2n} \psi^{r}(y)
	+
	C_{7} \rho^{2n} \psi^{r}(y) \|\Lambda \|^{s}
	\nonumber\\
	\leq &
	4\tilde{C}_{3}C_{7}L_{0}\rho^{n-1}\psi^{r}(y)\|\Lambda \|^{s}
	%\nonumber\\
	\leq %&
	L \rho^{n} \psi^{r}(y) \|\Lambda \|^{s}.
\end{align*}
\end{proof}

\section{Proof of Theorems \ref{theorem1.1} and \ref{theorem2.1} }\label{section2*}

In this section, we rely on the following notation.
For $1\leq i\leq d$,
$e_{i}$ denotes the $i$-th standard unit vector in $\mathbb{N}_{0}^{d}$.
$\boldsymbol e_{\boldsymbol\alpha}$ is the vector defined by
\begin{align*}
	i(\boldsymbol\alpha )
	=
	\min\{i: e_{i}\leq\boldsymbol\alpha, 1\leq i\leq d \},
	\;\;\;\;\;
	\boldsymbol e_{\boldsymbol\alpha}
	=
	e_{i(\boldsymbol\alpha ) }
\end{align*}
for $\boldsymbol\alpha\in\mathbb{N}_{0}^{d}\setminus\{\boldsymbol 0\}$.
$\Psi_{\theta}(y,\lambda)$, $\Psi_{\theta}^{\boldsymbol 0}(y,\Lambda)$ and
$\Psi_{\theta}^{\boldsymbol\alpha}(y,\Lambda)$
are the functions defined by
\begin{align}\label{7.3}
	\Psi_{\theta}(y,\lambda )
	=
	\log\left( \big\langle R_{\theta,y}^{\boldsymbol 0}(\lambda) \big\rangle \right),
	\;\;\;\;\;
	\Psi_{\theta}^{\boldsymbol 0}(y,\Lambda )
	=
	\Psi_{\theta}(y,\lambda_{\boldsymbol 0} ),
	\;\;\;\;\;
	\Psi_{\theta}^{\boldsymbol\alpha}(y,\Lambda )
	=
	\big\langle S_{\theta,y}^{\boldsymbol\alpha}(\Lambda ) \big\rangle
\end{align}
for $\theta\in\Theta$, $y\in{\cal Y}$,
$\lambda\in{\cal P}({\cal X} )$,
$\Lambda=\left\{\lambda_{\boldsymbol\beta}:
\boldsymbol\beta\in\mathbb{N}_{0}^{d}, |\boldsymbol\beta|\leq p \right\} \in {\cal L}_{0}({\cal X} )$
and $|\boldsymbol\alpha|=1$.
$\Psi_{\theta}^{\boldsymbol\alpha}(y,\Lambda)$
is the function recursively defined by
\begin{align}\label{7.1}
	\Psi_{\theta}^{\boldsymbol\alpha}(y,\Lambda )
	=
	\big\langle S_{\theta,y}^{\boldsymbol\alpha}(\Lambda ) \big\rangle
	-
	\sum_{\stackrel{\scriptstyle \boldsymbol\beta\in\mathbb{N}_{0}^{d}\setminus\{\boldsymbol\alpha\} }
	{\boldsymbol e_{\boldsymbol\alpha} \leq \boldsymbol\beta \leq \boldsymbol\alpha } }
	\left(\boldsymbol\alpha - \boldsymbol e_{\boldsymbol\alpha} \atop \boldsymbol\beta - \boldsymbol e_{\boldsymbol\alpha}\right)
	\Psi_{\theta}^{\boldsymbol\beta}(y,\Lambda )
	\big\langle S_{\theta,y}^{\boldsymbol\alpha-\boldsymbol\beta}(\Lambda ) \big\rangle
\end{align}
for $1<|\boldsymbol\alpha|\leq p$,
where the recursion is in $|\boldsymbol\alpha|$.\footnote
{The last two functions in (\ref{7.3}) are initial conditions in (\ref{7.1}).
At iteration $k$ of (\ref{7.1}) ($1<k\leq p$),
function $\Psi_{\theta}^{\boldsymbol\alpha}(y,\Lambda )$
is computed for multi-indices $\boldsymbol\alpha\in\mathbb{N}_{0}^{d}$, $|\boldsymbol\alpha|=k$
using the results obtained at the previous iterations. }

\begin{proposition}\label{proposition5.1}
Let Assumptions \ref{a1.1} -- \ref{a1.2'} hold.
Then, $p_{\theta,\boldsymbol y}^{0:n}(x|\lambda )$,
$P_{\theta,\boldsymbol y}^{0:n}(B|\lambda)$ and
$\Psi_{\theta}^{\boldsymbol 0}(y_{n\!+\!1}\!, P_{\theta,\boldsymbol y}^{0:n}(\lambda ) )$
are $p$-times differentiable in $\theta$
for each $\theta\in\Theta$, $x\in{\cal X}$,
$B\in{\cal B}({\cal X} )$, $\lambda\in{\cal P}({\cal X} )$, $n\geq 1$
and any sequence $\boldsymbol y = \{y_{n} \}_{n\geq 1}$ in ${\cal Y}$
($y_{n+1}$ is the $(n+1)$-th element of $\boldsymbol y$).
Moreover, we have
\begin{align}\label{p5.1.1*}
	\partial_{\theta}^{\boldsymbol\alpha} p_{\theta,\boldsymbol y}^{0:n}(x|\lambda )
	=
	f_{\theta,\boldsymbol y}^{\boldsymbol\alpha, 0:n}(x|{\cal E}_{\lambda } ),
	\;\;\;
	\partial_{\theta}^{\boldsymbol\alpha} P_{\theta,\boldsymbol y}^{0:n}(B|\lambda )
	=
	F_{\theta,\boldsymbol y}^{\boldsymbol\alpha, 0:n}(B|{\cal E}_{\lambda } ),
	\;\;\;
	\partial_{\theta}^{\boldsymbol\alpha}
	\Psi_{\theta}^{\boldsymbol 0}(y_{n+1}, P_{\theta,\boldsymbol y}^{0:n}(\lambda ) )
	=
	\Psi_{\theta}^{\boldsymbol\alpha}(y_{n+1}, F_{\theta,\boldsymbol y}^{0:n}({\cal E}_{\lambda} ) )
\end{align}
for the same $\theta$, $x$,
$B$, $\lambda$, $n$, $\boldsymbol y$ and
any multi-index $\boldsymbol\alpha\in\mathbb{N}_{0}^{d}$, $|\boldsymbol\alpha|\leq p$
(${\cal E}_{\lambda }$,
$f_{\theta,\boldsymbol y}^{\boldsymbol\alpha, 0:n}(x|{\cal E}_{\lambda } )$,
$p_{\theta,\boldsymbol y}^{0:n}(x|\lambda )$,
$F_{\theta,\boldsymbol y}^{\boldsymbol\alpha, 0:n}(B|{\cal E}_{\lambda } )$,
$P_{\theta,\boldsymbol y}^{0:n}(B|\lambda )$  are defined in (\ref{1.903}), (\ref{1.705}) -- (\ref{1.911})).
\end{proposition}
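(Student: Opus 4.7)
The plan is to establish (p5.1.1*) simultaneously by induction on $n\geq 1$, the combinatorial structure of the recursions (1.1) and (7.1) arising via the generalised Leibniz rule applied to the filter recursion and to the logarithm $\log\langle R_{\theta,y}^{\boldsymbol 0}\rangle$ respectively. Assumptions \ref{a1.2} and \ref{a1.2'} supply the dominating bound $\psi(y)^{|\boldsymbol\alpha|}\phi(y,x')$ needed to exchange $\partial_\theta^{\boldsymbol\alpha}$ with the $\mu$-integrals defining $p_{\theta,\boldsymbol y}^{0:n}$, $P_{\theta,\boldsymbol y}^{0:n}$ and $\Psi_\theta^{\boldsymbol 0}$, so all differentiability questions reduce to dominated convergence. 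Since ${\cal E}_\lambda$ has $\lambda_{\boldsymbol 0}=\lambda$ and $\lambda_{\boldsymbol\gamma}=\boldsymbol 0$ for $\boldsymbol\gamma\neq\boldsymbol 0$, the definition (\ref{1.703}) collapses to $s_{\theta,y}^{\boldsymbol\alpha}(x|{\cal E}_\lambda)=r_{\theta,y}^{\boldsymbol\alpha}(x|\lambda)/\langle R_{\theta,y}^{\boldsymbol 0}(\lambda)\rangle$; this handles the base case $n=1$, since differentiating the identity $p_{\theta,\boldsymbol y}^{0:1}(x|\lambda)\cdot\langle R_{\theta,y_1}^{\boldsymbol 0}(\lambda)\rangle = r_{\theta,y_1}^{\boldsymbol 0}(x|\lambda)$ via Leibniz and solving recursively for $\partial_\theta^{\boldsymbol\alpha}p_{\theta,\boldsymbol y}^{0:1}$ reproduces (\ref{1.1}) exactly.

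For the inductive step, set $\Lambda_n = F_{\theta,\boldsymbol y}^{0:n}({\cal E}_\lambda)$ and start from
\begin{align*}
p_{\theta,\boldsymbol y}^{0:n+1}(x|\lambda)\cdot D_\theta
=
\int r_\theta(y_{n+1},x|x')\,p_{\theta,\boldsymbol y}^{0:n}(x'|\lambda)\,\mu(dx'),
\qquad
D_\theta
=
\left\langle R_{\theta,y_{n+1}}^{\boldsymbol 0}\left(P_{\theta,\boldsymbol y}^{0:n}(\lambda)\right)\right\rangle.
\end{align*}
Differentiating the right-hand side by Leibniz and substituting the induction hypothesis $\partial_\theta^{\boldsymbol\gamma}p_{\theta,\boldsymbol y}^{0:n}(x'|\lambda)=f_{\theta,\boldsymbol y}^{\boldsymbol\gamma,0:n}(x'|{\cal E}_\lambda)$ (the $\boldsymbol\gamma$-density of $\Lambda_n$) produces $\sum_{\boldsymbol\gamma\leq\boldsymbol\alpha}\left(\boldsymbol\alpha \atop \boldsymbol\gamma\right)r_{\theta,y_{n+1}}^{\boldsymbol\alpha-\boldsymbol\gamma}(x|\Lambda_{n,\boldsymbol\gamma})$, so division by $D_\theta$ yields $s_{\theta,y_{n+1}}^{\boldsymbol\alpha}(x|\Lambda_n)$. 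The same calculation applied to $D_\theta$ gives $\partial_\theta^{\boldsymbol\beta}D_\theta/D_\theta=\left\langle S_{\theta,y_{n+1}}^{\boldsymbol\beta}(\Lambda_n)\right\rangle$ for every $|\boldsymbol\beta|\leq p$. Differentiating the product on the left by Leibniz, isolating the $\boldsymbol\beta=\boldsymbol\alpha$ term and dividing by $D_\theta$ yields exactly (\ref{1.1}) with $\partial_\theta^{\boldsymbol\beta}p_{\theta,\boldsymbol y}^{0:n+1}$ in place of $f_{\theta,y_{n+1}}^{\boldsymbol\beta}(\Lambda_n)$; an induction on $|\boldsymbol\alpha|$ then identifies $\partial_\theta^{\boldsymbol\alpha}p_{\theta,\boldsymbol y}^{0:n+1}(x|\lambda)$ with $f_{\theta,y_{n+1}}^{\boldsymbol\alpha}(x|\Lambda_n)=f_{\theta,\boldsymbol y}^{\boldsymbol\alpha,0:n+1}(x|{\cal E}_\lambda)$ via (\ref{1.705}) and (\ref{1.921}). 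Integrating over $B$ gives the $P$-identity.

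For the $\Psi^{\boldsymbol 0}$ identity, write $g(\theta)=D_\theta$, so that $\Psi_\theta^{\boldsymbol 0}(y_{n+1},P_{\theta,\boldsymbol y}^{0:n}(\lambda))=\log g(\theta)$ and $\partial_\theta^{\boldsymbol\beta}g(\theta)/g(\theta)=\left\langle S_{\theta,y_{n+1}}^{\boldsymbol\beta}(\Lambda_n)\right\rangle$ by the previous paragraph. Applying $\partial_\theta^{\boldsymbol\alpha-\boldsymbol e_{\boldsymbol\alpha}}$ to the elementary identity $\partial_\theta^{\boldsymbol e_{\boldsymbol\alpha}}g = g\cdot\partial_\theta^{\boldsymbol e_{\boldsymbol\alpha}}\log g$, expanding by Leibniz and isolating the $\boldsymbol\beta=\boldsymbol\alpha$ term, one obtains
\begin{align*}
\partial_\theta^{\boldsymbol\alpha}\log g
=
\frac{\partial_\theta^{\boldsymbol\alpha}g}{g}
-
\sum_{\stackrel{\scriptstyle\boldsymbol\beta\in\mathbb{N}_{0}^{d}\setminus\{\boldsymbol\alpha\} }{\boldsymbol e_{\boldsymbol\alpha}\leq\boldsymbol\beta\leq\boldsymbol\alpha} }
\left(\boldsymbol\alpha-\boldsymbol e_{\boldsymbol\alpha} \atop \boldsymbol\beta-\boldsymbol e_{\boldsymbol\alpha}\right)
(\partial_\theta^{\boldsymbol\beta}\log g)\,\frac{\partial_\theta^{\boldsymbol\alpha-\boldsymbol\beta}g}{g},
\end{align*}
which matches (\ref{7.1}) line-for-line under the substitutions $\partial_\theta^{\boldsymbol\beta}\log g\leftrightarrow\Psi_\theta^{\boldsymbol\beta}$, $\partial_\theta^{\boldsymbol\alpha-\boldsymbol\beta}g/g\leftrightarrow\langle S_{\theta,y_{n+1}}^{\boldsymbol\alpha-\boldsymbol\beta}\rangle$; an induction on $|\boldsymbol\alpha|$ concludes.

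The main obstacle is the two-layered combinatorial bookkeeping in the inductive step: one must verify that the Leibniz expansion of $\partial_\theta^{\boldsymbol\alpha}$ applied to the ratio $p_{\theta,\boldsymbol y}^{0:n+1}$ simultaneously reproduces (i) the inner sum over $\boldsymbol\gamma$ in the definition (\ref{1.703}) of $s^{\boldsymbol\alpha}$, arising from differentiating the numerator integrand and invoking the induction hypothesis, and (ii) the outer sum over $\boldsymbol\beta$ in (\ref{1.1}), arising from rearranging $\partial_\theta^{\boldsymbol\alpha}(p_{\theta,\boldsymbol y}^{0:n+1}\cdot D_\theta)=\partial_\theta^{\boldsymbol\alpha}N_\theta$ to solve for the top-order derivative. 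Once these two layers are correctly aligned, $p$-times continuous differentiability and the validity of the $\partial_\theta$--integral interchange follow routinely from Assumptions \ref{a1.1}, \ref{a1.2} and \ref{a1.2'} via dominated convergence.
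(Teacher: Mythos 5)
Your algebraic core is the same as the paper's: differentiating the product identity relating $p_{\theta,\boldsymbol y}^{0:n+1}$ to $\int r_{\theta}(y_{n+1},x|x')\,p_{\theta,\boldsymbol y}^{0:n}(x'|\lambda)\mu(dx')$ by Leibniz, isolating the top-order term to reproduce (\ref{1.1}), and the $\log$-manipulation that reproduces (\ref{7.1}) are exactly the steps carried out in (\ref{p5.1.47})--(\ref{p5.1.57}) and (\ref{p5.1.59})--(\ref{p5.1.63}). The problem is the analytic part that you dispose of in one sentence: the claim that Assumptions \ref{a1.2} and \ref{a1.2'} supply the dominating bound needed to exchange $\partial_{\theta}^{\boldsymbol\alpha}$ with the $\mu$-integrals at the inductive step. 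Those assumptions dominate the derivatives of the \emph{unnormalized} kernel, $|\partial_{\theta}^{\boldsymbol\alpha}r_{\theta}(y,x'|x)|\leq(\psi(y))^{|\boldsymbol\alpha|}\phi(y,x')$; they say nothing directly about the \emph{normalized} density $p_{\theta,\boldsymbol y}^{0:n}(x'|\lambda)$ and its $\theta$-derivatives, which is what sits inside the integrals defining your numerator and your $D_{\theta}$ at step $n\to n+1$. The pointwise bound one can actually extract for $\partial_{\theta}^{\boldsymbol\beta}p_{\theta,\boldsymbol y}^{0:n}(x'|\lambda)=f_{\theta,\boldsymbol y}^{\boldsymbol\beta,0:n}(x'|{\cal E}_{\lambda})$ from (\ref{1.703}), (\ref{1.1}) carries a factor $1/\big\langle R_{\theta,y_{n}}^{\boldsymbol 0}(\cdot)\big\rangle\leq 1/(\varepsilon\mu_{\theta}({\cal X}|y_{n}))$, and Assumption \ref{a1.1} only gives $\mu_{\theta}({\cal X}|y)>0$ for each fixed $\theta$, with no lower bound that is uniform over a neighbourhood of a given $\theta_{0}$. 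So the mean-value/dominated-convergence interchange you invoke at the inductive step (and again for the double integral defining $D_{\theta}$) is not justified by the assumptions as stated; this is precisely where "all differentiability questions reduce to dominated convergence" breaks down in the normalized formulation. (Your base case $n=1$ is fine, because there only $r_{\theta}$ appears under the integrals.)

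The paper's proof avoids this by establishing differentiability at the unnormalized, path-space level before any division: it introduces $v_{\theta,\boldsymbol y}^{n}(x|\lambda)$ and $w_{\theta,\boldsymbol y}^{n}(\lambda)$ in (\ref{p5.1.1501}), dominates $\partial_{\theta}^{\boldsymbol\alpha}u_{\theta,\boldsymbol y}^{n}(x_{0:n})$ by the $\theta$-free integrable function $2^{|\boldsymbol\alpha|}\big(\prod_{k}\psi(y_{k})\big)^{|\boldsymbol\alpha|}\prod_{k}\phi(y_{k},x_{k})$ in (\ref{p5.1.21})--(\ref{p5.1.25}), obtains the strictly positive lower bound $w_{\theta,\boldsymbol y}^{n}(\lambda)\geq\varepsilon^{n}\prod_{k}\mu_{\theta}({\cal X}|y_{k})$ in (\ref{p5.1.9}), and then uses the quotient lemma (Lemma \ref{lemmaa3}), which requires only non-vanishing of the denominator at each $\theta$, not any uniformity in $\theta$. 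Only after that does it run the Leibniz/recursion-matching argument you describe. To repair your proposal you would either have to route your induction through these unnormalized quantities (which is, in effect, the paper's proof) or produce a locally-$\theta$-uniform, $\mu$-integrable dominating function for the derivatives of $p_{\theta,\boldsymbol y}^{0:n}(\cdot|\lambda)$, which the stated assumptions do not obviously provide.
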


\begin{proof}
Throughout the proof, the following notation is used.
$\theta$, $\lambda$, $B$ are any elements of $\Theta$, ${\cal P}({\cal X} )$, ${\cal B}({\cal X})$ (respectively),
while $x,x'$ are any elements of ${\cal X}$.
$\boldsymbol y = \{y_{n} \}_{n\geq 1}$ is any sequence in ${\cal Y}$,
while
$\boldsymbol\alpha$ is any element of $\mathbb{N}_{0}^{d}$
satisfying $|\boldsymbol\alpha|\leq p$.
$n$ is any (strictly) positive integer.
$\delta_{x}(dx')$ is the Dirac measure centered at $x$.
$\xi_{n}(dx_{0:n}|x,\lambda)$ and $\zeta(dx_{0:n}|\lambda)$ are the measures on ${\cal X}^{n+1}$
defined by
\begin{align}
	&\label{p5.1.1}
	\xi_{n}(A|x,\lambda)
	=
	\int\int\cdots\int\int
	I_{A}(x_{0:n}) \delta_{x}(dx_{n})\mu(dx_{n-1})\cdots\mu(dx_{1})\lambda(dx_{0}),
	\\
	&\label{p5.1.3}
	\zeta_{n}(A|\lambda)
	=
	\int\cdots\int\int
	I_{A}(x_{0:n}) \mu(dx_{n})\cdots\mu(dx_{1})\lambda(dx_{0})
\end{align}
for $A\in{\cal B}({\cal X}^{n+1} )$.\footnote
{When $n=1$, (\ref{p5.1.1}), (\ref{p5.1.3}) should be interpreted as
\begin{align*}
	\xi_{1}(A|x,\lambda)
	=
	\int\int I_{A}(x_{0:1}) \delta_{x}(dx_{1})\lambda(dx_{0}),
	\;\;\;\;\;
	\zeta_{1}(A|\lambda)
	=
	\int\int
	I_{A}(x_{0:1}) \mu(dx_{1})\lambda(dx_{0}).
\end{align*} }
$u_{\theta,\boldsymbol y}^{n}(x_{0:n} )$ is the function defined by
\begin{align*}
	u_{\theta,\boldsymbol y}^{n}(x_{0:n} )
	=
	\prod_{k=1}^{n} r_{\theta}(y_{k}, x_{k}|x_{k-1} )
\end{align*}	
for $x_{0},\dots,x_{n}\in{\cal X}$.
$v_{\theta,\boldsymbol y}^{n}(x|\lambda)$ and $w_{\theta,\boldsymbol y}^{n}(\lambda)$
are the functions defined by
\begin{align}\label{p5.1.1501}
	v_{\theta,\boldsymbol y}^{n}(x|\lambda)
	=
	\int u_{\theta,\boldsymbol y}^{n}(x_{0:n} ) \xi_{n}(dx_{0:n}|x,\lambda),
	\;\;\;\;\;
	w_{\theta,\boldsymbol y}^{n}(\lambda)
	=
	\int u_{\theta,\boldsymbol y}^{n}(x_{0:n} ) \zeta_{n}(dx_{0:n}|\lambda).
\end{align}

Using (\ref{1.903}), it is straightforward to verify
\begin{align}\label{p5.1.5}
	p_{\theta}^{0:n}(x|\lambda)
	=
	\frac{v_{\theta,\boldsymbol y}^{n}(x|\lambda)}{w_{\theta,\boldsymbol y}^{n}(\lambda)},
	\;\;\;\;\;
	P_{\theta}^{0:n}(B|\lambda)
	=
	\int_{B} \frac{v_{\theta,\boldsymbol y}^{n}(x'|\lambda)}{w_{\theta,\boldsymbol y}^{n}(\lambda)}
	\mu(dx'),
	\;\;\;\;\;
	w_{\theta,\boldsymbol y}^{n}(\lambda)
	=
	\int v_{\theta,\boldsymbol y}^{n}(x'|\lambda) \mu(dx').
\end{align}
It is also easy to show
\begin{align*}
	w_{\theta,\boldsymbol y}^{1}(\lambda)
	=
	\int\left(\int r_{\theta}(y_{1},x'|x)\mu(dx') \right) \lambda(dx),
	\;\;\;\;\;
	w_{\theta,\boldsymbol y}^{n+1}(\lambda)
	=
	\int\left(\int r_{\theta}(y_{n+1},x'|x) \mu(dx') \right) v_{\theta,\boldsymbol y}^{n}(x|\lambda) \mu(dx).
\end{align*}
Consequently, Assumption \ref{a1.2} implies
\begin{align}
	&
	w_{\theta,\boldsymbol y}^{1}(\lambda)
	\geq
	\varepsilon \int \mu_{\theta}({\cal X}|y_{1} ) \lambda(dx)
	=
	\varepsilon \mu_{\theta}({\cal X}|y_{1} ),
	\nonumber\\
	&\label{p5.1.7}
	w_{\theta,\boldsymbol y}^{n+1}(\lambda)
	\geq
	\varepsilon \int \mu_{\theta}({\cal X}|y_{n+1} ) v_{\theta,\boldsymbol y}^{n}(x|\lambda) \mu(dx)
	=
	\varepsilon \mu_{\theta}({\cal X}|y_{n+1} ) w_{\theta,\boldsymbol y}^{n}(\lambda).
\end{align}
Iterating (\ref{p5.1.7}), we get
\begin{align}\label{p5.1.9}
	w_{\theta,\boldsymbol y}^{n}(\lambda)
	\geq
	\varepsilon^{n} \prod_{k=1}^{n} \mu_{\theta}({\cal X}|y_{k} )
	>0.
\end{align}

Owing to Leibniz rule and Assumptions \ref{a1.2}, \ref{a1.2'}, we have
\begin{align}\label{p5.1.21}
	\left|\partial_{\theta}^{\boldsymbol\alpha} u_{\theta,\boldsymbol y}^{n}(x_{0:n} ) \right|
	\leq&
	\sum_{\stackrel{\scriptstyle\boldsymbol\beta_{1},\dots,\boldsymbol\beta_{n}
	\in \mathbb{N}_{0}^{d} }
	{\boldsymbol\beta_{1} + \dots + \boldsymbol\beta_{n} = \boldsymbol\alpha} }
	\left( \boldsymbol\alpha \atop {\boldsymbol\beta_{1},\dots,\boldsymbol\beta_{n} } \right)
	\prod_{k=1}^{n} \left|\partial_{\theta}^{\boldsymbol\beta_{k} } r_{\theta}(y_{k},x_{k}|x_{k-1} ) \right|
	\nonumber\\
	\leq &
	\left(\prod_{k=1}^{n} \phi(y_{k},x_{k} ) \right)
	\left(
	\sum_{\stackrel{\scriptstyle\boldsymbol\beta_{1},\dots,\boldsymbol\beta_{n}
	\in \mathbb{N}_{0}^{d} }
	{\boldsymbol\beta_{1} + \dots + \boldsymbol\beta_{n} = \boldsymbol\alpha} }
	\left( \boldsymbol\alpha \atop {\boldsymbol\beta_{1},\dots,\boldsymbol\beta_{n} } \right)
	\prod_{k=1}^{n} (\psi(y_{k} ) )^{|\boldsymbol\beta_{k} |}
	\right)
	\nonumber\\
	\leq &
	2^{|\boldsymbol\alpha|}
	\left(\prod_{k=1}^{n} \psi(y_{k} ) \right)^{|\boldsymbol\alpha|}
	\left(\prod_{k=1}^{n} \phi(y_{k},x_{k} ) \right)
\end{align}
for $x_{0},\dots,x_{n}\in{\cal X}$.
Due to the same assumptions, we have
\begin{align}
	&\label{p5.1.23}
	\int \left( \prod_{k=1}^{n} \phi(y_{k},x_{k} ) \right)
	\xi_{n}(dx_{0:n}|x,\lambda )
	=
	\phi(y_{n},x) \left( \prod_{k=1}^{n-1} \int \phi(y_{k},x_{k} ) \mu(dx_{k} ) \right)
	<\infty,
	\\
	&\label{p5.1.25}
	\int \left( \prod_{k=1}^{n} \phi(y_{k},x_{k} ) \right)
	\zeta_{n}(dx_{0:n}|\lambda )
	=
	\left( \prod_{k=1}^{n} \int \phi(y_{k},x_{k} ) \mu(dx_{k} ) \right)
	<\infty.
\end{align}
Here and throughout the proof, we rely on the convention that
$\prod_{k=i}^{j}$ is equal to one whenever $j<i$.
Using Lemma \ref{lemmaa3} (see Appendix \ref{appendix3}) and (\ref{p5.1.21}) -- (\ref{p5.1.25}),
%and (\ref{p5.1.21}), (\ref{p5.1.23}), (\ref{p5.1.25}),
we conclude that
$v_{\theta,\boldsymbol y}^{n}(x|\lambda)$, $w_{\theta,\boldsymbol y}^{n}(\lambda)$
are well-defined and $p$-times differentiable in $\theta$.
Relying on the same arguments, we deduce
\begin{align}\label{p5.1.27}
	\partial_{\theta}^{\boldsymbol\alpha} v_{\theta,\boldsymbol y}^{n}(x|\lambda)
	=
	\int \partial_{\theta}^{\boldsymbol\alpha} u_{\theta,\boldsymbol y}^{n}(x_{0:n} ) \xi_{n}(dx_{0:n}|x,\lambda),
	\;\;\;\;\;
	\partial_{\theta}^{\boldsymbol\alpha} w_{\theta,\boldsymbol y}^{n}(\lambda)
	=
	\int \partial_{\theta}^{\boldsymbol\alpha} u_{\theta,\boldsymbol y}^{n}(x_{0:n} ) \zeta_{n}(dx_{0:n}|\lambda).
\end{align}
Then, (\ref{p5.1.5}), (\ref{p5.1.9}) imply that $p_{\theta,\boldsymbol y}^{0:n}(x|\lambda)$
is $p$-times differentiable in $\theta$.
Moreover, (\ref{p5.1.21}), (\ref{p5.1.27}) yield
\begin{align}\label{p5.1.29}
	&
	\left|
	\partial_{\theta}^{\boldsymbol\alpha} v_{\theta,\boldsymbol y}^{n}(x|\lambda )
	\right|
	\leq
	\int |\partial_{\theta}^{\boldsymbol\alpha} u_{\theta,\boldsymbol y}^{n}(x_{0:n} ) |
	\xi_{n}(dx_{0:n}|x,\lambda )
	\leq
	2^{|\boldsymbol\alpha|}
	\phi(y_{n},x)
	\left(\prod_{k=1}^{n} \psi(y_{k} ) \right)^{|\boldsymbol\alpha|}
	\left(\prod_{k=1}^{n-1} \int\phi(y_{k},x_{k} )\mu(dx_{k} ) \right).
\end{align}

Let $\tilde{P}_{\theta,\boldsymbol y}^{\boldsymbol\alpha,n}(dx|\lambda)$ be the signed measure
defined by
\begin{align}\label{p5.1.1503}
	\tilde{P}_{\theta,\boldsymbol y}^{\boldsymbol\alpha,n}(B|\lambda)
	=
	\int_{B} \partial_{\theta}^{\boldsymbol\alpha} p_{\theta,\boldsymbol y}^{0:n}(x|\lambda),
	\mu(dx)
\end{align}
while $\tilde{P}_{\theta,\boldsymbol y}^{\boldsymbol\alpha,n}(\lambda)$
is a `short-hand' notation for $\tilde{P}_{\theta,\boldsymbol y}^{\boldsymbol\alpha,n}(dx|\lambda)$.
Moreover, let $\tilde{P}_{\theta,\boldsymbol y}^{0}(\lambda)$ and
$\tilde{P}_{\theta,\boldsymbol y}^{n}(\lambda)$
be the vector measures defined by
\begin{align}\label{p5.1.1505}
	\tilde{P}_{\theta,\boldsymbol y}^{0}(\lambda) = {\cal E}_{\lambda},
	\;\;\;\;\;
	\tilde{P}_{\theta,\boldsymbol y}^{n}(\lambda)
	=
	\left\{ \tilde{P}_{\theta,\boldsymbol y}^{\boldsymbol\alpha,n}(\lambda):
	\boldsymbol\alpha\in\mathbb{N}_{0}^{d}, |\boldsymbol\alpha|\leq p \right\},
\end{align}
where $\tilde{P}_{\theta,\boldsymbol y}^{\boldsymbol\alpha,n}(\lambda)$  is the component $\boldsymbol\alpha$ of
$\tilde{P}_{\theta,\boldsymbol y}^{n}(\lambda)$.
Owing to Lemma \ref{lemmaa3} and (\ref{p5.1.5}), (\ref{p5.1.9}), (\ref{p5.1.29}),
$P_{\theta,\boldsymbol y}^{0:n}(B|\lambda)$ is $p$-times differentiable in $\theta$.
Due to the same arguments, $\tilde{P}_{\theta,\boldsymbol y}^{\boldsymbol\alpha,n}(B|\lambda)$ is
well-defined and satisfies
\begin{align}\label{p5.1.31}
	\tilde{P}_{\theta,\boldsymbol y}^{\boldsymbol\alpha,n}(B|\lambda)
	=
	\partial_{\theta}^{\boldsymbol\alpha} P_{\theta,\boldsymbol y}^{0:n}(B|\lambda)
	=
	\partial_{\theta}^{\boldsymbol\alpha} \tilde{P}_{\theta,\boldsymbol y}^{\boldsymbol 0, n}(B|\lambda)
\end{align}
(as $P_{\theta,\boldsymbol y}^{0:n}(\lambda)=\tilde{P}_{\theta,\boldsymbol y}^{\boldsymbol 0, n}(\lambda)$).

Using (\ref{1.703}), (\ref{1.907}), (\ref{p5.1.5}), (\ref{p5.1.1503}), it is straightforward to verify
\begin{align}
	&\label{p5.1.33}
	\begin{aligned}[b]
	r_{\theta,y_{n+1} }^{\boldsymbol 0}
	\big(x\big|\tilde{P}_{\theta,\boldsymbol y}^{\boldsymbol 0,n}(\lambda) \big)
	=&
	\frac{\int r_{\theta}(y_{n+1},x|x') v_{\theta,\boldsymbol y}^{n}(x'|\lambda ) \mu(dx') }
	{w_{\theta,\boldsymbol y}^{n}(\lambda ) },
	\end{aligned}
	\\
	&\label{p5.1.35}
	\begin{aligned}[b]
	\left\langle
	R_{\theta,y_{n+1} }^{\boldsymbol 0}
	\big(\tilde{P}_{\theta,\boldsymbol y}^{\boldsymbol 0,n}(\lambda) \big)
	\right\rangle
	=&
	\frac{\int\int r_{\theta}(y_{n+1},x''|x') v_{\theta,\boldsymbol y}^{n}(x'|\lambda )
	\mu(dx') \mu(dx'') }
	{w_{\theta,\boldsymbol y}^{n}(\lambda ) }.
	\end{aligned}
\end{align}
Moreover, Leibniz rule, Assumptions \ref{a1.2}, \ref{a1.2'} and (\ref{p5.1.29}) imply
\begin{align}\label{p5.1.37}
	&
	\left|
	\partial_{\theta}^{\boldsymbol\alpha}
	\left(r_{\theta}(y_{n+1},x|x') v_{\theta,\boldsymbol y}^{n}(x'|\lambda ) \right)
	\right|
	%\nonumber\\
	%&
	\leq %&
	\sum_{\stackrel{\scriptstyle\boldsymbol\beta\in\mathbb{N}_{0}^{d} }
	{\boldsymbol\beta\leq\boldsymbol\alpha} }
	\left(\boldsymbol\alpha \atop \boldsymbol\beta \right)
	\left|\partial_{\theta}^{\boldsymbol\alpha-\boldsymbol\beta} r_{\theta}(y_{n+1},x|x') \right|\:
	\left|\partial_{\theta}^{\boldsymbol\beta} v_{\theta,\boldsymbol y}^{n}(x'|\lambda ) \right|
	\nonumber\\
	&\;\;\;\;\;\;\;\;\;
	\begin{aligned}[b]
	\leq &
	\phi(y_{n+1},x)\phi(y_{n},x')
	\left(\prod_{k=1}^{n-1}\int\phi(y_{k},x_{k} )\mu(dx_{k} ) \right)
	\left(
	\sum_{\stackrel{\scriptstyle\boldsymbol\beta\in\mathbb{N}_{0}^{d} }
	{\boldsymbol\beta\leq\boldsymbol\alpha} }
	\left(\boldsymbol\alpha \atop \boldsymbol\beta \right)
	2^{|\boldsymbol\beta|}
	(\psi(y_{n+1} ) )^{|\boldsymbol\alpha-\boldsymbol\beta|}
	\left( \prod_{k=1}^{n} \psi(y_{k} ) \right)^{|\boldsymbol\beta|}
	\right)
	\\
	\leq &
	4^{|\boldsymbol\alpha|}
	\phi(y_{n+1},x)\phi(y_{n},x')
	\left(\prod_{k=1}^{n+1} \psi(y_{k} ) \right)^{|\boldsymbol\alpha|}
	\left(\prod_{k=1}^{n-1}\int\phi(y_{k},x_{k} )\mu(dx_{k} ) \right)
	<\infty.
	\end{aligned}
\end{align}
The same assumptions also yield
\begin{align}\label{p5.1.41}
	\int \phi(y_{n},x')\mu(dx')<\infty,
	\;\;\;\;\;
	\int\int \phi(y_{n+1},x) \phi(y_{n},x') \mu(dx)\mu(dx') < \infty.
\end{align}
Using Lemma \ref{lemmaa3} and (\ref{p5.1.9}), (\ref{p5.1.33}) -- (\ref{p5.1.41}),
%(\ref{p5.1.5}), (\ref{p5.1.33}), (\ref{p5.1.35}), (\ref{p5.1.37}), (\ref{p5.1.41}),
we conclude that
$r_{\theta,y_{n+1} }^{\boldsymbol 0}\big(x\big|\tilde{P}_{\theta,\boldsymbol y}^{\boldsymbol 0,n}(\lambda) \big)$,
$\big\langle R_{\theta,y_{n+1} }^{\boldsymbol 0}\big(\tilde{P}_{\theta,\boldsymbol y}^{\boldsymbol 0,n}(\lambda) \big)
\big\rangle$
are well-defined and $p$-times differentiable in $\theta$.\footnote
{To conclude that
$r_{\theta,y_{n+1} }^{\boldsymbol 0}\big(x\big|\tilde{P}_{\theta,\boldsymbol y}^{\boldsymbol 0,n}(\lambda) \big)$
is well-defined and satisfy (\ref{p5.1.43}),
set $z=x'$, $\nu(dz)=\mu(dx')$,
$F_{\theta}(z)=r_{\theta}(y_{n+1},x|x')v_{\theta,\boldsymbol y}^{n}(x'|\lambda)$,
$g_{\theta}=w_{\theta,\boldsymbol y}^{n}(\lambda)$ in Lemma \ref{lemmaa3}
($x$ is treated as a fixed value).
To conclude that
$\big\langle R_{\theta,y_{n+1} }^{\boldsymbol 0}\big(\tilde{P}_{\theta,\boldsymbol y}^{\boldsymbol 0,n}(\lambda) \big)
\big\rangle$
is well-defined and satisfy (\ref{p5.1.45}),
set $z=(x,x')$, $\nu(dz)=\mu(dx)\mu(dx')$,
$F_{\theta}(z)=r_{\theta}(y_{n+1},x|x')v_{\theta,\boldsymbol y}^{n}(x'|\lambda)$,
$g_{\theta}=w_{\theta,\boldsymbol y}^{n}(\lambda)$ in Lemma \ref{lemmaa3}. }
Relying on the same arguments and (\ref{p5.1.5}), we deduce
\begin{align}
	&\label{p5.1.43}
	\partial_{\theta}^{\boldsymbol\alpha}
	r_{\theta,y_{n+1} }^{\boldsymbol 0}\big(x\big|\tilde{P}_{\theta,\boldsymbol y}^{\boldsymbol 0,n}(\lambda) \big)
	=
	\int \partial_{\theta}^{\boldsymbol\alpha}
	\left( r_{\theta}(y_{n+1},x|x') p_{\theta,\boldsymbol y}^{0:n}(x'|\lambda) \right)
	\mu(dx'),
	\\
	&\label{p5.1.45}
	\partial_{\theta}^{\boldsymbol\alpha}
	\left\langle R_{\theta,y_{n+1} }^{\boldsymbol 0}\big(\tilde{P}_{\theta,\boldsymbol y}^{\boldsymbol 0,n}(\lambda) \big)
	\right\rangle
	=
	\int\int \partial_{\theta}^{\boldsymbol\alpha}
	\left( r_{\theta}(y_{n+1},x''|x') p_{\theta,\boldsymbol y}^{0:n}(x'|\lambda) \right)
	\mu(dx'')\mu(dx').
\end{align}
Consequently, Leibniz rule and (\ref{1.703}), (\ref{1.907}), (\ref{p5.1.1503}) imply
\begin{align}\label{p5.1.47}
	\partial_{\theta}^{\boldsymbol\alpha}
	r_{\theta,y_{n+1} }^{\boldsymbol 0}\big(x\big|\tilde{P}_{\theta,\boldsymbol y}^{\boldsymbol 0,n}(\lambda ) \big)
	=&
	\sum_{\stackrel{\scriptstyle\boldsymbol\beta\in\mathbb{N}_{0}^{d} }
	{\boldsymbol\beta\leq\boldsymbol\alpha} }
	\left(\boldsymbol\alpha \atop \boldsymbol\beta \right)
	\int \partial_{\theta}^{\boldsymbol\alpha-\boldsymbol\beta} r_{\theta}(y_{n+1},x|x')
	\partial_{\theta}^{\boldsymbol\beta} p_{\theta,\boldsymbol y}^{0:n}(x'|\lambda )
	\mu(dx')
	\nonumber\\
	=&
	\sum_{\stackrel{\scriptstyle\boldsymbol\beta\in\mathbb{N}_{0}^{d} }
	{\boldsymbol\beta\leq\boldsymbol\alpha} }
	\left(\boldsymbol\alpha \atop \boldsymbol\beta \right)
	r_{\theta,y_{n+1} }^{\boldsymbol\alpha-\boldsymbol\beta}
	\big(x\big|\tilde{P}_{\theta,\boldsymbol y}^{\boldsymbol\beta,n}(\lambda ) \big)
	\nonumber\\
	=&
	s_{\theta,y_{n+1} }^{\boldsymbol\alpha}(x|\tilde{P}_{\theta,\boldsymbol y}^{n}(\lambda ) )
	\left\langle
	R_{\theta,y_{n+1} }^{\boldsymbol 0}
	\big(\tilde{P}_{\theta,\boldsymbol y}^{\boldsymbol 0,n}(\lambda ) \big)
	\right\rangle.
\end{align}
Leibniz rule and (\ref{1.703}), (\ref{1.907}), (\ref{p5.1.1503}) also yield
\begin{align}\label{p5.1.49}
	\partial_{\theta}^{\boldsymbol\alpha}
	\left\langle
	R_{\theta,y_{n+1} }^{\boldsymbol 0}\big( \tilde{P}_{\theta,\boldsymbol y}^{\boldsymbol 0,n}(\lambda ) \big)
	\right\rangle
	=&
	\sum_{\stackrel{\scriptstyle\boldsymbol\beta\in\mathbb{N}_{0}^{d} }
	{\boldsymbol\beta\leq\boldsymbol\alpha} }
	\left(\boldsymbol\alpha \atop \boldsymbol\beta \right)
	\int\int \partial_{\theta}^{\boldsymbol\alpha-\boldsymbol\beta} r_{\theta}(y_{n+1},x''|x')
	\partial_{\theta}^{\boldsymbol\beta} p_{\theta,\boldsymbol y}^{0:n}(x'|\lambda )
	\mu(dx') \mu(dx'')
	\nonumber\\
	=&
	\sum_{\stackrel{\scriptstyle\boldsymbol\beta\in\mathbb{N}_{0}^{d} }
	{\boldsymbol\beta\leq\boldsymbol\alpha} }
	\left(\boldsymbol\alpha \atop \boldsymbol\beta \right)
	\left\langle
	R_{\theta,y_{n+1} }^{\boldsymbol\alpha-\boldsymbol\beta}
	\big(\tilde{P}_{\theta,\boldsymbol y}^{\boldsymbol\beta,n}(\lambda ) \big)
	\right\rangle
	\nonumber\\
	=&
	\left\langle
	S_{\theta,y_{n+1} }^{\boldsymbol\alpha}\big( \tilde{P}_{\theta,\boldsymbol y}^{n}(\lambda ) \big)
	\right\rangle
	\left\langle
	R_{\theta,y_{n+1} }^{\boldsymbol 0}
	\big(\tilde{P}_{\theta,\boldsymbol y}^{\boldsymbol 0,n}(\lambda ) \big)
	\right\rangle.
\end{align}
Moreover, using (\ref{1.703}), (\ref{1.907}), (\ref{p5.1.1501}), (\ref{p5.1.1503}), we get
\begin{align}
	&\label{p5.1.51}
	\begin{aligned}[b]
	\partial_{\theta}^{\boldsymbol\alpha}
	r_{\theta,y_{1} }^{\boldsymbol 0}
	\big(x\big|\tilde{P}_{\theta,\boldsymbol y}^{\boldsymbol 0,0}(\lambda) \big)
	=
	\partial_{\theta}^{\boldsymbol\alpha} v_{\theta,\boldsymbol y}^{1}(x|\lambda)
	=&
	\int \partial_{\theta}^{\boldsymbol\alpha}
	r_{\theta}(x,y_{1}|x') \lambda(dx')
	\\
	=&
	s_{\theta,y_{1} }^{\boldsymbol\alpha}\big(x\big|\tilde{P}_{\theta,\boldsymbol y}^{0}(\lambda) \big)
	\left\langle
	R_{\theta,y_{1} }^{\boldsymbol 0}\big(\tilde{P}_{\theta,\boldsymbol y}^{\boldsymbol 0,0}(\lambda) \big)
	\right\rangle,
	\end{aligned}
	\\
	&\label{p5.1.53}
	\begin{aligned}[b]
	\partial_{\theta}^{\boldsymbol\alpha}
	\left\langle
	R_{\theta,y_{1} }^{\boldsymbol 0}
	\big(\tilde{P}_{\theta,\boldsymbol y}^{\boldsymbol 0,0}(\lambda) \big)
	\right\rangle
	=
	\partial_{\theta}^{\boldsymbol\alpha} w_{\theta,\boldsymbol y}^{1}(\lambda)
	=&
	\int\int \partial_{\theta}^{\boldsymbol\alpha}
	r_{\theta}(x'',y_{1}|x') \mu(dx'') \lambda(dx')
	\\
	=&
	\left\langle
	S_{\theta,y_{1} }^{\boldsymbol\alpha}\big(\tilde{P}_{\theta,\boldsymbol y}^{0}(\lambda) \big)
	\right\rangle
	\left\langle
	R_{\theta,y_{1} }^{\boldsymbol 0}\big(\tilde{P}_{\theta,\boldsymbol y}^{\boldsymbol 0,0}(\lambda) \big)
	\right\rangle
	\end{aligned}
\end{align}
(as $\tilde{P}_{\theta,\boldsymbol y}^{\boldsymbol 0,0}(B|\lambda)=\lambda(B)$).

Relying on (\ref{1.903}), (\ref{1.703}), (\ref{1.907}), (\ref{p5.1.1503}), it is straightforward to verify
\begin{align*}
	p_{\theta,\boldsymbol y}^{0:n}(x|\lambda )
	\left\langle
	R_{\theta,y_{n} }^{\boldsymbol 0}
	\big(\tilde{P}_{\theta,\boldsymbol y}^{\boldsymbol 0,n-1}(\lambda ) \big)
	\right\rangle
	=
	r_{\theta, y_{n} }^{\boldsymbol 0}
	\big(x\big|\tilde{P}_{\theta,\boldsymbol y}^{\boldsymbol 0,n-1}(\lambda) \big).
\end{align*}
Then, Leibniz rule and (\ref{p5.1.49}), (\ref{p5.1.53}) imply
\begin{align*}
	\partial_{\theta}^{\boldsymbol\alpha}
	r_{\theta, y_{n} }^{\boldsymbol 0}
	\big(x\big|\tilde{P}_{\theta,\boldsymbol y}^{\boldsymbol 0,n-1}(\lambda) \big)
	=&
	\sum_{\stackrel{\scriptstyle\boldsymbol\beta\in\mathbb{N}_{0}^{d} }
	{\boldsymbol\beta\leq\boldsymbol\alpha} }
	\left(\boldsymbol\alpha \atop \boldsymbol\beta \right)
	\partial_{\theta}^{\boldsymbol\beta}
	p_{\theta,\boldsymbol y}^{0:n}(x|\lambda )
	\:
	\partial_{\theta}^{\boldsymbol\alpha-\boldsymbol\beta}
	\left\langle
	R_{\theta,y_{n} }^{\boldsymbol 0}
	\big(\tilde{P}_{\theta,\boldsymbol y}^{\boldsymbol 0,n-1}(\lambda ) \big)
	\right\rangle
	\nonumber\\
	=&
	\sum_{\stackrel{\scriptstyle\boldsymbol\beta\in\mathbb{N}_{0}^{d} }
	{\boldsymbol\beta\leq\boldsymbol\alpha} }
	\left(\boldsymbol\alpha \atop \boldsymbol\beta \right)
	\partial_{\theta}^{\boldsymbol\beta}
	p_{\theta,\boldsymbol y}^{0:n}(x|\lambda )
	\left\langle
	S_{\theta,y_{n} }^{\boldsymbol\alpha-\boldsymbol\beta}
	\big(\tilde{P}_{\theta,\boldsymbol y}^{n-1}(\lambda ) \big)
	\right\rangle
	\left\langle
	R_{\theta,y_{n} }^{\boldsymbol 0}
	\big(\tilde{P}_{\theta,\boldsymbol y}^{\boldsymbol 0,n-1}(\lambda ) \big)
	\right\rangle.
\end{align*}
Since
$0<\big\langle	R_{\theta,y_{n+1} }^{\boldsymbol 0}
\big(\tilde{P}_{\theta,\boldsymbol y}^{\boldsymbol 0,n}(\lambda ) \big)
\big\rangle<\infty$ (due to Assumption \ref{a1.1}), we have
\begin{align*}
	\partial_{\theta}^{\boldsymbol\alpha}
	p_{\theta,\boldsymbol y}^{0:n}(x|\lambda )
	=
	\frac{\partial_{\theta}^{\boldsymbol\alpha}
	r_{\theta, y_{n} }^{\boldsymbol 0}
	\big(x\big|\tilde{P}_{\theta,\boldsymbol y}^{\boldsymbol 0,n-1}(\lambda) \big) }
	{\left\langle
	R_{\theta,y_{n} }^{\boldsymbol 0}
	\big(\tilde{P}_{\theta,\boldsymbol y}^{\boldsymbol 0,n-1}(\lambda ) \big)
	\right\rangle }
	-
	\sum_{\stackrel{\scriptstyle\boldsymbol\beta\in\mathbb{N}_{0}^{d}\setminus\{\boldsymbol\alpha\} }
	{\boldsymbol\beta\leq\boldsymbol\alpha} }
	\left(\boldsymbol\alpha \atop \boldsymbol\beta \right)
	\partial_{\theta}^{\boldsymbol\beta}
	p_{\theta,\boldsymbol y}^{0:n}(x|\lambda )
	\left\langle
	S_{\theta,y_{n} }^{\boldsymbol\alpha-\boldsymbol\beta}
	\big(\tilde{P}_{\theta,\boldsymbol y}^{n-1}(\lambda ) \big)
	\right\rangle.
\end{align*}
Combining this with (\ref{p5.1.47}), (\ref{p5.1.51}), we get
\begin{align}\label{p5.1.55}
	\partial_{\theta}^{\boldsymbol\alpha}
	p_{\theta,\boldsymbol y}^{0:n}(x|\lambda )
	=
	s_{\theta,y_{n} }^{\boldsymbol\alpha }\big(x\big|\tilde{P}_{\theta,\boldsymbol y}^{n-1}(\lambda ) \big)
	-
	\sum_{\stackrel{\scriptstyle\boldsymbol\beta\in\mathbb{N}_{0}^{d}\setminus\{\boldsymbol\alpha \} }
	{\boldsymbol\beta\leq\boldsymbol\alpha} }
	\left( \boldsymbol\alpha \atop \boldsymbol\beta \right)
	\partial_{\theta}^{\boldsymbol\beta}
	p_{\theta,\boldsymbol y}^{0:n}(x|\lambda )
	\left\langle
	S_{\theta,y_{n} }^{\boldsymbol\alpha-\boldsymbol\beta}
	\big(\tilde{P}_{\theta,\boldsymbol y}^{n-1}(\lambda ) \big)
	\right\rangle.
\end{align}
Equation (\ref{p5.1.55}) can be interpreted as a recursion in $|\boldsymbol\alpha|$
which generates functions
$\big\{ \partial_{\theta}^{\boldsymbol\alpha} p_{\theta,\boldsymbol y}^{0:n}(x|\lambda ):
\boldsymbol\alpha\in\mathbb{N}_{0}^{d}, |\boldsymbol\alpha|\leq p \big\}$.\footnote
{In (\ref{p5.1.55}),
$p_{\theta,\boldsymbol y}^{0:n}(x|\lambda)$ is the initial condition.
At iteration $k$ of recursion (\ref{p5.1.55}) ($1\leq k\leq p$),
function $\partial_{\theta}^{\boldsymbol\alpha} p_{\theta,\boldsymbol y}^{0:n}(x|\lambda )$
is computed for multi-indices $\boldsymbol\alpha\in\mathbb{N}_{0}^{d}$,
$|\boldsymbol\alpha|=k$ using the results obtained at the previous iterations.
}
Equation (\ref{p5.1.55}) can also be considered as a particular case of (\ref{1.1}) ---
to get (\ref{p5.1.55}), set $\Lambda=\tilde{P}_{\theta,\boldsymbol y}^{n-1}(\lambda )$, $y=y_{n}$
in (\ref{1.1}).
Hence, comparing (\ref{p5.1.55}) with (\ref{1.1})
and using (\ref{1.905}), (\ref{1.909}),
(\ref{p5.1.1503}), (\ref{p5.1.1505}),
we conclude
\begin{align}\label{p5.1.57}
	\partial_{\theta}^{\boldsymbol\alpha}
	p_{\theta,\boldsymbol y}^{0:n}(x|\lambda )
	=
	f_{\theta,y_{n} }^{\boldsymbol\alpha}\big(x\big|\tilde{P}_{\theta,\boldsymbol y}^{n-1}(\lambda) \big),
	\;\;\;
	\tilde{P}_{\theta,\boldsymbol y}^{\boldsymbol\alpha,n}(\lambda)
	=
	F_{\theta,y_{n} }^{\boldsymbol\alpha}\big( \tilde{P}_{\theta,\boldsymbol y}^{n-1}(\lambda) \big),
	\;\;\;
	\tilde{P}_{\theta,\boldsymbol y}^{n}(\lambda)
	=
	F_{\theta,y_{n} }\big( \tilde{P}_{\theta,\boldsymbol y}^{n-1}(\lambda) \big).
\end{align}
Iterating (in $n$) the last part of (\ref{p5.1.57}), we also get
$\tilde{P}_{\theta,\boldsymbol y}^{n}(\lambda) =
F_{\theta,\boldsymbol y}^{0:n}({\cal E}_{\lambda} )$.
Combining this with (\ref{1.921}), (\ref{p5.1.31}), we deduce that the first two parts of
(\ref{p5.1.1*}) hold.

In the rest of the proof, we assume $1<|\boldsymbol\alpha|\leq p$.
Owing to (\ref{7.3}), (\ref{p5.1.49}), (\ref{p5.1.53}), we have
\begin{align}\label{p5.1.59}
	\partial_{\theta}^{\boldsymbol e}
	\Psi_{\theta}^{\boldsymbol 0}\big(y_{n}, \tilde{P}_{\theta,\boldsymbol y}^{n-1}(\lambda ) \big)
	=
	\frac{\partial_{\theta}^{\boldsymbol e}
	\big\langle
	R_{\theta,y_{n} }^{\boldsymbol 0}\big( \tilde{P}_{\theta,\boldsymbol y}^{\boldsymbol 0,n-1}(\lambda ) \big)
	\big\rangle}
	{\big\langle
	R_{\theta,y_{n} }^{\boldsymbol 0}\big( \tilde{P}_{\theta,\boldsymbol y}^{\boldsymbol 0,n-1}(\lambda ) \big)
	\big\rangle}
	=
	\big\langle
	S_{\theta,y_{n} }^{\boldsymbol e}
	\big( \tilde{P}_{\theta,\boldsymbol y}^{n-1}(\lambda) \big)
	\big\rangle
	=
	\Psi_{\theta}^{\boldsymbol e}\big(y_{n}, \tilde{P}_{\theta,\boldsymbol y}^{n-1}(\lambda ) \big),
\end{align}
where $\boldsymbol e\in\mathbb{N}_{0}^{d}$, $|\boldsymbol e|=1$.
Hence, we get
\begin{align*}
	\partial_{\theta}^{\boldsymbol e_{\boldsymbol\alpha} }
	\big\langle
	R_{\theta,y_{n} }^{\boldsymbol 0}\big( \tilde{P}_{\theta,\boldsymbol y}^{\boldsymbol 0,n-1}(\lambda ) \big)
	\big\rangle
	=
	\partial_{\theta}^{\boldsymbol e_{\boldsymbol\alpha} }
	\Psi_{\theta}^{\boldsymbol 0}\big(y_{n}, \tilde{P}_{\theta,\boldsymbol y}^{n-1}(\lambda ) \big) \:
	\big\langle
	R_{\theta,y_{n} }^{\boldsymbol 0}\big( \tilde{P}_{\theta,\boldsymbol y}^{\boldsymbol 0,n-1}(\lambda ) \big)
	\big\rangle
\end{align*}
(as $|\boldsymbol e_{\boldsymbol\alpha}|=1$).
Therefore, we have
\begin{align*}
	\partial_{\theta}^{\boldsymbol\alpha}
	\big\langle
	R_{\theta,y_{n} }^{\boldsymbol 0}\big( \tilde{P}_{\theta,\boldsymbol y}^{\boldsymbol 0,n-1}(\lambda ) \big)
	\big\rangle
	=
	\partial_{\theta}^{\boldsymbol\alpha-\boldsymbol e_{\boldsymbol\alpha} }
	\left(
	\partial_{\theta}^{\boldsymbol e_{\boldsymbol\alpha} }
	\Psi_{\theta}^{\boldsymbol 0}\big(y_{n}, \tilde{P}_{\theta,\boldsymbol y}^{n-1}(\lambda ) \big) \:
	\big\langle
	R_{\theta,y_{n} }^{\boldsymbol 0}\big( \tilde{P}_{\theta,\boldsymbol y}^{\boldsymbol 0,n-1}(\lambda ) \big)
	\big\rangle
	\right).
\end{align*}
Consequently, Leibniz rule and (\ref{p5.1.49}), (\ref{p5.1.53}) imply
\begin{align*}
	\partial_{\theta}^{\boldsymbol\alpha}
	\big\langle
	R_{\theta,y_{n} }^{\boldsymbol 0}\big( \tilde{P}_{\theta,\boldsymbol y}^{\boldsymbol 0, n-1}(\lambda ) \big)
	\big\rangle
	=&
	\sum_{\stackrel{\scriptstyle \boldsymbol\beta\in\mathbb{N}_{0}^{d} }
	{\boldsymbol\beta\leq\boldsymbol\alpha-\boldsymbol e_{\boldsymbol\alpha} } }
	\left( \boldsymbol\alpha-\boldsymbol e_{\boldsymbol\alpha} \atop \boldsymbol\beta \right)
	\partial_{\theta}^{\boldsymbol\beta+\boldsymbol e_{\boldsymbol\alpha} }
	\Psi_{\theta}^{\boldsymbol 0}\big(y_{n}, \tilde{P}_{\theta,\boldsymbol y}^{n-1}(\lambda ) \big) \:
	\partial_{\theta}^{\boldsymbol\alpha-\boldsymbol\beta-\boldsymbol e_{\boldsymbol\alpha} }
	\big\langle
	R_{\theta,y_{n} }^{\boldsymbol 0}\big( \tilde{P}_{\theta,\boldsymbol y}^{\boldsymbol 0,n-1}(\lambda ) \big)
	\big\rangle
	\\
	=&
	\sum_{\stackrel{\scriptstyle \boldsymbol\beta\in\mathbb{N}_{0}^{d} }
	{\boldsymbol e_{\boldsymbol\alpha}\leq\boldsymbol\beta\leq\boldsymbol\alpha } }
	\left( \boldsymbol\alpha-\boldsymbol e_{\boldsymbol\alpha} \atop \boldsymbol\beta-\boldsymbol e_{\boldsymbol\alpha} \right)
	\partial_{\theta}^{\boldsymbol\beta}
	\Psi_{\theta}^{\boldsymbol 0}\big(y_{n}, \tilde{P}_{\theta,\boldsymbol y}^{n-1}(\lambda ) \big) \:
	\partial_{\theta}^{\boldsymbol\alpha-\boldsymbol\beta}
	\big\langle
	R_{\theta,y_{n} }^{\boldsymbol 0}\big( \tilde{P}_{\theta,\boldsymbol y}^{\boldsymbol 0,n-1}(\lambda ) \big)
	\big\rangle
	\\
	=&
	\sum_{\stackrel{\scriptstyle \boldsymbol\beta\in\mathbb{N}_{0}^{d} }
	{\boldsymbol e_{\boldsymbol\alpha}\leq\boldsymbol\beta\leq\boldsymbol\alpha } }
	\left( \boldsymbol\alpha-\boldsymbol e_{\boldsymbol\alpha} \atop \boldsymbol\beta-\boldsymbol e_{\boldsymbol\alpha} \right)
	\partial_{\theta}^{\boldsymbol\beta}
	\Psi_{\theta}^{\boldsymbol 0}(y_{n}, \tilde{P}_{\theta,\boldsymbol y}^{n-1}(\lambda ) ) \:
	\big\langle
	S_{\theta,y_{n} }^{\boldsymbol\alpha-\boldsymbol\beta}
	\big( \tilde{P}_{\theta,\boldsymbol y}^{n-1}(\lambda) \big)
	\big\rangle
	\big\langle
	R_{\theta,y_{n} }^{\boldsymbol 0}\big( \tilde{P}_{\theta,\boldsymbol y}^{\boldsymbol 0,n-1}(\lambda ) \big)
	\big\rangle.
\end{align*}
As
$\big\langle S_{\theta,y_{n} }^{\boldsymbol 0}
\big( \tilde{P}_{\theta,\boldsymbol y}^{n-1}(\lambda) \big) \big\rangle = 1$
(due to (\ref{1.703}), (\ref{1.907})),
the same arguments then yield
\begin{align}\label{p5.1.61}
	\partial_{\theta}^{\boldsymbol\alpha}
	\Psi_{\theta}^{\boldsymbol 0}\big(y_{n}, \tilde{P}_{\theta,\boldsymbol y}^{n-1}(\lambda ) \big)
	=&
	-
	\sum_{\stackrel{\scriptstyle \boldsymbol\beta\in\mathbb{N}_{0}^{d}\setminus\{\boldsymbol\alpha\} }
	{\boldsymbol e_{\boldsymbol\alpha}\leq\boldsymbol\beta\leq\boldsymbol\alpha } }
	\left( \boldsymbol\alpha-\boldsymbol e_{\boldsymbol\alpha} \atop \boldsymbol\beta-\boldsymbol e_{\boldsymbol\alpha} \right)
	\partial_{\theta}^{\boldsymbol\beta}
	\Psi_{\theta}^{\boldsymbol 0}\big(y_{n}, \tilde{P}_{\theta,\boldsymbol y}^{n-1}(\lambda ) \big) \:
	\big\langle
	S_{\theta,y_{n} }^{\boldsymbol\alpha-\boldsymbol\beta}
	\big( \tilde{P}_{\theta,\boldsymbol y}^{n-1}(\lambda) \big)
	\big\rangle
	\nonumber\\
	&
	+
	\frac{\partial_{\theta}^{\boldsymbol\alpha}
	\big\langle
	R_{\theta,y_{n} }^{\boldsymbol 0}\big( \tilde{P}_{\theta,\boldsymbol y}^{\boldsymbol 0,n-1}(\lambda ) \big)
	\big\rangle }
	{\big\langle
	R_{\theta,y_{n} }^{\boldsymbol 0}\big( \tilde{P}_{\theta,\boldsymbol y}^{\boldsymbol 0,n-1}(\lambda ) \big)
	\big\rangle}
	\nonumber\\
	=&
	-
	\sum_{\stackrel{\scriptstyle \boldsymbol\beta\in\mathbb{N}_{0}^{d}\setminus\{\boldsymbol\alpha\} }
	{\boldsymbol e_{\boldsymbol\alpha}\leq\boldsymbol\beta\leq\boldsymbol\alpha } }
	\left( \boldsymbol\alpha-\boldsymbol e_{\boldsymbol\alpha} \atop \boldsymbol\beta-\boldsymbol e_{\boldsymbol\alpha} \right)
	\partial_{\theta}^{\boldsymbol\beta}
	\Psi_{\theta}^{\boldsymbol 0}\big(y_{n}, \tilde{P}_{\theta,\boldsymbol y}^{n-1}(\lambda ) \big) \:
	\big\langle
	S_{\theta,y_{n} }^{\boldsymbol\alpha-\boldsymbol\beta}
	\big( \tilde{P}_{\theta,\boldsymbol y}^{n-1}(\lambda) \big)
	\big\rangle
	\nonumber\\
	&
	+
	\big\langle
	S_{\theta,y_{n} }^{\boldsymbol\alpha}
	\big( \tilde{P}_{\theta,\boldsymbol y}^{n-1}(\lambda) \big)
	\big\rangle.
\end{align}
Equation (\ref{p5.1.61}) can be viewed as a recursion in $|\boldsymbol\alpha|$
which generates functions
$\big\{\partial_{\theta}^{\boldsymbol\alpha}
\Psi_{\theta}^{\boldsymbol 0}\big(y_{n}, \tilde{P}_{\theta,\boldsymbol y}^{n-1}(\lambda ) \big):
\boldsymbol\alpha\in\mathbb{N}_{0}^{d}, 1<|\boldsymbol\alpha|\leq p \big\}$.\footnote
{In (\ref{p5.1.61}), functions
$\big\{\partial_{\theta}^{\boldsymbol\alpha}
\Psi_{\theta}^{\boldsymbol 0}\big(y_{n+1}, \tilde{P}_{\theta,\boldsymbol y}^{n}(\lambda ) \big):
\boldsymbol\alpha\in\mathbb{N}_{0}^{d}, |\boldsymbol\alpha|=1 \big\}$
are the initial conditions.
At iteration $k$ of (\ref{p5.1.61}) ($1<k\leq p$),
function $\partial_{\theta}^{\boldsymbol\alpha}
\Psi_{\theta}^{\boldsymbol 0}\big(y_{n+1}, \tilde{P}_{\theta,\boldsymbol y}^{n}(\lambda ) \big)$
is computed for multi-indices $\boldsymbol\alpha\in\mathbb{N}_{0}^{d}$,
$|\boldsymbol\alpha|=k$ using the results obtained at the previous iterations.
}
Equation (\ref{p5.1.61}) can also be considered as a special case of (\ref{7.1}) ---
to get (\ref{p5.1.61}),
set $\Lambda=\tilde{P}_{\theta,\boldsymbol y}^{n-1}(\lambda )$, $y=y_{n}$ in (\ref{7.1}).
Hence, comparing (\ref{p5.1.61}) with (\ref{7.3}), (\ref{7.1}),
we conclude
\begin{align}\label{p5.1.63}
	\partial_{\theta}^{\boldsymbol\alpha}
	\Psi_{\theta}^{\boldsymbol 0}\big( y_{n}, \tilde{P}_{\theta,\boldsymbol y}^{n-1}(\lambda) \big)
	=
	\Psi_{\theta}^{\boldsymbol\alpha}\big( y_{n}, \tilde{P}_{\theta,\boldsymbol y}^{n-1}(\lambda) \big).
\end{align}
Using (\ref{p5.1.59}), (\ref{p5.1.63}), we deduce that the last part of (\ref{p5.1.1*})
holds.
\end{proof}

\begin{proof}[\rm\bf Proof of Theorem \ref{theorem1.1}]
Let $m\geq 0$ be any (fixed) integer, while $\boldsymbol y = \{y_{n} \}_{n\geq 1}$,
$\boldsymbol y' = \{y'_{n} \}_{n\geq 1}$ are any sequences in ${\cal Y}$
satisfying $y'_{n}=y_{n+m}$ for $n>m$.
Then, using (\ref{1.903}), it is straightforward to verify
$p_{\theta,\boldsymbol y}^{m:n}(x|\lambda)=p_{\theta,\boldsymbol y'}^{0:n-m}(x|\lambda)$
for $\theta\in\Theta$, $x\in{\cal X}$, $\lambda\in{\cal P}({\cal X} )$, $n>m$.
Consequently, Proposition \ref{proposition5.1} implies that
(\ref{t1.1.1*}) holds for the same $\theta$, $x$, $\lambda$, $n,m$
and $B\in{\cal B}({\cal X} )$,
$\boldsymbol\alpha\in\mathbb{N}_{0}^{d}$,
$|\boldsymbol\alpha|\leq p$.
\end{proof}

\begin{lemma}\label{lemma3.1}
Let Assumptions \ref{a1.1} and \ref{a2.1} hold.
Then, there exists a real number $C_{8}\in[1,\infty )$
(depending only on $\varepsilon$) such that
\begin{align*}
	&%\label{l3.1.1*}
	\left|
	\Psi_{\theta}^{\boldsymbol 0 }(y,\Lambda)
	\right|
	\leq
	C_{8}
	\varphi(y),
	\;\;\;\;\;
	%\\
	%&%\label{l3.1.3*}
	\left|
	\Psi_{\theta}^{\boldsymbol 0}(y,\Lambda)
	-
	\Psi_{\theta}^{\boldsymbol 0}(y,\Lambda')
	\right|
	\leq
	C_{8} \varphi(y) \|\Lambda-\Lambda' \|
\end{align*}
for all $\theta\in\Theta$, $y\in{\cal Y}$,
$\Lambda,\Lambda'\in{\cal L}_{0}({\cal X} )$.
\end{lemma}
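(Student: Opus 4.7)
The plan is to observe that $\Psi_\theta^{\boldsymbol 0}(y,\Lambda)=\log\bigl\langle R_{\theta,y}^{\boldsymbol 0}(\lambda_{\boldsymbol 0})\bigr\rangle$ only depends on the component $\lambda_{\boldsymbol 0}\in{\cal P}({\cal X})$ of $\Lambda$, and that Assumption \ref{a1.1} gives a two-sided sandwich for $\bigl\langle R_{\theta,y}^{\boldsymbol 0}(\lambda_{\boldsymbol 0})\bigr\rangle$ in terms of the reference mass $\mu_\theta({\cal X}|y)$. The whole proof reduces to standard scalar estimates on $\log$.

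For the first bound, I would integrate the inequality $\varepsilon\mu_\theta(B|y)\leq\int_B r_\theta(y,x'|x)\mu(dx')\leq\varepsilon^{-1}\mu_\theta(B|y)$ of Assumption \ref{a1.1} against the probability measure $\lambda_{\boldsymbol 0}$ (with $B={\cal X}$) to conclude
\[
\varepsilon\,\mu_\theta({\cal X}|y)\;\leq\;\bigl\langle R_{\theta,y}^{\boldsymbol 0}(\lambda_{\boldsymbol 0})\bigr\rangle\;\leq\;\varepsilon^{-1}\mu_\theta({\cal X}|y).
\]
Taking logarithms gives $\bigl|\Psi_\theta^{\boldsymbol 0}(y,\Lambda)-\log\mu_\theta({\cal X}|y)\bigr|\leq|\log\varepsilon|$, and then Assumption \ref{a2.1} together with $\varphi(y)\geq 1$ yields $|\Psi_\theta^{\boldsymbol 0}(y,\Lambda)|\leq(|\log\varepsilon|+1)\varphi(y)$.

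For the Lipschitz bound, I would apply the elementary inequality $|\log a-\log b|\leq|a-b|/\min(a,b)$ with $a=\bigl\langle R_{\theta,y}^{\boldsymbol 0}(\lambda_{\boldsymbol 0})\bigr\rangle$ and $b=\bigl\langle R_{\theta,y}^{\boldsymbol 0}(\lambda'_{\boldsymbol 0})\bigr\rangle$. By linearity and Assumption \ref{a1.1},
\[
\bigl|\bigl\langle R_{\theta,y}^{\boldsymbol 0}(\lambda_{\boldsymbol 0}-\lambda'_{\boldsymbol 0})\bigr\rangle\bigr|
\;\leq\;\int\!\!\int r_\theta(y,x'|x)\mu(dx')\,|\lambda_{\boldsymbol 0}-\lambda'_{\boldsymbol 0}|(dx)
\;\leq\;\varepsilon^{-1}\mu_\theta({\cal X}|y)\,\|\lambda_{\boldsymbol 0}-\lambda'_{\boldsymbol 0}\|,
\]
while both $a$ and $b$ are bounded below by $\varepsilon\mu_\theta({\cal X}|y)$. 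Dividing gives
\[
\bigl|\Psi_\theta^{\boldsymbol 0}(y,\Lambda)-\Psi_\theta^{\boldsymbol 0}(y,\Lambda')\bigr|
\;\leq\;\varepsilon^{-2}\|\lambda_{\boldsymbol 0}-\lambda'_{\boldsymbol 0}\|
\;\leq\;\varepsilon^{-2}\|\Lambda-\Lambda'\|,
\]
and multiplication by $\varphi(y)\geq 1$ absorbs this into $C_8\varphi(y)\|\Lambda-\Lambda'\|$. Setting $C_8=|\log\varepsilon|+\varepsilon^{-2}+1$ delivers both estimates simultaneously.

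There is no real obstacle here: the argument is essentially the same computation as in Lemma \ref{lemma1.2} for $\boldsymbol\alpha=\boldsymbol 0$, except that we are now taking logarithms, so the probability-measure assumption $\lambda_{\boldsymbol 0}\in{\cal P}({\cal X})$ (coming from $\Lambda\in{\cal L}_0({\cal X})$) must be used to cancel the $\mu_\theta({\cal X}|y)$ factors in numerator and denominator. The only non-cosmetic point is the invocation of Assumption \ref{a2.1} to get the $\varphi(y)$-envelope on $\log\mu_\theta({\cal X}|y)$ in the first inequality.
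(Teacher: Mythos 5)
Your proposal is correct and follows essentially the same route as the paper: the first bound via the Assumption \ref{a1.1} sandwich and Assumption \ref{a2.1}, and the Lipschitz bound via linearity of $\lambda\mapsto\bigl\langle R_{\theta,y}^{\boldsymbol 0}(\lambda)\bigr\rangle$ combined with an elementary logarithm estimate in which the $\mu_\theta({\cal X}|y)$ factors cancel. The only (cosmetic) difference is that the paper bounds $|\log(a/b)|\leq|a/b-1|$ and invokes Lemma \ref{lemma1.2} for the normalized ratio (yielding the constant $\varepsilon^{-4}$), whereas you use the mean-value inequality $|\log a-\log b|\leq|a-b|/\min(a,b)$ directly, which is self-contained and even gives the slightly better constant $\varepsilon^{-2}$.
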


\begin{proof}
Throughout the proof, the following notation is used.
$\tilde{C}$ is the real number defined by $\tilde{C}=1+|\log\varepsilon|$,
while $C_{8}$ is the real number defined by $C_{8}=2C_{1}\tilde{C}$
($\varepsilon$, $C_{1}$ are specified in Assumption \ref{a1.1} and Lemma \ref{lemma1.2}).
$\theta$, $y$ are any elements of $\Theta$, ${\cal Y}$ (respectively).
$\lambda,\lambda'$ are any elements of ${\cal P}({\cal X} )$, while
$\Lambda=\left\{\lambda_{\boldsymbol\alpha}:
\boldsymbol\alpha\in\mathbb{N}_{0}^{d}, |\boldsymbol\alpha|\leq p \right\}$,
$\Lambda'=\left\{\lambda'_{\boldsymbol\alpha}:
\boldsymbol\alpha\in\mathbb{N}_{0}^{d}, |\boldsymbol\alpha|\leq p \right\}$
are any elements of ${\cal L}_{0}({\cal X} )$.

Relying on Assumption \ref{a1.1}, we conclude
\begin{align*}
	\varepsilon\mu_{\theta}({\cal X}|y)
	\leq
	\int\left(\int r_{\theta}(y,x'|x) \mu(dx') \right) \lambda(dx)
	\leq
	\frac{\mu_{\theta}({\cal X}|y) }{\varepsilon}.
\end{align*}
Consequently, Assumption \ref{a2.1} and (\ref{1.907}) imply
\begin{align*}
	\left|
	\log\left(
	\big\langle R_{\theta,y}^{\boldsymbol 0}(\lambda )\big\rangle
	\right)
	\right|
	\leq
	|\log\varepsilon|
	+
	\left|
	\log\mu_{\theta}({\cal X}|y)
	\right|
	\leq
	\tilde{C} + \varphi(y)
	\leq
	2\tilde{C}\varphi(y).
\end{align*}
Therefore, (\ref{7.3}) yields
\begin{align*}%\label{l3.1.1}
	\left|
	\Psi_{\theta}^{\boldsymbol 0}(y,\Lambda )
	\right|
	= &
	\left|
	\log\left(
	\big\langle R_{\theta,y}^{\boldsymbol 0}(\lambda_{\boldsymbol 0} )\big\rangle
	\right)
	\right|
	\leq
	2\tilde{C}\varphi(y)
	\leq %&
	C_{8} \varphi(y).
\end{align*}
Moreover, using Lemma \ref{lemma1.2}, we deduce
\begin{align*}
	\left|
	\frac{\big\langle R_{\theta,y}^{\boldsymbol 0}(\lambda) \big\rangle }
	{\big\langle R_{\theta,y}^{\boldsymbol 0}(\lambda') \big\rangle}
	-
	1
	\right|
	=
	\left|
	\frac{\big\langle R_{\theta,y}^{\boldsymbol 0}(\lambda-\lambda') \big\rangle }
	{\big\langle R_{\theta,y}^{\boldsymbol 0}(\lambda') \big\rangle}
	\right|
	\leq
	\frac{\left\|\big\langle R_{\theta,y}^{\boldsymbol 0}(\lambda-\lambda') \big\rangle \right\| }
	{\big\langle R_{\theta,y}^{\boldsymbol 0}(\lambda') \big\rangle}
	\leq
	C_{1}	\left\|\lambda-\lambda'\right\|.
\end{align*}
Consequently, we have
\begin{align}\label{l3.1.1}
	\log\left(
	\frac{\big\langle R_{\theta,y}^{\boldsymbol 0}(\lambda) \big\rangle }
	{\big\langle R_{\theta,y}^{\boldsymbol 0}(\lambda') \big\rangle}
	\right)
	\leq
	\left|
	\frac{\big\langle R_{\theta,y}^{\boldsymbol 0}(\lambda) \big\rangle }
	{\big\langle R_{\theta,y}^{\boldsymbol 0}(\lambda') \big\rangle}
	-
	1
	\right|
	\leq
	C_{1}	\left\|\lambda-\lambda'\right\|.
\end{align}
Reverting the roles of $\lambda$, $\lambda'$, we get
\begin{align}\label{l3.1.3}
	-\log\left(
	\frac{\big\langle R_{\theta,y}^{\boldsymbol 0}(\lambda) \big\rangle }
	{\big\langle R_{\theta,y}^{\boldsymbol 0}(\lambda') \big\rangle}
	\right)
	=
	\log\left(
	\frac{\big\langle R_{\theta,y}^{\boldsymbol 0}(\lambda') \big\rangle }
	{\big\langle R_{\theta,y}^{\boldsymbol 0}(\lambda) \big\rangle}
	\right)
	\leq
	\left|
	\frac{\big\langle R_{\theta,y}^{\boldsymbol 0}(\lambda') \big\rangle }
	{\big\langle R_{\theta,y}^{\boldsymbol 0}(\lambda) \big\rangle}
	-
	1
	\right|
	\leq
	C_{1} \left\|\lambda-\lambda'\right\|.
\end{align}
Owing to (\ref{l3.1.1}), (\ref{l3.1.3}), we have
\begin{align*}
	\left|
	\log\left(
	\frac{\big\langle R_{\theta,y}^{\boldsymbol 0}(\lambda) \big\rangle }
	{\big\langle R_{\theta,y}^{\boldsymbol 0}(\lambda') \big\rangle}
	\right)
	\right|
	\leq
	C_{1}	\left\|\lambda-\lambda'\right\|.
\end{align*}
Hence, we get
\begin{align*}%\label{l3.1.3}
	\left|
	\Psi_{\theta}^{\boldsymbol 0}(y,\Lambda )
	-
	\Psi_{\theta}^{\boldsymbol 0}(y,\Lambda' )
	\right|
	= &
	\left|
	\log\left(
	\frac{\big\langle R_{\theta,y}^{\boldsymbol 0}(\lambda_{\boldsymbol 0} ) \big\rangle }
	{\big\langle R_{\theta,y}^{\boldsymbol 0}(\lambda'_{\boldsymbol 0} ) \big\rangle}
	\right)
	\right|
	%\nonumber\\
	\leq %&
	C_{1} \left\|\lambda_{\boldsymbol 0} - \lambda'_{\boldsymbol 0} \right\|
	%\nonumber\\
	\leq %&
	C_{8} \varphi(y) \left\|\Lambda - \Lambda' \right\|
\end{align*}
(as $\varphi(y)\geq 1$,
$\left\|\lambda_{\boldsymbol 0} - \lambda'_{\boldsymbol 0} \right\| \leq
\left\|\Lambda - \Lambda' \right\|$).
\end{proof}

\begin{lemma}\label{lemma3.2}
Let Assumptions \ref{a1.1} and \ref{a1.2} hold.
Then, there exists a real number $C_{9}\in[1,\infty )$ (depending only on $\varepsilon$, $p$) such that
\begin{align*}
	%&\label{l3.2.1*}
	\left|\Psi_{\theta}^{\boldsymbol\alpha}(y,\Lambda) \right|
	\leq
	C_{9} \big(\psi(y) \|\Lambda \| \big)^{p},
	\;\;\;\;\;
	%\\
	%&\label{l3.2.3*}
	\left|
	\Psi_{\theta}^{\boldsymbol\alpha}(y,\Lambda)
	-
	\Psi_{\theta}^{\boldsymbol\alpha}(y,\Lambda')
	\right|
	\leq
	C_{9} \|\Lambda-\Lambda' \|
	\big(\psi(y) \left(\|\Lambda \| + \|\Lambda' \|\right) \big)^{p}
\end{align*}
for all $\theta\in\Theta$, $y\in{\cal Y}$,
$\Lambda,\Lambda'\in{\cal L}_{0}({\cal X} )$
and any multi-index $\boldsymbol\alpha\in\mathbb{N}_{0}^{d}\setminus\{\boldsymbol 0\}$,
$|\boldsymbol\alpha|\leq p$.
\end{lemma}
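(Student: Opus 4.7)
The plan is to argue by induction on $|\boldsymbol\alpha|$, working with the strengthened bound
\begin{align*}
	\bigl|\Psi_{\theta}^{\boldsymbol\alpha}(y,\Lambda)\bigr|
	\leq
	A_{|\boldsymbol\alpha|}\bigl(\psi(y)\|\Lambda\|\bigr)^{|\boldsymbol\alpha|},
	\quad
	\bigl|\Psi_{\theta}^{\boldsymbol\alpha}(y,\Lambda)-\Psi_{\theta}^{\boldsymbol\alpha}(y,\Lambda')\bigr|
	\leq
	A_{|\boldsymbol\alpha|}\|\Lambda-\Lambda'\|\bigl(\psi(y)(\|\Lambda\|+\|\Lambda'\|)\bigr)^{|\boldsymbol\alpha|-1}\psi(y),
\end{align*}
for a sequence $\{A_{k}\}_{k=1}^{p}$ to be chosen inductively. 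The lemma statement then follows on setting $C_{9}=\max_{1\le k\le p}A_{k}$ and exploiting $\psi(y)\geq 1$, $\|\Lambda\|\geq\|\lambda_{\boldsymbol 0}\|=1$ (since $\Lambda\in{\cal L}_{0}({\cal X})$), which yields $(\psi(y)\|\Lambda\|)^{|\boldsymbol\alpha|}\leq(\psi(y)\|\Lambda\|)^{p}$ and similarly absorbs the extra factor of $\psi(y)$ into the exponent $p$.

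For the base case $|\boldsymbol\alpha|=1$, definition (\ref{7.3}) gives $\Psi_{\theta}^{\boldsymbol\alpha}(y,\Lambda)=\langle S_{\theta,y}^{\boldsymbol\alpha}(\Lambda)\rangle$, so both inequalities follow immediately from Lemma \ref{lemma1.4} applied with $|\boldsymbol\alpha|=1$, after the trivial estimates $\|\lambda_{\boldsymbol\gamma}\|\leq\|\Lambda\|$ and $(\psi(y))^{|\boldsymbol\alpha-\boldsymbol\gamma|}\leq(\psi(y))^{|\boldsymbol\alpha|}$. This determines $A_{1}$ in terms of $C_{2}$ and the number of multi-indices of length $\le 1$.

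For the inductive step with $|\boldsymbol\alpha|>1$, I would substitute the recursion (\ref{7.1}) into the triangle inequality and bound each factor separately: $|\langle S_{\theta,y}^{\boldsymbol\alpha-\boldsymbol\beta}(\Lambda)\rangle|$ by Lemma \ref{lemma1.4}, and $|\Psi_{\theta}^{\boldsymbol\beta}(y,\Lambda)|$ by the induction hypothesis (available since $\boldsymbol e_{\boldsymbol\alpha}\leq\boldsymbol\beta\leq\boldsymbol\alpha$, $\boldsymbol\beta\neq\boldsymbol\alpha$ forces $1\leq|\boldsymbol\beta|<|\boldsymbol\alpha|$). Multiplying the two bounds produces a factor $(\psi(y)\|\Lambda\|)^{|\boldsymbol\beta|}\cdot(\psi(y)\|\Lambda\|)^{|\boldsymbol\alpha-\boldsymbol\beta|}=(\psi(y)\|\Lambda\|)^{|\boldsymbol\alpha|}$, which matches the target homogeneity. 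Summing over $\boldsymbol\beta$, controlling the multinomial coefficients by $2^{p}$, and defining $A_{|\boldsymbol\alpha|}$ recursively by a relation of the form $A_{k}=2^{p}C_{2}\bigl(1+\sum_{j<k}(\text{binomial factor})A_{j}\bigr)$ closes the first estimate. The Lipschitz estimate is handled in exactly the same way, now writing the telescoping identity
\begin{align*}
	\Psi_{\theta}^{\boldsymbol\beta}(y,\Lambda)\langle S_{\theta,y}^{\boldsymbol\alpha-\boldsymbol\beta}(\Lambda)\rangle
	-
	\Psi_{\theta}^{\boldsymbol\beta}(y,\Lambda')\langle S_{\theta,y}^{\boldsymbol\alpha-\boldsymbol\beta}(\Lambda')\rangle
\end{align*}
as a sum $(\Psi_{\theta}^{\boldsymbol\beta}(y,\Lambda)-\Psi_{\theta}^{\boldsymbol\beta}(y,\Lambda'))\langle S^{\cdot}(\Lambda)\rangle+\Psi_{\theta}^{\boldsymbol\beta}(y,\Lambda')(\langle S^{\cdot}(\Lambda)\rangle-\langle S^{\cdot}(\Lambda')\rangle)$ and applying the inductive Lipschitz bound together with the difference inequality (\ref{l1.3.7*}) of Lemma \ref{lemma1.4}, absorbing the spurious $\|\lambda_{\boldsymbol 0}-\lambda'_{\boldsymbol 0}\|\|\lambda''_{\boldsymbol\gamma}\|$ term into $\|\Lambda-\Lambda'\|(\|\Lambda\|+\|\Lambda'\|)^{|\boldsymbol\alpha|-1}$.

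The only real obstacle is bookkeeping: one must verify that the recursive definition of $A_{k}$ depends only on $p$ and $\varepsilon$ (through $C_{2}$), and that the same sequence $\{A_{k}\}$ simultaneously handles both the size and Lipschitz estimates—this is why it is convenient to prove the two inequalities in a single joint induction rather than separately. No new analytical input beyond Lemma \ref{lemma1.4} and the algebraic identity (\ref{7.1}) is required.
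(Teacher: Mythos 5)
Your overall route is the same as the paper's: a joint induction on $|\boldsymbol\alpha|$ driven by the recursion (\ref{7.1}), with the base case (\ref{7.3}) and all size/difference estimates supplied by Lemma \ref{lemma1.4}, constants depending only on $p$ and $\varepsilon$ through $C_{2}$, and the final passage from exponent $|\boldsymbol\alpha|$ to $p$ using $\psi(y)\geq 1$, $\|\Lambda\|\geq\|\lambda_{\boldsymbol 0}\|=1$. (The paper packages the constants as $B_{\boldsymbol\alpha}=\exp(\tilde{C}|\boldsymbol\alpha|)$ to exploit submultiplicativity rather than your recursive $A_{k}$, but that is cosmetic.)

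There is, however, a genuine flaw in the strengthened Lipschitz hypothesis you carry through the induction, namely the exponent $|\boldsymbol\alpha|-1$ on $\|\Lambda\|+\|\Lambda'\|$. That bound is already false at the base case $|\boldsymbol\alpha|=1$: there $\Psi_{\theta}^{\boldsymbol\alpha}(y,\Lambda)$ contains the term $\big\langle R_{\theta,y}^{\boldsymbol 0}(\lambda_{\boldsymbol\alpha})\big\rangle/\big\langle R_{\theta,y}^{\boldsymbol 0}(\lambda_{\boldsymbol 0})\big\rangle$, so taking $\lambda_{\boldsymbol\alpha}=\lambda'_{\boldsymbol\alpha}$ of large total variation and perturbing only $\lambda_{\boldsymbol 0}$ produces a difference of order $\|\lambda_{\boldsymbol\alpha}\|\,\|\lambda_{\boldsymbol 0}-\lambda'_{\boldsymbol 0}\|$, which cannot be dominated by $A_{1}\|\Lambda-\Lambda'\|\psi(y)$ uniformly in $\|\Lambda'\|$; this is exactly the $\|\lambda_{\boldsymbol 0}-\lambda'_{\boldsymbol 0}\|\,\|\lambda'_{\boldsymbol\gamma}\|$ term in (\ref{l1.3.7*}). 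The same deficiency reappears in your inductive step: in the cross term $\big|\Psi_{\theta}^{\boldsymbol\beta}(y,\Lambda')\big|\cdot\big|\big\langle S_{\theta,y}^{\boldsymbol\alpha-\boldsymbol\beta}(\Lambda)\big\rangle-\big\langle S_{\theta,y}^{\boldsymbol\alpha-\boldsymbol\beta}(\Lambda')\big\rangle\big|$ with $|\boldsymbol\beta|=|\boldsymbol\alpha|-1$ you pick up $\|\Lambda'\|^{|\boldsymbol\beta|}$ times an extra factor $\|\Lambda\|+\|\Lambda'\|$ from (\ref{l1.3.7*}), i.e.\ total norm exponent $|\boldsymbol\alpha|$, so the hypothesis with exponent $|\boldsymbol\alpha|-1$ does not close. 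The repair is immediate and is what the paper does: run the induction with
\begin{align*}
	\big|\Psi_{\theta}^{\boldsymbol\alpha}(y,\Lambda)-\Psi_{\theta}^{\boldsymbol\alpha}(y,\Lambda')\big|
	\leq
	A_{|\boldsymbol\alpha|}\,\|\Lambda-\Lambda'\|\,
	\big(\psi(y)(\|\Lambda\|+\|\Lambda'\|)\big)^{|\boldsymbol\alpha|},
\end{align*}
which is exactly what Lemma \ref{lemma1.4} delivers at $|\boldsymbol\alpha|=1$, closes in the inductive step, and still implies the stated lemma since $|\boldsymbol\alpha|\leq p$ and the bases exceed $1$. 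With that single correction, the rest of your argument (telescoping decomposition, control of the binomial coefficients by $2^{p}$, recursive choice of $A_{k}$ depending only on $p$ and $\varepsilon$) goes through as planned.
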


\begin{proof}
Throughout the proof, the following notation is used.
$\theta$, $y$ are any elements of $\Theta$, ${\cal Y}$ (respectively).
$\tilde{C}_{1}$, $\tilde{C}_{2}$ are the real numbers defined by
$\tilde{C}_{1}=2^{p}C_{2}$, $\tilde{C}_{2}=3\tilde{C}_{1}^{2}$,
while $C_{9}$ is the real number defined by $C_{9}=\exp(\tilde{C}_{2}p)$
($C_{2}$ is specified in Lemma \ref{lemma1.4}).
$B_{\boldsymbol\alpha}$ is the real number defined by
$B_{\boldsymbol\alpha}=\exp\big(\tilde{C}_{2}|\boldsymbol\alpha| \big)$
for $\boldsymbol\alpha\in\mathbb{N}_{0}^{d}$.

Let $\boldsymbol\gamma$ be any element of $\mathbb{N}_{0}^{d}\setminus\{\boldsymbol 0\}$
satisfying $|\boldsymbol\gamma|\leq p$.
Then, it easy to conclude $B_{\boldsymbol\gamma}\leq\exp(\tilde{C}_{2})\leq 3\tilde{C}_{1}^{2}$.
Consequently, Lemma \ref{lemma1.4} and (\ref{1.907}) imply
\begin{align}\label{l3.2.3}
	\left|
	\big\langle S_{\theta,y}^{\boldsymbol\gamma}(\Lambda) \big\rangle
	-
	\big\langle S_{\theta,y}^{\boldsymbol\gamma}(\Lambda') \big\rangle
	\right|
	\leq &
	C_{2}\sum_{\stackrel{\scriptstyle \boldsymbol\delta\in\mathbb{N}_{0}^{d}}
	{\boldsymbol\delta\leq\boldsymbol\gamma } }
	(\psi(y) )^{|\boldsymbol\gamma-\boldsymbol\delta|}
	(\|\lambda_{\boldsymbol\delta}-\lambda'_{\boldsymbol\delta}\|
	+
	\|\lambda_{\boldsymbol 0}-\lambda'_{\boldsymbol 0}\| \|\lambda'_{\boldsymbol\delta}\| )
	\nonumber\\
	\leq &
	2^{|\boldsymbol\gamma|} C_{2} \left(\psi(y) \right)^{|\boldsymbol\gamma|}
	\|\Lambda - \Lambda' \| (\|\Lambda\| + \|\Lambda' \| )
	\nonumber\\
	\leq &
	\tilde{C}_{1} \|\Lambda - \Lambda' \|
	\big(\psi(y) (\|\Lambda\| + \|\Lambda' \| ) \big)^{|\boldsymbol\gamma|}
	\nonumber\\
	\leq &
	\frac{B_{\boldsymbol\gamma} \|\Lambda - \Lambda' \|
	\big(\psi(y) (\|\Lambda\| + \|\Lambda' \| ) \big)^{|\boldsymbol\gamma|} }
	{3\tilde{C}_{1} }
\end{align}
for $\Lambda=\left\{\lambda_{\boldsymbol\delta}:
\boldsymbol\delta\in\mathbb{N}_{0}^{d},|\boldsymbol\delta|\leq p\right\}\in{\cal L}_{0}({\cal X} )$,
$\Lambda'=\left\{\lambda'_{\boldsymbol\delta}:
\boldsymbol\delta\in\mathbb{N}_{0}^{d},|\boldsymbol\delta|\leq p\right\}\in{\cal L}_{0}({\cal X} )$
(as $\|\Lambda\|\geq 1$, $\|\Lambda'\|\geq \|\lambda'_{\boldsymbol\delta}\|$,
$\|\Lambda-\Lambda'\|\geq\|\lambda_{\boldsymbol\delta}-\lambda'_{\boldsymbol\delta}\|$).
The same arguments yield
\begin{align}\label{l3.2.1}
	\left|\big\langle S_{\theta,y}^{\boldsymbol\gamma}(\Lambda) \big\rangle \right|
	\leq
	C_{2}\sum_{\stackrel{\scriptstyle \boldsymbol\delta\in\mathbb{N}_{0}^{d}}
	{\boldsymbol\delta\leq\boldsymbol\gamma } }
	(\psi(y) )^{|\boldsymbol\gamma-\boldsymbol\delta|}
	\|\lambda_{\boldsymbol\delta}\|
	\leq
	2^{|\boldsymbol\gamma|}C_{2}(\psi(y) )^{|\boldsymbol\gamma|}\|\Lambda\|
	\leq
	\tilde{C}_{1} \big(\psi(y) \|\Lambda \| \big)^{|\boldsymbol\gamma|}
	\leq
	\frac{B_{\boldsymbol\alpha} \big(\psi(y) \|\Lambda \| \big)^{|\boldsymbol\gamma|} }
	{3\tilde{C}_{1} }
\end{align}
for the same $\Lambda$.

To prove the lemma, it is sufficient to show
\begin{align}\label{l3.2.5}
	\big|\Psi_{\theta}^{\boldsymbol\alpha}(y,\Lambda) \big|
	\leq
	B_{\boldsymbol\alpha} \big(\psi(y) \|\Lambda \| \big)^{|\boldsymbol\alpha|},
	\;\;\;\;\;
	%\\
	%&\label{l3.2.7}
	\big|
	\Psi_{\theta}^{\boldsymbol\alpha}(y,\Lambda)
	-
	\Psi_{\theta}^{\boldsymbol\alpha}(y,\Lambda')
	\big|
	\leq
	B_{\boldsymbol\alpha} \|\Lambda-\Lambda' \|
	\big(\psi(y) \left(\|\Lambda \| + \|\Lambda' \|\right) \big)^{|\boldsymbol\alpha|}
\end{align}
for $\Lambda,\Lambda'\in{\cal L}_{0}({\cal X} )$,
$\boldsymbol\alpha\in\mathbb{N}_{0}^{d}\setminus\{\boldsymbol 0\}$,
$|\boldsymbol\alpha|\leq p$.
We prove (\ref{l3.2.5}) by mathematical induction in
$|\boldsymbol\alpha|$.
When $|\boldsymbol\alpha|=1$, (\ref{l3.2.3}), (\ref{l3.2.1}) imply that
(\ref{l3.2.5}) is true for all $\Lambda,\Lambda'\in{\cal L}_{0}({\cal X} )$.
Now, the induction hypothesis is formulated:
Suppose that (\ref{l3.2.5}) holds for some $l\in\mathbb{N}_{0}^{d}$
and all $\Lambda,\Lambda'\in{\cal L}_{0}({\cal X} )$,
$\boldsymbol\alpha\in\mathbb{N}_{0}^{d}$ satisfying $1\leq l<p$, $|\boldsymbol\alpha|\leq l$.
Then, to prove (\ref{l3.2.5}),
it is sufficient to show (\ref{l3.2.5})
for all $\Lambda,\Lambda'\in{\cal L}_{0}({\cal X} )$,
$\boldsymbol\alpha\in\mathbb{N}_{0}^{d}$ satisfying $|\boldsymbol\alpha|= l+1$.
In what follows in the proof, $\Lambda,\Lambda'$ are any elements of ${\cal L}_{0}({\cal X} )$.
$\boldsymbol\alpha$ is any element of $\mathbb{N}_{0}^{d}$
satisfying $|\boldsymbol\alpha|= l+1$,
while $\boldsymbol\beta$ is any element of $\mathbb{N}_{0}^{d}\setminus\{\boldsymbol 0,\boldsymbol \alpha\}$
fulfilling $\boldsymbol\beta\leq\boldsymbol\alpha$.

Since $\boldsymbol\beta\leq\boldsymbol\alpha$, $\boldsymbol\beta\neq\boldsymbol 0$,
$\boldsymbol\beta\neq\boldsymbol\alpha$,
we have $1\leq |\boldsymbol\beta|\leq|\boldsymbol\alpha|-1=l$.
Then, owing to the induction hypothesis, we have
\begin{align}
	&\label{l3.2.21}
	\max\left\{
	\frac{\big|\Psi_{\theta}^{\boldsymbol\beta}(y,\Lambda ) \big| }
	{ \big(\psi(y)\|\Lambda\| \big)^{|\boldsymbol\beta|} },
	\frac{\big|\Psi_{\theta}^{\boldsymbol\beta}(y,\Lambda' ) \big| }
	{ \big(\psi(y)\|\Lambda'\| \big)^{|\boldsymbol\beta|} }
	\right\}
	\leq
	B_{\boldsymbol\beta}
	\leq
	\frac{B_{\boldsymbol\alpha} }{3\tilde{C}_{1}^{2} },
	\\
	&\label{l3.2.23}
	\frac{\big|\Psi_{\theta}^{\boldsymbol\beta}(y,\Lambda) - \Psi_{\theta}^{\boldsymbol\beta}(y,\Lambda') \big| }
	{ \big(\psi(y) (\|\Lambda\|+\|\Lambda'\|) \big)^{|\boldsymbol\beta|} }
	\leq
	B_{\boldsymbol\beta}\|\Lambda-\Lambda'\|
	\leq
	\frac{B_{\boldsymbol\alpha} \|\Lambda-\Lambda'\| }{3\tilde{C}_{1}^{2} }.
\end{align}
Consequently, (\ref{l3.2.1}) implies
\begin{align}\label{l3.2.25}
	\big|
	\Psi_{\theta}^{\boldsymbol\beta}(y,\Lambda)
	\big|\:
	\big|
	\big\langle S_{\theta,y}^{\boldsymbol\alpha-\boldsymbol\beta}(\Lambda ) \big\rangle
	\big|
	\leq
	\frac{B_{\boldsymbol\beta}B_{\boldsymbol\alpha-\boldsymbol\beta} \big(\psi(y)\|\Lambda\| \big)^{|\boldsymbol\alpha|} }
	{3\tilde{C}_{1} }
	\leq
	\frac{B_{\boldsymbol\alpha} \big(\psi(y)\|\Lambda\| \big)^{|\boldsymbol\alpha|} }
	{3\tilde{C}_{1} }
\end{align}
(as $|\boldsymbol\alpha|=|\boldsymbol\beta|+|\boldsymbol\alpha-\boldsymbol\beta|$).
Similarly, (\ref{l3.2.3}), (\ref{l3.2.1}), (\ref{l3.2.21}), (\ref{l3.2.23}) yield
\begin{align}
	&\label{l3.2.27}
	\big|\Psi_{\theta}^{\boldsymbol\beta}(y,\Lambda) - \Psi_{\theta}^{\boldsymbol\beta}(y,\Lambda') \big| \:
	\big|
	\big\langle S_{\theta,y}^{\boldsymbol\alpha-\boldsymbol\beta}(\Lambda') \big\rangle
	\big|
	\leq
	\frac{B_{\boldsymbol\alpha} \|\Lambda-\Lambda'\| \big(\psi(y)(\|\Lambda\|+\|\Lambda'\| ) \big)^{|\boldsymbol\alpha|} }
	{3\tilde{C}_{1} },
	\\
	&\label{l3.2.29}
	\big|\Psi_{\theta}^{\boldsymbol\beta}(y,\Lambda) \big| \:
	\big|
	\big\langle S_{\theta,y}^{\boldsymbol\alpha-\boldsymbol\beta}(\Lambda) \big\rangle
	-
	\big\langle S_{\theta,y}^{\boldsymbol\alpha-\boldsymbol\beta}(\Lambda') \big\rangle
	\big|
	\leq
	\frac{B_{\boldsymbol\alpha} \|\Lambda-\Lambda'\| \big(\psi(y)(\|\Lambda\|+\|\Lambda'\| ) \big)^{|\boldsymbol\alpha|} }
	{3\tilde{C}_{1} }.
\end{align}
Using (\ref{7.1}), (\ref{l3.2.1}), (\ref{l3.2.25}), we conclude
\begin{align*}
	\big|
	\Psi_{\theta}^{\boldsymbol\alpha}(y,\Lambda)
	\big|
	\leq &
	\big|
	\big\langle S_{\theta,y}^{\boldsymbol\alpha}(\Lambda ) \big\rangle
	\big|
	+
	\sum_{\stackrel{\scriptstyle\boldsymbol\beta\in\mathbb{N}_{0}^{d}\setminus\{\boldsymbol\alpha \} }
	{\boldsymbol e_{\boldsymbol\alpha} \leq \boldsymbol\beta \leq \boldsymbol\alpha } }
	\left( \boldsymbol\alpha - \boldsymbol e_{\boldsymbol\alpha} \atop \boldsymbol\beta - \boldsymbol e_{\boldsymbol\alpha} \right)
	\big|
	\Psi_{\theta}^{\boldsymbol\beta}(y,\Lambda)
	\big|\:
	\big|
	\big\langle S_{\theta,y}^{\boldsymbol\alpha-\boldsymbol\beta}(\Lambda ) \big\rangle
	\big|
	\\
	\leq &
	\frac{2^{|\boldsymbol\alpha|} B_{\boldsymbol\alpha}
	\big(\psi(y) \|\Lambda \| \big)^{|\boldsymbol\alpha|} }
	{\tilde{C}_{1} }
	\\
	\leq &
	B_{\boldsymbol\alpha} \big(\psi(y) \|\Lambda \| \big)^{|\boldsymbol\alpha|}.
\end{align*}
(as $\tilde{C}_{1}\geq 2^{|\boldsymbol\alpha|}$).
Relying on (\ref{7.1}), (\ref{l3.2.3}), (\ref{l3.2.27}), (\ref{l3.2.29}), we deduce
\begin{align*}
	\big|\Psi_{\theta}^{\boldsymbol\alpha}(y,\Lambda)
	-
	\Psi_{\theta}^{\boldsymbol\alpha}(y,\Lambda') \big|
	\leq &
	\sum_{\stackrel{\scriptstyle\boldsymbol\beta\in\mathbb{N}_{0}^{d}\setminus\{\boldsymbol\alpha \} }
	{\boldsymbol e_{\boldsymbol\alpha} \leq \boldsymbol\beta \leq \boldsymbol\alpha } }
	\left( \boldsymbol\alpha - \boldsymbol e_{\boldsymbol\alpha} \atop \boldsymbol\beta - \boldsymbol e_{\boldsymbol\alpha} \right)
	\big|\Psi_{\theta}^{\boldsymbol\beta}(y,\Lambda) \big| \:
	\big|
	\big\langle S_{\theta,y}^{\boldsymbol\alpha-\boldsymbol\beta}(\Lambda) \big\rangle
	-
	\big\langle S_{\theta,y}^{\boldsymbol\alpha-\boldsymbol\beta}(\Lambda') \big\rangle
	\big|
	\\
	&
	+
	\sum_{\stackrel{\scriptstyle\boldsymbol\beta\in\mathbb{N}_{0}^{d}\setminus\{\boldsymbol\alpha \} }
	{\boldsymbol e_{\boldsymbol\alpha} \leq \boldsymbol\beta \leq \boldsymbol\alpha } }
	\left( \boldsymbol\alpha - \boldsymbol e_{\boldsymbol\alpha} \atop \boldsymbol\beta - \boldsymbol e_{\boldsymbol\alpha} \right)
	\big|
	\Psi_{\theta}^{\boldsymbol\beta}(y,\Lambda)
	-
	\Psi_{\theta}^{\boldsymbol\beta}(y,\Lambda')
	\big| \:
	\big|
	\big\langle S_{\theta,y}^{\boldsymbol\alpha-\boldsymbol\beta}(\Lambda') \big\rangle
	\big|
	\\
	&
	+
	\big|
	\big\langle S_{\theta,y}^{\boldsymbol\alpha}(\Lambda) \big\rangle
	-
	\big\langle S_{\theta,y}^{\boldsymbol\alpha}(\Lambda') \big\rangle
	\big|
	\\
	\leq &
	\frac{2^{|\boldsymbol\alpha|} B_{\boldsymbol\alpha} \|\Lambda-\Lambda' \|
	\big(\psi(y) (\|\Lambda\| + \|\Lambda'\| ) \big)^{|\boldsymbol\alpha|} }
	{\tilde{C}_{1} }
	\\
	\leq &
	B_{\boldsymbol\alpha} \|\Lambda-\Lambda' \|
	\big(\psi(y) (\|\Lambda\| + \|\Lambda'\| ) \big)^{|\boldsymbol\alpha|}.
\end{align*}
Hence, (\ref{l3.2.5}) holds for $\boldsymbol\alpha\in\mathbb{N}_{0}^{d}$,
$|\boldsymbol\alpha|=l+1$.
Then, the lemma directly follows by the principle of mathematical induction.
\end{proof}

\begin{proof}[\rm\bf Proof of Theorem \ref{theorem2.1}]
Let $w=p(p+1)$. Using Theorem \ref{theorem1.3} and Lemmas \ref{lemma3.1}, \ref{lemma3.2},
we conclude that for each multi-index $\boldsymbol\alpha\in\mathbb{N}_{0}^{d}$, $|\boldsymbol\alpha|\leq p$,
there exists a function $\psi_{\theta}^{\boldsymbol\alpha}$
which maps $\theta$ to $\mathbb{R}$ and satisfies
\begin{align}\label{t2.1.1}
	\psi_{\theta}^{\boldsymbol\alpha}
	=
	\lim_{n\rightarrow\infty }
	(\tilde{\Pi}^{n} \Psi^{\boldsymbol\alpha} )_{\theta}(x,y,\Lambda )
\end{align}
for $\theta\in\Theta$, $x\in{\cal X}$, $y\in{\cal Y}$, $\Lambda\in{\cal L}_{0}({\cal X} )$.
Relying on the same arguments, we deduce that
there also exist real numbers $\rho\in(0,1)$, $\tilde{C}_{1}\in[1,\infty )$
(depending only on $\varepsilon$, $\delta$, $K_{0}$, $M_{0}$)
such that
\begin{align}\label{t2.1.3}
	\left|
	(\tilde{\Pi}^{n}\Psi^{\boldsymbol\alpha} )_{\theta}(x,y,\Lambda )
	-
	\psi_{\theta}^{\boldsymbol\alpha}
	\right|
	\leq
	\tilde{C}_{1}\rho^{n}\psi^{u}(y) \|\Lambda \|^{w}
\end{align}
for the same $\theta$, $x$, $y$, $\Lambda$ and
$n\geq 1$,
$\boldsymbol\alpha\in\mathbb{N}_{0}^{d}$,
$|\boldsymbol\alpha|\leq p$
($u$ is specified in Assumption \ref{a2.1}).

Throughout the rest of the proof, the following notation is used.
$\theta$ is any element of $\Theta$,
while $x$, $y$, $\lambda$  are any elements of ${\cal X}$, ${\cal Y}$, ${\cal P}({\cal X} )$ (respectively).
$\boldsymbol\alpha$ is any element of $\mathbb{N}_{0}^{d}$
satisfying $|\boldsymbol\alpha|\leq p$.
$n$ is any (strictly) positive integer.

Owing to Assumption \ref{a2.1}, we have
\begin{align}\label{t2.1.5}
	\max\left\{
	E\left( \varphi(Y_{n} ) \right), E\left( \psi^{u}(Y_{n} ) \right)
	\right\}
	\leq
	E\left( \varphi(Y_{n} )\psi^{u}(Y_{n} ) \right)
	=
	E\left( \int \varphi(y)\psi^{u}(y) Q(X_{n},dy) \right)
	\leq
	M_{0}.
\end{align}
Due to the same assumption, we also have
\begin{align*}
	&
	E\left(\left.
	\psi^{p}(Y_{k} ) \psi^{u}(Y_{1} )
	\right|X_{1}=x,Y_{1}=y
	\right)
	=
	\psi^{u}(y)
	E\left(
	\int \psi^{p}(y) Q(X_{k},dy)
	\right)
	\leq
	M_{0} \psi^{u}(y),
	\\
	&
	E\left(\left.
	\psi^{p}(Y_{l} ) \psi^{u}(Y_{k} )
	\right|X_{1}=x,Y_{1}=y
	\right)
	=
	E\left(
	\int \psi^{p}(y) Q(X_{l},dy) \int \psi^{u}(y) Q(X_{k},dy)
	\right)
	\leq
	M_{0}^{2}
\end{align*}
for $l>k>1$.
Therefore, we get
\begin{align}\label{t2.1.7}
	E\left(\left.
	\psi^{p}(Y_{n+1} )
	\sum_{k=1}^{n} \psi^{u}(Y_{k} )
	\right|X_{1}=x,Y_{1}=y
	\right)
	\leq
	M_{0}^{2}n + M_{0}\psi^{u}(y)
	<\infty.
\end{align}

Using (\ref{1.903}), (\ref{1.705}), (\ref{1.915}), (\ref{7.3}), (\ref{7.1}), it is straightforward to verify
\begin{align*}
	\log q_{\theta}^{n}(Y_{1:n} |\lambda )
	=&
	\sum_{k=1}^{n-1}
	\log\left(
	\int\int r_{\theta}(Y_{k+1},x''|x') p_{\theta,\boldsymbol Y}^{0:k}(x'|\lambda )
	\mu(dx'') \mu(dx')
	\right)
	\\
	&+
	\log\left(\int\int r_{\theta}(Y_{1},x'|x) \mu(dx')\lambda(dx) \right)
	\\
	=&
	\sum_{k=0}^{n-1}
	\Psi_{\theta}^{\boldsymbol 0}(Y_{k+1}, F_{\theta,\boldsymbol Y}^{0:k}({\cal E}_{\lambda} ) )
\end{align*}
(here, $\boldsymbol Y$ denotes stochastic process $\{Y_{n} \}_{n\geq 1}$,
i.e., $\boldsymbol Y = \{Y_{n} \}_{n\geq 1}$).
It is also easy to show
\begin{align*}
	\big(\tilde{\Pi}^{n}\Psi^{\boldsymbol\alpha} \big)_{\theta}(x,y,{\cal E}_{\lambda} )
	=
	E\left(\left.
	\Psi_{\theta}^{\boldsymbol\alpha}(Y_{n+1},F_{\theta,\boldsymbol Y}^{0:n}({\cal E}_{\lambda} ) )
	\right|X_{1}=x,Y_{1}=y
	\right).
\end{align*}
Therefore, we have
\begin{align}\label{t2.1.11}
	E\left(\left.
	\log q_{\theta}^{n}(Y_{1:n} |\lambda )
	\right|X_{1}=x,Y_{1}=y
	\right)
	=
	\sum_{k=1}^{n-1}
	\big(\tilde{\Pi}^{k}\Psi^{\boldsymbol 0} \big)_{\theta}(x,y,{\cal E}_{\lambda} )
	+
	\Psi^{\boldsymbol 0}_{\theta}(y,{\cal E}_{\lambda} ).
\end{align}
Consequently, Lemma \ref{lemma3.1} and (\ref{t2.1.3}) imply
\begin{align*}
	\left|
	E\left(\left.
	\frac{1}{n}
	\log q_{\theta}^{n}(Y_{1:n} |\lambda )
	\right|X_{1}=x,Y_{1}=y
	\right)
	-
	\psi_{\theta}^{\boldsymbol 0}
	\right|
	\leq &
	\frac{1}{n} \sum_{k=1}^{n-1}
	\left|
	\big(\tilde{\Pi}^{k}\Psi^{\boldsymbol 0} \big)_{\theta}(x,y,{\cal E}_{\lambda} )
	-
	\psi_{\theta}^{\boldsymbol 0}
	\right|
	+
	\frac{\big|\psi_{\theta}^{\boldsymbol 0}\big| + \big|\Psi^{\boldsymbol 0}_{\theta}(y,{\cal E}_{\lambda} )\big|}
	{n}
	\\
	\leq &
	\frac{\tilde{C}_{1} \psi^{u}(y) }{n}
	\sum_{k=1}^{n-1} \rho^{k}
	+
	\frac{|\psi_{\theta}^{\boldsymbol 0} | + C_{8} \phi(y) }{n}
	\\
	\leq &
	\frac{\tilde{C}_{1} \psi^{u}(y) }{n(1-\rho ) }
	+
	\frac{|\psi_{\theta}^{\boldsymbol 0} | + C_{8} \phi(y) }{n}.
\end{align*}
Then, (\ref{t2.1.5}) yields
\begin{align*}
	\left|
	E\left(
	\frac{1}{n}
	\log q_{\theta}^{n}(Y_{1:n} |\lambda )
	\right)
	-
	\psi_{\theta}^{\boldsymbol 0}
	\right|
	&\leq
	E\left(
	\left|
	E\left(\left.
	\frac{1}{n}
	\log q_{\theta}^{n}(Y_{1:n} |\lambda )
	\right|X_{1},Y_{1}
	\right)
	-
	\psi_{\theta}^{\boldsymbol 0}
	\right|
	\right)
	\\
	&\leq
	\frac{\tilde{C}_{1} E(\psi^{u}(Y_{1} ) ) }{n(1-\rho ) }
	+
	\frac{|\psi_{\theta}^{\boldsymbol 0} | + C_{8} E(\phi(Y_{1} ) ) }{n}
	\\
	&\leq
	\frac{\tilde{C}_{1} M_{0} }{n(1-\rho ) }
	+
	\frac{|\psi_{\theta}^{\boldsymbol 0} | + C_{8} M_{0} }{n}.
\end{align*}
Therefore, we get
\begin{align}\label{t2.1.9}
	\lim_{n\rightarrow\infty}
	E\left(
	\frac{1}{n}
	\log q_{\theta}^{n}(Y_{1:n} |\lambda )
	\right)
	=
	\psi_{\theta}^{\boldsymbol 0}.
\end{align}

Let $\tilde{C}_{2}=\max\{A_{\boldsymbol\alpha}: \boldsymbol\alpha\in\mathbb{N}_{0}^{d},
|\boldsymbol\alpha|\leq p\}$
($A_{\boldsymbol\alpha}$ is specified in Proposition \ref{proposition1.3}).
Owing to Proposition \ref{proposition1.3} and Lemma \ref{lemma3.2}, we have
\begin{align*}
	\left|
	\Psi_{\theta}^{\boldsymbol\alpha}(Y_{n+1},F_{\theta,\boldsymbol Y}^{0:n}({\cal E}_{\lambda} ) )
	\right|
	\leq %&
	C_{9} \psi^{p}(Y_{n+1} ) \|F_{\theta,\boldsymbol Y}^{0:n}({\cal E}_{\lambda} ) \|^{p}
	\leq &
	\tilde{C}_{2}^{p} C_{9} \psi^{p}(Y_{n+1} ) \left(\Psi_{\boldsymbol Y}^{0:n} \right)^{u}
	\\
	\leq &
	\tilde{C}_{2}^{p} C_{9} n^{u} \psi^{p}(Y_{n+1} )
	\sum_{k=1}^{n} \psi^{u}(Y_{k} )
\end{align*}
(as $\Psi_{\boldsymbol Y}^{0:n}\geq 1$, $u>p^{2}$).
Consequently, Proposition \ref{proposition5.1}, Lemma \ref{lemmaa3}
and (\ref{t2.1.7}), (\ref{t2.1.11}) imply that
\linebreak
$\big(\tilde{\Pi}^{n}\Psi^{\boldsymbol 0} \big)_{\theta}(x,y,{\cal E}_{\lambda} )$
is $p$-times differentiable in $\theta$ and satisfies
\begin{align*}
	\partial_{\theta}^{\boldsymbol\alpha}
	\big(\tilde{\Pi}^{n}\Psi^{\boldsymbol 0} \big)_{\theta}(x,y,{\cal E}_{\lambda} )
	=&
	E\left(\left.
	\partial_{\theta}^{\boldsymbol\alpha}
	\Psi_{\theta}^{\boldsymbol 0}(Y_{n+1},F_{\theta,\boldsymbol Y}^{0:n}({\cal E}_{\lambda} ) )
	\right|X_{1}=x,Y_{1}=y
	\right)
	\nonumber\\
	=&
	E\left(\left.
	\Psi_{\theta}^{\boldsymbol\alpha}(Y_{n+1},F_{\theta,\boldsymbol Y}^{0:n}({\cal E}_{\lambda} ) )
	\right|X_{1}=x,Y_{1}=y
	\right)
	\nonumber\\
	=&
	\big(\tilde{\Pi}^{n}\Psi^{\boldsymbol\alpha} \big)_{\theta}(x,y,{\cal E}_{\lambda} ).
\end{align*}
Then, the uniform convergence theorem and (\ref{t2.1.3}) yield that
$\psi_{\theta}^{\boldsymbol 0}$ is $p$-times differentiable in $\theta$
and satisfies
$\partial_{\theta}^{\boldsymbol\alpha} \psi_{\theta}^{\boldsymbol 0} =
\psi_{\theta}^{\boldsymbol\alpha}$.
Combining this with (\ref{t2.1.9}), we conclude that there exists
function $l(\theta)$ with the properties specified in the statement of the theorem.
\end{proof}

\section{Proof of Corollaries \ref{corollary3.1} and \ref{corollary3.2} }\label{section3*}

Throughout this section, we rely on the following notation.
$\tilde{A}'_{\theta}(x'|x)$, $\tilde{B}'_{\theta}(x)$, $\tilde{B}_{\theta}(x)$,
$\tilde{C}'_{\theta}(y|x)$, $\tilde{D}'_{\theta}(x)$ and $\tilde{D}_{\theta}(x)$
are the functions defined by
\begin{align*}
	\tilde{A}'_{\theta}(x'|x) = x'-A_{\theta}(x),
	&\;\;\;\;\;
	\tilde{B}'_{\theta}(x) = \text{adj}B_{\theta}(x),
	\;\;\;\;\;
	\tilde{B}_{\theta}(x) = \text{det}B_{\theta}(x),
	\\
	\tilde{C}'_{\theta}(y|x) = y-C_{\theta}(x),
	&\;\;\;\;\;
	\tilde{D}'_{\theta}(x) = \text{adj}D_{\theta}(x),
	\;\;\;\;\;
	\tilde{D}_{\theta}(x) = \text{det}D_{\theta}(x)
\end{align*}
for $\theta\in\tilde{\Theta}$, $x,x'\in{\cal X}$, $y\in{\cal Y}$.
$\tilde{A}_{\theta}(x'|x)$, $\tilde{C}_{\theta}(y|x)$,
$U_{\theta}(x'|x)$ and $V_{\theta}(y|x)$ are the functions defined by
\begin{align*}
	\tilde{A}_{\theta}(x'|x)
	=
	\tilde{B}'_{\theta}(x)\tilde{A}'_{\theta}(x'|x),
	&\;\;\;\;\;
	U_{\theta}(x'|x) = \frac{\tilde{A}_{\theta}(x'|x) }{\tilde{B}_{\theta}(x) },
	\\
	\tilde{C}_{\theta}(y|x)
	=
	\tilde{D}'_{\theta}(x)\tilde{C}'_{\theta}(y|x),
	&\;\;\;\;\;
	V_{\theta}(y|x) = \frac{\tilde{C}_{\theta}(y|x) }{\tilde{D}_{\theta}(x) }.
\end{align*}
$u_{\theta}(x'|x)$, $\bar{u}_{\theta}(x)$, $v_{\theta}(y|x)$ and $\bar{v}_{\theta}(x)$
are the functions defined by
\begin{align*}
	u_{\theta}(x'|x)
	=
	r\left( U_{\theta}(x'|x) \right),
	&\;\;\;\;\;
	\bar{u}_{\theta}(x)
	=
	\int_{\cal X} u_{\theta}(x''|x)dx'',
	\\
	v_{\theta}(y|x)
	=
	s\left( V_{\theta}(y|x) \right),
	&\;\;\;\;\;
	\bar{v}_{\theta}(x)
	=
	\int_{\cal Y} v_{\theta}(y'|x)dy'.
\end{align*}
Then, it is easy to show
\begin{align*}
	U_{\theta}(x'|x)
	=
	B_{\theta}^{-1}(x) \left( x'-A_{\theta}(x) \right),
	\;\;\;\;\;
	V_{\theta}(y|x)
	=
	D_{\theta}^{-1}(x) \left( y-C_{\theta}(x) \right)
\end{align*}
for all $\theta\in\tilde{\Theta}$, $x,x'\in{\cal X}$, $y\in{\cal Y}$.
It is also easy to demonstrate
\begin{align*}
	p_{\theta}(x'|x) = \frac{u_{\theta}(x'|x) }{\bar{u}_{\theta}(x) },
	\;\;\;\;\;
	q_{\theta}(y|x) = \frac{v_{\theta}(y|x) }{\bar{v}_{\theta}(x) }.
\end{align*}

\begin{lemma}\label{lemma4.1}
Let Assumptions \ref{a3.1} -- \ref{a3.4} hold.
Then, $p_{\theta}(x'|x)$ and $q_{\theta}(y|x)$
are $p$-times differentiable in $\theta$ for each $\theta\in\Theta$,
$x\in{\cal X}$, $y\in{\cal Y}$.
Moreover, there exist real numbers $\varepsilon_{1}\in(0,1)$,
$K_{1}\in[1,\infty)$ such that
\begin{align}\label{l4.1.1*}
	&
	\min\left\{ p_{\theta}(x'|x), q_{\theta}(y|x) \right\}
	\geq
	\varepsilon_{1},
	\;\;\;\;\;
	\max\left\{ \left|\partial_{\theta}^{\boldsymbol\alpha}p_{\theta}(x'|x) \right|,
	\left|\partial_{\theta}^{\boldsymbol\alpha}q_{\theta}(y|x) \right|
	\right\}
	\leq
	K_{1}
\end{align}
for all $\theta\in\Theta$, $x,x'\in{\cal X}$, $y\in{\cal Y}$
and any multi-index $\boldsymbol\alpha\in\mathbb{N}_{0}^{d}$, $|\boldsymbol\alpha|\leq p$.
\end{lemma}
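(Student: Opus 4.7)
The plan is to exploit compactness systematically. Since $\Theta$ is a bounded open set with $\mathrm{cl}\Theta\subset\tilde{\Theta}$, the set $\mathrm{cl}\Theta$ is compact and contained in $\tilde\Theta$. Together with Assumption \ref{a3.4} (which makes $\mathcal X$ and $\mathcal Y$ compact), this means the joint domains $\mathrm{cl}\Theta\times\mathcal X\times\mathcal X$ and $\mathrm{cl}\Theta\times\mathcal X\times\mathcal Y$ are compact. I would first show that $B_\theta^{-1}(x)=\tilde B'_\theta(x)/\tilde B_\theta(x)$ and $D_\theta^{-1}(x)=\tilde D'_\theta(x)/\tilde D_\theta(x)$ are $p$-times differentiable in $\theta$, with $\theta$-derivatives continuous in $(\theta,x)$. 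This uses Assumptions \ref{a3.1}--\ref{a3.3}: invertibility plus joint continuity of $\tilde B_\theta(x)$ on $\mathrm{cl}\Theta\times\mathcal X$ forces $|\tilde B_\theta(x)|$ to be bounded below by a positive constant, so the quotient rule applied $p$ times yields the required differentiability and jointly continuous derivatives (analogously for $D_\theta^{-1}$).

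Next, I would apply Fa\`a di Bruno's formula to the compositions $u_\theta(x'|x)=r(U_\theta(x'|x))$ and $v_\theta(y|x)=s(V_\theta(y|x))$. Since $U_\theta$ and $V_\theta$ are $p$-times $\theta$-differentiable with jointly continuous derivatives (by the previous step and Assumptions \ref{a3.2}, \ref{a3.3}), and since $r$, $s$ are $p$-times differentiable with continuous derivatives (Assumptions \ref{a3.2}, \ref{a3.3}), Fa\`a di Bruno yields that $\partial_\theta^{\boldsymbol\alpha} u_\theta(x'|x)$ and $\partial_\theta^{\boldsymbol\alpha} v_\theta(y|x)$ exist for all $|\boldsymbol\alpha|\le p$ and are continuous in $(\theta,x,x')$ and $(\theta,y,x)$ respectively. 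Continuity on compact sets then delivers a finite upper bound on every such derivative, as well as on $u_\theta,v_\theta$ themselves. Moreover, since $U_\theta(\mathcal X|\mathcal X)$ and $V_\theta(\mathcal Y|\mathcal X)$ are continuous images of compact sets and hence compact subsets of $\mathbb R^{d_x}$ and $\mathbb R^{d_y}$, the strict positivity of $r$, $s$ (Assumption \ref{a3.1}) gives positive lower bounds on $u_\theta$ and $v_\theta$ that are uniform in $(\theta,x,x')$ and $(\theta,y,x)$.

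To handle the denominators, I would invoke Lemma \ref{lemmaa3} (or the standard differentiation-under-the-integral-sign theorem) to transfer the differentiability and continuity from $u_\theta(x''|x)$ and $v_\theta(y'|x)$ to $\bar u_\theta(x)=\int_{\mathcal X} u_\theta(x''|x)\,dx''$ and $\bar v_\theta(x)=\int_{\mathcal Y} v_\theta(y'|x)\,dy'$; the required domination is immediate since all derivatives are bounded on the compact domain. Lebesgue measure of $\mathcal X$, $\mathcal Y$ is positive and finite (non-empty interior and compactness), so the positive lower bound on $u_\theta,v_\theta$ yields a positive lower bound on $\bar u_\theta,\bar v_\theta$, while the upper bound on $u_\theta,v_\theta$ yields a finite upper bound. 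Quotient rule / Fa\`a di Bruno applied to $p_\theta=u_\theta/\bar u_\theta$ and $q_\theta=v_\theta/\bar v_\theta$ then gives the $p$-times differentiability claim and the uniform bound $K_1$ on all $\theta$-derivatives. The uniform lower bound $\varepsilon_1$ follows from dividing the positive lower bound on $u_\theta$ by the finite upper bound on $\bar u_\theta$ (and similarly for $q_\theta$).

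The only real obstacle is bookkeeping: combining Fa\`a di Bruno with the quotient rule while keeping all the continuity hypotheses aligned on the compact domain. No delicate analysis is required; the compactness hypotheses in Assumption \ref{a3.4} together with the continuity in Assumption \ref{a3.3} make every relevant function and every relevant $\theta$-derivative attain its maximum and minimum on a compact set, which is precisely the structure needed to produce the universal constants $\varepsilon_1$ and $K_1$.
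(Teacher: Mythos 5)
Your proposal is correct and follows essentially the same route as the paper: express the inverses via adjugate/determinant and use a quotient lemma, a composition (Fa\`a di Bruno type) lemma, and differentiation under the integral, with compactness of $\mathrm{cl}\Theta\times{\cal X}\times{\cal X}$ and $\mathrm{cl}\Theta\times{\cal X}\times{\cal Y}$ supplying the uniform upper and lower bounds that yield $\varepsilon_{1}$ and $K_{1}$. The paper packages these steps as Lemmas \ref{lemmaa1}, \ref{lemmaa2} and \ref{lemmaa3}, but the substance is identical to what you describe.
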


\begin{proof}
Throughout the proof, $\boldsymbol\alpha$ is any multi-index in $\mathbb{N}_{0}^{d}$
satisfying $|\boldsymbol\alpha|\leq p$.
It is easy to notice that $\tilde{B}_{\theta}(x)$ and the entries of $\tilde{B}'_{\theta}(x)$
are polynomial in the entries of $B_{\theta}(x)$.
It is also easy to notice that $\tilde{D}_{\theta}(x)$ and the entries of $\tilde{D}'_{\theta}(x)$
are polynomial in the entries of $D_{\theta}(x)$.
Consequently, Assumptions \ref{a3.2}, \ref{a3.3} imply that
$\partial_{\theta}^{\boldsymbol\alpha}\tilde{A}_{\theta}(x'|x)$,
$\partial_{\theta}^{\boldsymbol\alpha}\tilde{B}_{\theta}(x)$
exist and are continuous in $(\theta,x,x')$, $(\theta,x)$
on $\tilde{\Theta}\times{\cal X}\times{\cal X}$, $\tilde{\Theta}\times{\cal X}$.
The same assumptions also imply that
$\partial_{\theta}^{\boldsymbol\alpha}\tilde{C}_{\theta}(y|x)$,
$\partial_{\theta}^{\boldsymbol\alpha}\tilde{D}_{\theta}(x)$
exist and are continuous in $(\theta,x,y)$, $(\theta,x)$
on $\tilde{\Theta}\times{\cal X}\times{\cal Y}$, $\tilde{\Theta}\times{\cal X}$.
As $\tilde{B}_{\theta}(x)$, $\tilde{D}_{\theta}(x)$ are non-zero
(due to Assumption \ref{a3.1}),
we conclude from Lemma \ref{lemmaa2} (see Appendix \ref{appendix2}) that
$\partial_{\theta}^{\boldsymbol\alpha}\tilde{U}_{\theta}(x'|x)$,
$\partial_{\theta}^{\boldsymbol\alpha}\tilde{V}_{\theta}(y|x)$
exist and are continuous in $(\theta,x,x)$, $(\theta,x,y)$
on $\tilde{\Theta}\times{\cal X}\times{\cal X}$, $\tilde{\Theta}\times{\cal X}\times{\cal Y}$.
Then, using Assumption \ref{a3.2} and Lemma \ref{lemmaa1} (see Appendix \ref{appendix1}),
we deduce that
$\partial_{\theta}^{\boldsymbol\alpha}\tilde{u}_{\theta}(x'|x)$,
$\partial_{\theta}^{\boldsymbol\alpha}\tilde{v}_{\theta}(y|x)$
exist and are continuous in $(\theta,x,x)$, $(\theta,x,y)$
on $\tilde{\Theta}\times{\cal X}\times{\cal X}$, $\tilde{\Theta}\times{\cal X}\times{\cal Y}$.

Let $\theta$ be any element of $\Theta$. Moreover, let $x,x'$ be any elements of ${\cal X}$,
while $y$ is any element of ${\cal Y}$.
Since $\Theta$ is bounded and $\text{cl}\Theta\subset\tilde{\Theta}$,
Assumptions \ref{a3.1}, \ref{a3.4} imply that there exist real numbers
$\delta\in(0,1)$, $\tilde{C}\in[1,\infty)$
(independent of $\theta$, $x,x'$, $y$, $\boldsymbol\alpha$) such that
\begin{align}\label{l4.1.1}
	\min\left\{ u_{\theta}(x'|x), v_{\theta}(y|x) \right\}
	\geq\delta,
	\;\;\;\;\;
	\max\left\{
	\left|\partial_{\theta}^{\boldsymbol\alpha}u_{\theta}(x'|x)\right|,
	\left|\partial_{\theta}^{\boldsymbol\alpha}v_{\theta}(x'|x)\right|
	\right\}
	\leq\tilde{C}.
\end{align}
Consequently, Lemma \ref{lemmaa3} (see Appendix \ref{appendix3}) yields that
$\partial_{\theta}^{\boldsymbol\alpha}\bar{u}_{\theta}(x)$,
$\partial_{\theta}^{\boldsymbol\alpha}\bar{v}_{\theta}(x)$ exist.
Moreover, combining Assumption \ref{a3.4} and (\ref{l4.1.1}), we get
\begin{align}\label{l4.1.3}
	\bar{u}_{\theta}(x)
	=
	\int_{\cal X} u_{\theta}(x'|x) dx'
	\geq\delta\text{m}({\cal X})>0,
	\;\;\;\;\;
	\bar{v}_{\theta}(x)
	=
	\int_{\cal Y} v_{\theta}(y|x) dy
	\geq\delta\text{m}({\cal Y})>0,
\end{align}
where $\text{m}({\cal X})$, $\text{m}({\cal Y})$ are the Lebesgue measures of
${\cal X}$, ${\cal Y}$ (respectively).
Then, using Lemma \ref{lemmaa2}, we conclude that
$\partial_{\theta}^{\boldsymbol\alpha}p_{\theta}(x'|x)$,
$\partial_{\theta}^{\boldsymbol\alpha}q_{\theta}(y|x)$ exist.
Relying on the same lemma and (\ref{l4.1.1}), (\ref{l4.1.3}),
we deduce that there exists a real number
$K_{1}\in[1,\infty)$ with the properties specified in the lemma's statement.
\end{proof}

\begin{proof}[\rm\bf Proof of Corollary \ref{corollary3.1}]
Throughout the proof, the following notation is used.
$\varepsilon$, $\tilde{C}_{1}$, $\tilde{C}_{2}$, $\tilde{C}_{3}$ are the real numbers defined by
$\varepsilon=\min\{\varepsilon_{1}^{2},K_{1}^{-2} \}$,
$\tilde{C}_{1}=2K_{1}^{2}\varepsilon_{1}^{-2}$,
$\tilde{C}_{2}=K_{1}^{2}$,
$\tilde{C}_{3}=1+|\log\mu({\cal X}) |$
($\varepsilon_{1}$, $K_{1}$ are specified in Lemma \ref{lemma4.1}).
$r$, $u$, $v$ are the real numbers specified in Assumptions \ref{a1.5}, \ref{a2.2}.
$\psi(y)$, $\phi(x,y)$, $\varphi(y)$ and $\mu_{\theta}(dx|y)$
are the functions and the measure defined by
\begin{align*}
	\psi(y)=\tilde{C}_{1},
	\;\;\;\;\;
	\phi(x,y)=\tilde{C}_{2},
	\;\;\;\;\;
	\varphi(y)=\tilde{C}_{3},
	\;\;\;\;\;
	\mu_{\theta}(B|y)=\mu(B)
\end{align*}
for $\theta\in\Theta$, $x\in{\cal X}$, $y\in{\cal Y}$, $B\in{\cal B}({\cal X})$
($\mu(dx)$ is specified in Subsection \ref{subsection1.1}).
$r_{\theta}(y,x'|x)$ has the same meaning as in (\ref{1.901}),
while $p_{\theta}(x'|x)$, $q_{\theta}(y|x)$ are defined in (\ref{3.5}).
$\theta$ is any element of $\Theta$,
while $\boldsymbol\alpha$ is any multi-index in $\mathbb{N}_{0}^{d}$
satisfying $|\boldsymbol\alpha|\leq p$.
$x,x'$ are any elements of ${\cal X}$,
while $y$ is any element of ${\cal Y}$.

(i) Owing to Lemma \ref{lemma4.1}, we have
\begin{align}\label{c3.1.1}
	\varepsilon_{1}^{2} \leq r_{\theta}(y,x'|x) \leq K_{1}^{2}.
\end{align}
Consequently, we get
\begin{align*}
	\int_{B} r_{\theta}(y,x'|x)\mu(dx')
	\geq
	\varepsilon_{1}^{2}\mu(B)
	\geq
	\varepsilon\mu_{\theta}(B|y),
	\;\;\;\;\;
	\int_{B} r_{\theta}(y,x'|x)\mu(dx')
	\leq
	K_{1}^{2}\mu(B)
	\leq
	\frac{1}{\varepsilon}\mu_{\theta}(B|y)
\end{align*}
for $B\in{\cal B}({\cal X})$. We also get
\begin{align*}
	r_{\theta}(y,x'|x)
	\leq
	\tilde{C}_{2}
	=
	\phi(y,x'),
	\;\;\;\;\;
	\int\phi(y,x)\mu(dx)
	=
	\tilde{C}_{2}\mu({\cal X})
	<\infty.
\end{align*}
Hence, Assumptions \ref{a1.1}, \ref{a1.2'} hold for
$p_{\theta}(x'|x)$, $q_{\theta}(y|x)$ specified in (\ref{3.5}).

Due to Leibniz formula and Lemma \ref{lemma4.1}, we have
\begin{align*}
	\left| \partial_{\theta}^{\boldsymbol\alpha}r_{\theta}(y,x'|x) \right|
	\leq
	\sum_{\stackrel{\scriptstyle \boldsymbol\beta \in \mathbb{N}_{0}^{d}}
	{\boldsymbol\beta \leq \boldsymbol\alpha }}
	\left( \boldsymbol\alpha \atop \boldsymbol\beta \right)
	\left|\partial_{\theta}^{\boldsymbol\beta}q_{\theta}(y|x') \right|
	\left|\partial_{\theta}^{\boldsymbol\alpha-\boldsymbol\beta}p_{\theta}(x'|x) \right|
	\leq
	K_{1}^{2}
	\sum_{\stackrel{\scriptstyle \boldsymbol\beta \in \mathbb{N}_{0}^{d}}
	{\boldsymbol\beta \leq \boldsymbol\alpha }}
	\left( \boldsymbol\alpha \atop \boldsymbol\beta \right)
	=
	2^{|\boldsymbol\alpha|}K_{1}^{2}.
\end{align*}
Then, (\ref{c3.1.1}) implies
\begin{align*}
	\left| \partial_{\theta}^{\boldsymbol\alpha}r_{\theta}(y,x'|x) \right|
	\leq
	2^{|\boldsymbol\alpha|}K_{1}^{2}\varepsilon_{1}^{-2}
	r_{\theta}(y,x'|x)
	\leq
	\left( \psi(y) \right)^{|\boldsymbol\alpha|} r_{\theta}(y,x'|x).
\end{align*}
Thus, Assumption \ref{a1.2} holds for
$p_{\theta}(x'|x)$, $q_{\theta}(y|x)$ specified in (\ref{3.5}).
Consequently, all conclusions of Theorems \ref{theorem1.1}, \ref{theorem1.2} are true for the model
introduced in Section \ref{section3}.

(ii) Owing to (\ref{c3.1.1*}), we have
\begin{align*}
	\int\varphi(x,y)\psi^{r}(y)Q(x,dy)
	\leq
	\tilde{C}_{1}^{r}
	\sup_{x'\in{\cal X}}
	\int\varphi(x',y)Q(x',dy)
	<\infty.
\end{align*}
Hence, in addition to Assumptions \ref{a1.1} -- \ref{a1.2'},
Assumptions \ref{a1.3} -- \ref{a1.5} also hold for
$p_{\theta}(x'|x)$, $q_{\theta}(y|x)$ specified in (\ref{3.5}).
Therefore, all conclusions of Theorem \ref{theorem1.3} are true for the model
introduced in Section \ref{section3}.

(iii) It is easy to conclude
\begin{align*}
	\left| \log\mu_{\theta}({\cal X}|y) \right|
	=
	\left| \log\mu({\cal X}) \right|
	\leq
	\tilde{C}_{3}
	=
	\varphi(y).
\end{align*}
It is also easy to deduce
\begin{align*}
	\int\varphi(y)\psi^{u}(y)Q(x,dy)
	=
	\tilde{C}_{1}^{u}\tilde{C}_{3}
	<\infty,
	\;\;\;\;\;
	\int\psi^{v}(y)Q(x,dy)
	=
	\tilde{C}_{1}^{v}
	<\infty.
\end{align*}
Thus, in addition to Assumptions \ref{a1.1} -- \ref{a1.2'},
Assumptions \ref{a1.3}, \ref{a2.1}, \ref{a2.2} also hold for
$p_{\theta}(x'|x)$, $q_{\theta}(y|x)$ specified in (\ref{3.5}).
Consequently, all conclusions of Theorem \ref{theorem2.1} are true for the model
introduced in Section \ref{section3}.
\end{proof}

\begin{lemma}\label{lemma4.2}
(i) Let Assumptions \ref{a3.1} -- \ref{a3.3} and \ref{a3.5} hold.
Then, $p_{\theta}(x'|x)$ and $q_{\theta}(y|x)$
are $p$-times differentiable in $\theta$ for each $\theta\in\Theta$,
$x\in{\cal X}$, $y\in{\cal Y}$.
Moreover, there exist real numbers $\varepsilon_{2}\in(0,1)$,
$K_{2},K_{3}\in[1,\infty)$ such that
\begin{align}\label{l4.2.1*}
	&
	p_{\theta}(x'|x) \geq\varepsilon_{2},
	\;\;\;\;\;
	\left|\partial_{\theta}^{\boldsymbol\alpha}p_{\theta}(x'|x) \right| \leq K_{2},
	\;\;\;\;\;
	q_{\theta}(y|x)\leq K_{3},
	\;\;\;\;\;
	\left|\partial_{\theta}^{\boldsymbol\alpha}q_{\theta}(y|x) \right|
	\leq K_{3}q_{\theta}(y|x)(1+\|y\| )^{2|\boldsymbol\alpha|}
\end{align}
for all $\theta\in\Theta$, $x,x'\in{\cal X}$, $y\in{\cal Y}$
and any multi-index $\boldsymbol\alpha\in\mathbb{N}_{0}^{d}$, $|\boldsymbol\alpha|\leq p$.

(ii) Let Assumptions \ref{a3.1} -- \ref{a3.3}, \ref{a3.5} and \ref{a3.6} hold.
Then, there exist a real number $K_{4}\in[1,\infty)$ such that
\begin{align}\label{l4.2.3*}
	\left| \log q_{\theta}(y|x) \right|
	\leq K_{4}(1+\|y\| )^{2}
\end{align}
for all $\theta\in\Theta$, $x,x'\in{\cal X}$, $y\in{\cal Y}$.
\end{lemma}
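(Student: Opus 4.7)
The key simplification in this setting is that Assumption \ref{a3.5} fixes ${\cal Y} = \mathbb{R}^{d_{y} }$. Since $s$ is a probability density, the change of variables $v = D_{\theta}^{-1}(x)(y'-C_{\theta}(x))$ yields
\begin{align*}
	\int_{\cal Y} s\bigl( D_{\theta}^{-1}(x)(y'-C_{\theta}(x)) \bigr) dy'
	=
	|\text{det}\,D_{\theta}(x)|,
\end{align*}
so the denominator in (\ref{3.5}) for $q_{\theta}(y|x)$ reduces to $|\text{det}\,D_{\theta}(x)|$, and hence $q_{\theta}(y|x) = s(V_{\theta}(y|x))/|\text{det}\,D_{\theta}(x)|$ on $\mathbb{R}^{d_{y} }$. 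The density $p_{\theta}(x'|x)$, on the other hand, still involves a truncation to the compact set ${\cal X}$, and will be handled exactly as in Lemma \ref{lemma4.1}.

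The plan for the $p_{\theta}$ portion of (\ref{l4.2.1*}) is to reuse the proof of Lemma \ref{lemma4.1} verbatim: Assumptions \ref{a3.1} -- \ref{a3.3} are identical to the hypotheses used there for the $p_{\theta}$ chain, and the argument (via Lemmas \ref{lemmaa1} -- \ref{lemmaa3}, continuity, and compactness of $\text{cl}\,\Theta \times {\cal X}$) gives both the $p$-times differentiability of $p_{\theta}(x'|x)$ and constants $\varepsilon_{2}\in(0,1)$, $K_{2}\in[1,\infty)$ with $p_{\theta}(x'|x)\geq\varepsilon_{2}$ and $|\partial_{\theta}^{\boldsymbol\alpha}p_{\theta}(x'|x)|\leq K_{2}$. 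No ingredient specific to ${\cal Y}$ is used for this half.

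The substantive new work is the derivative bound on $q_{\theta}$. By Assumption \ref{a3.1} and compactness of $\text{cl}\,\Theta \times {\cal X}$, we obtain constants $c_{1},c_{2},c_{3}\in(0,\infty)$ with $\|V_{\theta}(y|x)\|\leq c_{1}(1+\|y\|)$, $\|\partial_{\theta}^{\boldsymbol\beta}V_{\theta}(y|x)\|\leq c_{2}(1+\|y\|)$ for any multi-index $\boldsymbol\beta$, $\boldsymbol 0 \leq \boldsymbol\beta \leq \boldsymbol\alpha$, and $|\text{det}\,D_{\theta}(x)|\geq c_{3}$, all uniform in $(\theta,x,y)\in\Theta\times{\cal X}\times\mathbb{R}^{d_{y} }$; moreover $\partial_{\theta}^{\boldsymbol\beta}[1/|\text{det}\,D_{\theta}(x)|]$ is continuous and hence uniformly bounded on $\text{cl}\,\Theta\times{\cal X}$. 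The upper bound $q_{\theta}(y|x)\leq K_{3}$ is then immediate from $s(y)\leq K_{0}$. For the derivative bound, I would differentiate $q_{\theta} = s(V_{\theta})\cdot|\text{det}\,D_{\theta}|^{-1}$ by Leibniz, and expand $\partial_{\theta}^{\boldsymbol\beta}[s(V_{\theta}(y|x))]$ via multivariate Fa\`a di Bruno. Each term in the Fa\`a di Bruno expansion has the form $(\partial^{\boldsymbol\gamma}s)(V_{\theta}(y|x))\cdot\prod_{j}\partial_{\theta}^{\boldsymbol\beta_{j} }V_{\theta}(y|x)$ with $|\boldsymbol\gamma|\leq|\boldsymbol\beta|$, and using Assumption \ref{a3.5} together with the bound on $\|V_{\theta}\|$,
\begin{align*}
	|(\partial^{\boldsymbol\gamma}s)(V_{\theta}(y|x))|
	\leq
	K_{0}\,s(V_{\theta}(y|x))(1+\|V_{\theta}(y|x)\|)^{|\boldsymbol\gamma|}
	\leq
	K\,s(V_{\theta}(y|x))(1+\|y\|)^{|\boldsymbol\gamma|},
\end{align*}
while each of the $|\boldsymbol\gamma|$ factors $\partial_{\theta}^{\boldsymbol\beta_{j} }V_{\theta}$ contributes at most $c_{2}(1+\|y\|)$. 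Summing, every Leibniz term is bounded by $K\,s(V_{\theta}(y|x))(1+\|y\|)^{2|\boldsymbol\alpha|}$, which yields the sought $|\partial_{\theta}^{\boldsymbol\alpha}q_{\theta}(y|x)|\leq K_{3} q_{\theta}(y|x)(1+\|y\|)^{2|\boldsymbol\alpha|}$ after multiplying by the uniform constant bounding the derivatives of $|\text{det}\,D_{\theta}(x)|^{-1}$.

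For part (ii), write $\log q_{\theta}(y|x) = \log s(V_{\theta}(y|x)) - \log|\text{det}\,D_{\theta}(x)|$. Assumption \ref{a3.6} gives $|\log s(V_{\theta}(y|x))|\leq L_{0}(1+\|V_{\theta}(y|x)\|)^{2}\leq L_{0}(1+c_{1}(1+\|y\|))^{2}\leq K(1+\|y\|)^{2}$, and the second term is bounded by a constant on $\text{cl}\,\Theta\times{\cal X}$ by the lower bound on $|\text{det}\,D_{\theta}(x)|$ and continuity; combining gives (\ref{l4.2.3*}). The single delicate point in the whole argument is the bookkeeping in the Fa\`a di Bruno step: one has to confirm that the polynomial factor in $(1+\|y\|)$ produced by the composition with $V_{\theta}$ is exactly $(1+\|y\|)^{2|\boldsymbol\alpha|}$ and not higher, which is ensured by the observation that each $\partial^{\boldsymbol\gamma}s$ factor and each $\partial_{\theta}^{\boldsymbol\beta_{j} }V_{\theta}$ factor each contribute the same polynomial growth rate but the total count of such factors is controlled by $|\boldsymbol\alpha|$.
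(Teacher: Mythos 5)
Your proposal is correct and follows essentially the same route as the paper: $p_{\theta}(x'|x)$ is handled verbatim as in Lemma \ref{lemma4.1}, the normalization of $q_{\theta}(y|x)$ is identified with $|\text{det}\,D_{\theta}(x)|$, and linear-in-$\|y\|$ bounds on $V_{\theta}(y|x)$ and its $\theta$-derivatives combined with a composition bound produce the $(1+\|y\|)^{2|\boldsymbol\alpha|}$ factor — the paper packages exactly this chain-rule/quotient bookkeeping in its Appendix Lemmas \ref{lemmaa1} and \ref{lemmaa2} instead of invoking Fa\`a di Bruno directly, and part (ii) is argued identically. The only point the paper makes explicitly that you gloss over is that differentiability of $\theta\mapsto 1/|\text{det}\,D_{\theta}(x)|$ rests on $\text{det}\,D_{\theta}(x)$ being non-vanishing and continuous, so its sign is locally constant in $\theta$; this is routine and does not affect the validity of your argument.
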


\begin{proof}
Throughout the proof, the following notation is used.
$\theta$ is any element of $\Theta$, while
$\boldsymbol\alpha$ is any multi-index in $\mathbb{N}_{0}^{d}$
satisfying $|\boldsymbol\alpha|\leq p$.
$x,x'$ are any elements of ${\cal X}$, while $y$ is any element of ${\cal Y}$.

(i) Using the same arguments as in the proof of Lemma \ref{lemma4.1},
it can be shown that $\partial_{\theta}^{\boldsymbol\alpha}p_{\theta}(x'|x)$ exists.
Relying on the same arguments, it can also be demonstrated that
there exist real numbers $\varepsilon_{2}\in(0,1)$, $K_{2}\in[1,\infty)$
(independent of $\theta$, $x,x'$)
such that the first two inequalities in (\ref{l4.2.1*}) hold.
In what follows in the proof of (i),
we show that $\partial_{\theta}^{\boldsymbol\alpha}q_{\theta}(y|x)$ exists.
We also demonstrate that
there exists a real number $K_{3}\in[1,\infty)$
(independent of $\theta$, $x$, $y$)
such that the last two inequalities in (\ref{l4.2.1*}) hold.

Relying on the same arguments as in the proof of Lemma \ref{lemma4.1},
it can be shown that
$\partial_{\theta}^{\boldsymbol\alpha}\tilde{C}_{\theta}(y|x)$,
$\partial_{\theta}^{\boldsymbol\alpha}\tilde{C}'_{\theta}(y|x)$,
$\partial_{\theta}^{\boldsymbol\alpha}V_{\theta}(y|x)$,
$\partial_{\theta}^{\boldsymbol\alpha}v_{\theta}(y|x)$
exist and are continuous in $(\theta,x,y)$ on $\tilde{\Theta}\times{\cal X}\times{\cal Y}$.
Using the same arguments, it can be demonstrated that
$\partial_{\theta}^{\boldsymbol\alpha}\tilde{D}_{\theta}(x)$,
$\partial_{\theta}^{\boldsymbol\alpha}\tilde{D}'_{\theta}(x)$
exist and are continuous in $(\theta,x)$ on $\tilde{\Theta}\times{\cal X}$.
Since $\Theta$ is bounded and $\text{cl}\Theta\subset\tilde{\Theta}$,
Assumptions \ref{a3.1}, \ref{a3.3}, \ref{a3.5} imply that
there exist real numbers $\delta\in(0,1)$, $\tilde{C}_{1}\in[1,\infty )$
(independent of $\theta$, $x$, $\boldsymbol\beta$)
such that
\begin{align}\label{l4.2.1}
	\left| \tilde{D}_{\theta}(x) \right|\geq\delta,
	\;\;\;\;\;
	\max\left\{
	\left| \partial_{\theta}^{\boldsymbol\beta}\tilde{D}_{\theta}(x) \right|,
	\left\| \partial_{\theta}^{\boldsymbol\beta}\tilde{D}'_{\theta}(x) \right\|
	\right\}
	\leq\tilde{C}_{1}
\end{align}
for $\boldsymbol\beta\in\mathbb{N}_{0}^{d}$,
$|\boldsymbol\beta|\leq p$.
The same arguments also yield that there exists a real number
$\tilde{C}_{2}\in[1,\infty )$ (independent of $\theta$, $x$, $y$, $\boldsymbol\gamma$)
such that
\begin{align}\label{l4.2.3}
	\left\| \tilde{C}'_{\theta}(y|x) \right\|
	\leq\tilde{C}_{2}(1+\|y\| ),
	\;\;\;\;\;
	\left\| \partial_{\theta}^{\boldsymbol\gamma}\tilde{C}'_{\theta}(y|x) \right\|
	\leq\tilde{C}_{2}
\end{align}
for $\boldsymbol\gamma\in\mathbb{N}_{0}^{d}\setminus\{\boldsymbol 0\}$,
$|\boldsymbol\gamma|\leq p$.

Let $\tilde{C}_{3}=2^{p}\tilde{C}_{1}\tilde{C}_{2}$.
Owing to Leibniz formula and (\ref{l4.2.1}), (\ref{l4.2.3}),
we have
\begin{align*}
	\left\|\partial_{\theta}^{\boldsymbol\alpha} \tilde{C}_{\theta}(y|x) \right\|
	\leq
	\sum_{\stackrel{\scriptstyle \boldsymbol\beta \in \mathbb{N}_{0}^{d}}
	{\boldsymbol\beta \leq \boldsymbol\alpha }}
	\left( \boldsymbol\alpha \atop \boldsymbol\beta \right)
	\left\|\partial_{\theta}^{\boldsymbol\beta}\tilde{D}'_{\theta}(x) \right\|
	\left\|\partial_{\theta}^{\boldsymbol\alpha-\boldsymbol\beta}\tilde{C}'_{\theta}(y|x) \right\|
	\leq &
	\tilde{C}_{1}\tilde{C}_{2}
	\left(
	1 + \|y\|
	+
	\sum_{\stackrel{\scriptstyle \boldsymbol\beta \in \mathbb{N}_{0}^{d}
	\setminus\{\boldsymbol\alpha \} }
	{\boldsymbol\beta \leq \boldsymbol\alpha }}
	\left( \boldsymbol\alpha \atop \boldsymbol\beta \right)
	\right)
	\\
	\leq &
	2^{|\boldsymbol\alpha|}\tilde{C}_{1}\tilde{C}_{2} (1+\|y\| )
	\\
	\leq &
	\tilde{C}_{3} (1+\|y\| ).
\end{align*}
Consequently, Lemma \ref{lemmaa2} (see Appendix \ref{appendix2}) and (\ref{l4.2.1}) imply that
there exists a real number $\tilde{C}_{4}\in[1,\infty )$
(independent of $\theta$, $x$, $y$, $\boldsymbol\alpha$) such that
\begin{align}\label{l4.2.5}
	\left\|\partial_{\theta}^{\boldsymbol\alpha}V_{\theta}(y|x) \right\|
	\leq\tilde{C}_{4}(1+\|y\| ).
\end{align}
Then, Lemma \ref{lemmaa1} (see Appendix \ref{appendix1}) and Assumption \ref{a3.5},
yield that there exists a real number $\tilde{C}_{5}\in[1,\infty )$
(independent of $\theta$, $x$, $y$, $\boldsymbol\alpha$) such that
\begin{align}\label{l4.2.7}
	v_{\theta}(y|x)\leq\tilde{C}_{5},
	\;\;\;\;\;
	\left|\partial_{\theta}^{\boldsymbol\alpha}v_{\theta}(y|x) \right|
	\leq\tilde{C}_{5}v_{\theta}(y|x)(1+\|y\| )^{2|\boldsymbol\alpha|}.
\end{align}
Moreover, due to Assumptions \ref{a3.1}, \ref{a3.2},
the sign of $\tilde{D}_{\theta}(x)$ is constant in $\theta$ on each connected component of $\Theta$.
Since $\Theta$ is open, all connected components of $\Theta$ are open, too.
As $\bar{v}_{\theta}(x)=|\tilde{D}_{\theta}(x)|$
(due to Assumption \ref{a3.5} and ${\cal Y}=\mathbb{R}^{d_{y}}$),
we conclude that $\partial_{\theta}^{\boldsymbol\alpha}\bar{v}_{\theta}(x)$ exists.
Using (\ref{l4.2.1}), we also deduce
\begin{align}\label{l4.2.9}
	\bar{v}_{\theta}(x)\geq\delta,
	\;\;\;\;\;
	\left|\partial_{\theta}^{\boldsymbol\alpha}\bar{v}_{\theta}(x) \right|
	=
	\left|\partial_{\theta}^{\boldsymbol\alpha}\tilde{D}_{\theta}(x) \right|
	\leq\tilde{C}_{1}.
\end{align}
Consequently, Lemma \ref{lemmaa2} implies that
$\partial_{\theta}^{\boldsymbol\alpha}q_{\theta}(y|x)$ exists.
The same lemma, Assumption \ref{a3.5} and (\ref{l4.2.7}), (\ref{l4.2.9}) also yield that there exists
a real number $K_{3}\in[1,\infty )$ (independent of $\theta$, $x$, $y$, $\boldsymbol\alpha$) such that
the last two inequalities in (\ref{l4.2.1*}) hold.

(ii) Let $\tilde{C}_{6}=5L_{0}\tilde{C}_{1}\tilde{C}_{4}^{2}$,
$K_{4}=K_{0}\tilde{C}_{6}$.
Owing to Assumption \ref{a3.6} and (\ref{l4.2.1}), (\ref{l4.2.5}), we have
\begin{align}\label{l4.2.21}
	\log q_{\theta}(y|x)
	=
	\log v_{\theta}(y|x) - \log \bar{v}_{\theta}(x)
	=&
	\log s( V_{\theta}(y|x) ) - \log | \tilde{D}_{\theta}(x) |
	\nonumber\\
	\geq &
	-L_{0}(1 + \|V_{\theta}(y|x) \| )^{2} - \tilde{C}_{1}
	\nonumber\\
	\geq &
	-4L_{0}\tilde{C}_{4}^{2}(1 + \|y\| )^{2} - \tilde{C}_{1}
	\nonumber\\
	\geq &
	-\tilde{C}_{6}(1+\|y\| )^{2}.
\end{align}
Moreover, due to Assumption \ref{a3.5}, we have
\begin{align}\label{l4.2.23}
	\log q_{\theta}(y|x)
	\leq \log K_{0}
	\leq K_{0}(1+\|y\| )^{2}.
\end{align}
Combining (\ref{l4.2.21}), (\ref{l4.2.23}), we conclude that (\ref{l4.2.3*}) holds.
\end{proof}

\begin{proof}[\rm\bf Proof of Corollary \ref{corollary3.2}]
Throughout the proof, the following notation is used.
$\varepsilon$, $\tilde{C}_{1}$, $\tilde{C}_{2}$, $\tilde{C}_{3}$ are the real numbers defined by
$\varepsilon=\min\{\varepsilon_{2},K_{2}^{-1} \}$,
$\tilde{C}_{1}=2K_{2}K_{3}\varepsilon_{2}^{-2}$,
$\tilde{C}_{2}=K_{2}K_{3}$,
$\tilde{C}_{3}=K_{3}K_{4}(1+|\log\mu({\cal X})| )$
($\varepsilon_{2}$, $K_{2}$, $K_{3}$, $K_{4}$ are specified in Lemma \ref{lemma4.2}).
$r$, $u$, $v$ are the real numbers specified in Assumptions \ref{a1.5}, \ref{a2.2}.
$\psi(y)$, $\phi(x,y)$, $\varphi(y)$ and $\mu_{\theta}(dx|y)$
are the functions and the measure defined by
\begin{align*}
	\psi(y)=\tilde{C}_{1}(1+\|y\| )^{2},
	\;\;\;\;\;
	\phi(x,y)=\tilde{C}_{2},
	\;\;\;\;\;
	\varphi(y)=\tilde{C}_{3}(1+\|y\| )^{2},
	\;\;\;\;\;
	\mu_{\theta}(B|y)=\int_{B}q_{\theta}(y|x)\mu(dx)
\end{align*}
for $\theta\in\Theta$, $x\in{\cal X}$, $y\in{\cal Y}$, $B\in{\cal B}({\cal X})$
($\mu(dx)$ is specified in Subsection \ref{subsection1.1}).
$r_{\theta}(y,x'|x)$ has the same meaning as in (\ref{1.901}),
while $p_{\theta}(x'|x)$, $q_{\theta}(y|x)$ are defined in (\ref{3.5}).
$\theta$ is any element of $\Theta$,
while $\boldsymbol\alpha$ is any multi-index in $\mathbb{N}_{0}^{d}$
satisfying $|\boldsymbol\alpha|\leq p$.
$x,x'$ are any elements of ${\cal X}$,
while $y$ is any element of ${\cal Y}$.

(i) Owing to Lemma \ref{lemma4.2}, we have
\begin{align}\label{c3.2.1}
	\varepsilon_{2}q_{\theta}(y|x') \leq r_{\theta}(y,x'|x) \leq K_{2}q_{\theta}(y|x')
	\leq K_{2}K_{3}.
\end{align}
Consequently, we get
\begin{align*}
	&
	\int_{B} r_{\theta}(y,x'|x)\mu(dx')
	\geq
	\varepsilon_{2}\int_{B}q_{\theta}(y|x')\mu(dx')
	\geq
	\varepsilon\mu_{\theta}(B|y),
	\\
	&
	\int_{B} r_{\theta}(y,x'|x)\mu(dx')
	\leq
	K_{2}\int_{B}q_{\theta}(y|x')\mu(dx')
	\leq
	\frac{1}{\varepsilon}\mu_{\theta}(B|y)
\end{align*}
for $B\in{\cal B}({\cal X})$. We also get
\begin{align*}
	r_{\theta}(y,x'|x)
	\leq
	\tilde{C}_{2}
	=
	\phi(y,x'),
	\;\;\;\;\;
	\int\phi(y,x)\mu(dx)
	=
	\tilde{C}_{2}\mu({\cal X})
	<\infty.
\end{align*}
Hence, Assumptions \ref{a1.1}, \ref{a1.2'} hold for
$p_{\theta}(x'|x)$, $q_{\theta}(y|x)$ specified in (\ref{3.5}).

Due to Leibniz formula and Lemma \ref{lemma4.2}, we have
\begin{align*}
	\left| \partial_{\theta}^{\boldsymbol\alpha}r_{\theta}(y,x'|x) \right|
	\leq
	\sum_{\stackrel{\scriptstyle \boldsymbol\beta \in \mathbb{N}_{0}^{d}}
	{\boldsymbol\beta \leq \boldsymbol\alpha }}
	\left( \boldsymbol\alpha \atop \boldsymbol\beta \right)
	\left|\partial_{\theta}^{\boldsymbol\beta}q_{\theta}(y|x') \right|
	\left|\partial_{\theta}^{\boldsymbol\alpha-\boldsymbol\beta}p_{\theta}(x'|x) \right|
	\leq &
	K_{2}K_{3}q_{\theta}(y|x')
	\sum_{\stackrel{\scriptstyle \boldsymbol\beta \in \mathbb{N}_{0}^{d}}
	{\boldsymbol\beta \leq \boldsymbol\alpha }}
	\left( \boldsymbol\alpha \atop \boldsymbol\beta \right)
	(1+\|y\| )^{2|\boldsymbol\beta|}
	\\
	\leq &
	2^{|\boldsymbol\alpha|}K_{2}K_{3}q_{\theta}(y|x')(1+\|y\| )^{2|\boldsymbol\alpha|}.
\end{align*}
Then, (\ref{c3.2.1}) implies
\begin{align*}
	\left| \partial_{\theta}^{\boldsymbol\alpha}r_{\theta}(y,x'|x) \right|
	\leq
	2^{|\boldsymbol\alpha|}K_{2}K_{3}\varepsilon_{2}^{-1}
	(1+\|y\| )^{2|\boldsymbol\alpha|}
	r_{\theta}(y,x'|x)
	\leq
	\left( \psi(y) \right)^{|\boldsymbol\alpha|} r_{\theta}(y,x'|x).
\end{align*}
Thus, Assumption \ref{a1.2} holds for
$p_{\theta}(x'|x)$, $q_{\theta}(y|x)$ specified in (\ref{3.5}).
Consequently, all conclusions of Theorems \ref{theorem1.1}, \ref{theorem1.2} are true for the model
introduced in Section \ref{section3}.

(ii) Owing to (\ref{c3.2.1*}), we have
\begin{align*}
	\int\varphi(x,y)\psi^{r}(y)Q(x,dy)
	\leq
	\tilde{C}_{1}^{r}
	\sup_{x'\in{\cal X}}
	\int\varphi(x',y)(1+\|y\| )^{2r}Q(x',dy)
	<\infty.
\end{align*}
Hence, in addition to Assumptions \ref{a1.1} -- \ref{a1.2'},
Assumptions \ref{a1.3} -- \ref{a1.5} also hold for
$p_{\theta}(x'|x)$, $q_{\theta}(y|x)$ specified in (\ref{3.5}).
Therefore, all conclusions of Theorem \ref{theorem1.3} are true for the model
introduced in Section \ref{section3}.

(iii) Owing to Lemma \ref{lemma4.2}, we have
\begin{align}\label{c3.2.3}
	\mu_{\theta}({\cal X}|y)
	=
	\int q_{\theta}(y|x)\mu(dx)
	\leq
	K_{3}\mu({\cal X}).
\end{align}
Due to the same lemma and Jensen inequality, we also have
\begin{align}\label{c3.2.5}
	\log\mu_{\theta}({\cal X}|y)
	\geq
	\log\mu({\cal X})
	+
	\frac{1}{\mu({\cal X})}
	\int \log q_{\theta}(y|x) \mu(dx)
	\geq
	-|\log\mu({\cal X})|
	-K_{4}(1+\|y\| )^{2}.
\end{align}
Combining (\ref{c3.2.3}), (\ref{c3.2.5}), we get
\begin{align*}
	\left| \log\mu_{\theta}({\cal X}|y) \right|
	\leq
	K_{3}|\log\mu({\cal X})|
	+
	K_{4}(1+\|y\| )^{2}
	\leq
	\tilde{C}_{3}(1+\|y\| )^{2}
	=
	\varphi(y).
\end{align*}
Moreover, (\ref{c3.2.3*}) implies
\begin{align*}
	\int\psi^{v}(y)Q(x,dy)
	\leq
	\tilde{C}_{1}^{v}
	\sup_{x'\in{\cal X}}
	\int (1+\|y\| )^{2v}Q(x',dy)
	<\infty.
\end{align*}
As $v\geq u+1$, (\ref{c3.2.3*}) also yields
\begin{align*}
	\int\varphi(y)\psi^{u}(y)Q(x,dy)
	\leq
	\tilde{C}_{1}^{u}\tilde{C}_{3}
	\sup_{x'\in{\cal X}}
	\int (1+\|y\| )^{2(u+1)}Q(x',dy)
	<\infty.
\end{align*}
Thus, in addition to Assumptions \ref{a1.1} -- \ref{a1.2'},
Assumptions \ref{a1.3}, \ref{a2.1}, \ref{a2.2} also hold for
$p_{\theta}(x'|x)$, $q_{\theta}(y|x)$ specified in (\ref{3.5}).
Consequently, all conclusions of Theorem \ref{theorem2.1} are true for the model
introduced in Section \ref{section3}.
\end{proof}

\refstepcounter{appendixcounter}\label{appendix1}
\section*{Appendix \arabic{appendixcounter} }

In this section, we present auxiliary results crucially important for the proof
of Corollaries \ref{corollary3.1} and \ref{corollary3.2}.
Let $\Theta$ and $d$ have the same meaning as in Subsection \ref{subsection1.1}.
Moreover, let ${\cal Z}$ be an open set in $\mathbb{R}^{d_{z}}$,
where $d_{z}\geq 1$ is an integer.
We consider here functions $f_{\theta}$ and $g(z)$
mapping $\theta\in\Theta$, $z\in{\cal Z}$ to
${\cal Z}$ and $\mathbb{R}$ (respectively).
We also consider function $h_{\theta}$ defined by
$h_{\theta}=g(f_{\theta})$ for $\theta\in\Theta$.
The analysis carried out in this section relies on the following
assumptions.

\begin{assumptionappendix}\label{aa1.1}
$f_{\theta}$ and $g(z)$ are $p$-times differentiable
on $\Theta$ and ${\cal Z}$ (respectively),
where $p\geq 1$ is an integer.
\end{assumptionappendix}

\begin{assumptionappendix}\label{aa1.2}
There exist a real number $K\in[1,\infty)$ and a function
$\phi_{\theta}$ mapping $\theta\in\Theta$ to $[1,\infty)$ such that
\begin{align*}
	\max\left\{
	\left\|f_{\theta}\right\|,
	\left\| \partial_{\theta}^{\boldsymbol\alpha}f_{\theta} \right\|
	\right\}
	\leq \phi_{\theta},
	\;\;\;\;\;
	\left| \partial^{\boldsymbol\beta}g(z) \right|
	\leq
	K|g(z)|(1+\|z\| )^{|\boldsymbol\beta|}
\end{align*}
for all $\theta\in\Theta$, $z\in{\cal Z}$
and any multi-indices $\boldsymbol\alpha\in\mathbb{N}_{0}^{d}\setminus\{\boldsymbol 0\}$,
$\boldsymbol\beta\in\mathbb{N}_{0}^{d_{z} }\setminus\{\boldsymbol 0\}$
satisfying $|\boldsymbol\alpha|\leq p$,
$|\boldsymbol\beta|\leq p$.
\end{assumptionappendix}

Throughout this section, the following notation is used.
For $\boldsymbol\alpha=(\alpha_{1},\dots,\alpha_{d} )\in\mathbb{N}_{0}^{d}$,
$n_{\boldsymbol\alpha}$ and $m_{\boldsymbol\alpha}$ are the integers defined by
\begin{align*}
	n_{\boldsymbol\alpha}
	=
	(\alpha_{1}+1 )\cdots(\alpha_{d}+1 )-1,
	\;\;\;\;\;
	m_{\boldsymbol\alpha}
	=
	\left( n_{\boldsymbol\alpha} \atop |\boldsymbol\alpha| \right).
\end{align*}
For $\theta\in\Theta$, $1\leq k\leq d_{z}$,
$f_{\theta,k}$ is the $k$-th component of $f_{\theta}$.
For the same $\theta$ and
$\boldsymbol\alpha\in\mathbb{N}_{0}^{d}\setminus\{\boldsymbol 0\}$,
$|\boldsymbol\alpha|\leq p$,
$F_{\theta,\boldsymbol\alpha}$ is the $n_{\boldsymbol\alpha}$-dimensional vector
whose components are derivatives
$\left\{ \partial_{\theta}^{\boldsymbol\beta}f_{\theta,k}:
\boldsymbol\beta\in\mathbb{N}_{0}^{d}\setminus\{\boldsymbol 0\},
\boldsymbol\beta\leq\boldsymbol\alpha, 1\leq k\leq d_{z} \right\}$.
In $F_{\theta,\boldsymbol\alpha}$,
the components are ordered lexicographically in $(k,\boldsymbol\beta)$.

\begin{lemmaappendix}\label{lemmaa1}
(i) Let Assumption \ref{aa1.1} hold.
Then, $h_{\theta}$ is $p$-times differentiable on $\Theta$.
Moreover, the first and higher-order derivatives of $h_{\theta}$
admit representation
\begin{align}\label{la1.1*}
	\partial_{\theta}^{\boldsymbol\alpha}h_{\theta}
	=
	\sum_{\stackrel{\scriptstyle \boldsymbol\beta \in \mathbb{N}_{0}^{d_{z}}\setminus\{\boldsymbol 0\}}
	{|\boldsymbol\beta| \leq |\boldsymbol\alpha| }}
	\partial^{\boldsymbol\beta}g(f_{\theta} )
	\:
	P_{\boldsymbol\alpha,\boldsymbol\beta}(F_{\theta,\boldsymbol\alpha} )
\end{align}
for all $\theta\in\Theta$ and any multi-index
$\boldsymbol\alpha\in\mathbb{N}_{0}^{d}\setminus\{\boldsymbol 0\}$
satisfying $|\boldsymbol\alpha|\leq p$.
Here, $P_{\boldsymbol\alpha,\boldsymbol\beta}:\mathbb{R}^{n_{\boldsymbol\alpha} }\rightarrow\mathbb{R}$
is a polynomial of degree up to $|\boldsymbol\alpha|$
whose coefficients are independent of $\theta$ and depend only on $\boldsymbol\alpha$,
$\boldsymbol\beta$.

(ii) Let Assumptions \ref{aa1.1} and \ref{aa1.2} hold.
Then, there exists a real number $L\in[1,\infty)$
such that
\begin{align*}
	\left|\partial_{\theta}^{\boldsymbol\alpha}h_{\theta} \right|
	\leq
	L\left|h_{\theta} \right| \phi_{\theta}^{2|\boldsymbol\alpha|}
\end{align*}
for all $\theta\in\Theta$ and any multi-index
$\boldsymbol\alpha\in\mathbb{N}_{0}^{d}\setminus\{\boldsymbol 0\}$
satisfying $|\boldsymbol\alpha|\leq p$.
\end{lemmaappendix}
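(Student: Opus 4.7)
The plan is to prove part (i) by induction on $|\boldsymbol\alpha|$, which is essentially a multivariate Faà di Bruno formula tailored so that the ``polynomial factor'' depends only on the derivatives of $f_\theta$ of orders $\boldsymbol\beta\leq\boldsymbol\alpha$. Part (ii) will then be an elementary estimate built from the representation in (i) together with Assumption \ref{aa1.2}.

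For the base case $|\boldsymbol\alpha|=1$, the ordinary chain rule gives $\partial_\theta^{\boldsymbol\alpha}h_\theta = \sum_{k=1}^{d_z}(\partial^{e_k}g)(f_\theta)\,\partial_\theta^{\boldsymbol\alpha}f_{\theta,k}$, which matches (\ref{la1.1*}) with $P_{\boldsymbol\alpha,e_k}$ being the projection onto the $(k,\boldsymbol\alpha)$-coordinate of $F_{\theta,\boldsymbol\alpha}$ (a polynomial of degree $1$). For the inductive step, fix $\boldsymbol\gamma=\boldsymbol\alpha+e_i\in\mathbb N_0^d$ with $|\boldsymbol\gamma|\leq p$ and apply $\partial_{\theta_i}$ to both sides of the representation for $\boldsymbol\alpha$. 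The Leibniz rule yields two kinds of contributions to $\partial_\theta^{\boldsymbol\gamma}h_\theta$:
\begin{align*}
\text{(a) }\;& \sum_{k=1}^{d_z}\partial^{\boldsymbol\beta+e_k}g(f_\theta)\,\partial_\theta^{e_i}f_{\theta,k}\,P_{\boldsymbol\alpha,\boldsymbol\beta}(F_{\theta,\boldsymbol\alpha}),\\
\text{(b) }\;& \partial^{\boldsymbol\beta}g(f_\theta)\,\partial_{\theta_i}\!\bigl[P_{\boldsymbol\alpha,\boldsymbol\beta}(F_{\theta,\boldsymbol\alpha})\bigr].
\end{align*}
Since every entry of $F_{\theta,\boldsymbol\alpha}$ has the form $\partial_\theta^{\boldsymbol\delta}f_{\theta,k}$ with $\boldsymbol\delta\leq\boldsymbol\alpha\leq\boldsymbol\gamma$, and since $\partial_{\theta_i}\partial_\theta^{\boldsymbol\delta}f_{\theta,k}=\partial_\theta^{\boldsymbol\delta+e_i}f_{\theta,k}$ with $\boldsymbol\delta+e_i\leq\boldsymbol\gamma$, both contributions are polynomials in the entries of $F_{\theta,\boldsymbol\gamma}$. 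A degree count then shows: in (a) we multiply a polynomial of degree $\leq|\boldsymbol\alpha|$ by a single linear factor, producing degree $\leq|\boldsymbol\alpha|+1=|\boldsymbol\gamma|$; in (b) differentiation lowers the degree of $P_{\boldsymbol\alpha,\boldsymbol\beta}$ by one in one of its formal variables and the chain rule reintroduces one linear factor (an entry of $F_{\theta,\boldsymbol\gamma}$), keeping the total degree $\leq|\boldsymbol\alpha|\leq|\boldsymbol\gamma|$. Grouping terms by the order $\boldsymbol\beta$ (respectively $\boldsymbol\beta+e_k$) of the derivative of $g$ yields an expression of the form (\ref{la1.1*}) with $\boldsymbol\alpha$ replaced by $\boldsymbol\gamma$, and the coefficients of the resulting polynomials depend only on $\boldsymbol\gamma$ and the corresponding index of $\partial g$, not on $\theta$. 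This closes the induction.

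For part (ii), use (\ref{la1.1*}) and estimate each factor separately. By Assumption \ref{aa1.2},
\begin{align*}
\bigl|\partial^{\boldsymbol\beta}g(f_\theta)\bigr|
\leq K|h_\theta|\bigl(1+\|f_\theta\|\bigr)^{|\boldsymbol\beta|}
\leq K|h_\theta|(2\phi_\theta)^{|\boldsymbol\alpha|},
\end{align*}
since $\phi_\theta\geq1\geq\|f_\theta\|/\phi_\theta$ and $|\boldsymbol\beta|\leq|\boldsymbol\alpha|$. Because $P_{\boldsymbol\alpha,\boldsymbol\beta}$ has $\theta$-independent coefficients and degree $\leq|\boldsymbol\alpha|$, there is a constant $C_{\boldsymbol\alpha,\boldsymbol\beta}$ such that
\begin{align*}
\bigl|P_{\boldsymbol\alpha,\boldsymbol\beta}(F_{\theta,\boldsymbol\alpha})\bigr|
\leq C_{\boldsymbol\alpha,\boldsymbol\beta}\bigl(1+\|F_{\theta,\boldsymbol\alpha}\|\bigr)^{|\boldsymbol\alpha|}
\leq C_{\boldsymbol\alpha,\boldsymbol\beta}(2\phi_\theta)^{|\boldsymbol\alpha|},
\end{align*}
again using $\|\partial_\theta^{\boldsymbol\delta}f_\theta\|\leq\phi_\theta$ and $\phi_\theta\geq 1$. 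Multiplying and summing over the finitely many $\boldsymbol\beta$ with $|\boldsymbol\beta|\leq|\boldsymbol\alpha|\leq p$ produces a bound of the form $L|h_\theta|\phi_\theta^{2|\boldsymbol\alpha|}$ with $L$ depending only on $p$, $d$, $d_z$, $K$ and the finite family $\{C_{\boldsymbol\alpha,\boldsymbol\beta}\}$.

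The main obstacle is bookkeeping in the inductive step: one must verify that applying $\partial_{\theta_i}$ to $P_{\boldsymbol\alpha,\boldsymbol\beta}(F_{\theta,\boldsymbol\alpha})$ does not inflate the polynomial degree beyond $|\boldsymbol\gamma|$, and that the new ``formal variables'' that appear (namely $\partial_\theta^{\boldsymbol\delta+e_i}f_{\theta,k}$) are still among the entries of $F_{\theta,\boldsymbol\gamma}$. Once this degree/variable accounting is done carefully — and the $\theta$-independence of the coefficients is preserved through the induction — the remaining estimates for (ii) are routine.
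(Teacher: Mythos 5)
Your proposal is correct and follows essentially the same route as the paper: part (i) by induction on $|\boldsymbol\alpha|$, peeling off one unit multi-index and tracking via the Leibniz and chain rules that the new terms remain polynomials in the entries of $F_{\theta,\boldsymbol\gamma}$ of degree at most $|\boldsymbol\gamma|$ with $\theta$-independent coefficients, and part (ii) by bounding $\partial^{\boldsymbol\beta}g(f_{\theta})$ through Assumption A1.2 and the polynomial factor through $\|F_{\theta,\boldsymbol\alpha}\|\leq\phi_{\theta}$. The only cosmetic difference is that you absorb $(1+\|f_{\theta}\|)^{|\boldsymbol\beta|}$ and the polynomial bound each into a factor $(2\phi_{\theta})^{|\boldsymbol\alpha|}$ rather than the paper's $(1+\phi_{\theta})^{|\boldsymbol\alpha|}\phi_{\theta}^{|\boldsymbol\alpha|}$, which yields the same conclusion.
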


\begin{proof}
(i) This part of lemma is proved by induction in $|\boldsymbol\alpha|$.
It is straightforward to show that
$\partial_{\theta}^{\boldsymbol\alpha}h_{\theta}$ exists
and satisfies (\ref{la1.1*}) for all $\theta\in\Theta$,
$\boldsymbol\alpha\in\mathbb{N}_{0}^{d}$, $|\boldsymbol\alpha|=1$.
Now, the induction hypothesis is formulated.
Let $1\leq l<p$ be an integer.
Suppose that $\partial_{\theta}^{\boldsymbol\alpha}h_{\theta}$ exists
and satisfies (\ref{la1.1*}) for each $\theta\in\Theta$,
$\boldsymbol\alpha\in\mathbb{N}_{0}^{d}$, $|\boldsymbol\alpha|\leq l$.
Then, to show (i), it is sufficient to demonstrate
that $\partial_{\theta}^{\boldsymbol\alpha}h_{\theta}$ exists
and satisfies (\ref{la1.1*}) for all $\theta\in\Theta$,
$\boldsymbol\alpha\in\mathbb{N}_{0}^{d}$, $|\boldsymbol\alpha|=l+1$.

Let $\theta$ be any element of $\Theta$,
while $\boldsymbol\alpha$ is any multi-index in $\mathbb{N}_{0}^{d}$
satisfying $|\boldsymbol\alpha|=l+1$.
Then, there exists $\boldsymbol e\in\mathbb{N}_{0}^{d}$
such that $\boldsymbol e\leq\boldsymbol\alpha$, $|\boldsymbol e|=1$.
As $|\boldsymbol\alpha-\boldsymbol e|=|\boldsymbol\alpha|-1=l$,
the induction hypothesis yields
\begin{align}\label{la1.1}
	\partial_{\theta}^{\boldsymbol\alpha-\boldsymbol e}h_{\theta}
	=
	\sum_{\stackrel{\scriptstyle \boldsymbol\beta \in \mathbb{N}_{0}^{d_{z}}\setminus\{\boldsymbol 0\}}
	{|\boldsymbol\beta| \leq l }}
	\:
	\partial^{\boldsymbol\beta}g(f_{\theta} )
	P_{\boldsymbol\alpha-\boldsymbol e,\boldsymbol\beta}(F_{\theta,\boldsymbol\alpha-\boldsymbol e} ).
\end{align}
Since $l<p$, the right-hand side of (\ref{la1.1}) involves only the derivatives of
$f_{\theta}$, $g(z)$ of the order up to $p-1$.
Then, Assumption \ref{aa1.1} implies that
$\partial_{\theta}^{\boldsymbol\alpha}h_{\theta}=
\partial_{\theta}^{\boldsymbol e}\left( \partial_{\theta}^{\boldsymbol\alpha-\boldsymbol e}h_{\theta} \right)$
exist and satisfies
\begin{align}\label{la1.5}
	\partial_{\theta}^{\boldsymbol\alpha}h_{\theta}
	=&
	\sum_{\stackrel{\scriptstyle \boldsymbol\beta \in \mathbb{N}_{0}^{d_{z}}\setminus\{\boldsymbol 0\}}
	{|\boldsymbol\beta| \leq l }}
	\partial^{\boldsymbol\beta}g(f_{\theta} ) \:
	\partial_{\theta}^{\boldsymbol e}
	P_{\boldsymbol\alpha-\boldsymbol e,\boldsymbol\beta}(F_{\theta,\boldsymbol\alpha-\boldsymbol e} )
	+
	\sum_{k=1}^{d_{z} }
	\sum_{\stackrel{\scriptstyle \boldsymbol\beta \in \mathbb{N}_{0}^{d_{z}}\setminus\{\boldsymbol 0\}}
	{|\boldsymbol\beta| \leq l }}
	\partial^{\boldsymbol\beta + \boldsymbol e_{k} }g(f_{\theta} ) \:
	\partial_{\theta}^{\boldsymbol e} f_{\theta,k} \:
	P_{\boldsymbol\alpha-\boldsymbol e,\boldsymbol\beta}(F_{\theta,\boldsymbol\alpha-\boldsymbol e} )
	\nonumber\\
	=&
	\sum_{\stackrel{\scriptstyle \boldsymbol\beta \in \mathbb{N}_{0}^{d_{z}}\setminus\{\boldsymbol 0\}}
	{|\boldsymbol\beta| \leq l }}
	\partial^{\boldsymbol\beta}g(f_{\theta} ) \:
	\partial_{\theta}^{\boldsymbol e}
	P_{\boldsymbol\alpha-\boldsymbol e,\boldsymbol\beta}(F_{\theta,\boldsymbol\alpha-\boldsymbol e} )
	+
	\sum_{k=1}^{d_{z} }
	\sum_{\stackrel{\scriptstyle \boldsymbol\beta \in \mathbb{N}_{0}^{d_{z}}\setminus\{\boldsymbol 0\}}
	{\boldsymbol e_{k}\leq\boldsymbol\beta, |\boldsymbol\beta| \leq l+1 }}
	\partial^{\boldsymbol\beta}g(f_{\theta} ) \:
	\partial_{\theta}^{\boldsymbol e} f_{\theta,k} \:
	P_{\boldsymbol\alpha-\boldsymbol e,\boldsymbol\beta-\boldsymbol e_{k} }(F_{\theta,\boldsymbol\alpha-\boldsymbol e} ),
\end{align}
where $\boldsymbol e_{k}$ is the $k$-th standard unit vector in $\mathbb{N}_{0}^{d_{z} }$.
Moreover, terms
\begin{align*}
	\partial_{\theta}^{\boldsymbol e}
	P_{\boldsymbol\alpha-\boldsymbol e,\boldsymbol\beta}(F_{\theta,\boldsymbol\alpha-\boldsymbol e} ),
	\;\;\;\;\;
	\partial_{\theta}^{\boldsymbol e} f_{\theta,k} \:
	P_{\boldsymbol\alpha-\boldsymbol e,\boldsymbol\beta-\boldsymbol e_{k} }(F_{\theta,\boldsymbol\alpha-\boldsymbol e} )
\end{align*}
are polynomial in derivatives
$\left\{ \partial_{\theta}^{\boldsymbol\gamma}f_{\theta,j}:
\boldsymbol\gamma\in\mathbb{N}_{0}^{d}\setminus\{\boldsymbol 0\},
\boldsymbol\gamma\leq\boldsymbol\alpha, 1\leq j\leq d_{z} \right\}$.
Apparently, the order of these polynomials is up to $|\boldsymbol\alpha-\boldsymbol e|+1=|\boldsymbol\alpha|$,
while the corresponding coefficients are independent of $\theta$
and depend only on $\boldsymbol\alpha$, $\boldsymbol\beta$.
Therefore, the right-hand side of (\ref{la1.5}) admits representation (\ref{la1.1*}).
Hence, the same holds for $\partial_{\theta}^{\boldsymbol\alpha}h_{\theta}$.

(ii) Let $\tilde{C}_{\boldsymbol\alpha,\boldsymbol\beta}$
be the maximum absolute value of the coefficients of polynomial
$P_{\boldsymbol\alpha,\boldsymbol\beta}(\cdot)$,
where $\boldsymbol\alpha\in\mathbb{N}_{0}^{d}\setminus\{\boldsymbol 0\}$,
$\boldsymbol\beta\in\mathbb{N}_{0}^{d_{z}}\setminus\{\boldsymbol 0\}$,
$|\boldsymbol\alpha|\leq p$, $|\boldsymbol\beta|\leq|\boldsymbol\alpha|$.
As the number of different power terms in $P_{\boldsymbol\alpha,\boldsymbol\beta}(\cdot)$
is at most $m_{\boldsymbol\alpha}$, Assumption \ref{aa1.2} and (i) yield
\begin{align*}
	\left|
	\partial^{\boldsymbol\beta}g(f_{\theta} )
	\:
	P_{\boldsymbol\alpha,\boldsymbol\beta}(F_{\theta,\boldsymbol\alpha} )
	\right|
	\leq
	K\tilde{C}_{\boldsymbol\alpha,\boldsymbol\beta}\:m_{\boldsymbol\alpha}
	|g(f_{\theta} ) |
	(1+\|f_{\theta}\| )^{|\boldsymbol\beta|}\phi_{\theta}^{|\boldsymbol\alpha|}
	\leq &
	K\tilde{C}_{\boldsymbol\alpha,\boldsymbol\beta}\:m_{\boldsymbol\alpha}
	|h_{\theta} |
	(1+\phi_{\theta} )^{|\boldsymbol\alpha|}\phi_{\theta}^{|\boldsymbol\alpha|}
	\\
	\leq &
	2^{|\boldsymbol\alpha|}K\tilde{C}_{\boldsymbol\alpha,\boldsymbol\beta}\:m_{\boldsymbol\alpha}
	|h_{\theta} |\phi_{\theta}^{2|\boldsymbol\alpha|}.
\end{align*}
Then, using (i) again, we conclude that there exists a real number $L\in[1,\infty)$
with the properties specified in the lemma's statement.
\end{proof}

\refstepcounter{appendixcounter}\label{appendix2}
\section*{Appendix \arabic{appendixcounter} }

As the previous section, this section provides auxiliary results relevant for the proof
of Corollaries \ref{corollary3.1} and \ref{corollary3.2}.
Let $\Theta$ and $d$ have the same meaning as in Subsection \ref{subsection1.1}.
We consider here functions $f_{\theta}$ and $g_{\theta}$
mapping $\theta\in\Theta$ to
$\mathbb{R}$ and $\mathbb{R}\setminus\{0\}$ (respectively).
We also consider function $h_{\theta}$ defined by
$h_{\theta}=f_{\theta}/g_{\theta}$ for $\theta\in\Theta$.
The results presented in this section rely on the following
assumptions.

\begin{assumptionappendix}\label{aa2.1}
$f_{\theta}$ and $g_{\theta}$ are $p$-times differentiable
on $\Theta$,
where $p\geq 1$ is an integer.
\end{assumptionappendix}

\begin{assumptionappendix}\label{aa2.2}
There exist functions
$\phi_{\theta}$ and $\psi_{\theta}$ mapping $\theta\in\Theta$ to $[1,\infty)$ such that
\begin{align*}
	\left| \partial_{\theta}^{\boldsymbol\alpha}f_{\theta} \right|
	\leq |f_{\theta}| \phi_{\theta}^{|\boldsymbol\alpha|},
	\;\;\;\;\;
	\left| \partial_{\theta}^{\boldsymbol\alpha}g_{\theta} \right|
	\leq
	\psi_{\theta}
\end{align*}
for all $\theta\in\Theta$
and any multi-index $\boldsymbol\alpha\in\mathbb{N}_{0}^{d}$
satisfying $|\boldsymbol\alpha|\leq p$.
\end{assumptionappendix}

Throughout this section, we use the following notation.
For $\boldsymbol\alpha=(\alpha_{1},\dots,\alpha_{d} )\in\mathbb{N}_{0}^{d}$,
$n_{\boldsymbol\alpha}$ and $m_{\boldsymbol\alpha}$ are the integers defined by
\begin{align*}
	n_{\boldsymbol\alpha}
	=
	(\alpha_{1}+1 )\cdots(\alpha_{d}+1 ),
	\;\;\;\;\;
	m_{\boldsymbol\alpha}
	=
	\left( n_{\boldsymbol\alpha} \atop |\boldsymbol\alpha| \right).
\end{align*}
For $\theta\in\Theta$ and
$\boldsymbol\alpha\in\mathbb{N}_{0}^{d}$,
$|\boldsymbol\alpha|\leq p$,
$G_{\theta,\boldsymbol\alpha}$ is the $n_{\boldsymbol\alpha}$-dimensional vector
whose components are derivatives
$\left\{ \partial_{\theta}^{\boldsymbol\beta}g_{\theta}:
\boldsymbol\beta\in\mathbb{N}_{0}^{d}\setminus\{\boldsymbol 0\},
\boldsymbol\beta\leq\boldsymbol\alpha, 1\leq k\leq d_{z} \right\}$.
In $G_{\theta,\boldsymbol\alpha}$,
the components are ordered lexicographically in $\boldsymbol\beta$.

\begin{lemmaappendix}\label{lemmaa2}
(i) Let Assumption \ref{aa2.1} hold.
Then, $h_{\theta}$ is $p$-times differentiable on $\Theta$.
Moreover, the first and higher-order derivatives of $h_{\theta}$
admit representation
\begin{align}\label{la2.1*}
	\partial_{\theta}^{\boldsymbol\alpha}h_{\theta}
	=
	\sum_{\stackrel{\scriptstyle \boldsymbol\beta \in \mathbb{N}_{0}^{d}}
	{\boldsymbol\beta\leq\boldsymbol\alpha}}
	\frac{\partial_{\theta}^{\boldsymbol\beta}f_{\theta}
	\: P_{\boldsymbol\alpha,\boldsymbol\beta}(G_{\theta,\boldsymbol\alpha} ) }
	{ g_{\theta}^{|\boldsymbol\alpha|+1} }
\end{align}
for all $\theta\in\Theta$ and any multi-index
$\boldsymbol\alpha\in\mathbb{N}_{0}^{d}$
satisfying $|\boldsymbol\alpha|\leq p$.
Here, $P_{\boldsymbol\alpha,\boldsymbol\beta}:\mathbb{R}^{n_{\boldsymbol\alpha} }\rightarrow\mathbb{R}$
is a polynomial of the degree up to $|\boldsymbol\alpha|$
whose coefficients are independent of $\theta$ and depend only on $\boldsymbol\alpha$,
$\boldsymbol\beta$.

(ii) Let Assumptions \ref{aa2.1} and \ref{aa2.2} hold.
Then, there exists a real number $K\in[1,\infty)$
such that
\begin{align*}
	\left|\partial_{\theta}^{\boldsymbol\alpha}h_{\theta} \right|
	\leq
	K\left|\frac{f_{\theta}}{g_{\theta}}\right|
	\left(\frac{\phi_{\theta}\psi_{\theta} }{|g_{\theta}| } \right)^{|\boldsymbol\alpha|}
\end{align*}
for all $\theta\in\Theta$ and any multi-index
$\boldsymbol\alpha\in\mathbb{N}_{0}^{d}$
satisfying $|\boldsymbol\alpha|\leq p$.
\end{lemmaappendix}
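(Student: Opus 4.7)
My plan is to prove part (i) by induction on $|\boldsymbol\alpha|$ and then deduce part (ii) by plugging the bounds of Assumption A\arabic{appendixcounter}.2 into the representation (\ref{la2.1*}). The base case $|\boldsymbol\alpha|=0$ is trivial: $h_\theta = f_\theta/g_\theta$ corresponds to the trivial choice $P_{\boldsymbol 0,\boldsymbol 0}\equiv 1$, a polynomial of degree $0\leq |\boldsymbol 0|$.

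For the inductive step, I would suppose that (\ref{la2.1*}) holds for all multi-indices of length at most $l<p$, fix $\boldsymbol\alpha$ with $|\boldsymbol\alpha|=l+1$, and pick a standard unit vector $\boldsymbol e\in\mathbb N_0^d$ with $\boldsymbol e\leq\boldsymbol\alpha$, so that $\partial_\theta^{\boldsymbol\alpha}h_\theta=\partial_\theta^{\boldsymbol e}(\partial_\theta^{\boldsymbol\alpha-\boldsymbol e}h_\theta)$. Writing
\begin{align*}
\partial_\theta^{\boldsymbol\alpha-\boldsymbol e} h_\theta
= \sum_{\boldsymbol\beta\leq\boldsymbol\alpha-\boldsymbol e}
\frac{\partial_\theta^{\boldsymbol\beta}f_\theta\,P_{\boldsymbol\alpha-\boldsymbol e,\boldsymbol\beta}(G_{\theta,\boldsymbol\alpha-\boldsymbol e})}{g_\theta^{|\boldsymbol\alpha|}}
\end{align*}
and applying the product and quotient rules to a single summand yields three kinds of terms: one with $\partial_\theta^{\boldsymbol\beta+\boldsymbol e}f_\theta\cdot g_\theta\cdot P$, one with $\partial_\theta^{\boldsymbol\beta}f_\theta\cdot g_\theta\cdot\partial_\theta^{\boldsymbol e}P(G_{\theta,\boldsymbol\alpha-\boldsymbol e})$, and one with $\partial_\theta^{\boldsymbol\beta}f_\theta\cdot P\cdot(-|\boldsymbol\alpha|)\partial_\theta^{\boldsymbol e}g_\theta$, each divided by $g_\theta^{|\boldsymbol\alpha|+1}$. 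I would then collect these by the multi-index of $f_\theta$'s derivative, which reindexes the sum over $\boldsymbol\beta\leq\boldsymbol\alpha$ with the correct denominator $g_\theta^{|\boldsymbol\alpha|+1}$. The coefficient polynomials $P_{\boldsymbol\alpha,\boldsymbol\beta}$ are then read off this collection; their coefficients depend only on $\boldsymbol\alpha,\boldsymbol\beta$ (and on $|\boldsymbol\alpha|$ via the $-|\boldsymbol\alpha|$ factor).

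The bookkeeping point to verify carefully is the degree bound. Since $P_{\boldsymbol\alpha-\boldsymbol e,\boldsymbol\beta}$ has degree at most $|\boldsymbol\alpha-\boldsymbol e|=|\boldsymbol\alpha|-1$ and $\partial_\theta^{\boldsymbol e}$ applied through the chain rule to a polynomial $P(G_{\theta,\boldsymbol\alpha-\boldsymbol e})$ produces a polynomial in the larger vector $G_{\theta,\boldsymbol\alpha}$ of the same degree, each of the three term types carries a single extra factor (either $g_\theta$, $\partial_\theta^{\boldsymbol e}g_\theta$, or $g_\theta$) that are all entries of $G_{\theta,\boldsymbol\alpha}$; this gives overall degree at most $|\boldsymbol\alpha|$, as required. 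This degree check is the only step with any real content, and I expect it to be the main (though mild) obstacle.

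For part (ii), I would let $\tilde C_{\boldsymbol\alpha,\boldsymbol\beta}$ denote the maximum absolute value of the coefficients of $P_{\boldsymbol\alpha,\boldsymbol\beta}$. Since $P_{\boldsymbol\alpha,\boldsymbol\beta}$ contains at most $m_{\boldsymbol\alpha}$ monomials, each a product of at most $|\boldsymbol\alpha|$ components of $G_{\theta,\boldsymbol\alpha}$, Assumption A\arabic{appendixcounter}.2 yields $|P_{\boldsymbol\alpha,\boldsymbol\beta}(G_{\theta,\boldsymbol\alpha})|\leq \tilde C_{\boldsymbol\alpha,\boldsymbol\beta}m_{\boldsymbol\alpha}\psi_\theta^{|\boldsymbol\alpha|}$. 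The same assumption gives $|\partial_\theta^{\boldsymbol\beta}f_\theta|\leq|f_\theta|\phi_\theta^{|\boldsymbol\beta|}\leq|f_\theta|\phi_\theta^{|\boldsymbol\alpha|}$ (using $\phi_\theta\geq 1$). Summing (\ref{la2.1*}) over $\boldsymbol\beta\leq\boldsymbol\alpha$ and dividing by $|g_\theta|^{|\boldsymbol\alpha|+1}$ then produces a bound of the form
\begin{align*}
|\partial_\theta^{\boldsymbol\alpha}h_\theta|
\leq K\left|\frac{f_\theta}{g_\theta}\right|\left(\frac{\phi_\theta\psi_\theta}{|g_\theta|}\right)^{|\boldsymbol\alpha|},
\end{align*}
where $K$ can be taken as the maximum, over all $\boldsymbol\alpha,\boldsymbol\beta$ with $|\boldsymbol\alpha|\leq p$, of $n_{\boldsymbol\alpha}\,\tilde C_{\boldsymbol\alpha,\boldsymbol\beta}\,m_{\boldsymbol\alpha}$. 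This constant depends only on $p$ and $d$, matching the lemma's statement.
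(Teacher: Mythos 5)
Your proposal is correct and follows essentially the same route as the paper's proof: induction on $|\boldsymbol\alpha|$ via a unit multi-index $\boldsymbol e\leq\boldsymbol\alpha$, with the quotient/product rule producing the three term types, the degree-$|\boldsymbol\alpha|$ bookkeeping for $P_{\boldsymbol\alpha,\boldsymbol\beta}$, and then part (ii) obtained by bounding each of the at most $m_{\boldsymbol\alpha}$ monomials through Assumption A2.2 and using $\phi_{\theta},\psi_{\theta}\geq 1$. The only cosmetic difference is your base case ($|\boldsymbol\alpha|=0$ versus the paper's $|\boldsymbol\alpha|\in\{0,1\}$), which is immaterial.
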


\begin{proof}
(i) This part of lemma is proved by induction in $|\boldsymbol\alpha|$.
It is straightforward to show that
$\partial_{\theta}^{\boldsymbol\alpha}h_{\theta}$ exists
and satisfies (\ref{la2.1*}) for all $\theta\in\Theta$,
$\boldsymbol\alpha\in\mathbb{N}_{0}^{d}$, $|\boldsymbol\alpha|\in\{0,1\}$.
Now, the induction hypothesis is formulated.
Let $1\leq l<p$ be an integer.
Suppose that $\partial_{\theta}^{\boldsymbol\alpha}h_{\theta}$ exists
and satisfies (\ref{la2.1*}) for each $\theta\in\Theta$,
$\boldsymbol\alpha\in\mathbb{N}_{0}^{d}$, $|\boldsymbol\alpha|\leq l$.
Then, to show (i), it is sufficient to demonstrate
that $\partial_{\theta}^{\boldsymbol\alpha}h_{\theta}$ exists
and satisfies (\ref{la2.1*}) for all $\theta\in\Theta$,
$\boldsymbol\alpha\in\mathbb{N}_{0}^{d}$, $|\boldsymbol\alpha|=l+1$.

Let $\theta$ be any element of $\Theta$,
while $\boldsymbol\alpha$ is any multi-index in $\mathbb{N}_{0}^{d}$
satisfying $|\boldsymbol\alpha|=l+1$.
Then, there exists $\boldsymbol e\in\mathbb{N}_{0}^{d}$
such that $\boldsymbol e\leq\boldsymbol\alpha$, $|\boldsymbol e|=1$.
As $|\boldsymbol\alpha-\boldsymbol e|=|\boldsymbol\alpha|-1=l$,
the induction hypothesis yields
\begin{align}\label{la2.1}
	\partial_{\theta}^{\boldsymbol\alpha-\boldsymbol e}h_{\theta}
	=
	\sum_{\stackrel{\scriptstyle \boldsymbol\beta \in \mathbb{N}_{0}^{d}}
	{\boldsymbol\beta\leq\boldsymbol\alpha-\boldsymbol e}}
	\frac{\partial_{\theta}^{\boldsymbol\beta}f_{\theta}
	\: P_{\boldsymbol\alpha-\boldsymbol e,\boldsymbol\beta}(G_{\theta,\boldsymbol\alpha-\boldsymbol e} ) }
	{ g_{\theta}^{|\boldsymbol\alpha|} }.
\end{align}
Since $l<p$, the right-hand side of (\ref{la2.1}) involves only the derivatives of
$f_{\theta}$, $g_{\theta}$ of the order up to $p-1$.
Then, Assumption \ref{aa2.1} implies that
$\partial_{\theta}^{\boldsymbol\alpha}h_{\theta}=
\partial_{\theta}^{\boldsymbol e}\left( \partial_{\theta}^{\boldsymbol\alpha-\boldsymbol e}h_{\theta} \right)$
exist and satisfies
\begin{align}\label{la2.5}
	\partial_{\theta}^{\boldsymbol\alpha}h_{\theta}
	=&
	\sum_{\stackrel{\scriptstyle \boldsymbol\beta \in \mathbb{N}_{0}^{d}}
	{\boldsymbol\beta\leq\boldsymbol\alpha-\boldsymbol e}}
	\frac{\partial_{\theta}^{\boldsymbol\beta+\boldsymbol e}f_{\theta} \:
	P_{\boldsymbol\alpha-\boldsymbol e,\boldsymbol\beta}(G_{\theta,\boldsymbol\alpha-\boldsymbol e} )
	+
	\partial_{\theta}^{\boldsymbol\beta}f_{\theta} \:
	\partial_{\theta}^{\boldsymbol e}P_{\boldsymbol\alpha-\boldsymbol e,\boldsymbol\beta}
	(G_{\theta,\boldsymbol\alpha-\boldsymbol e} )
	}
	{ g_{\theta}^{|\boldsymbol\alpha|} }
	-
	|\boldsymbol\alpha|
	\sum_{\stackrel{\scriptstyle \boldsymbol\beta \in \mathbb{N}_{0}^{d}}
	{\boldsymbol\beta\leq\boldsymbol\alpha-\boldsymbol e}}
	\frac{\partial_{\theta}^{\boldsymbol\beta}f_{\theta} \:
	\partial_{\theta}^{\boldsymbol e}g_{\theta} \:
	P_{\boldsymbol\alpha-\boldsymbol e,\boldsymbol\beta}(G_{\theta,\boldsymbol\alpha-\boldsymbol e} ) }
	{ g_{\theta}^{|\boldsymbol\alpha|+1} }
	\nonumber\\
	=&
	\sum_{\stackrel{\scriptstyle \boldsymbol\beta \in \mathbb{N}_{0}^{d}}
	{\boldsymbol\beta\leq\boldsymbol\alpha-\boldsymbol e}}
	\frac{\partial_{\theta}^{\boldsymbol\beta}f_{\theta}
	\left(
	g_{\theta} \:
	\partial_{\theta}^{\boldsymbol e}P_{\boldsymbol\alpha-\boldsymbol e,\boldsymbol\beta}
	(G_{\theta,\boldsymbol\alpha-\boldsymbol e} )
	-
	|\boldsymbol\alpha|
	\partial_{\theta}^{\boldsymbol e}g_{\theta} \:
	P_{\boldsymbol\alpha-\boldsymbol e,\boldsymbol\beta}
	(G_{\theta,\boldsymbol\alpha-\boldsymbol e} )
	\right)
	}
	{ g_{\theta}^{|\boldsymbol\alpha|+1} }
	+
	\sum_{\stackrel{\scriptstyle \boldsymbol\beta \in \mathbb{N}_{0}^{d}}
	{\boldsymbol e\leq\boldsymbol\beta\leq\boldsymbol\alpha}}
	\frac{\partial_{\theta}^{\boldsymbol\beta}f_{\theta} \:
	P_{\boldsymbol\alpha-\boldsymbol e,\boldsymbol\beta-\boldsymbol e}(G_{\theta,\boldsymbol\alpha-\boldsymbol e} )
	}
	{ g_{\theta}^{|\boldsymbol\alpha|} }
\end{align}
Moreover, terms
\begin{align*}
	\partial_{\theta}^{\boldsymbol e}g_{\theta} \:
	P_{\boldsymbol\alpha-\boldsymbol e,\boldsymbol\beta}
	(G_{\theta,\boldsymbol\alpha-\boldsymbol e} ),
	\;\;\;\;\;
	g_{\theta} \:
	\partial_{\theta}^{\boldsymbol e}P_{\boldsymbol\alpha-\boldsymbol e,\boldsymbol\beta}
	(G_{\theta,\boldsymbol\alpha-\boldsymbol e} ),
	\;\;\;\;\;
	g_{\theta}\:
	P_{\boldsymbol\alpha-\boldsymbol e,\boldsymbol\beta-\boldsymbol e}(G_{\theta,\boldsymbol\alpha-\boldsymbol e} )
\end{align*}
are polynomial in derivatives
$\left\{ \partial_{\theta}^{\boldsymbol\gamma}g_{\theta}:
\boldsymbol\gamma\in\mathbb{N}_{0}^{d},
\boldsymbol\gamma\leq\boldsymbol\alpha \right\}$.
Apparently, the order of these polynomials is up to $|\boldsymbol\alpha-\boldsymbol e|+1=|\boldsymbol\alpha|$,
while the corresponding coefficients are independent of $\theta$
and depend only on $\boldsymbol\alpha$, $\boldsymbol\beta$.
Therefore, the right-hand side of (\ref{la2.5}) admits representation (\ref{la2.1*}).
Hence, the same holds for $\partial_{\theta}^{\boldsymbol\alpha}h_{\theta}$.

(ii) Let $\tilde{C}_{\boldsymbol\alpha,\boldsymbol\beta}$
be the maximum absolute value of the coefficients of polynomial
$P_{\boldsymbol\alpha,\boldsymbol\beta}(\cdot)$,
where $\boldsymbol\alpha,\boldsymbol\beta\in\mathbb{N}_{0}^{d}$,
$\boldsymbol\alpha\leq\boldsymbol\beta$.
As the number of different power terms in $P_{\boldsymbol\alpha,\boldsymbol\beta}(\cdot)$
is at most $m_{\boldsymbol\alpha}$, Assumption \ref{aa2.2} and (i) yield
\begin{align*}
	\left|
	\frac{\partial_{\theta}^{\boldsymbol\beta}f_{\theta}\:
	P_{\boldsymbol\alpha,\boldsymbol\beta}(G_{\theta,\boldsymbol\alpha} ) }
	{g_{\theta}^{|\boldsymbol\alpha|+1} }
	\right|
	\leq
	\tilde{C}_{\boldsymbol\alpha,\boldsymbol\beta}\:m_{\boldsymbol\alpha}
	\phi_{\theta}^{|\boldsymbol\beta|} \left|\frac{f_{\theta}}{g_{\theta}}\right|
	\left( \frac{\psi_{\theta} }{|g_{\theta}| } \right)^{|\boldsymbol\alpha|}
	\leq
	\tilde{C}_{\boldsymbol\alpha,\boldsymbol\beta}\:m_{\boldsymbol\alpha}
	\left|\frac{f_{\theta}}{g_{\theta}}\right|
	\left( \frac{\phi_{\theta}\psi_{\theta} }{|g_{\theta}| } \right)^{|\boldsymbol\alpha|}
\end{align*}
for all $\theta\in\Theta$ and any
$\boldsymbol\alpha,\boldsymbol\beta\in\mathbb{N}_{0}^{d}\setminus\{\boldsymbol 0\}$,
$\boldsymbol\beta\leq\boldsymbol\alpha$.
Then, using (i) again, we conclude that there exists a real number $K\in[1,\infty)$
with the properties specified in the lemma's statement.
\end{proof}

\refstepcounter{appendixcounter}\label{appendix3}
\section*{Appendix \arabic{appendixcounter} }

In this section, we present auxiliary results which Proposition \ref{proposition5.1}
and Theorem \ref{theorem1.1} crucially rely on.
Let $\Theta$ and $d$ have the same meaning as in Section \ref{section1}.
Moreover, let ${\cal Z}$ be a Borel set in $\mathbb{R}^{d_{z} }$,
where $d_{z}\geq 1$ is an integer.
We consider here functions $F_{\theta}(z)$ and $g_{\theta}$ mapping
$\theta\in\Theta$, $z\in{\cal Z}$ to $\mathbb{R}$ and $\mathbb{R}\setminus\{0\}$
(respectively).
We also consider non-negative measure $\mu(dz)$ on ${\cal Z}$.
The analysis carried out in this section relies on the following assumptions.

\begin{assumptionappendix}\label{aa3.1}
$F_{\theta}(z)$ and $g_{\theta}$ are $p$-times differentiable in $\theta$
for each $\theta\in\Theta$, $z\in{\cal Z}$, where $p\geq 1$.
\end{assumptionappendix}

\begin{assumptionappendix}\label{aa3.2}
There exists a function $\phi:{\cal Z}\rightarrow[1,\infty )$ such that
\begin{align*}
	|\partial_{\theta}^{\boldsymbol\alpha} F_{\theta}(z) |\leq \phi(z),
	\;\;\;\;\;
	\int \phi(z')\mu(dz') < \infty
\end{align*}
for all $\theta\in\Theta$, $z\in{\cal Z}$ and any multi-index $\boldsymbol\alpha\in\mathbb{N}_{0}^{d}$,
$|\boldsymbol\alpha|\leq p$.
\end{assumptionappendix}

Throughout this section, we use the following notation.
$f_{\theta}$, $h_{\theta}$, $H_{\theta}(z)$ are the functions defined by
\begin{align*}
	f_{\theta} = \int F_{\theta}(z') \mu(dz'),
	\;\;\;\;\;
	h_{\theta}
	=
	\frac{f_{\theta} }{g_{\theta} },
	\;\;\;\;\;
	H_{\theta}(z)
	=
	\frac{F_{\theta}(z) }{g_{\theta} }
\end{align*}
for $\theta\in\Theta$, $z\in{\cal Z}$.
$\xi_{\theta}(dz)$ and $\zeta_{\theta}(dz)$ are the signed measures on ${\cal Z}$
defined by
\begin{align*}
	\xi_{\theta}(B)
	=
	\int_{B} F_{\theta}(z) \mu(dz),
	\;\;\;\;\;
	\zeta_{\theta}(B)
	=
	\int_{B} H_{\theta}(z) \mu(dz)
\end{align*}
for $B\in{\cal B}({\cal Z} )$.
$\xi_{\theta}^{\boldsymbol\alpha}(dz)$ and $\zeta_{\theta}^{\boldsymbol\alpha}(dz)$ are the signed measures on ${\cal Z}$
defined by
\begin{align*}
	\xi_{\theta}^{\boldsymbol\alpha}(B)
	=
	\int_{B} \partial_{\theta}^{\boldsymbol\alpha} F_{\theta}(z) \mu(dz),
	\;\;\;\;\;
	\zeta_{\theta}^{\boldsymbol\alpha}(B)
	=
	\int_{B} \partial_{\theta}^{\boldsymbol\alpha} H_{\theta}(z) \mu(dz)
\end{align*}
for $\boldsymbol\alpha\in\mathbb{N}_{0}^{d}$, $|\boldsymbol\alpha|\leq p$.

\begin{lemmaappendix}\label{lemmaa3}
Let Assumptions \ref{aa3.1} and \ref{aa3.2} hold.
Then, the following is true.

(i) $f_{\theta}$ and $g_{\theta}$ are well-defined for each $\theta\in\Theta$.
Moreover, $f_{\theta}$ and $g_{\theta}$ are $p$-times differentiable and satisfy
\begin{align}\label{la3.1*}
	\partial_{\theta}^{\boldsymbol\alpha} f_{\theta}
	=
	\int \partial_{\theta}^{\boldsymbol\alpha} F_{\theta}(z) \mu(dz),
	\;\;\;\;\;
	\partial_{\theta}^{\boldsymbol\alpha} h_{\theta}
	=
	\int \partial_{\theta}^{\boldsymbol\alpha} H_{\theta}(z) \mu(dz)
\end{align}
for all $\theta\in\Theta$ and any mutli-index $\boldsymbol\alpha\in\mathbb{N}_{0}^{d}$,
$|\boldsymbol\alpha|\leq p$.

(ii) $\xi_{\theta}(B)$, $\zeta_{\theta}(B)$,
$\xi_{\theta}^{\boldsymbol\alpha}(B)$ and $\zeta_{\theta}^{\boldsymbol\alpha}(B)$ are well-defined
for each $\theta\in\Theta$, $B\in{\cal B}({\cal Z} )$.
Moreover, $\xi_{\theta}(B)$ and $\zeta_{\theta}(B)$ are $p$-times differentiable (in $\theta$) and satisfy
\begin{align}\label{la3.3*}
	\partial_{\theta}^{\boldsymbol\alpha} \xi_{\theta}(B)
	=
	\xi_{\theta}^{\boldsymbol\alpha}(B),
	\;\;\;\;\;
	\partial_{\theta}^{\boldsymbol\alpha} \zeta_{\theta}(B)
	=
	\zeta_{\theta}^{\boldsymbol\alpha}(B)
\end{align}
for all $\theta\in\Theta$, $B\in{\cal B}({\cal Z} )$ and any mutli-index $\boldsymbol\alpha\in\mathbb{N}_{0}^{d}$,
$|\boldsymbol\alpha|\leq p$.
\end{lemmaappendix}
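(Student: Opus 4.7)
The plan is to prove both parts by induction on $|\boldsymbol\alpha|$, using the standard theorem on differentiation under the integral sign together with the quotient formula of Lemma \ref{lemmaa2}. Throughout, the role of Assumption \ref{aa3.2} is to supply the $\mu$-integrable dominant $\phi(z)$ needed to justify interchange of derivative and integral at each inductive step.

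For part (i), I would first note that Assumption \ref{aa3.2} with $\boldsymbol\alpha=\boldsymbol 0$ gives $|F_{\theta}(z)|\leq\phi(z)$ together with $\int\phi(z)\mu(dz)<\infty$, so $F_{\theta}(\cdot)$ is $\mu$-integrable and $f_{\theta}$ is well-defined; $g_{\theta}$ is well-defined by hypothesis. I would then prove the first identity in \eqref{la3.1*} by induction on $|\boldsymbol\alpha|$: the base case is trivial, and for $\boldsymbol\alpha=\boldsymbol\beta+\boldsymbol e_{i}$ with $|\boldsymbol\beta|=|\boldsymbol\alpha|-1$, the induction hypothesis combined with the classical Leibniz rule for integrals (applied with the integrable dominant $\phi(z)\geq|\partial_{\theta}^{\boldsymbol\alpha}F_{\theta}(z)|$) yields $\partial_{\theta}^{\boldsymbol\alpha}f_{\theta}=\int\partial_{\theta}^{\boldsymbol\alpha}F_{\theta}(z)\mu(dz)$. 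For the second identity, I would invoke Lemma \ref{lemmaa2}(i) to express $\partial_{\theta}^{\boldsymbol\alpha}h_{\theta}$ as a finite combination of the form $\sum_{\boldsymbol\beta\leq\boldsymbol\alpha}\partial_{\theta}^{\boldsymbol\beta}f_{\theta}\,P_{\boldsymbol\alpha,\boldsymbol\beta}(G_{\theta,\boldsymbol\alpha})/g_{\theta}^{|\boldsymbol\alpha|+1}$. Replacing each $\partial_{\theta}^{\boldsymbol\beta}f_{\theta}$ by its integral representation and pulling the finite sum inside the integral, the integrand coincides with the pointwise quotient formula obtained by applying Lemma \ref{lemmaa2}(i) to $H_{\theta}(z)=F_{\theta}(z)/g_{\theta}$ for each fixed $z$. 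This yields the second part of \eqref{la3.1*}.

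Part (ii) is then the same argument applied with the measure $\mu$ restricted to an arbitrary $B\in{\cal B}({\cal Z})$. Well-definedness of $\xi_{\theta}(B)$ and $\xi_{\theta}^{\boldsymbol\alpha}(B)$ is immediate from Assumption \ref{aa3.2}; well-definedness of $\zeta_{\theta}(B)$ and $\zeta_{\theta}^{\boldsymbol\alpha}(B)$ follows from the quotient formula together with the fact that $|\partial_{\theta}^{\boldsymbol\alpha}H_{\theta}(z)|\leq C_{\theta}\phi(z)$ for a $\theta$-dependent constant $C_{\theta}$ (see the final paragraph). The identities \eqref{la3.3*} are obtained by running the induction of part (i) verbatim on $\int_{B}$ in place of $\int$.

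The main technical point to verify is the existence of a $\mu$-integrable dominant for $\partial_{\theta}^{\boldsymbol\alpha}H_{\theta}(z)$, which is what allows the Leibniz rule to be applied in the $h_{\theta}$-step and in part (ii). Since $g_{\theta}\neq 0$ and $g_{\theta}$ is $p$-times differentiable (hence continuous together with its derivatives of order up to $p$), on any sufficiently small compact neighborhood of a given $\theta_{0}\in\Theta$ the quantities $|g_{\theta}|^{-1}$ and $|\partial_{\theta}^{\boldsymbol\gamma}g_{\theta}|$ (for $|\boldsymbol\gamma|\leq p$) are uniformly bounded. Combining these bounds with $|\partial_{\theta}^{\boldsymbol\beta}F_{\theta}(z)|\leq\phi(z)$ from Assumption \ref{aa3.2} and the pointwise quotient identity of Lemma \ref{lemmaa2}(i) applied to $H_{\theta}(z)$ yields $|\partial_{\theta}^{\boldsymbol\alpha}H_{\theta}(z)|\leq C_{\theta_{0}}\phi(z)$ on that neighborhood, providing the required integrable dominant. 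Since differentiability is a local property, this local bound is sufficient to conclude the global statements in \eqref{la3.1*} and \eqref{la3.3*}.
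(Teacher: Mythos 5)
Your proof is correct and follows essentially the same route as the paper: dominated differentiation under the integral (with the dominant $\phi$ from Assumption \ref{aa3.2}) gives the first identities in (\ref{la3.1*}), (\ref{la3.3*}) for $f_{\theta}$ and $\xi_{\theta}(B)$, and the quotient representation of Lemma \ref{lemmaa2}, whose coefficients $G_{\theta}^{\boldsymbol\alpha,\boldsymbol\beta}/g_{\theta}^{|\boldsymbol\alpha|+1}$ do not depend on $z$ or $B$, lets you exchange the finite sum with the integral to obtain the corresponding identities for $h_{\theta}$ and $\zeta_{\theta}(B)$. One minor remark: your closing local-uniformity argument is unnecessary (and slightly overstates what is available, since $p$-times differentiability does not by itself guarantee continuity or local boundedness of the order-$p$ derivatives of $g_{\theta}$); for each fixed $\theta$ those coefficients are simply finite numbers, which already yields the integrable bound $C_{\theta}\phi(z)$ for $\partial_{\theta}^{\boldsymbol\alpha}H_{\theta}(z)$, hence the well-definedness of $\zeta_{\theta}^{\boldsymbol\alpha}(B)$, without any appeal to compact neighborhoods.
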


\begin{proof}
Let $\theta$ be any element of $\Theta$,
while $\boldsymbol\alpha$ is any multi-index in $\mathbb{N}_{0}^{d}$
satisfying $|\boldsymbol\alpha|\leq p$.
Moreover, let $z$ be any element of ${\cal Z}$,
while $B$ is any element of ${\cal B}({\cal Z})$.
Owing to Assumptions \ref{aa3.1}, \ref{aa3.2},
$f_{\theta}$, $\xi_{\theta}(B)$, $\xi_{\theta}^{\boldsymbol\alpha}(B)$
are well-defined.
Consequently, $h_{\theta}$, $\zeta_{\theta}(B)$ are also well-defined.
Moreover, due to the dominated convergence theorem and Assumptions \ref{aa3.1}, \ref{aa3.2},
$f_{\theta}$, $\xi_{\theta}(B)$ are $p$-times differentiable in $\theta$ on $\Theta$
and satisfy the first part of (\ref{la3.1*}), (\ref{la3.3*}).
Therefore, $h_{\theta}$, $\zeta_{\theta}(B)$ are also $p$-times differentiable in $\theta$ on $\Theta$.

Using Lemma \ref{lemmaa2}, we conclude that
$\partial_{\theta}^{\boldsymbol\alpha} h_{\theta}$,
$\partial_{\theta}^{\boldsymbol\alpha} H_{\theta}(z)$,
$\partial_{\theta}^{\boldsymbol\alpha}\zeta_{\theta}(B)$ admit
the following representation:
\begin{align}\label{la3.1}
	&
	\partial_{\theta}^{\boldsymbol\alpha} h_{\theta}
	=
	\sum_{\stackrel{\scriptstyle \boldsymbol\beta\in\mathbb{N}_{0}^{d} }
	{\boldsymbol\beta\leq\boldsymbol\alpha } }
	\frac{G_{\theta}^{\boldsymbol\alpha,\boldsymbol\beta}
	\partial_{\theta}^{\boldsymbol\beta} f_{\theta} }
	{(g_{\theta} )^{|\boldsymbol\alpha|+1} },
	\;\;\;\;\;
	\partial_{\theta}^{\boldsymbol\alpha} H_{\theta}(x)
	=
	\sum_{\stackrel{\scriptstyle \boldsymbol\beta\in\mathbb{N}_{0}^{d} }
	{\boldsymbol\beta\leq\boldsymbol\alpha } }
	\frac{G_{\theta}^{\boldsymbol\alpha,\boldsymbol\beta}
	\partial_{\theta}^{\boldsymbol\beta} F_{\theta}(z) }
	{(g_{\theta} )^{|\boldsymbol\alpha|+1} },
	\;\;\;\;\;
	\partial_{\theta}^{\boldsymbol\alpha}\zeta_{\theta}(B)
	=
	\sum_{\stackrel{\scriptstyle \boldsymbol\beta\in\mathbb{N}_{0}^{d} }
	{\boldsymbol\beta\leq\boldsymbol\alpha } }
	\frac{G_{\theta}^{\boldsymbol\alpha,\boldsymbol\beta}
	\partial_{\theta}^{\boldsymbol\beta} \xi_{\theta}(B) }
	{(g_{\theta} )^{|\boldsymbol\alpha|+1} },
\end{align}
where $G_{\theta}^{\boldsymbol\alpha,\boldsymbol\beta}$ is a polynomial function of
derivatives $\big\{\partial_{\theta}^{\boldsymbol\gamma}g_{\theta}: \boldsymbol\gamma\in\mathbb{N}_{0},
\boldsymbol\gamma\leq\boldsymbol\alpha \big\}$.
Owing to (\ref{la3.1}) and the first part of (\ref{la3.1*}), we have
\begin{align*}
	\partial_{\theta}^{\boldsymbol\alpha} h_{\theta}
	=
	\sum_{\stackrel{\scriptstyle \boldsymbol\beta\in\mathbb{N}_{0}^{d} }
	{\boldsymbol\beta\leq\boldsymbol\alpha } }
	\frac{G_{\theta}^{\boldsymbol\alpha,\boldsymbol\beta} }
	{(g_{\theta} )^{|\boldsymbol\alpha|+1} }
	\int \partial_{\theta}^{\boldsymbol\beta} F_{\theta}(z) \mu(dz)
	=
	\int \partial_{\theta}^{\boldsymbol\alpha} H_{\theta}(z) \mu(dz).
\end{align*}
Similarly, due to (\ref{la3.1}) and the first part of (\ref{la3.3*}), we have
\begin{align*}
	\partial_{\theta}^{\boldsymbol\alpha} \zeta_{\theta}(B)
	=
	\sum_{\stackrel{\scriptstyle \boldsymbol\beta\in\mathbb{N}_{0}^{d} }
	{\boldsymbol\beta\leq\boldsymbol\alpha } }
	\frac{G_{\theta}^{\boldsymbol\alpha,\boldsymbol\beta} }
	{(g_{\theta} )^{|\boldsymbol\alpha|+1} }
	\int_{B} \partial_{\theta}^{\boldsymbol\beta} F_{\theta}(z) \mu(dz)
	=
	\int_{B} \partial_{\theta}^{\boldsymbol\alpha} H_{\theta}(z) \mu(dz)
	=
	\zeta_{\theta}^{\boldsymbol\alpha}(B).
\end{align*}
Hence, $\partial_{\theta}^{\boldsymbol\alpha} h_{\theta}$, $\zeta_{\theta}^{\boldsymbol\alpha}(B)$
are well-defined and satisfy the second part of (\ref{la3.1*}), (\ref{la3.3*}).
\end{proof}

\end{document}